\documentclass{amsart}
\usepackage{amssymb,latexsym,amsfonts}
\usepackage{amsthm}
\usepackage{fontenc}
\usepackage{amsmath}
\usepackage{textcomp}
\usepackage{graphicx}
\usepackage{stmaryrd}
\usepackage{geometry}
\usepackage[all]{xy}
\usepackage[francais]{babel}
\usepackage[latin1]{inputenc}
\usepackage[T1]{fontenc}
\pagestyle{plain}

\DeclareMathOperator{\Jac}{Jac}
\DeclareMathOperator{\Fitt}{Fitt}

\DeclareMathOperator{\Div}{Div}

\DeclareMathOperator{\Kern}{Ker}
\DeclareMathOperator{\Bun}{Bun_{G}}
\DeclareMathOperator{\Bungm}{Bun_{\mathbb{G}_{m}^{l}}}

\DeclareMathOperator{\Bunb}{Bun_{\textbf{G}}}
\DeclareMathOperator{\Spec}{Spec}
\DeclareMathOperator{\inv}{inv}
\DeclareMathOperator{\supp}{supp}
\DeclareMathOperator{\Id}{Id}
\DeclareMathOperator{\Hom}{Hom}

\DeclareMathOperator{\End}{End}
\DeclareMathOperator{\dime}{dim}
\DeclareMathOperator{\Ima}{Im}

\DeclareMathOperator{\NN}{\mathbb{NN}}

\DeclareMathOperator{\Gr}{Gr}

\DeclareMathOperator{\Aut}{Aut}
\DeclareMathOperator{\ad}{ad}
\DeclareMathOperator{\Ad}{Ad}

\DeclareMathOperator{\Ann}{Ann}

\DeclareMathOperator{\coker}{Coker}
\DeclareMathOperator{\Ker}{Ker}

\DeclareMathOperator{\Tr}{Tr}

\DeclareMathOperator{\rg}{rg}

\DeclareMathOperator{\out}{Out}
\DeclareMathOperator{\Ext}{Ext}
\DeclareMathOperator{\Res}{Res}
\DeclareMathOperator{\Gal}{Gal}

\DeclareMathOperator{\val}{val}

\DeclareMathOperator{\Lie}{Lie}

\DeclareMathOperator{\Fr}{Fr}

\newtheorem{thm}{Théorème}
\newtheorem{prop}[thm]{Proposition}
\newtheorem{lem}[thm]{Lemme}
\newtheorem{defi}[thm]{Définition}

\newtheorem{cor}[thm]{Corollaire}

\newcommand{\rmq}{\noindent\textbf{Remarque :}}
\newcommand{\rmqs}{\noindent\textbf{Remarques :}}
\newcommand{\cH}{\mathcal{H}}

\newcommand{\GL}{GL}

\newcommand{\co}{\mathcal{O}}
\newcommand{\cP}{\mathcal{P}}
\newcommand{\cD}{\mathcal{D}}
\newcommand{\cT}{\mathcal{T}}

\newcommand{\cQ}{\mathcal{Q}}
\newcommand{\cm}{\mathcal{M}}

\newcommand{\g}{\gamma}

\newcommand{\la}{\lambda}

\newcommand{\eps}{\epsilon}

\newcommand{\cmdbD}{\overline{\mathcal{M}}_{\lambda}^{\flat,\leq d}}

\newcommand{\cmdo}{\mathcal{M}_{\lambda}}

\newcommand{\cmd}{\overline{\mathcal{M}}_{\lambda}}
\newcommand{\cmdb}{\overline{\mathcal{M}}_{\lambda}^{\flat}}

\newcommand{\kc}{\mathfrak{C}_{+}}

\newcommand{\kC}{\mathfrak{C}}

\newcommand{\kcd}{\mathfrak{C}_{+}^{\lambda}}
\newcommand{\kg}{\mathfrak{g}}
\newcommand{\kq}{\mathfrak{q}}
\newcommand{\kx}{\mathfrak{X}}
\newcommand{\km}{\mathfrak{m}}

\newcommand{\kt}{\mathfrak{t}}

\newcommand{\ev}{ev}
\newcommand{\bg}{\textbf{G}}
\newcommand{\bz}{\textbf{Z}}
\newcommand{\bC}{\textbf{C}}
\newcommand{\bB}{\textbf{B}}

\newcommand{\bh}{\textbf{H}}
\newcommand{\bt}{\textbf{T}}
\newcommand{\bx}{\textbf{x}}
\newcommand{\bw}{\textbf{W}}
\newcommand{\bU}{\textbf{U}}

\newcommand{\abd}{\mathcal{A}_{\lambda}}
\newcommand{\abdD}{\mathcal{A}_{\lambda}^{\leq d}}

\newcommand{\abdb}{\mathcal{A}_{\lambda}^{\flat}}
\newcommand{\abdbD}{\mathcal{A}_{\lambda}^{\flat,\leq d}}

\newcommand{\abdh}{\mathcal{A}_{\lambda}^{\heartsuit}}

\newcommand{\abdd}{\mathcal{A}_{\lambda}^{\diamondsuit}}

\newcommand{\fx}{F_{x}}

\newcommand{\ab}{\mathbb{A}}

\begin{document}
\title{La fibration de Hitchin-Frenkel-Ngô et son complexe d'intersection}
\author{Alexis Bouthier}
\maketitle
\tableofcontents
\selectlanguage{francais}
\begin{center}
\textbf{Abstract:}
\end{center}
In this article, we construct the Hitchin fibration for groups following the scheme outlined by Frenkel-Ngô \cite{FN} in the case of $SL_{2}$. This construction uses as a decisive tool the Vinberg's semigroup and follows the study accomplished in \cite{Bt}. The total space of Hitchin is obtained by taking the fiber product of the Hecke stack with the diagonal of the stack of $G$-bundles $\Bun$; we prove a transversality statement between the intersection complex of the Hecke stack and the diagonal of $\Bun$, over a sufficiently big open subset, in order to get local applications, such that the fundamental lemma for the spherical Hecke algebra.
Along the proof of this theorem, we establish a result concerning the integral conjugacy classes of the points of a simply connected group in a local field.
\bigskip
\bigskip
\begin{center}
\textbf{Résumé:}
\end{center}
Dans cet article, on construit la fibration de Hitchin pour les groupes d'après celle esquissée par Frenkel-Ngô \cite{FN} dans le cas de $SL_{2}$. Cette construction utilise de manière cruciale le semi-groupe de Vinberg et fait suite à l'étude menée dans \cite{Bt}. L'espace total de Hitchin s'obtient comme le produit fibré du champ de Hecke avec la diagonale  du champ des $G$-torseurs $\Bun$; nous démontrons alors un énoncé de transversalité du complexe d'intersection du champ de Hecke avec cette diagonale, au-dessus d'un ouvert suffisamment gros, pour obtenir des applications locales, telles que le lemme fondamental pour l'algèbre de Hecke sphérique.
Dans le cours de la preuve de ce théorème, nous établissons également un énoncé sur les classes de conjugaisons entières des points d'un groupe simplement connexe sur un corps local.
\bigskip
\section*{Introduction}
Dans sa preuve du lemme fondamental pour les algèbres de Lie \cite{N}, Ngô utilise la fibration de Hitchin comme un moule géométrique pour les intégrales orbitales de la fonction caractéristique du compact maximal.
Dans ce travail, on s'intéresse à la construction d'un analogue de la fibration de Hitchin pour le cas des groupes ainsi qu'au lien entre cet espace et les intégrales orbitales d'une autre classe de fonctions, celles de l'algèbre de Hecke sphérique.
\medskip

Soit $k$ un corps algébriquement clos ou un corps fini, soient $\co:=k[[\pi]]$ et $F=k((\pi))$. On note $D:=\Spec(\co)$ et $D^{\bullet}:=\Spec(F)$.

Pour alléger l'introduction, on considère un groupe $G$ semisimple simplement connexe déployé, $(T,B)$ une paire de Borel, $r=\rg (T)$, $W$ le groupe de Weyl et $w_{0}$ son élément long.
Soit $\Delta=\{\alpha_{1},\dots,\alpha_{r}\}$ l'ensemble des racines simples, ainsi que le cône des cocaractères (resp. caractères) dominants $X_{*}(T)^{+}\subset X_{*}(T)$, (resp. $X^{*}(T)^{+}\subset X^{*}(T)$). Enfin, on note $\omega_{1},\dots\omega_{r}$, les poids fondamentaux.
On pose $K:=G(\co)$.
Considérons la grassmannienne affine, 
\begin{center}
$\Gr:=G(F)/K$,
\end{center}
que l'on peut munir d'une structure d'ind-schéma d'après \cite[Prop.2]{H}. On a une interprétation modulaire de la grassmanienne affine $\Gr$, elle classifie les paires $(E,\beta)$ où $E$ est un $G$-torseur sur $D$ et $\beta$ une trivialisation sur $D^{\bullet}$.
De plus, elle admet une décomposition en $K$-orbites, dite de Cartan:
\begin{center}
$\Gr:=\coprod\limits_{\la\in X_{*}(T)^{+}} K\pi^{\la}K/K$.
\end{center}
On introduit alors $\Gr_{\la}:=K\pi^{\la}K/K$ ainsi que $\overline{\Gr}_{\la}$ l'adhérence $\Gr_{\la}$ dans $\Gr$.
Nous avons la description suivante:
\begin{center}
$\overline{\Gr}_{\la}=\coprod\limits_{\mu\leq\la}\Gr_{\mu}$.
\end{center}
En particulier, pour deux $G$-torseurs $E$, $E'$ sur le disque  et $\beta$ un isomorphisme sur $D^{\bullet}$ entre $E$ et $E'$, on obtient  un point:
\begin{center}
$\inv(E,E',\beta)\in K\backslash \Gr=X_{*}(T)^{+}$.
\end{center}
\medskip

Supposons maintenant que $k$ est fini. Soit un entier premier $l\neq p$, on considère l'algèbre de Hecke sphérique
\begin{center}
$\mathcal{H}:=C_{c}(K\backslash G(F)/K,\overline{\mathbb{Q}}_{l})$,
\end{center}
avec comme produit, la convolution des fonctions. Par le dictionnaire fonctions-faisceaux, elle admet une  base, due à Lusztig, donnée par les fonctions $\phi_{\la}$ qui correspondent aux faisceaux $IC_{\overline{\Gr}_{\la}}$.
\medskip
Nous cherchons donc à construire une fibration, analogue à la fibration de Hitchin, dont le nombre de points nous fournirait les intégrales orbitales des fonctions $\phi_{\la}$. Elle s'obtient de la manière suivante.
\medskip

Soit $X$ une courbe projective lisse géométriquement connexe sur $k$, on note $F$ son corps de fonctions, $\mathbb{A}$ l'anneau des adèles et $\co_{\mathbb{A}}$ les adèles entières.
On considère une somme formelle $\la=\sum\limits_{x\in X}\la_{x}[x]$ avec $\la_{x}\in X_{*}(T)^{+}$ presque tous nuls et $S=\supp(\la):=\{x\in X\vert~ \la_{x}\neq 0\}$.
Enfin, on pose $\overline{\Gr}_{\la}:=\prod\limits_{x\in X}\overline{\Gr}_{\la_{x}}$.

A la suite de Beilinson-Drinfeld \cite{BD}, on considère le champ de Hecke $\overline{\cH}_{\la}$ qui classifie les triplets $(E,E',\beta)$ où $E$, $E'$ sont des $G$-torseurs sur $X$ et un isomorphisme 
\begin{center}
$\beta: E_{\vert X-S}\rightarrow E'_{\vert X-S}$,
\end{center}
tel que pour tout $x\in S$, $\inv_{x}(E_{\vert D_{x}},E'_{\vert D_{x}},\beta_{D_{x}^{\bullet}})\leq \la_{x}$.
On forme alors le carré cartésien:
$$\xymatrix{\cmd\ar[d]\ar[r]^{\Delta}&\overline{\cH}_{\la}\ar[d]\\\Bun\ar[r]^-{\Delta}&\Bun\times\Bun}$$
On a la description adélique suivante: 
\begin{center}
$\cmd(k):=G(F)\backslash\{(\g,(g_{x}))\in G(F)\times G(\ab)/G(\co_{\ab})\vert~ g_{x}^{-1}\g g_{x}\in\overline{K_{x}\pi_{x}^{\la}K_{x}}\}$,
\end{center}
où $G(F)$ agit par $h.(\g,(g_{x}))=((h\g h^{-1},(hg_{x})))$.
Si $k$ est fini, si l'on considère le complexe $\Delta^{*}IC_{\overline{\cH}_{\la}}$, la trace de Frobenius en un point $t=(\g,(g_{x}))$ est donnée par:
\begin{center}
$\Tr(\Fr_{t},\Delta^{*}IC_{\overline{\cH}_{\la}})=\bigotimes\limits_{x\in X}' \phi_{\la_{x}}(g_{x}^{-1}\g g_{x})$.
\end{center}
Si l'on regarde au travers des représentations fondamentales $\rho_{i}: G\rightarrow\GL(V_{\omega_{i}})$, la condition $\inv_{x}(E,E,\phi)\leq \la_{x}$ se récrit:
\begin{center}
$\forall~ i, \rho_{i}(\phi)\in H^{0}(X,\End(\rho_{i}(E))(\left\langle \omega_{i},-w_{0}\la\right\rangle))$,
\end{center}
où $\rho_{i}(E)$ est le fibré vectoriel que l'on obtient en poussant $E$ par $\rho_{i}: G\rightarrow\GL(V_{\omega_{i}})$.
On a alors une application introduite par Frenkel-Ngô :
\begin{center}
$f:\cmd\rightarrow\abd:=\bigoplus\limits_{i=1}^{r}H^{0}(X,\co_{X}(\left\langle \omega_{i},-w_{0}\la\right\rangle))$
\end{center}
donnée par $(E,\phi)\mapsto (\Tr(\rho_{i}(\phi)))_{1\leq i\leq r}$,
réminiscente de la fibration de Hitchin.
\bigskip
Si l'on suppose que $k$ est fini et  $f$ propre, en calculant la trace de Frobenius en un point $a\in\abd$  de $f_{*}\Delta^{*}IC_{\overline{\cH}_{\la}}$, on obtient  une intégrale orbitale globale stable :
\begin{center}
$\Tr(\Fr_{a},f_{*}\Delta^{*}IC_{\overline{\cH}_{\la}})=SO_{a}(\phi_{\la})$.
\end{center}
Pour pouvoir utiliser des techniques similaires à celles de Ngô \cite{N}, on aimerait savoir si $\Delta^{*}IC_{\overline{\cH}_{\la}}$ est pervers et pur, comme cela a été formulé par Frenkel et Ngô dans la conjecture 4.1 de \cite{FN}.
En fait, nous allons démontrer que l'on obtient le complexe d'intersection sur $\cmd$, quitte à considérer un certain ouvert.
Ceci fait l'objet de notre premier théorème.

On définit un ouvert $\abdb\subset\abd$; il consiste à borner la taille du discriminant local. Soit $\cmdb$ l'ouvert de $\cmd$ correspondant. On note toujours $\Delta:\cmdb\rightarrow\overline{\cH}_{\la}$ la flèche naturelle. Notre théorème principal est l'énoncé de transversalité du complexe d'intersection de $\overline{\cH}_{\la}$ avec la diagonale $\Delta$ au-dessus de l'ouvert $\abdb$ :
\begin{thm}\label{2}
Soit $G$ un schéma en groupes quasi-déployé sur $X$, tel que $G_{der}$ est simplement connexe sans facteur simple de type $A_{2r}$, alors le champ $\cmdb$ est  équidimensionnel, on note $d$ sa codimension dans $\overline{\cH}_{\la}$ et nous avons l'égalité suivante :
\begin{center}
$\Delta^{*}[-d]IC_{\overline{\cH}_{\la}}=IC_{\cmdb}$.
\end{center}
\end{thm}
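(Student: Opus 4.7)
The plan is to verify the stated equality of intersection complexes by combining an identification on a smooth dense open with a check of the perverse support and cosupport conditions on the complement, working locally via the Beilinson-Drinfeld factorization structure of $\overline{\cH}_{\la}$.

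I would begin by establishing the equidimensionality of $\cmdb$. Since $\cmd$ arises as the fiber product of $\Bun$ with $\overline{\cH}_{\la}$ along the diagonal, the factorization structure presents it étale-locally at $x\in\supp(\la)$ as the closed subscheme of $G(\cox)\times\overline{\Gr}_{\la_{x}}$ cut out by the condition $g_{x}^{-1}\gamma g_{x}\in\overline{K_{x}\pi_{x}^{\la_{x}}K_{x}}$, whose fibers over $\gamma$ are closed affine Springer fibers. The $\flat$ condition bounds the local valuation of the discriminant, and via the Kazhdan-Lusztig dimension formula for affine Springer fibers (and its globalization by Ng\^o), this yields uniform control on these fiber dimensions, hence equidimensionality of $\cmdb$ with codimension $d$ inside $\overline{\cH}_{\la}$ given explicitly as a sum of local codimensions.

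Next, I would identify a smooth dense open $U\subset\cmdb$, namely the preimage of the anisotropic regular semisimple locus of $\abdb$ under the characteristic polynomial map $f:\cmdb\to\abdb$. Over $U$, both sides of the claimed equality are canonically the constant sheaf $\bql$ shifted by $\dim U$: the open $U$ lies in the smooth locus of $\overline{\cH}_{\la}$ and the inclusion $U\hookrightarrow\overline{\cH}_{\la}$ is transverse there, so $\Delta^{*}[-d]IC_{\overline{\cH}_{\la}}|_{U}$ is the shifted constant sheaf, while $IC_{\cmdb}|_{U}$ is constant by the smoothness of $U$. Equidimensionality from the first step ensures that $U$ meets every irreducible component of $\cmdb$, so the agreement over $U$ reduces the theorem to verifying that $\Delta^{*}[-d]IC_{\overline{\cH}_{\la}}$ satisfies the perverse support and cosupport conditions defining the intermediate extension from $U$.

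The main obstacle is precisely this verification of support conditions on the complement of $U$. Via factorization, it reduces locally at each $x$ to an estimate for the restriction of $IC_{\overline{\Gr}_{\la_{x}}}$ along the conjugation locus $\{(g,\gamma):g^{-1}\gamma g\in\overline{K_{x}\pi_{x}^{\la_{x}}K_{x}}\}$, stratified by the value $\inv_{x}(\gamma)=\mu\leq\la_{x}$. On the $\mu$-stratum, Satake identifies the stalks of $IC_{\overline{\Gr}_{\la_{x}}}$ with the weight-$\mu$ multiplicity space in the irreducible representation of highest weight $\la_{x}$; consequently the required perverse inequality reduces to a codimension estimate for the locus of integral conjugacy classes of invariant $\mu$ inside the ambient affine Springer fiber. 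This is where the auxiliary statement on integral conjugacy classes of points of a simply connected group in a local field, announced in the abstract, enters decisively, and also where the hypothesis that $G_{der}$ has no simple factor of type $A_{2r}$ is used: in the excluded types a parity anomaly between coweight lattices and restricted Weyl data would destroy the normalization making the local codimension estimate match the global $d$.
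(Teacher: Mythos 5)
Your strategy (identify the two complexes on a dense smooth open, then check the perverse support and cosupport conditions for the intermediate extension) is not the paper's, and it contains a gap at exactly the point where the real difficulty lies. The support condition for $\Delta^{*}[-d]IC_{\overline{\cH}_{\la}}$ is indeed a stalk estimate and could in principle be attacked through the Satake description of the stalks of $IC_{\overline{\Gr}_{\la}}$ together with dimension bounds on strata. But the cosupport condition is a statement about \emph{costalks}, i.e. about $\Delta^{!}IC_{\overline{\cH}_{\la}}$, and $\Delta^{!}$ is not $\Delta^{*}$ up to shift unless the inclusion is already known to be transverse to the singularities of $\overline{\cH}_{\la}$ --- which is precisely the content of the theorem. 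No computation of stalks of $IC_{\overline{\Gr}_{\la}}$, and no codimension estimate for loci of integral conjugacy classes, gives you access to $\Delta^{!}$. Your proposal never says how this half of the verification would be carried out, and I do not see how it could be without circularity. The paper avoids the problem entirely: it proves (Proposition \ref{bemolisse}) that the global-local map $\theta^{\flat}:\cmdb\rightarrow[\overline{\Gr}'_{\la}/G'_{N}]$ is \emph{smooth}, by the infinitesimal lifting criterion over an Artinian base. Since $\overline{\cH}'_{\la}$ is also smooth over the same local model (Varshavsky), both $IC_{\cmdb}$ and $\Delta^{*}IC_{\overline{\cH}_{\la}}$ are smooth pullbacks of $IC_{[\overline{\Gr}'_{\la}/G'_{N}]}$, which yields the transversality and the equidimensionality simultaneously (so your separate first step via affine Springer fiber dimension formulas is also not needed). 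The lifting itself is the hard analytic core: it uses the Elkik/Gabber--Ramero conductor bounds for the cameral cover (Gabber's theorem $h\leq d$), a Riemann--Roch surjectivity statement to approximate characteristic polynomials on the open $\abdb$, and the Denef--Loeser--Sebag approximation result.

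Two further inaccuracies: the hypothesis excluding factors of type $A_{2r}$ is not a ``parity anomaly'' in a codimension count; it is the hypothesis of Steinberg's theorem guaranteeing that the section $\eps$ (hence $\eps_{+}$) is defined over $X$ in the quasi-split case, and this section is used throughout the lifting argument. And the theorem on integral conjugacy classes (Th\'eor\`eme \ref{4bis}) is a byproduct extracted from the approximation analysis, not an input that drives a support estimate.
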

Cet ouvert $\abdb$ est suffisamment gros pour pouvoir établir des identités locales telles que le lemme fondamental. Si l'on part d'une situation locale avec un $\la$ et un discriminant fixé, on peut globaliser cette situation en ajoutant de grands \og cocaractères \fg avec des discriminants transverses en des points auxiliaires pour pouvoir tomber dans l'ouvert $\abdb$.

A la suite de la preuve du théorème, nous démontrons un résultat local qui peut se révéler utile pour d'autres applications que celles qui occupent notre propos.$\\$
On pose pour la fin de ce paragraphe $\co=k[[\pi]]$ et $F=k((\pi))$. Il s'agit de décrire les classes de $G(\co)$-conjugaison à l'intérieur de $G(F)^{rs}$. 
Elles font naturellement intervenir le semigroupe de Vinberg, introduit par Vinberg en caractéristique nulle \cite{Vi} et par Rittatore en caractéristique $p$ \cite{Ri}.

C'est un schéma affine, normal, intègre qui contient $G_{+}:=(T\times G)/Z_{G}$ comme ouvert dense, où l'on plonge le centre $Z_{G}$ de $G$ antidiagonalement.
Expliquons désormais le lien entre les strates de Cartan et le semi-groupe de Vinberg.
Soit $\la\in X_{*}(T)^{+}$, on définit alors un sous-schéma localement fermé $V_{G}^{\la,0}$ de $V_{G}$ sur $\Spec(\co)$ qui consiste en des paires $(\pi^{-w_{0}\la},x)$.
Nous avons  que $x\in K\pi^{\la}K$ si et seulement si $x_{+}:=(\pi^{-w_{0}\la},x)\in V_{G}^{\la,0}(\co)$.
D'après Steinberg \cite{S}, on dispose d'un morphisme, dit de \og polynôme caractéristique \fg:
\begin{center}
$\chi: G\rightarrow T/W$,
\end{center}
le résultat est alors le suivant:
\begin{thm}\label{4bis}
Soient $\g,\g'\in K\pi^{\la} K\cap G(F)^{rs}$ telles que $\chi(\g)=\chi(\g')$.

Soit $d_{+}:=\left\langle 2\rho,\la\right\rangle+d(\g)$ où $d(\g)$ est la valuation du discriminant de $\g$. Alors $d_{+}$ est un entier positif.
De plus, si l'on considère $\g_{+}:=(\pi^{-w_{0}\la},\g) \in V_{G}^{\la,0}(\co)$ (resp. $\g_{+}'$), alors les assertions suivantes sont équivalentes:
\begin{enumerate}
\item
$\g$ et $\g'$ sont conjuguées sous $G(\co)$.
\item
$\bar{\g}_{+}$ et $\bar{\g}_{+}'$ sont conjuguées modulo $\pi^{2d_{+}+1}$,
où $\bar{\g}_{+}$ (resp. $\bar{\g}'_{+}$) désigne les réductions à $V_{G}^{\la,0}(\co/\pi^{2d_{+}+1}\co)$.
\end{enumerate}
\end{thm}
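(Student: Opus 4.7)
The implication (i) $\Rightarrow$ (ii) is immediate; the substance lies in the positivity claim and the converse.

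\textbf{Positivity of $d_+$.} Following the analysis of $V_G$ in \cite{Bt}, the Steinberg map extends to a characteristic polynomial $\chi_+ : V_G \to V_G /\!/ G$, and the Weyl discriminant pulled back to $V_G^{\la,0}$ evaluates on $\gamma_+ = (\pi^{-w_0 \la}, \gamma)$ as a product: the contribution of the $\pi^{-w_0 \la}$ factor is $\prod_{\alpha > 0} \alpha(\pi^{-w_0 \la})$, of valuation $\langle 2\rho, \la \rangle$; the contribution of $\gamma$ is the usual discriminant, of valuation $d(\gamma)$. Summing yields $d_+$. Integrality of $\gamma_+$ over $\co$ forces $d_+ \geq 0$, and regularity of $\gamma$ makes it finite.

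\textbf{Converse (ii) $\Rightarrow$ (i).} The strategy is a Hensel--Newton iteration on the orbit morphism $\phi : G \to V_G^{\la,0}$, $g \mapsto g^{-1} \gamma_+ g$, whose derivative at the identity is the adjoint action $\ad_{\gamma_+} : \Lie(G) \to T_{\gamma_+} V_G^{\la,0}$. Since $\gamma$ is regular semisimple, the centralizer $I_\gamma$ of $\gamma_+$ is an $F$-torus, and over $F$ the image of $\ad_{\gamma_+}$ is complementary to the tangent space of the fibre of $\chi_+$. The key quantitative input is the Jacobian estimate: on the $\co$-lattice $\Lie(G)(\co)$, the cokernel of $\ad_{\gamma_+}$ onto the orbit direction has length exactly $d_+$. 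This follows by a computation in a Chevalley basis adapted to the Cartan stratification, in which the determinant of $\ad_{\gamma_+}$ specializes to the pulled-back discriminant.

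Given (ii), lift $g_0 \in G(\co / \pi^{2d_+ + 1})$ to $\tilde g_0 \in G(\co)$ by smoothness, and replace $\gamma_+'$ by $\tilde g_0^{-1} \gamma_+' \tilde g_0$; the task becomes: given $\delta := \gamma_+' - \gamma_+$ with $\val(\delta) \geq n \geq 2d_+ + 1$ and $\chi_+(\gamma_+') = \chi_+(\gamma_+)$, produce $g \in G(\co)$ conjugating $\gamma_+$ to $\gamma_+'$. Expanding $\chi_+(\gamma_+ + \delta) = \chi_+(\gamma_+)$ shows that the projection of $\delta$ onto the tangent space of the $\chi_+$-fibre vanishes modulo $\pi^{2n}$, so $\delta$ lies in the image of $\ad_{\gamma_+}$ up to that precision. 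The Jacobian estimate then produces $X \in \Lie(G)(\co)$ with $\val(X) \geq n - d_+$ and $\ad_{\gamma_+}(X) \equiv \delta \pmod{\pi^{2n}}$. Conjugating by an element $g \in G(\co)$ approximating $\exp(X)$, constructed via the Bruhat decomposition and the formal group law on the root subgroups (hence valid in arbitrary characteristic), produces a new error of valuation $\geq 2(n - d_+) \geq n + 2$. Iterating yields a Cauchy sequence in $G(\co)$ whose limit is the desired conjugator.

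\textbf{Main obstacle.} The technical core is the Jacobian estimate identifying the cokernel length of $\ad_{\gamma_+}$ on $\Lie(G)(\co)$ with exactly $d_+$; this requires a local model of $V_G^{\la,0}$ compatible with the Cartan stratification and a careful accounting of the factor $\pi^{-w_0 \la}$. The other delicate point is constructing the exponential-like conjugator in positive characteristic, where the simply connectedness of $G$ and the uniform structure of the root subgroups ensure that the formal group law propagates consistently across each step of the iteration.
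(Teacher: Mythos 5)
Your overall architecture (successive approximation against the orbit map $\phi:g\mapsto g\g_{+}g^{-1}$, controlled by an index computation of order $d_{+}$) is the same as the paper's, and your treatment of the positivity of $d_{+}$ and of (i)$\Rightarrow$(ii) is fine. But the central step of the converse is asserted rather than proved, and the assertion hides exactly the hard part. You claim that since $d\chi_{+}(\delta)\equiv 0$ to high order, ``$\delta$ lies in the image of $\ad_{\g_{+}}$ up to that precision,'' and that the needed quantitative lifting follows from a ``Jacobian estimate'' computable in a Chevalley basis. This does not follow. Three lattices are in play inside $T_{x_{+}}V^{\la,0}(\co)$: the image of $d\phi$ on $\kg(\co)$, the integral tangent space to the fibre of $\chi_{+}$, and the ``centralizer direction'' $d\xi(\kq_{x}(\co))$. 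A cokernel-length bound for the first inside the second (the paper's Lemme \ref{quotdet}, which moreover only gives $\leq d$, not $=d$) lets you write $d\phi(X)=\pi^{d}C+Z$ with an error $Z$ in the centralizer direction; it does not tell you that $Z$ is negligible. The divisibility of $Z$ is the crux, and it is obtained in the paper (Corollaire \ref{divisib}) from the index formula $[T_{a_{+}}\kcd(\co):d\nu(\Lie J_{a_{+}}(\co))]=d$ (Proposition \ref{clecent}), whose proof goes through the N\'eron model of the regular centralizer, the Goresky--Kottwitz--MacPherson description of $J_{a}^{\flat}$, Bezrukavnikov's formula, and the identity $\dim\mathcal{P}(J_{a})=(d-c)/2$. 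No Chevalley-basis computation of $\det(\Id-\ad_{x})$ substitutes for this: the difficulty is not the valuation of that determinant (which is just $d_{0}$) but the position of the integral lattices relative to one another.

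Two smaller points. First, you identify the ``main obstacle'' as constructing an exponential-like conjugator in positive characteristic; this is a non-issue, since at each step one works over a square-zero extension where the exact sequence $1\to\pi^{n-d}\kg(\co/\pi^{n+1}\co)\to G(\co/\pi^{n+1}\co)\to G(\co/\pi^{n-d}\co)\to 1$ provides the conjugator directly, with no formal group law needed. Second, your quadratic (Newton) convergence bookkeeping $2(n-d_{+})\geq n+2$ is workable under the hypothesis $n>2d_{+}$, but it is only as good as the lifting step it iterates; the paper instead runs a linear iteration gaining one power of $\pi$ per step with all conjugators in $K_{n-d-1}$. The gap to fill is the proof of the analogue of Proposition \ref{clecent}; without it the argument does not close.
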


Passons en revue l'organisation de l'article.
On commence par faire des rappels issus de \cite{Bt} sur le semigroupe de Vinberg $V_{G}$ ainsi que sur le quotient adjoint $\chi_{+}$. Puis, nous définissons la fibration de Hitchin pour les groupes.
Une première définition immédiate de l'espace total de Hitchin s'obtient en prenant le produit fibré le long de la diagonale de $\Bun$ du champ de Hecke $\overline{\cH}_{\la}$.  Nous réinterprétons ensuite cette définition en fonction du semi-groupe de Vinberg pour en déduire une définition de la fibration de Hitchin $f_{\la}:\cmd\rightarrow\abd$ sur laquelle agit un champ de Picard $\cP\rightarrow\abd$.
On s'intéresse alors à l'étude du complexe d'intersection du champ $\cmd$. Dans un premier temps, nous introduisons l'ouvert transversal $\abdd\subset\abd$ où la valuation du discriminant est partout inférieure ou égale à un. Pour un tel ouvert, la situation est très simple puisque la fibration $f_{\la}$ est lisse.
Nous considérons donc un ouvert plus gros $\abdb\subset\abd$ qui contient des polynômes caractéristiques dont la valuation du discriminant local peut-être plus grande que un. Nous démontrons alors le théorème \ref{2}  de transversalité sur cet ouvert. Pour obtenir un tel théorème, on commence par construire un modèle local des singularités de l'espace de Hitchin. Nous utilisons ensuite un théorème d'approximation des polynômes caractéristiques ainsi que des énoncés de relèvements d'Elkik et Gabber-Ramero pour comparer ce modèle local avec la strate $\overline{\Gr}_{\la}$.
C'est en étudiant ce résultat d'approximation des polynômes caractéristiques qu'est apparu naturellement le résultat \ref{4bis}.
Enfin, en appendice, nous donnons la preuve d'un théorème de Gabber non publié sur les problèmes de relèvements pour des morphismes finis et plats.
\medskip

Je remercie très chaleureusement Gérard Laumon et Bao Châu Ngô pour les nombreuses discussions que nous avons pu avoir sur cet article et particulièrement pour leur soutien dans les derniers mois pour m'aider à obtenir le résultat final. J'exprime également ma profonde gratitude à Ofer Gabber pour  son concours indispensable concernant les problèmes de relèvement, plus particulièrement l'appendice et l'énoncé \ref{inftem}. Je remercie Y. Varshavsky de m'avoir suggéré l'amélioration du théorème principal. Enfin, je remercie l'Université de Chicago pour les nombreux séjours que j'ai pu y effectuer.

\section{Rappels sur le semi-groupe de Vinberg}\label{introsemi}
Soit $k$ un corps.
On considère un groupe connexe réductif $\bg$  tel que $\bg_{der}$ soit simplement connexe, déployé sur $k$. On note $G$ une forme quasi-déployée de $\bg$ sur un $k$-schéma $X$. Soit $(\bB, \bt)$ une paire de Borel de $\bg$, $\Delta$ l'ensemble des racines simples, $r$ le rang semisimple de $\bg$ et $\bw$ le groupe de Weyl.
On commence par construire le semi-groupe de Vinberg $V_{\bg}$ sur $k$.
Choisissons une base $\omega_{1}',\dots,\omega_{l}'$ du réseau des caractères:
\begin{center}
$\chi:\bt\rightarrow\mathbb{G}_{m}$
\end{center}
tels que
\begin{center}
$\forall~\alpha\in\Delta=\{\alpha_{1},\dots,\alpha_{l}\}, \left\langle \chi,\check{\alpha}\right\rangle=0.$
\end{center}
Chaque $\omega_{i}':\bt\rightarrow\mathbb{G}_{m}$, $1\leq i\leq l$, se prolonge de manière unique en un caractère:
\begin{center}
$\omega_{i}':\bg\rightarrow\mathbb{G}_{m}$.
\end{center}
Pour tout indice $i, 1\leq i\leq l$, notons $V_{\omega_{i}'}$ un espace vectoriel de dimension un sur lequel $\bg$ agit par $\omega_{i}':\bg\rightarrow\mathbb{G}_{m}$.
L'ensemble des poids dominants est stable par translation par les éléments du réseau $\mathbb{Z}\omega_{1}'+\dots+\mathbb{Z}\omega_{l}'$. Le quotient par ce réseau est un cône saturé non dégénéré de $X^{*}(\bt)/(\mathbb{Z}\omega_{1}'+\dots+\mathbb{Z}\omega_{l}')$ qui est engendré par $\bar{\omega}_{1},\dots,\bar{\omega}_{r}$ que l'on relève en une famille de caractères $\omega_{1},\dots,\omega_{r}$ de $\bt$.

Soit $(\rho_{\omega_{i}}, V_{\omega_{i}})$ la représentation irréductible de plus haut poids $\omega_{i}$.
Considérons $\bg_{+}:=(\bt\times \bg)/Z_{\bg}$ où $Z_{\bg}$ se plonge par $\la\rightarrow (\la,\la^{-1})$ et $\bt_{+}=(\bt\times \bt)/Z_{\bg}$.
On a une immersion :
$$\begin{array}{ccccc}
&  & \bg_{+} & \to & \prod\limits_{i=1}^{r}\End(V_{\omega_{i}})\times\prod\limits_{i=1}^{l}\Aut(V_{\omega_{i}'})\times\prod\limits_{i=1}^{r}\mathbb{A}_{\alpha_{i}} \\
& & (t,g) & \mapsto & (\omega_{i}(t)\rho_{\omega_{i}}(g),\omega_{i}'(tg),\alpha_{i}(t)).\\
\end{array}$$
Ici,  $H_{\bg}:=\prod\limits_{i=1}^{r}\End(V_{\omega_{i}})\times\prod\limits_{i=1}^{l}\Aut(V_{\omega_{i}'})\times\prod\limits_{i=1}^{r}\mathbb{A}^{1}_{\alpha_{i}}$
et $H_{\bg}^{0}$ sera la même chose où l'on enlève $\{0\}$ dans chaque $\End(V_{\omega_{i}})$.
On pose alors $V_{\bg}$ (resp $V_{\bg}^{0}$) la normalisation de l'adhérence de $\bg_{+}$ dans $H_{\bg}$ (resp. $H_{\bg}^{0})$ et $V_{\bt}$ l'adhérence de $\bt_{+}$ dans $V_{\bg}$, lequel est normal par \cite[Cor. 6.2.14]{BK}.
On a un théorème analogue à celui de Chevalley \cite[Prop. 1.3]{Bt}:
\begin{thm}\label{bouth}
L'application de restriction $\phi:k[V_{\bg}]^{\bg}\rightarrow k[V_{\bt}]^{\bw}$ est un isomorphisme de $k$-algèbres. De plus, $V_{\bt}/\bw$ est un espace affine de dimension $2r$ avec un tore, dont les coordonnées sont données par les $\omega_{i}'$, $(\alpha_{i},0)$, $\chi_{i}=\Tr(\rho_{(\omega_{i}, \omega_{i})})$.
\end{thm}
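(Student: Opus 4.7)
The plan is to adapt the classical Chevalley restriction theorem to the Vinberg semigroup, proving injectivity by density of orbits and surjectivity by exhibiting enough explicit $\bg$-invariants to recover the full structure of the quotient. Well-definedness of $\phi$ is immediate: $V_{\bt}$ is the closure of $\bt_+ = (\bt \times \bt)/Z_{\bg}$, which is stable under the conjugation action of $N_{\bg}(\bt)$ on the $\bg$-factor of $\bg_+ = (\bt \times \bg)/Z_{\bg}$, while $\bt$ itself acts trivially; the induced action on $V_{\bt}$ is therefore that of $\bw = N_{\bg}(\bt)/\bt$, so restriction sends $\bg$-invariants to $\bw$-invariants.

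For injectivity, I would argue that $\bg \cdot \bt_+$ is dense in $V_{\bg}$. Over the regular semisimple locus of the open subset $\bg_+ \subset V_{\bg}$, every point $(t,g)$ is $\bg$-conjugate to some $(t,s) \in \bt_+$, since $g$ lies in a maximal torus and all maximal tori of $\bg$ are conjugate. Any $\bg$-invariant regular function on $V_{\bg}$ vanishing on $V_{\bt}$ therefore vanishes on a dense open subset, hence on all of $V_{\bg}$ by irreducibility; this yields injectivity of $\phi$.

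For surjectivity together with the explicit description of $V_{\bt}/\bw$, I would exhibit candidate generators of $k[V_{\bg}]^{\bg}$ visible from the defining embedding into $H_{\bg}$: the $l$ characters $\omega_i': V_{\bg} \to \mathbb{G}_m$ (invariant as characters), the $r$ projections $\alpha_i: V_{\bg} \to \mathbb{A}^1$ (invariant because $\bg$ acts trivially on this factor), and the $r$ traces $\chi_i = \Tr(\rho_{(\omega_i,\omega_i)})$ of the irreducible $\bg_+$-representations of extended highest weight $(\omega_i, \omega_i)$ (invariant as class functions). Restricted to $\bt_+$, these give respectively $\omega_i'(ts)$, $\alpha_i(t)$, and $\omega_i(t)\sum_{\mu} \mu(s)$, all manifestly $\bw$-invariant. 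I would then show that the induced morphism $V_{\bt}/\bw \to \mathbb{G}_m^l \times \mathbb{A}^{2r}$ is an isomorphism: the classical Steinberg--Chevalley theorem, available because $\bg_{der}$ is simply connected, gives $\bt/\bw \cong \mathbb{G}_m^l \times \mathbb{A}^r$ for the second factor of $\bt_+$, while the $\alpha_i$ coordinates freely parametrize the first factor modulo the center and account for the extra $\mathbb{A}^r$. Normality of $V_{\bt}$ (inherited from the normalization step defining $V_{\bg}$) together with normality of $V_{\bt}/\bw$ (quotient of a normal variety by a finite group) upgrades this birational morphism to an isomorphism, yielding simultaneously the explicit structure and the surjectivity of $\phi$.

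The main obstacle I anticipate is precisely this last step: verifying that the candidate morphism $V_{\bt}/\bw \to \mathbb{G}_m^l \times \mathbb{A}^{2r}$ is an isomorphism rather than merely birational, equivalently that the listed invariants generate the whole algebra $k[V_{\bt}]^{\bw}$. This requires fine control over the monomial algebra of $V_{\bt}$ as a (normal) affine toric variety, and ensuring that the normalization step in the definition of $V_{\bg}$ does not introduce new $\bw$-invariant regular functions beyond those produced by the classical Chevalley isomorphism twisted by the semigroup coordinates $\alpha_i$. The hypothesis that $\bg_{der}$ is simply connected is essential here, as it is what guarantees the classical input $\bt/\bw \cong \mathbb{A}^r \times \mathbb{G}_m^l$ needed to close the argument.
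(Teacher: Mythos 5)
Un point de contexte d'abord : l'article ne d�montre pas ce th�or�me, il le cite tel quel de \cite[Prop. 1.3]{Bt} ; il n'y a donc pas de preuve interne � laquelle comparer votre tentative. Votre mise en place est n�anmoins la bonne et suit la strat�gie standard : la bonne d�finition de $\phi$ et son injectivit� par densit� de $\bg\cdot\bt_{+}$ dans le sch�ma int�gre $V_{\bg}$ (tout �l�ment r�gulier semi-simple de $\bg_{+}$ se conjugue dans $\bt_{+}$) sont correctes, et la surjectivit� se ram�ne bien, comme vous le dites, � montrer que les fonctions $\omega_{i}'$, $\alpha_{i}$, $\chi_{i}$ engendrent $k[V_{\bt}]^{\bw}$, puisque ce sont des restrictions d'invariants globaux sur $V_{\bg}$.

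Le pas final, que vous identifiez vous-m�me comme l'obstacle principal, est cependant lacunaire tel qu'�crit. L'argument \og birationnel $+$ normalit� $\Rightarrow$ isomorphisme \fg{} est faux sans hypoth�se de finitude : l'inclusion $\mathbb{G}_{m}\hookrightarrow\mathbb{A}^{1}$ est un morphisme birationnel entre vari�t�s affines normales qui n'est pas un isomorphisme. Pour invoquer le Main Theorem de Zariski il faudrait d'abord �tablir que $V_{\bt}/\bw\rightarrow\mathbb{G}_{m}^{l}\times\mathbb{A}^{2r}$ est fini, c'est-�-dire que $k[V_{\bt}]^{\bw}$ est un module fini sur la sous-alg�bre engendr�e par les $\omega_{i}'^{\pm1}$, $\alpha_{i}$, $\chi_{i}$ --- ce qui est essentiellement l'�nonc� � d�montrer. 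La preuve effective (celle de Vinberg, reprise dans \cite{Bt}) proc�de autrement : $k[V_{\bt}]$ est l'alg�bre du mono�de des caract�res du c�ne d�finissant $V_{\bt}$, les sommes d'orbites $m_{(\mu,\nu)}$ forment une base de $k[V_{\bt}]^{\bw}$, et l'on montre par r�currence sur l'ordre de dominance que chaque $m_{(\mu,\nu)}$ est un polyn�me en les g�n�rateurs annonc�s, en utilisant la triangularit� $\chi_{i}=m_{(\omega_{i},\omega_{i})}+(\text{termes inf�rieurs})$ et la description explicite des g�n�rateurs du c�ne (c'est ici qu'intervient l'hypoth�se $\bg_{der}$ simplement connexe, et non seulement via le th�or�me de Steinberg classique). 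Sans cet argument de filtration, ou un substitut �tablissant la finitude, votre preuve ne conclut pas.
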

On en déduit alors un morphisme 
\begin{center}
$\chi_{+}: V_{\bg}\rightarrow\bC_{+}:=V_{\bt}/W$.
\end{center}
Nous avons également une flèche $\chi: \bg\rightarrow \bt/\bw=\mathbb{G}_{m}^{l}\times\mathbb{A}^{r}$, issue du théorème de Chevalley \cite[Th.6.1]{S}.
Steinberg a construit une section à cette flèche de la manière suivante; on commence par supposer que $\bg$ est semisimple simplement connexe, dans ce cas $\bt/\bw=\mathbb{A}^{r}$.
Pour un $r$-uplet $(a_{1},.., a_{r})\in \bt/\bw:=\mathbb{A}^{r}$, on définit:
\begin{center}
$\epsilon(a_{1},.., a_{r}):=\prod\limits_{i=1}^{r}x_{\alpha_{i}}(a_{i})n_{i}$,
\end{center}
où les $x_{\alpha_{i}}(a_{i})$ sont des éléments du groupe radiciel $U_{\alpha_{i}}$ et les $n_{i}$ sont des éléments du normalisateur $N_{\bg}(\bt)$ représentant les réflexions simples $s_{\alpha_{i}}$ de $\bw$.\\
Ainsi, $\epsilon(a)\in\prod\limits_{i=1}^{r}\bU_{i}n_{i}$ et en utilisant les relations de commutation on a que :
\begin{center}
$\prod\limits_{i=1}^{r}\bU_{i}n_{i}=\bU_{w}w$
\end{center}
où $w=s_{1}s_{2}...s_{r}$ et $\bU_{w}=\bU\cap w\bU^{-}w^{-1}$.
Voyons comment on l'étend au cas réductif.

Soit $S$ un sous-tore de $\bt$ de telle sorte que $\bt=S\times \bt_{der}$, alors $\bg=S\ltimes \bg_{der}$.
La flèche de Steinberg est donnée par:
\begin{center}
$\chi:\bg\rightarrow S\times\mathbb{A}^{r}$,
\end{center}
où $\mathbb{A}^{r}$ est la partie correspondant au quotient adjoint de $\bg_{der}$.
Soit $\bg^{reg}:=\{(g,\g)\in \bg\times\bg\vert~ g\g g^{-1}=\g\}$.
Nous avons alors le théorème suivant \cite[Th. 8.1]{S} et \cite[Prop. 2.5]{dC-M}:
\begin{prop}
Soit $\epsilon_{\bg_{der}}$ la section de Steinberg pour $\bg_{der}$.
On pose alors $\eps_{\bg}:S\times\mathbb{A}^{r}\rightarrow$ donnée par
\begin{center}
$\eps_{\bg}(s,a)=s\epsilon_{\bg_{der}}(a)$.
\end{center}
Alors $\eps_{\bg}$ est une section à $\chi$ et tombe dans $\bg^{reg}$.
\end{prop}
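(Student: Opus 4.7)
La proposition comporte deux assertions: que $\chi \circ \eps_{\bg} = \Id_{S \times \mathbb{A}^r}$ et que $\eps_{\bg}(s,a) \in \bg^{reg}$ pour tout $(s,a)$. Mon plan est de ramener les deux au cas semisimple simplement connexe d�j� acquis par Steinberg, en exploitant le fait que le facteur $S$ est central.

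L'observation-cl� est que $S$ est contenu dans le centre de $\bg$. En effet, son r�seau de caract�res est engendr� par les $\omega_i'$, qui par d�finition v�rifient $\langle \omega_i',\check\alpha\rangle = 0$ pour toute racine simple $\alpha\in\Delta$; ainsi $S$ commute avec tous les sous-groupes radiciels et donc avec $\bg_{der}$. Le morphisme de multiplication $S \times \bg_{der} \to \bg$, $(s,\gamma)\mapsto s\gamma$, est alors une isog�nie centrale de noyau fini $S\cap \bg_{der}$. Pour le point (i), il suffit de remarquer que, compatiblement � cette isog�nie, la fl�che de Chevalley se d�compose en $(s,\gamma)\mapsto (s,\chi_{\bg_{der}}(\gamma))$: la composante $S$ est d�termin�e par les caract�res $\omega_i'$ de $\bg$ (qui s'�tendent de mani�re unique par le th�or�me~\ref{bouth}), et la composante $\mathbb{A}^r$ est la fl�che de Chevalley pour $\bg_{der}$. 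Appliqu�e � $\eps_{\bg}(s,a) = s\cdot\epsilon_{\bg_{der}}(a)$, cette d�composition donne $\chi(\eps_{\bg}(s,a)) = (s,\chi_{\bg_{der}}(\epsilon_{\bg_{der}}(a))) = (s,a)$, la derni�re �galit� r�sultant de la propri�t� de section pour $\epsilon_{\bg_{der}}$ dans le cas semisimple simplement connexe.

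Pour le point (ii), on rappelle qu'un �l�ment $g\in \bg$ est r�gulier si et seulement si $\dim Z_{\bg}(g) = \rg(\bg) = \dim S + r$. Comme $s$ est central, on a $Z_{\bg}(s\cdot \epsilon_{\bg_{der}}(a)) = Z_{\bg}(\epsilon_{\bg_{der}}(a))$, et cette derni�re vari�t� co�ncide, � isog�nie pr�s, avec $S\times Z_{\bg_{der}}(\epsilon_{\bg_{der}}(a))$. Puisque $\epsilon_{\bg_{der}}(a)\in \bg_{der}^{reg}$ par le th�or�me de Steinberg rappel� plus haut, $\dim Z_{\bg_{der}}(\epsilon_{\bg_{der}}(a)) = r$, d'o� $\dim Z_{\bg}(\eps_{\bg}(s,a)) = \dim S + r$, ce qui �tablit la r�gularit�. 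L'argument est donc une simple r�duction formelle du cas r�ductif au cas semisimple simplement connexe; le seul point demandant un peu d'attention est l'identification de $\chi$ avec le produit de la projection sur $S$ et de $\chi_{\bg_{der}}$, qui est d�j� contenue dans la d�composition $\bt = S \times \bt_{der}$ et dans le th�or�me~\ref{bouth}.
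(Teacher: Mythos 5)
Your key observation --- that $S$ is contained in the centre of $\bg$ --- is false, and both halves of your argument collapse with it. The condition $\langle\omega_i',\check\alpha\rangle=0$ concerns the \emph{characters} of $\bt$ that extend to $\bg$; the subtorus $S$ is instead determined by a splitting $X_*(\bt)=X_*(S)\oplus X_*(\bt_{der})$ of \emph{cocharacter} lattices, and such a direct complement of $\bt_{der}$ cannot in general be chosen central. For $\bg=\GL_2$ the connected centre $\{\mathrm{diag}(s,s)\}$ meets $\bt_{der}=\{\mathrm{diag}(t,t^{-1})\}$ in $\mu_2$, so no central subtorus satisfies $\bt=S\times\bt_{der}$; one must take e.g. $S=\{\mathrm{diag}(s,1)\}$, which is not central --- whence the semidirect product $\bg=S\ltimes\bg_{der}$. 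Concretely, (a) the claimed factorisation $\chi(s\gamma)=(s,\chi_{\bg_{der}}(\gamma))$ fails, since $\Tr(\mathrm{diag}(s,1)\gamma)=s\gamma_{11}+\gamma_{22}\neq\Tr(\gamma)$, and (b) the identity $Z_{\bg}(s\gamma)=Z_{\bg}(\gamma)$ fails. Note moreover that even if $S$ were central the reduction would not be formal: one would then get $\chi_i(s\gamma)=\omega_i(s)\,\chi_i(\gamma)$, not $\chi_i(\gamma)$, since $s$ acts on $V_{\omega_i}$ by the scalar $\omega_i(s)\neq 1$.

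The genuine content of the proposition is that, although $\rho_{\omega_i}(s)$ does not act by a scalar, the special shape of $\epsilon_{\bg_{der}}(a)\in\bU_{w}w$ (a product of root subgroups along a Coxeter element) forces $\Tr\rho_{\omega_i}(s\,\epsilon_{\bg_{der}}(a))$ to depend on $(s,a)$ in an invertibly triangular way, and forces the centraliser of $s\,\epsilon_{\bg_{der}}(a)$ to have minimal dimension; this is a computation in a weight basis, not a formal reduction to the simply connected case. In the $\GL_2$ example, $\mathrm{diag}(s,1)\left(\begin{smallmatrix}0&-1\\ 1&a\end{smallmatrix}\right)=\left(\begin{smallmatrix}0&-s\\ 1&a\end{smallmatrix}\right)$ still has trace $a$ only because the diagonal entry that gets multiplied by $s$ happens to vanish --- it is exactly this structure that must be exploited in general. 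The paper itself does not reprove the statement: it quotes \cite[Th. 8.1]{S} and \cite[Prop. 2.5]{dC-M}, where this analysis is carried out.
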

On considère maintenant le schéma en groupes des centralisateurs:
\begin{center}
$I:=\{(g,\g)\in \bg\times V_{\bg}\vert~ g^{-1}\g g=\g\}$
\end{center}
Voyons  comment on construit une section pour le semigroupe de Vinberg $V_{\bg}$.
Soit la flèche $\phi:\bt\rightarrow \bt_{+}$, $t\rightarrow (t,t^{-1})$. L'image $\bt_{\Delta}$, le tore antidiagonal est isomorphe à $\bt^{ad}:=\bt/Z_{\bg}$, et on a un isomorphisme canonique donné par:
\begin{center}
$\alpha_{\bullet}:\bt_{\Delta}\rightarrow\mathbb{G}_{m}^{r}$.
\end{center}
Soit alors $\psi$ l'isomorphisme inverse, nous opterons pour la notation indiciaire. Ainsi pour $b\in \mathbb{G}_{m}^{r}$, on a: 
\begin{center}
$\forall~ 1\leq i\leq r, \alpha_{i}(\psi_{b})=b_{i}$ et $\psi_{b}\in \bt_{\Delta}$.
\end{center}
On définit $\epsilon_{+}:\mathbb{G}_{m}^{r}\times (\mathbb{G}_{m}^{l}\times\mathbb{A}^{r})\rightarrow \bg_{+}$ par: 
\begin{center}
$\epsilon_{+}(b,a)=\epsilon_{\bg}(a)\psi_{b}$.
\end{center}
Dans la suite, on pose $\cQ_{+}:=\cQ\bt_{\Delta}$ où $\cQ:=\eps_{\bg}(\bt/\bw)$.
On obtient alors le théorème de structure suivant tiré de \cite[Thm.2-3]{Bt}:
\begin{thm}\label{bouth2}
Soit $V_{\bg}^{reg}:=\{\g\in V_{\bg}\vert~\dim I_{g}=r\}$.
La section  $\eps_{+}$ se prolonge en un morphisme $\eps_{+}:\bC_{+}\rightarrow V_{\bg}^{reg}$.

De plus, le morphisme $\chi_{+}^{reg}$ est lisse et ses fibres géométriques sont des $G$-orbites.
Enfin, il existe un unique schéma en groupes commutatifs $J$, lisse sur $\bC_{+}$ et muni d'un morphisme $\chi_{+}^{*}J\rightarrow I$, qui est un isomorphisme sur $V_{\bg}^{reg}$.
\end{thm}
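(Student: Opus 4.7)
The plan is to split the statement into three parts: (a) the extension of the section $\eps_{+}$, (b) the smoothness of $\chi_{+}^{reg}$ with geometric $G$-orbit fibers, and (c) the descent of the centralizer scheme to $J$ over $\bC_{+}$. I would handle them in this order, following the pattern of the Kostant-Steinberg theory in the reductive group setting (notably as used by Ng�), but upgrading each step so that it tolerates the boundary of $V_{\bg}$ where elements are no longer invertible.

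For (a), the map $\eps_{+}(b,a)=\eps_{\bg}(a)\psi_{b}$ is a priori defined only on the open subset $\mathbb{G}_{m}^{r}\subset\mathbb{A}^{r}_{\alpha_{\bullet}}$ where all the $b_{i}=\alpha_{i}(\psi_{b})$ are invertible. To extend over $\bC_{+}=V_{\bt}/\bw$, I would test the extension in each factor of the embedding $V_{\bg}\hookrightarrow H_{\bg}$. Concretely, in $\End(V_{\omega_{i}})$ the element $\rho_{\omega_{i}}(\eps_{\bg}(a)\psi_{b})$ becomes $\omega_{i}(\psi_{b})\rho_{\omega_{i}}(\eps_{\bg}(a))$, and one checks using the decomposition $\eps_{\bg}(a)\in\bU_{w}w$ and the fact that the weights of $\rho_{\omega_{i}}$ all lie in $\omega_{i}-\sum_{j}\mathbb{Z}_{\geq 0}\alpha_{j}$ that this expression is polynomial in the coordinates $(b,a)$ of $\bC_{+}$ given by Theorem~\ref{bouth}. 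Normality of $V_{\bg}$ then allows us to view the resulting morphism as landing in $V_{\bg}$ and not only in $H_{\bg}$.

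For (b), I would first verify that the extended section takes values in $V_{\bg}^{reg}$. Since regularity is an open condition and the centralizer dimension can only jump up in codimension, it suffices to exhibit one regular element in each fiber of $\eps_{+}$; the classical computation of the centralizer of a Steinberg element $\eps_{\bg}(a)$ shows it has dimension $r$, and multiplying by the diagonal torus element $\psi_{b}$ only enlarges (or preserves) the centralizer dimension in the monoid direction while the constraint $(t,g)\in\bg_{+}$ keeps it of dimension $r$. A direct dimension count $\dim V_{\bg}-\dim\bC_{+}=\dim\bg-r$ then shows the $G$-orbit through $\eps_{+}(a)$ fills the whole fiber $\chi_{+}^{-1}(a)\cap V_{\bg}^{reg}$. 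Smoothness of $\chi_{+}^{reg}$ follows by $G$-equivariance from smoothness at the section: the differential of the orbit map at $\eps_{+}(a)$ surjects onto the tangent space of the fiber, so $\chi_{+}^{reg}$ is smooth along $\eps_{+}$ and hence everywhere on $V_{\bg}^{reg}$ by translation.

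For (c), I would set $J:=\eps_{+}^{*}I$, which is a smooth commutative group scheme over $\bC_{+}$ of relative dimension $r$ since each $I_{\eps_{+}(a)}$ is a regular centralizer (commutativity on the open locus where $\eps_{+}(a)$ is semisimple regular, extended by closure and normality). The canonical morphism $\chi_{+}^{*}J\to I$ over $V_{\bg}^{reg}$ is defined pointwise by conjugation: for $x\in V_{\bg}^{reg}$ over $a$, the set of $g\in\bg$ with $g\cdot\eps_{+}(a)=x$ is an $I_{\eps_{+}(a)}$-torsor, and because $I_{\eps_{+}(a)}$ is commutative, conjugation by any such $g$ yields a well-defined isomorphism $I_{\eps_{+}(a)}\xrightarrow{\sim}I_{x}$. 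Uniqueness of $J$ follows because $V_{\bg}^{reg}$ is dense in $V_{\bg}$ with smooth complement and both group schemes are smooth, so $J$ is forced by the isomorphism $\chi_{+}^{*}J|_{V_{\bg}^{reg}}\cong I|_{V_{\bg}^{reg}}$ together with \'etale descent along $\chi_{+}^{reg}$.

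The main obstacle I expect is step (a): showing that the formula for $\eps_{+}$ truly extends to a morphism into $V_{\bg}$ (not just into the ambient $H_{\bg}$) and that the extension lands in $V_{\bg}^{reg}$ without hidden collapse. This requires a careful bookkeeping of the weights appearing in each fundamental representation and a use of the normalization procedure defining $V_{\bg}$; once this is in hand, the smoothness and descent steps are formal consequences of $G$-equivariance and the commutativity of regular centralizers.
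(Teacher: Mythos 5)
First, note that the paper does not prove this statement at all: it is recalled verbatim from \cite[Thm.~2--3]{Bt}, with only the remark that the proof there is written in the semisimple case and extends to the reductive case. So there is no internal proof to compare against; your proposal has to stand on its own as a reconstruction of the argument of \cite{Bt}.

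Your step (a) is essentially the right mechanism (the weights of $V_{\omega_{i}}$ lie in $\omega_{i}-\sum_{j}\mathbb{Z}_{\geq0}\alpha_{j}$, so $\psi_{b}$ acts by monomials in the $b_{j}$ with non-negative exponents, and the map extends polynomially into $H_{\bg}$), with the small correction that the lift through the normalization $V_{\bg}\rightarrow\overline{\bg_{+}}$ uses the normality of the \emph{source} $\bC_{+}$ together with the fact that the section generically lands in $\bg_{+}$, not "normality of $V_{\bg}$". The genuine gap is in step (b). Semicontinuity of $\dim I_{\g}$ works \emph{against} you here: the regular locus is open, so knowing that $\eps_{+}(b,a)$ is regular when all $b_{i}$ are invertible (classical Steinberg theory) says nothing about the boundary points where some $b_{i}=0$; the centralizer dimension could perfectly well jump up there, and the sentence "multiplying by $\psi_{b}$ only enlarges (or preserves) the centralizer dimension \dots while the constraint $(t,g)\in\bg_{+}$ keeps it of dimension $r$" is not an argument --- an enlargement is exactly what would destroy regularity. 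Computing $\dim I_{\eps_{+}(b,a)}$ at boundary points (in particular at $(0,0)$, where the paper's own remark stresses that the section depends on the chosen order of the factors $\bU_{i}n_{i}$) is the actual content of \cite[Thm.~2]{Bt} and requires the structure theory of the monoid (idempotents, big cell), not a degeneration argument. Relatedly, your dimension count only shows the $G$-orbit of $\eps_{+}(a)$ is open dense in a component of the regular fiber; that it fills the whole fiber (i.e.\ that every regular element is conjugate to a section point) is a separate statement, already nontrivial in Kostant--Steinberg theory, and your smoothness argument for $\chi_{+}^{reg}$ implicitly uses the smoothness of $V_{\bg}^{reg}$, which is itself a consequence of what you are trying to prove rather than an input ($V_{\bg}$ is only normal).

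Step (c) is the standard Ng\^o-type argument and is fine in outline once (b) is secured, but two points need care: commutativity of $J=\eps_{+}^{*}I$ "by closure" requires $J$ flat (smooth) over $\bC_{+}$, i.e.\ again the boundary regularity from (b); and the morphism $\chi_{+}^{*}J\rightarrow I$ is required over all of $V_{\bg}$, not just over $V_{\bg}^{reg}$ --- extending it from $V_{\bg}^{reg}$ uses that $V_{\bg}$ is normal, that the complement of $V_{\bg}^{reg}$ has codimension $\geq2$, and that $I$ is a closed subscheme of the affine scheme $\bg\times V_{\bg}$, none of which appears in your sketch.
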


$\rmq$
\begin{enumerate}
\item
Il est à noter que la section $\eps_{+}$ prolongée au semigroupe de Vinberg dépend fortement de l'ordre des facteurs $U_{i}n_{i}$. En effet, un ordre différent amène à des sections qui ne sont plus nécessairement conjuguées, comme on peut le voir au point au point $(0,0)\in\bC_{+}$.
\item
Dans \cite{Bt}, la preuve est faite dans le cas semisimple mais s'étend telle quelle au cas réductif.
\end{enumerate}

On dispose également d'un morphisme, dit d'abélianisation:
\begin{center}
$\alpha: V_{\bg}\rightarrow A_{\bg}:=\mathbb{A}^{r}$
\end{center}
donnée par $(t,g)\mapsto(\alpha_{i}(t))_{1\leq i\leq r}$.

Nous avons maintenant besoin de passer au cas quasi-déployé.
On considère un épinglage $(\bB,\bt,\textbf{x}_{+})$ de $\bg$ avec $\bx_{+}=\sum\limits_{\alpha\in\Delta}\bx_{\alpha}$, où $\textbf{x}_{\alpha_{i}}$ est un vecteur propre du sous-espace radiciel $\Lie (\bU)_{\alpha}$ de $\Lie(\bU)$ correspondant à la racine $\alpha$ de $\bt$ et où $\bU$ est le radical unipotent de $\bB$.
Notons $\out(\bg)$ le groupe des automorphismes laissant invariant cet épinglage. C'est un groupe discret, éventuellement infini. Il agit sur l'ensemble des racines, en laissant l'ensemble $\Delta$ des racines simples. Il agit également sur $\bw$ de façon compatible avec l'action de $\bw$ sur $\bt$.

La donnée de la forme quasi-déployée $G$ de $\bg$ revient à la donnée d'une flèche 
\begin{center}
$\rho:\pi_{1}(X,x)\rightarrow\out(\bg)$.
\end{center}
où $x$ est un point géométrique du $X$.
On définit alors le semi-groupe de Vinberg de $G$ comme le $X$-schéma:
\begin{center}
$V_{G}:= \rho\wedge^{\out(\bg)}V_{\bg}$
\end{center}
ainsi que l'abélianisé:
\begin{center}
$A_{G}:=\rho\wedge^{\out(\bg)}A_{\bg}$.
\end{center}
L'action de $\bw\rtimes\out(\bg)$ sur $\bt$ induit une action de $\out(\bg)$ sur $\bC_{+}$ et on définit l'espace des polynômes caractéristiques par:
\begin{center}
$\kc:=\rho\wedge^{\out(\bg)}\bC_{+}$.
\end{center}
On  a des morphismes:
\begin{center}
$\chi_{+}:V_{G}\rightarrow\kc$.
\end{center}
et 
\begin{center}
$\alpha:V_{G}\rightarrow A_{G}$
\end{center}
Enfin, la flèche finie plate $\theta_{\bg}:V_{\bt}\rightarrow\bC_{+}$ qui est génériquement étale galoisienne de groupe $\bw$ induit une flèche
\begin{center}
$\theta:V_{T}\rightarrow\kc$
\end{center}
qui est génériquement un torseur sous le schéma en groupes fini étale $W=\rho\wedge^{\out(\bg)}\bw$.

Il nous faut maintenant obtenir une section. L'inconvénient de la section de Steinberg $\eps$ est qu'elle n'est pas $\out(\bg)$-équivariante, néanmoins $\eps_{+}$ l'est comme on va le voir.
Il ne  nous reste plus qu'à examiner le comportement de la section par torsion extérieure. Si nous regardions seulement la section de Steinberg pour une forme quasi-déployée de $\bg$, nous avons le résultat suivant dû à Steinberg.

\begin{thm}\cite[Th. 9.4]{S}
Soit $G$ un groupe connexe réductif avec $G_{der}$ simplement connexe, quasi-déployé sur un $k$-schéma $X$ qui ne contient pas de composante simple de type $A_{r}$, $r$ pair, alors la section de Steinberg $\cQ=\eps(T/W)$ est définie sur $X$.
\end{thm}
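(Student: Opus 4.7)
La stratégie consiste à construire la section $\eps_{\bg}:\bt/\bw\to\bg$ de façon qu'elle soit $\out(\bg)$-équivariante. Comme $G$ s'obtient à partir de $\bg$ par torsion le long du torseur extérieur $\rho:\pi_{1}(X,x)\to\out(\bg)$, et que $T/W$ est la torsion correspondante de $\bt/\bw$, une telle section fournira par descente galoisienne une section $\eps_{G}:T/W\to G$ définie sur $X$. L'unique obstruction à l'équivariance de $\eps_{\bg}$ provient de l'ordre total choisi sur $\Delta$ dans la formule $\eps_{\bg}(a)=\prod_{i=1}^{r} x_{\alpha_{i}}(a_{i})n_{i}$ : un automorphisme extérieur $\sigma$ permute les facteurs suivant son action sur $\Delta$, et le produit n'est pas en général invariant par cette permutation.

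Pour lever cette obstruction, je décomposerais $\Delta=\bigsqcup_{k=1}^{s} O_{k}$ en $\out(\bg)$-orbites et choisirais un ordre total sur $\Delta$ dans lequel les éléments d'une même orbite sont consécutifs. L'équivariance du produit se réduit alors à la commutation, au sein de $\bg$, des facteurs $x_{\alpha}(a_{\alpha})n_{\alpha}$ pour $\alpha$ parcourant une même orbite. On vérifie que, pour chaque type de diagramme de Dynkin admettant des automorphismes extérieurs non triviaux, les racines simples appartenant à une même orbite extérieure sont deux à deux orthogonales, à l'exception notable du type $A_{2r}$ : pour $A_{2r-1}$, les paires $\alpha_{i}\leftrightarrow\alpha_{2r-i}$ sont à distance au moins deux dans le diagramme; pour $D_{n}$ ($n\geq 4$), les racines $\alpha_{n-1},\alpha_{n}$ ne sont pas adjacentes, et la trialité de $D_{4}$ permute trois racines extrémales mutuellement non adjacentes; pour $E_{6}$, les paires $\alpha_{1}\leftrightarrow\alpha_{6}$ et $\alpha_{3}\leftrightarrow\alpha_{5}$ sont non adjacentes. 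Seul le type $A_{2r}$ pose problème, à cause des deux racines centrales $\alpha_{r},\alpha_{r+1}$ échangées par l'involution mais adjacentes, d'où précisément l'exclusion dans l'hypothèse.

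Pour deux racines simples orthogonales $\alpha,\beta$, le sous-groupe engendré par $x_{\alpha},x_{\beta}$ et leurs opposés est isomorphe à un quotient central de $\SL_{2}\times\SL_{2}$, ce qui assure la commutation des facteurs $x_{\alpha}(a)n_{\alpha}$ et $x_{\beta}(b)n_{\beta}$, pourvu que les relèvements $n_{\alpha},n_{\beta}\in N_{\bg}(\bt)$ soient choisis de façon cohérente avec l'épinglage fixé. Ces relèvements étant alors $\out(\bg)$-équivariants par équivariance de l'épinglage, on obtient l'identité $\sigma\cdot\eps_{\bg}(a)=\eps_{\bg}(\sigma\cdot a)$ pour tout $\sigma\in\out(\bg)$, et la descente le long de $\rho$ produit la section $\eps_{G}$ recherchée. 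Le principal obstacle tient d'une part à la vérification que la commutation requise se fait bien au niveau des facteurs complets $x_{\alpha}(a)n_{\alpha}$ (et non seulement au niveau des $x_{\alpha}$), d'autre part à l'analyse orbite par orbite dans la classification de Killing-Cartan, qui fait apparaître le cas $A_{2r}$ comme véritable obstruction et justifie ainsi l'hypothèse du théorème.
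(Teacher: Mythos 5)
Le papier ne démontre pas cet énoncé : il renvoie directement à Steinberg \cite[Th. 9.4]{S}, dont la preuve est précisément l'argument que vous reconstituez (ordonner les racines simples par orbites sous $\out(\bg)$, constater que les racines d'une même orbite sont deux à deux orthogonales sauf pour la paire centrale de $A_{2r}$, en déduire la commutation des facteurs $x_{\alpha}(a)n_{\alpha}$ et donc l'équivariance du produit, puis descendre). Votre esquisse est correcte et suit essentiellement la même voie que la source citée.
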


A la suite de Donagi-Gaitsgory \cite{DG} et Ngô \cite[sect. 2.4]{N}, nous donnons une interprétation alternative du centralisateur régulier.
On suppose de plus la caractéristique du corps $k$ est première à l'ordre de $\bw$.
On a un morphisme fini plat $W$-équivariant:
\begin{center}
$\theta:V_{T}\rightarrow\kc$
\end{center}
ramifié le long du diviseur $\mathfrak{D}=\bigcup\limits_{\alpha\in R}\overline{\Kern(\alpha)}$, où
$\overline{\Kern(\alpha)}$ désigne l'adhérence dans $V_{T}$ de $\Kern(\alpha)$ avec $\alpha$ une racine. Considérons le schéma:
\begin{center}
$\Omega:=\prod\limits_{V_{T}/\kc}(T\times V_{T})$.
\end{center}
Pour tout $S$-schéma sur $\kc$, les $S$-points de $\Omega$ sont donnés par:
\begin{center}
$\Omega(S)=\Hom_{V_{T}}(S\times_{\kc}V_{T}, T\times V_{T})=\theta_{*}(T\times V_{T})$.
\end{center}
Le morphisme $\theta:V_{T}\rightarrow\kc$ étant fini plat, nous obtenons que $\Omega$ est représentable. C'est un schéma en groupes lisses et commutatifs de dimension $r\left|\bw\right|$ et au-dessus de l'ouvert régulier semi-simple, $\theta$ étant fini étale, $\Omega$ restreint à cet ouvert est un tore. L'action diagonale de $W$ sur $T\times V_{T}$ induit une action sur $\Omega$.
On considère alors:
\begin{equation}
J^{1}=\Omega^{W}=(\prod\limits_{V_{T}/\kc}(T\times V_{T}))^{W}
\label{jun}
\end{equation}
Comme la caractéristique du corps est première avec l'ordre de $W$, on a que $J^{1}$ est un schéma en groupes lisse sur $\kc$.

\begin{prop}
\label{centralisateur}
On a un morphisme canonique $J\rightarrow J^{1}$ qui est un isomorphisme au-dessus de $\kc^{rs}$.

\end{prop}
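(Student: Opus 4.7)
The plan is to construct the morphism $J \to J^{1}$ via the Weil-restriction adjunction. A morphism $J \to \Omega = \prod_{V_{T}/\kc}(T \times V_{T})$ over $\kc$ is, by adjunction, equivalent to a morphism $\theta^{*}J \to T \times V_{T}$ of group schemes over $V_{T}$; we construct the latter and descend.

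First, the tautological inclusion $V_{T} \hookrightarrow V_{\bg}$ gives a canonical embedding $T \times V_{T} \hookrightarrow I|_{V_{T}}$, since every element of $V_{T}$ (the closure of $T_{+}$ in $V_{\bg}$) commutes with $T$. Theorem \ref{bouth2} yields an isomorphism $\chi_{+}^{*}J \cong I$ over $V_{\bg}^{reg}$, which restricts to $\theta^{*}J|_{V_{T}^{reg}} \cong I|_{V_{T}^{reg}}$, with $V_{T}^{reg} := V_{T} \cap V_{\bg}^{reg}$. For $t \in V_{T}^{reg}$, the centralizer $I_{t}$ has dimension $r$ and contains the $r$-dimensional torus $T$; using the connectedness of centralizers in the simply connected $\bg_{der}$ (Steinberg), we deduce $I_{t} = T$. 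Composing, one obtains a canonical isomorphism
\[\theta^{*}J|_{V_{T}^{reg}} \xrightarrow{\sim} T \times V_{T}^{reg}.\]

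The next step is to extend this to a morphism $\theta^{*}J \to T \times V_{T}$ over all of $V_{T}$. This uses the normality of $V_{T}$ (inherited from the normality of $V_{\bt}$ in the split case), the smoothness of $\theta^{*}J$ over $V_{T}$, the affine smooth character of $T \times V_{T}$, and the hypothesis $\car(k) \nmid |W|$ (which ensures smoothness of $J^{1}$). The resulting morphism is by construction $W$-equivariant through the compatible actions of $W$ on $V_{T}$ and on $T$; hence adjunction followed by passage to $W$-invariants produces the desired morphism $J \to J^{1}$. The isomorphism over $\kc^{rs}$ then follows from Galois descent: on this open locus $\theta: V_{T}^{rs} \to \kc^{rs}$ is a finite \'etale $W$-Galois cover, $J^{1}|_{\kc^{rs}}$ is the twist of $T$ associated to this cover, and $J|_{\kc^{rs}}$ (the centralizer of the regular semisimple section $\eps_{+}$) is the same twist via the same descent datum; our morphism is the tautological identification.

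The main expected obstacle is the extension step: the complement $V_{T} \setminus V_{T}^{reg}$ is in general of codimension one (e.g.\ for $SL_{2}$ it is the anti-diagonal in $V_{T} = \mathbb{A}^{2}$), so a naive Hartogs argument does not suffice. The extension must exploit the specific structure of the Vinberg semigroup, for instance through an explicit local analysis near the root hyperplanes combined with the $W$-equivariance to rigidify the extension; alternatively one can argue via the universal property characterizing $J$ as the unique smooth commutative group scheme mapping to $I$ over $V_{\bg}^{reg}$, matched against a parallel property for $J^{1}$ guaranteed by its smoothness.
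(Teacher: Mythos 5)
Your construction of the morphism over $V_{T}^{reg}:=V_{T}\cap V_{\bg}^{reg}$ is fine as far as it goes, but the proof has a genuine gap exactly where you flag it: the extension of $\theta^{*}J|_{V_{T}^{reg}}\to T\times V_{T}^{reg}$ across the complement $V_{T}\setminus V_{T}^{reg}$, which is of codimension one (union of the closures of the root kernels). Neither of the two escape routes you sketch is an argument: the \og analyse locale pr\`es des hyperplans de racines \fg{} is precisely the content that is missing, and the \og propri\'et\'e universelle \fg{} of $J$ given by le th\'eor\`eme \ref{bouth2} only characterizes $J$ through the map $\chi_{+}^{*}J\to I$, so invoking it presupposes a map $J^{1}\to$ something comparable, which is again what one has to build. (A secondary point: for boundary points of $V_{T}$ lying in $V_{\bg}^{reg}$, the identity $I_{t}=T$ needs more than Steinberg's connectedness theorem, which applies to semisimple elements of the group, not to points of the semigroup boundary.)

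The paper's proof is designed to avoid this extension problem altogether. Instead of restricting $J$ to the locus where the point of $V_{T}$ is itself regular, it identifies $V_{\bg}^{reg}\times_{\kc}V_{T}$ with $\tilde{V}_{G}^{reg}$ (the map $\iota$ is finite and birational onto a normal scheme, hence an isomorphism by Zariski's Main Theorem), so that \emph{every} point of $V_{T}$ sits under a regular point of $\tilde{V}_{G}$. The morphism $\theta^{*}J\to T\times V_{T}$ is then produced over all of $V_{T}$ at once from the lemme \ref{borelcent} ($I_{\g}\subset\ad(g)B$ for $(gB,\g)\in\tilde{V}_{G}^{reg}$) composed with the projection $V_{B}\to V_{T}$ furnished by Renner's extension principle, followed by adjunction and passage to the $W$-invariants. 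If you want to salvage your route, you would have to reproduce this mechanism in some form; as written, the codimension-one extension is asserted, not proved.
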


\begin{proof}
On commence par construire un morphisme de $J$ dans $\Omega$; par adjonction, il revient au même de construire une flèche
\begin{center}
$\theta^{*}J\rightarrow T\times V_{T}$
\end{center}
au-dessus de $V_{T}$.
Nous voulons construire une flèche $V_{B}\rightarrow V_{T}$, il résulte du principe d'extension de Renner \cite[Thm 5.2]{Re2}, qu'il nous suffit de construire une flèche $f: B_{+}\rightarrow T_{+}$ et une flèche $g:V_{T}\rightarrow V_{T}$ telle que $g_{\vert T_{+}}=f$.
On considère alors la projection $B_{+}\rightarrow T_{+}$ et l'application identité $\Id: V_{T}\rightarrow V_{T}$ et l'on obtient une flèche: 
\begin{equation}
\phi:V_{B}\rightarrow V_{T}.
\label{projvin}
\end{equation}
On rappelle que $\tilde{V}_{G}=\{(g,\g)\in G/B\vert ~g^{-1}\g g \in V_{B}\}$. On définit alors une application $\tilde{V}_{G}\rightarrow V_{T}$ donnée par:
\begin{center}
$(g,\g)\rightarrow \phi(g^{-1}\g g)$
\end{center}
Considérons le diagramme commutatif suivant:
$$\xymatrix{\tilde{V}_{G}^{reg}\ar[r]\ar[d]_{\la}&V_{T}\ar[d]^{\theta}\\V_{G}^{reg}\ar[r]&\kc}$$
Ce diagramme est en fait cartésien. En effet, la flèche $\chi_{+}^{reg}$ est lisse, donc $V_{G}^{reg}\times_{\kc}V_{T}$ est lisse au-dessus de $V_{T}$, donc normal.
La flèche $\iota:\tilde{V}_{G}^{reg}\rightarrow V_{G}^{reg}\times_{\kc}V_{T}$ est finie car $\tilde{V}_{G}^{reg}\rightarrow V_{G}^{reg}$ l'est, d'après \cite[Cor. 2.15]{Bt}. De plus, $\iota$ est birationnelle car c'est un isomorphisme au-dessus de $G_{+}^{rs}$, il résulte alors du Main Theorem de Zariski que $\iota$ est un isomorphisme.
Il résulte alors de l'isomorphisme entre $\chi_{+}^{*}J$ et $I_{\scriptscriptstyle{\vert V_{G}^{reg}}}$, qu'il nous suffit de construire un morphisme :
\begin{center}
$\lambda^{*}I_{\scriptscriptstyle{\vert V_{G}^{reg}}}\rightarrow T\times \tilde{V}_{G}$.
\end{center}
On commence par rappeler un lemme, de preuve analogue à \cite[Lem. 2.4.3]{N}:
\begin{lem}\label{borelcent}
Pour tout $(gB,\g)\in\tilde{V}_{G}^{reg}$, $I_{\g}\subset \ad(g)B$.
\end{lem}
On considère alors le schéma $\underline{B}$ dont la fibre au-dessus de $gB\in G/B$.
En notant $\underline{B}_{\scriptscriptstyle{\vert \tilde{V}_{G}^{reg}}}$, le changement de base à $\tilde{V}_{G}^{reg}$, du lemme ci-dessus, on obtient une flèche
\begin{center}
$I_{\scriptscriptstyle{\vert \tilde{V}_{G}^{reg}}}\rightarrow\underline{B}_{\scriptscriptstyle{\vert \tilde{V}_{G}^{reg}}}$,
\end{center}
et donc en changeant à nouveau de base, un morphisme:
\begin{center}
$\lambda^{*}I_{\scriptscriptstyle{\vert V_{G}^{reg}}}\rightarrow\underline{{B}}_{\scriptscriptstyle{\vert \tilde{V}_{G}}}$.
\end{center}
Comme de plus, nous avons un morphisme canonique de  $\underline{B}$ vers $T\times G/B$, d'où l'on obtient une flèche $G_{+}$-équivariante entre 
\begin{center}
$\lambda^{*}I_{\scriptscriptstyle{\vert V_{G}^{reg}}}\rightarrow T\times \tilde{V}_{G}$,
\end{center}
qui est un isomorphisme au-dessus du lieu régulier semi-simple.
Au-dessus de $\tilde{V}_{G}^{rs}$, l'action de $W$ sur $I$ se transporte à $T\times \tilde{V}_{G}^{rs}$.
Par adjonction, on a une flèche:
\begin{center}
$I_{\scriptscriptstyle{\vert V_{G}^{reg}}}\rightarrow\lambda_{*}(T\times \tilde{V}_{G})$
\end{center}
qui se factorise par le sous-schéma des points fixes sous $W$.
Par descente, on obtient le morphisme $J\rightarrow J^{1}$ qui est un isomorphisme au-dessus de $\kc^{rs}$.
\end{proof}

Nous allons avoir besoin de considérer un sous-schéma ouvert de $J^{1}$.
\begin{defi}
Soit $\tilde{J}$ le sous-foncteur de $J^{1}$ qui à tout $\kc$-schéma $S$, associe le sous-ensemble $\tilde{J}(S)$ de $J^{1}(S)$ des morphismes $W$-équivariants:
\begin{center}
$f:S\times_{\kc}V_{T}\rightarrow T$
\end{center}
tel que pour tout point géométrique $x$ de $S\times_{\kc}V_{T}$ stable sous une involution $s_{\alpha}(x)=x$ attachée à une racine $\alpha$, on a $\alpha(f(x))\neq -1$.
\end{defi}
\begin{lem}
Le foncteur $\tilde{J}$ est représentable par un sous-schéma ouvert affine de $J^{1}$. De plus, on a les inclusions $J^{0}\subset\tilde{J}\subset J^{1}$.
\end{lem}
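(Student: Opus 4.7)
The plan is to realize $\tilde J$ as the complement in $J^1$ of a finite union of closed subschemes indexed by the roots of $\bg$, and then to verify that each of these is a principal Cartier divisor, so that $\tilde J$ appears as a distinguished open of the affine scheme $J^1$. Recall first that $J^1=\Omega^W$ is affine: $\Omega$ is the Weil restriction of the affine scheme $T\times V_T$ along the finite morphism $\theta$, hence affine, and the subscheme of $W$-invariants inherits this property.

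For each root $\alpha\in R$, let $Z_\alpha:=\overline{\Kern(\alpha)}\subset V_T$, which coincides with the scheme-theoretic fixed locus of the reflection $s_\alpha$. By $W$-equivariance, the universal morphism $f_{univ}:J^1\times_{\kc}V_T\to T$ sends $J^1\times_{\kc}Z_\alpha$ into the closed subscheme $T^{s_\alpha}\subset T$; since $\mathrm{char}(k)\nmid|W|$ (in particular $\neq 2$), this subscheme decomposes as a disjoint union of at most two open pieces distinguished by the locally constant value $\alpha\in\{\pm 1\}$. Thus the condition $\alpha\circ f_{univ}=-1$ cuts out a clopen subscheme $B_\alpha\subset J^1\times_{\kc}Z_\alpha$. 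Since $\theta$ is finite, so is the projection $p_\alpha:J^1\times_{\kc}Z_\alpha\to J^1$, and the image $\bar B_\alpha:=p_\alpha(B_\alpha)\subset J^1$ is closed. Unwinding the definition yields $\tilde J=J^1\setminus\bigcup_{\alpha\in R}\bar B_\alpha$, so $\tilde J$ is open in $J^1$.

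For affineness, I want to exhibit each $\bar B_\alpha$ as a principal Cartier divisor. The idempotent $e_\alpha:=(1-\alpha\circ f_{univ})/2$, well-defined in characteristic $\neq 2$, vanishes on the ``good'' component and takes value $1$ on $B_\alpha$. Working Zariski-locally on $J^1$ so that $p_\alpha$ is finite locally free (which can be arranged by stratifying $\kc$), the norm $N(e_\alpha)\in\Gamma(J^1,\mathcal{O})$ cuts out $\bar B_\alpha$ precisely; hence $\tilde J$ is the complement of a finite union of principal divisors in an affine scheme, and is itself affine. For the inclusions: $\tilde J\subset J^1$ is tautological, while $J^0\subset\tilde J$ follows from interpreting $J^0$ as the fiberwise identity component of $J^1$, so by connectedness its sections send every $s_\alpha$-fixed point into the neutral component $\Kern(\alpha)^\circ\subset T^{s_\alpha}$, where $\alpha$ takes the value $1\neq-1$. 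The main subtlety I foresee is the last affineness step: the projection $p_\alpha$ need not be globally finite locally free, so a careful stratification argument---or a direct principality argument exploiting the clopen structure of $B_\alpha$ on an affine cover---will be needed to finish.
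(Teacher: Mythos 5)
Your treatment of openness and of the inclusions is sound and follows exactly the route the paper takes, namely N\^go's proof of \cite[Lem. 2.4.6]{N} with the root hyperplanes replaced by $\overline{\Kern(\alpha)}\subset V_{T}$ (the paper's own proof is just this one-line reduction): on $J^{1}\times_{\kc}\overline{\Kern(\alpha)}$ the universal map lands in $T^{s_{\alpha}}$, where $\alpha$ takes values in $\mu_{2}$, the bad locus $B_{\alpha}$ is clopen, its image under the finite projection $p_{\alpha}$ is closed, and $J^{0}\subset\tilde{J}$ follows from fibrewise connectedness of $J^{0}$ together with the value $\alpha=1$ on the identity section. (One caveat, which the paper shares: the functor $\tilde{J}$ is defined by a condition at all $s_{\alpha}$-fixed points, and the $s_{\alpha}$-fixed locus of $V_{T}$ can be strictly larger than $\overline{\Kern(\alpha)}$ when $\check{\alpha}$ is divisible by $2$; this identification deserves a word.)

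The affineness step, however, does not work as written. First, the idempotent is the wrong one: with $e_{\alpha}=(1-\alpha\circ f_{univ})/2$, a norm $N(e_{\alpha})$ would vanish at $j$ exactly when some point of the fibre lies in the \emph{good} component (where $e_{\alpha}=0$), not in $B_{\alpha}$; to cut out $\bar{B}_{\alpha}$ you must norm $(1+\alpha\circ f_{univ})/2$. Second, and more seriously, the norm along $p_{\alpha}$ is not merely ``not globally'' defined --- it is not defined at all on a dense open of $J^{1}$: $\overline{\Kern(\alpha)}$ is a divisor in $V_{T}$, so $\overline{\Kern(\alpha)}\rightarrow\kc$ is finite with image a proper closed subset of $\kc$; hence $p_{\alpha}$ is not surjective, a fortiori not finite locally free, and no stratification of the base assembles the fibrewise products into a regular function on $J^{1}$. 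Consequently the proposal does not show that $\bar{B}_{\alpha}$ is the support of a principal (or even Cartier) divisor, and the affineness of $\tilde{J}$ --- which is part of the statement --- is left unproved. To close this you should either import the affineness argument from \cite[Lem. 2.4.6]{N} directly, or replace $p_{\alpha}$ by the finite \emph{flat} morphism $J^{1}\times_{\kc}V_{T}\rightarrow J^{1}$ deduced from $\theta$ and norm a function on the whole of $J^{1}\times_{\kc}V_{T}$ that is invertible off $B_{\alpha}$ and vanishes on it.
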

\begin{proof}
On renvoie à \cite[Lem. 2.4.6]{N}, la seule différence est qu'en lieu et place des hyperplans de racines $h_{\alpha}$, il faut considérer l'adhérence dans $V_{G}$ du tore $\Kern\alpha$.
\end{proof}

\begin{prop}\label{galois}
Le morphisme $J\rightarrow J^{1}$ se factorise via $\tilde{J}$ et induit un isomorphisme entre $J$ et $\tilde{J}$.
\end{prop}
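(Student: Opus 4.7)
Le plan consiste \`a adapter la strat\'egie de Ng\^o \cite[Prop. 2.4.7]{N} (\'etablie pour les alg\`ebres de Lie) au contexte du semi-groupe de Vinberg. La d\'emonstration se d\'ecompose en trois \'etapes.

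Dans une premi\`ere \'etape, je montrerais que le morphisme $J\to J^{1}$ construit dans la proposition \ref{centralisateur} se factorise par le sous-sch\'ema ouvert $\tilde{J}$. Comme $J$ est lisse sur $\kc$ d'apr\`es le th\'eor\`eme \ref{bouth2} et comme $J\to J^{1}$ est d\'ej\`a un isomorphisme sur l'ouvert dense $\kc^{rs}$, il suffit de v\'erifier la condition $\alpha(f(x))\neq -1$ au point g\'en\'erique de chaque composante irr\'eductible du diviseur de r\'eflexion $\overline{\Kern(\alpha)}\subset V_{T}$. En un tel point, situ\'e sur une unique paroi, on se ram\`ene par une r\'eduction \`a un Levi \`a un calcul de rang un dans un sous-groupe de type $A_{1}$; la formule explicite pour $\eps_{+}$ donn\'ee par le th\'eor\`eme \ref{bouth2}, combin\'ee \`a l'hypoth\`ese de simple connexit\'e de $\bg_{der}$, permet alors de calculer le centralisateur de $\eps_{+}(a)$ et d'\'ecarter la valeur $\alpha(f)=-1$.

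Dans une deuxi\`eme \'etape, pour construire un inverse $\tilde{J}\to J$, je proc\'ederais comme suit. \'Etant donn\'e un $S$-point $f:S\times_{\kc}V_{T}\to T$ de $\tilde{J}$, on utilise la section de Kostant $\eps_{+}:\kc\to V_{G}^{reg}$ et l'isomorphisme canonique $\chi_{+}^{*}J\simeq I_{\vert V_{G}^{reg}}$ du th\'eor\`eme \ref{bouth2} pour produire un point de $J(S)$. Concr\`etement, on rel\`eve $f$ en une section du sch\'ema des centralisateurs $I$ le long de $\eps_{+}(\kc)$ via l'action du tore antidiagonal $\bt_{\Delta}\subset\bg_{+}$ et un argument de descente galoisienne analogue \`a celui figurant dans (\ref{jun}); la condition $\alpha(f)\neq -1$ est pr\'ecis\'ement ce qui garantit que cette section s'\'etend \`a travers le lieu de ramification du rev\^etement $\theta:V_{T}\to\kc$.

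Enfin, pour v\'erifier que les deux morphismes sont mutuellement inverses, il suffit, par lissit\'e et s\'eparation de $J$ et $\tilde{J}$ sur $\kc$, de le constater au-dessus de l'ouvert dense $\kc^{rs}$, o\`u la proposition \ref{centralisateur} fournit d\'ej\`a les isomorphismes voulus. Le point d\'elicat reste donc le calcul aux points fixes par une r\'eflexion dans la premi\`ere \'etape : chez Ng\^o, la v\'erification se ram\`ene \`a un calcul explicite avec un triplet $\mathfrak{sl}_{2}$, mais au niveau du semi-groupe de Vinberg, ce calcul est plus subtil en raison du caract\`ere multiplicatif de $T$ et de la pr\'esence des composantes du bord $\bigcup_{i}\overline{\alpha}_{i}^{-1}(0)$ qui n'ont pas d'analogue dans le cadre infinit\'esimal.
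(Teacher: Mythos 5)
Votre strat\'egie est sensiblement diff\'erente de celle du texte, et elle laisse ouvert pr\'ecis\'ement le point que vous identifiez vous-m\^eme comme d\'elicat. Vous proposez de refaire la preuve de Ng\^o \cite[2.4.7]{N} dans le cadre du semi-groupe de Vinberg : factorisation via $\tilde{J}$ par un calcul de type $A_{1}$ aux points g\'en\'eriques du diviseur de r\'eflexion, construction directe d'un inverse $\tilde{J}\to J$ via la section de Steinberg, puis v\'erification sur $\kc^{rs}$. Or ni le calcul de rang un (que vous reconnaissez \`a la fin \^etre \og plus subtil \fg{} dans le cadre multiplicatif et au bord du semi-groupe) ni la construction de l'inverse (r\'eduite \`a une phrase sur un rel\`evement via l'action du tore antidiagonal et une descente galoisienne) ne sont effectivement men\'es \`a bien ; en l'\'etat, la preuve est donc incompl\`ete sur ses deux \'etapes essentielles.

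La preuve du texte \'evite enti\`erement ces deux difficult\'es par un argument de prolongement en codimension deux. On sait d\'ej\`a que $J\simeq\tilde{J}$ au-dessus de $T_{+}/W$ (c'est exactement \cite[2.4.7]{N}, puisque l\`a on est dans le groupe et non dans le bord du semi-groupe) et au-dessus de $\kc^{rs}$ (o\`u $J$, $\tilde{J}$ et $J^{1}$ sont tous isomorphes au tore $T$). Le compl\'ementaire de $\kc^{rs}\cup T_{+}/W$ est de codimension au moins deux dans $\kc$, car $\kc^{rs}$ est dense dans chaque strate du bord ; comme $J$ et $\tilde{J}$ sont lisses sur $\kc$, l'isomorphisme d\'efini hors de ce ferm\'e se prolonge. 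Autrement dit, les composantes du bord qui vous inqui\`etent ne rencontrent le lieu non r\'egulier semi-simple qu'en codimension au moins deux, et aucun calcul de mur suppl\'ementaire n'est n\'ecessaire : les points g\'en\'eriques des diviseurs de r\'eflexion gisent d\'ej\`a dans $T_{+}/W$, o\`u le r\'esultat de Ng\^o s'applique tel quel. Si vous tenez \`a votre approche directe, il vous faudrait effectivement mener le calcul $A_{1}$ au bord et d\'etailler la construction de l'inverse ; l'argument de codimension du texte rend tout cela superflu.
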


\begin{proof}
On sait déjà que nous avons la proposition au-dessus de $T_{+}/W$, d'après \cite[2.4.7]{N}.
De plus, on sait qu'au-dessus de $\kc^{rs}$, $J$, $J^{1}$ et $\tilde{J}$ sont tous isomorphes au tore $T$.

En particulier, on obtient la proposition sur l'ouvert $\kc^{rs}\cup T_{+}/W$, lequel est de codimension au moins deux dans $\kc$, car $\kc^{rs}$ est ouvert dans chaque strate.
Comme les schémas $J$ et $\tilde{J}$ sont lisses sur  $\kc$, l'isomorphisme se prolonge, ce qu'on voulait.
\end{proof}
\section{Le champ de Hecke}
Soit $X$ une courbe projective lisse géométriquement connexe sur un corps algébriquement clos $k$. On note $F$ son corps de fonctions, $\left|X\right|$ l'ensemble des points fermés.
Pour $x\in\left|X\right|$, soit $D_{x}=\Spec(\mathcal{O}_{x})$ le disque formel en $x$ et $D_{x}^{\bullet}=\Spec(\fx)$, le disque formel épointé, d'uniformisante $\pi_{x}$.

Soit $\bg$ un groupe connexe réductif déployé tel que $\bg_{der}$ soit simplement connexe. 
On considère une paire de Borel $(\bB,\bt)$ de $\bg$ et on note $\bw$ le groupe de Weyl de $\bg$. On suppose que l'ordre de $\bw$ est premier avec la caractéristique.

Soit un schéma en groupes $G$  connexe réductif sur $X$ qui se déploie en $\bg$ sur un revêtement fini galoisien $\rho:X_{\rho}\rightarrow X$ de groupe $\Gamma$. Cela revient à la donnée d'un morphisme:
\begin{center}
$\rho:\Gamma\rightarrow \out(\bg)$.
\end{center}
Le schéma en groupes $G$ est alors muni d'une paire de Borel $(B,T)$ et on considère $W$ le schéma en groupes fini étale obtenu en tordant $W$ par $\rho$.
On note $X_{*}(\bt)^{+}$ l'ensemble des cocaractères dominants de $\bt$.
On désigne par $\Bun$ le champ des  $G$-torseurs sur $X$. Le champ $\Bun$ est algébrique au sens d'Artin (\cite{LM} et \cite[Prop. 1]{H}).

\subsection{Le cas déployé} Nous reprenons la présentation qui en est faite par Frenkel-Ngô \cite{FN}. Les objets considérés ont été introduits par Beilinson-Drinfeld \cite {BD}.

Pour tout point fermé $x\in X$, considérons le $k$-schéma en groupes $K_{x}=\bg(\co_{x})$ et le ind-schéma en groupes $\bg_{x}:=\bg(F_{x})$, on dispose de la grassmannienne affine en $x$, $\Gr_{x}=\bg_{x}/K_{x}$. Elle admet d'après Lusztig une structure d'ind-schéma ainsi qu'une stratification en $K_{x}$-orbites, localement fermées, dite de Cartan, indexée par les cocaractères dominants $X_{*}(T)^{+}$. Pour $\la\in X_{*}(T)^{+}$, on note $\Gr_{\la,x}$ une telle strate et $\overline{\Gr}_{\la,x}$ son adhérence dans $\Gr_{x}$.
Plus généralement, étant donné un ensemble fini de points fermés $S\subset\left|X\right|$, nous notons 
\begin{center}
$\Gr_{S}:=\prod\limits_{s\in S}\Gr_{s}$.
\end{center}
Soit $\Div(X)$ le groupe abélien des diviseurs sur $X$. On considère alors le groupe $\Div(X,T)=\Div(X)\otimes_{\mathbb{Z}}X_{*}(T)$ des diviseurs à coefficients dans les cocaractères de $T$. On note $\Div^{+}(X,T)\subset\Div(X,T)$ le cône formé par les diviseurs dont les coefficients sont dans $X_{*}(T)^{+}$. On a alors une action de $W$ sur $\Div(X,T)$ induite par l'action de $W$ sur $X_{*}(T)$.
Pour $\la=\sum\limits_{x\in \left|X\right|}\la_{x}[x]\in \Div(X,T)$, on pose $S=\supp(\la):=\{x\in \left|X\right|\vert ~\la_{x}\neq 0\}$ et nous notons $\Gr_{\la}$ (resp. $\overline{\Gr}_{\la}$) le produit 
\begin{center}
$\prod\limits_{s\in S}\Gr_{\la_{s},s}$
\end{center}
(resp. $\prod\limits_{s\in S}\overline{\Gr}_{\la_{s},s}$).
\begin{defi}
On définit le champ de Hecke $\mathcal{H}_{S}$, dont le groupoïde des $R$-points $\mathcal{H}_{S}(R)$, pour une $k$-algèbre $R$, est constitué des triplets $(E, E', \beta)$ où $E, E'$ sont des $\bg$-torseurs sur $X_{R}:=X\times_{k}R$ et $\beta$ un isomorphisme:
\begin{center}
$E_{\scriptscriptstyle{\vert X_{R}-\Gamma_{S,R}}}\stackrel{\cong}{\rightarrow}E'_{\scriptscriptstyle{\vert X_{R}-\Gamma_{S,R}}}$.
\end{center}
où $\Gamma_{S, R}$ désigne le graphe des points $x$ de $S$ dans $X_{R}$.
\end{defi}

Nous avons une flèche 
\begin{center}
$\inv:\cH_{S}\rightarrow[K_{S}\backslash\Gr_{S}]$
\end{center}
qui associe à un triplet $(E,E',\beta)$ la position relative du triplet local $(E_{S},E_{S'},\beta_{\vert F_{S}})$, où $E_{S}$ est la restriction de $E$ à $D_{S}$. Pour $\la=\sum\limits_{s\in S}\la_{s}[s]\in \Div^{+}(X,T)$, on note $\inv_{S}(E,E',\beta)=\la$ si la paire $(E,E')$ est en position relative $\la$.

\begin{defi}
Pour $\la=\sum\limits_{s\in S}\la_{s}[s]\in \Div^{+}(X,T)$, on considère alors le sous-champ fermé $\overline{\cH}_{\la}$, qui est l'image réciproque du fermé $\overline{\Gr}_{\la}$ par la flèche $\inv$.
\end{defi} 
$\rmq$ D'après le lemme 3.1 de Varshavsky \cite{Va}, il résulte que $\overline{\cH}_{\la}$ est un champ algébrique localement de type fini sur $k$.
Nous en déduisons donc que $\cH$ a une structure de ind-champ algébrique.
Nous avons deux projections $p_{1}$ et $p_{2}$:
$$\xymatrix{&\mathcal{H}_{S}\ar[dr]^{p_{2}}\ar[dl]_{p_{1}}\\\Bunb&&\Bunb}$$
où $p_{1}(E, E', \beta)=E$ et  $p_{2}(E, E', \beta)=E'$.
Les fibres de $p_{1}$ et $p_{2}$ sont des formes tordues de la grassmannienne affine $\Gr_{S}$.
De plus, le sous-champ fermé $\overline{\cH}_{\la}$ est fibré au-dessus de $\Bunb$ en $\overline{\Gr}_{\la}$.
Nous avons la proposition suivante due à Varshavsky \cite[A8c]{Va}, qui nous dit que pour la topologie lisse, la fibration est localement triviale:
\begin{prop}\label{va}
Il existe un morphisme $V\rightarrow \Bunb$ lisse à fibres géométriquement connexes tel que $\overline{\cH}_{\la}\times_{\Bunb}V$ est isomorphe à $V\times_{k}\overline{\Gr}_{\la}$, le produit fibré se faisant indifféremment pour $p_{1}$ ou $p_{2}$. 
\end{prop}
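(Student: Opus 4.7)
Le plan est d'obtenir la trivialisation locale en rigidifiant les $\bg$-torseurs par une structure de niveau au voisinage du support $S=\supp(\la)$. Pour chaque $s\in S$, je fixerai un entier $n_{s}\geq 1$ \`a pr\'eciser plus tard, et d\'efinirai $V$ comme le champ classifiant les couples $(E,\tau)$ o\`u $E$ est un $\bg$-torseur sur $X_R$ et $\tau$ une trivialisation de $E$ sur le voisinage infinit\'esimal d'ordre $N=\sum_{s\in S}n_{s}[s]$ du graphe $\Gamma_{S,R}$. L'oubli de $\tau$ fournit un morphisme $V\to\Bunb$, torseur sous le sch\'ema en groupes $\prod_{s\in S}\bg(\co_{s}/\pi_{s}^{n_{s}})$ obtenu par restriction des scalaires \`a la Weil. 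Ce sch\'ema est lisse (car $\bg$ l'est) et \`a fibres g\'eom\'etriquement connexes (le noyau de la r\'eduction \`a $\bg(k(s))$ est it\'erativement extension d'espaces affines, et $\bg$ est connexe).

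L'\'etape centrale est la trivialisation effective du champ de Hecke. Le sous-sch\'ema $\overline{\Gr}_{\la_{s},s}$ \'etant ferm\'e de type fini dans l'ind-sch\'ema $\Gr_{s}$, l'action de $K_{s}=\bg(\co_{s})$ sur $\overline{\Gr}_{\la_{s},s}$ se factorise par un quotient de type fini $\bg(\co_{s}/\pi_{s}^{m_{s}})$ pour un $m_{s}$ ne d\'ependant que de $\la_{s}$. Je choisirai alors $n_{s}\geq m_{s}$, assez grand pour absorber aussi la modification $\la_{s}$ du c\^ot\'e de $E'$. \'Etant donn\'e un $R$-point $(E,E',\beta,\tau)$ de $\overline{\cH}_{\la}\times_{\Bunb,p_{1}}V$, la trivialisation $\tau$ identifie la paire $(E,\beta)$ au voisinage formel de $S$ \`a un \'el\'ement canonique de $\overline{\Gr}_{\la}(R)$, par la description modulaire de la grassmannienne affine. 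R\'eciproquement, \'etant donn\'es $(E,\tau)\in V(R)$ et un point de $\overline{\Gr}_{\la}(R)$, le recollement de Beauville-Laszlo reconstruit $E'$ muni de $\beta$, d'o\`u l'isomorphisme cherch\'e. Le cas de $p_{2}$ s'en d\'eduit par sym\'etrie, la trivialisation de $E$ induisant via $\beta$ une trivialisation de $E'$ d'un niveau l\'eg\`erement inf\'erieur mais toujours suffisant.

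Le principal obstacle technique sera pr\'ecis\'ement le choix uniforme du niveau $n_{s}$ r\'epondant simultan\'ement aux deux contraintes : stabilit\'e de $\overline{\Gr}_{\la_{s},s}$ sous un sous-groupe de congruence, et transfert de la trivialisation entre $E$ et $E'$ modulo la modification $\la_{s}$. Une fois ce choix fait, tout le reste rel\`eve des techniques standard de structures de niveau et du recollement de Beauville-Laszlo, suivant en cela l'argument de Varshavsky dans l'appendice cit\'e.
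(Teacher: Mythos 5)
Votre construction est correcte et suit essentiellement l'argument de la r\'ef\'erence \cite[A8c]{Va} que l'article se contente de citer (et que reprend V. Lafforgue, \cite[Prop. 1.10--1.14]{VLaf}) : on prend pour $V$ le champ des torseurs munis d'une structure de niveau $N=\sum_{s}n_{s}[s]$ assez grande, on utilise le fait que l'action de $K_{s}$ sur $\overline{\Gr}_{\la_{s}}$ se factorise par un groupe de jets $\bg(\co_{s}/\pi_{s}^{m_{s}})$, et on recolle \`a la Beauville--Laszlo. Deux points m\'eritent toutefois d'\^etre pr\'ecis\'es. D'abord, pour associer \`a $(E,\beta,\tau)$ un point de $\overline{\Gr}_{\la}(R)$, il faut relever la trivialisation d'ordre $n_{s}$ en une trivialisation formelle compl\`ete de $E$ sur $D_{s}\hat{\times}R$ --- ce qui se fait par lissit\'e de $\bg$, par rel\`evements successifs modulo $\pi_{s}^{m}$ --- l'ambigu\"it\'e du rel\`evement vivant dans le sous-groupe de congruence $K_{s,n_{s}}$, lequel agit trivialement sur $\overline{\Gr}_{\la_{s}}$ gr\^ace au choix $n_{s}\geq m_{s}$ ; c'est bien l'esprit de votre argument, mais cette \'etape de rel\`evement et d'ind\'ependance du choix doit figurer explicitement. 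Ensuite, votre d\'eduction du cas de $p_{2}$ par \og transfert de la trivialisation via $\beta$ \fg{} ne fonctionne pas telle quelle : $\beta$ n'est d\'efini qu'en dehors de $S$ (il a des p\^oles aux points de $S$), et n'envoie donc pas une structure de niveau de $E$ sur une structure de niveau de $E'$, de quelque ordre que ce soit. L'argument correct consiste simplement \`a refaire la m\^eme construction en pla\c{c}ant la structure de niveau sur $E'$, le point de la grassmannienne \'etant alors fourni par $(E,\beta^{-1})$ via l'isomorphisme $\overline{\Gr}_{\la}\cong\overline{\Gr}_{-w_{0}\la}$ donn\'e par $g\mapsto g^{-1}$. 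Cette correction est imm\'ediate et ne remet pas en cause la strat\'egie.
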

En particulier, les projections sont plates, projectives et algébriques au-dessus de $\Bunb$.
\subsection{Le cas quasi-déployé}\label{quasidep}
La définition du champ de Hecke dans le cas quasi-déployé est due à Frenkel-Ngô \cite{FN} et la définition de la grassmannienne affine dans le cas quasi-déployé à Heinloth \cite{H}.
On rappelle que nous avons un revêtement fini galoisien de groupe $\Gamma$:
\begin{center}
$\rho:X_{\rho}\rightarrow X$
\end{center}
qui déploie $G$ en $\bg$ ainsi que le schéma en tores $T$ en $\bt$.
Le schéma en groupes $G$ s'identifie alors à 
\begin{center}
$\bg\times^{\Gamma}X_{\rho}$.
\end{center}
En particulier un $G$-torseur sur $X$ est un $\bg$-torseur sur $X_{\rho}$, $\Gamma$-équivariant, i.e. pour tout $\g\in\Gamma$, on a un isomorphisme 
\begin{center}
$i_{\g}:\g^{*}(\tilde{E})\rightarrow\tilde{E}$
\end{center}
où $\g^{*}$ est induit à la fois par l'action de $\g$ sur $X_{\rho}$ et $\bg$ avec une relation de compatibilité $i_{\g_{1}}i_{\g_{2}}=i_{\g_{1}\g_{2}}$ pour $\g_{1},\g_{2}\in\Gamma$.

\begin{defi}
Soit $\cH_{S}$ le ind-champ classifiant les quadruplets $(\tilde{E},\tilde{E'},\beta')$, $\tilde{E}$ et $\tilde{E'}$ des $\bg$-torseurs $\Gamma$-équivariants sur $X_{\rho}$, $\beta'$ un isomorphisme en dehors de $\rho^{-1}(S)$, invariant sous l'action de $\Gamma$ sur $\tilde{E}$ et $\tilde{E'}$.
\end{defi}
Pour chaque point géométrique $x$ de $S$, la restriction de $G$ à $D_{x}$ est isomorphe à $\bg$.
On obtient donc un isomorphisme entre $G_{\vert D_{x}}$ et le schéma en groupes constant $\bg\times D_{x}$, bien défini modulo l'action de $\Gamma$. 
En particulier, on obtient que le morphisme $\cH_{S}\rightarrow\Bun$ a ses fibres géométriques isomorphes à la grassmannienne affine $\Gr_{\bg}$.

De plus, si $E$ et $E'$ sont des $G$-torseurs sur $D_{x}$, muni d'un isomorphisme sur $D_{x}^{\bullet}$, alors $\inv_{x}(E,E')$ est bien défini comme une orbite $[\la]\in[X_{*}(\bt)^{+}/\Gamma]$.
Ainsi, pour $\la=\sum\limits_{x\in X}[\la_{x}][x]$, où $[\la_{x}]$ désigne une orbite $[X_{*}(\bt)^{+}/\Gamma]$, on peut définir le sous-champ fermé $\overline{\cH}_{\la}$ du champ de Hecke $\cH_{S}$.
Il est fibré au-dessus de $\Bun$ en l'union $\overline{\Gr}_{\la}$ pour $\la$ dans la $\Gamma$-orbite de $[\la]$.

Dans la suite, on note également $\Div^{+}(X,T)$ l'ensemble des combinaisons formelles $\la=\sum\limits_{x\in X}[\la_{x}][x]$ avec $[\la_{x}]\in [X_{*}(\bt)^{+}/\Gamma]$.
\section{La fibration de Hitchin}
Soit $S$ un ensemble fini de $\left|X\right|$ et $\la=\sum\limits_{s\in S}[\la_{s}][s]$.
On considère le carré cartésien suivant:
$$\xymatrix{\mathcal{M}_{S}\ar[d]\ar[r]&\cH_{S}\ar[d]\\\Bun\ar[r]^-{\Delta}&\Bun\times\Bun}$$
On obtient que $\cm_{S}(k)$ est le groupoïde des couples $(E,\phi)$, où $E$ un $G$-torseur sur $X$ et
$\phi$ une section sur $X-S$ du fibré adjoint
\begin{center}
$\Ad(E)=E\times^{G}G$
\end{center}
où $G$ agit sur lui-même par conjugaison, ou si l'on préfère un automorphisme de $E$ au-dessus de $X-S$. Cet espace a été introduit dans l'article de Frenkel-Ngô \cite[sect. 4.1]{FN}.
Nous obtenons alors par changement de base le sous-champ $\cmd$ à partir du champ $\overline{\cH}_{\la}$.

Dans le cas déployé, on a la description adélique suivante: 
\begin{center}
$\cmd(k):=\bg(F)\backslash\{(\g,(g_{x}))\in \bg(F)\times \bg(\ab)/\bg(\co_{\ab})\vert~ g_{x}^{-1}\g g_{x}\in\overline{K_{x}\pi_{x}^{\la}K_{x}}\}$,
\end{center}
où $\bg(F)$ agit par $h.(\g,(g_{x}))=((h\g h^{-1},(hg_{x})))$.
\bigskip

Nous obtenons également, toujours dans le cas déployé, une interprétation modulaire de $\cmd$ tirée de \cite[sect. 4.5]{FN}. 
\begin{lem}
Pour une $k$-algèbre $R$, le groupoïde $\cmd(R)$ est donné par les uplets $(E,\beta)$ où $E$ est un $\bg$-torseur sur $X_{R}$, $\beta$ un automorphisme de $E$ sur $X_{R}-\Gamma_{S,R}$ tel que pour toute représentation irréductible $\rho_{\mu}$ de plus haut poids $\mu$ $\rho_{\mu}(\beta)$ se prolonge en une injection de fibrés vectoriels :
\begin{center}
$\rho_{\mu}(\beta):\rho_{\mu}(E)\rightarrow\rho_{\mu}(E)(\left\langle \mu,-w_{0}\la\right\rangle)$.
\end{center}
où l'on a poussé le $\bg$-torseur $E$ en un fibré vectoriel par $\rho_{\mu}(E)$. 
\end{lem}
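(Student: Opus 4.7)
Le plan consiste simplement à traduire, en termes de fibrés vectoriels associés, la condition définissant le sous-champ fermé $\cmd\subset\cm_{S}$ comme image réciproque de $\overline{\cH}_{\la}\subset\cH_{S}$. Par construction, un $R$-point de $\cmd$ consiste en un $\bg$-torseur $E$ sur $X_{R}$ muni d'un automorphisme $\beta$ sur $X_{R}-\Gamma_{S,R}$ tel que le triplet $(E,E,\beta)$ appartienne à $\overline{\cH}_{\la}(R)$. La condition d'appartenance à $\overline{\cH}_{\la}$ est locale en $x\in S$ et, après trivialisation de $E$ sur $D_{x}$, se lit $\inv_{x}(E,E,\beta)\leq\la_{x}$ dans $X_{*}(\bt)^{+}$, c'est-à-dire $\beta_{x}\in\overline{K_{x}\pi_{x}^{\la_{x}}K_{x}}(R)$.

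La seconde étape est la caractérisation représentation-théorique de l'adhérence d'une strate de Cartan. Pour $\mu$ dominant, on a $\rho_{\mu}(\pi_{x}^{\la_{x}})\in\pi_{x}^{-\langle\mu,-w_{0}\la_{x}\rangle}\End(V_{\mu})(\co_{x})$, l'exposant étant atteint par le vecteur de plus bas poids $w_{0}\mu$. Utilisant la description de l'ordre dominant $\nu\leq\la$ par les inégalités $\langle\mu,-w_{0}\nu\rangle\leq\langle\mu,-w_{0}\la\rangle$ pour tous les poids dominants $\mu$ (il suffit d'ailleurs de le vérifier sur les $\omega_{i}$, car $-w_{0}$ permute les poids fondamentaux), on obtient l'équivalence bien connue :
\begin{center}
$\beta_{x}\in\overline{K_{x}\pi_{x}^{\la_{x}}K_{x}}(R)\iff\forall\,\mu,\ \rho_{\mu}(\beta_{x})\in\pi_{x}^{-\langle\mu,-w_{0}\la_{x}\rangle}\End(V_{\mu})(\co_{x}\otimes R).$
\end{center}
Il faut évidemment travailler à valeurs dans $R$ : la caractérisation ponctuelle s'étend sans mal en familles car l'adhérence $\overline{\Gr}_{\la_{x}}$ est précisément définie, à l'intérieur de $\Gr_{x}$, par ces conditions de pôles, qui sont des conditions fermées au sens des schémas.

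La troisième étape est la globalisation. La section $\rho_{\mu}(\beta)$, a priori définie sur $X_{R}-\Gamma_{S,R}$ du fibré adjoint $\End(\rho_{\mu}(E))$, se recolle sur tout $X_{R}$ à valeurs dans $\End(\rho_{\mu}(E))(\langle\mu,-w_{0}\la\rangle)$ si et seulement si, en chaque $x\in S$, elle admet un pôle d'ordre au plus $\langle\mu,-w_{0}\la_{x}\rangle$, ce qui est exactement l'étape précédente. Enfin, l'injectivité du morphisme $\rho_{\mu}(\beta):\rho_{\mu}(E)\to\rho_{\mu}(E)(\langle\mu,-w_{0}\la\rangle)$ est automatique puisqu'il s'agit d'un morphisme entre fibrés vectoriels sur $X_{R}$ qui est un isomorphisme au-dessus de l'ouvert dense $X_{R}-\Gamma_{S,R}$, donc de noyau une sous-faisceau de torsion d'un fibré vectoriel, donc nul.

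Le seul point véritablement technique est la caractérisation représentation-théorique de $\overline{\Gr}_{\la}$, qui est un résultat classique remontant à la description par Beilinson-Drinfeld et Lusztig ; nous l'utilisons comme boîte noire et le reste de l'argument n'est qu'un changement de cadre entre la description adélique et la description modulaire du champ de Hecke.
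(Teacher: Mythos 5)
Votre preuve est correcte et suit essentiellement la même démarche que celle sous-entendue par le papier (qui renvoie à Frenkel--Ngô, sect.\ 4.5) : tout repose sur la caractérisation locale de $\overline{K_{x}\pi_{x}^{\la_{x}}K_{x}}$ par les ordres des pôles des coefficients matriciels dans les représentations, qui est précisément le contenu du lemme \ref{cont} du texte, suivie d'un recollement global et de l'observation que l'injectivité est automatique pour un morphisme de fibrés vectoriels génériquement inversible. Seule petite réserve, partagée d'ailleurs par l'énoncé lui-même : dans le cas réductif non semisimple, la caractérisation de l'adhérence exige l'\emph{égalité} des ordres de pôles pour les représentations de dimension un, point que ni vous ni l'énoncé ne soulignez.
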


Le lien avec le semi-groupe de Vinberg apparaît grâce au lemme suivant:
\begin{lem}\label{cont} 
Un élément $g\in \bg(F_{x})$ appartient à l'orbite $K\pi_{x}^{\lambda}K$ (resp. $\overline{K_{x}\pi_{x}^{\la}K_{x}}$) si et seulement si pour tout cocaractère dominant $\omega\in X^{*}(\bt)^{+}$, le plus grand des ordres des pôles des coefficients de la matrice $\rho_{\omega}(g)$ est égal à $\left\langle \omega,-w_{0}\lambda\right\rangle$ (resp. inférieur ou égal et égal pour les représentations de dimension un) où $w_{0}$ est l'élément long du groupe de Weyl. De plus, l'élément $g_{+}=(\pi^{-w_{0}\la},g)$ est dans $V_{\bg}^{0}(\mathcal{O}_{x})$ (resp. $V_{\bg}(\mathcal{O}_{x})$).
\end{lem}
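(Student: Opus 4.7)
The strategy is to relate the Cartan cell containing $g$ to the pole orders of the matrix coefficients of $\rho_{\omega}(g)$ via a weight-space decomposition, and then to unwind the embedding $\bg_{+}\hookrightarrow H_{\bg}$ in order to convert these bounds into the Vinberg integrality statement.

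First I would fix, for each dominant $\omega$, a $\bg(\co_{x})$-stable $\co_{x}$-lattice $L_{\omega}\subset V_{\omega}$. Since $K=\bg(\co_{x})$ acts by $\co_{x}$-linear automorphisms of $L_{\omega}$, the maximum pole order of $\rho_{\omega}(g)$ along a basis of $L_{\omega}$ depends only on the double coset $KgK$. The operator $\rho_{\omega}(\pi_{x}^{\mu})$ acts on the $\nu$-weight component of $V_{\omega}\otimes_{\co_{x}}\fx$ by $\pi_{x}^{\langle\nu,\mu\rangle}$, and since $\mu$ is dominant, the linear form $\nu\mapsto\langle\nu,\mu\rangle$ on the convex hull of the weights of $V_{\omega}$ is minimized at the vertex $\nu=w_{0}\omega$, giving a maximal pole order equal to $\langle\omega,-w_{0}\mu\rangle$. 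Combined with the Cartan decomposition $\bg(\fx)=\coprod_{\mu\in X_{*}(\bt)^{+}}K\pi_{x}^{\mu}K$, this proves one direction, and since the pairings $\langle\bar{\omega}_{i},-w_{0}\mu\rangle$ recover the image of $\mu$ in the coroot lattice modulo center while the valuations of the characters $\omega_{j}'$ recover the central part, also the converse: equality for $\la$ forces $\mu=\la$.

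For the closure statement I would invoke $\overline{K\pi_{x}^{\la}K}=\coprod_{\mu\leq\la}K\pi_{x}^{\mu}K$, where $\mu\leq\la$ means that $\la-\mu$ is a nonnegative integer combination of simple coroots. For dominant $\mu$ this is equivalent to $\langle\omega,-w_{0}(\la-\mu)\rangle\geq 0$ for every dominant $\omega$, with equality whenever $\omega=\omega_{j}'$ (such characters are fixed by $w_{0}$ and vanish on the coroot lattice). Together with the previous paragraph this yields the pole-order characterization of the closure.

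Finally, the Vinberg part follows from unwinding the defining embedding. By construction $V_{\bg}$ is the normalization of the closure of $\bg_{+}$ in $H_{\bg}$, and since $V_{\bg}$ is normal while $\co_{x}$ is a discrete valuation ring, an $\fx$-point of $\bg_{+}$ extends to $V_{\bg}(\co_{x})$ iff its image in $H_{\bg}$ has $\co_{x}$-integral coordinates. I would then check each component of $g_{+}=(\pi_{x}^{-w_{0}\la},g)$: the $\End(V_{\omega_{i}})$-component equals $\pi_{x}^{\langle\omega_{i},-w_{0}\la\rangle}\rho_{\omega_{i}}(g)$, whose integrality is exactly the pole-order bound of the previous paragraph; the character $\omega_{i}'(\pi_{x}^{-w_{0}\la}g)$ has valuation zero because $w_{0}\omega_{i}'=\omega_{i}'$ forces $\langle\omega_{i}',-w_{0}\la\rangle=-\langle\omega_{i}',\la\rangle$, cancelling the valuation $\langle\omega_{i}',\la\rangle$ of $\omega_{i}'(g)$; and $\alpha_{i}(\pi_{x}^{-w_{0}\la})=\pi_{x}^{\langle-w_{0}\alpha_{i},\la\rangle}$ is integral because $-w_{0}\alpha_{i}$ is a simple root and $\la$ is dominant. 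The refinement to $V_{\bg}^{0}$ amounts to requiring each $\End$-component to be nonzero modulo $\pi_{x}$, which matches exactly the equality case of the pole-order bound, i.e. the strict Cartan condition $g\in K\pi_{x}^{\la}K$. The main subtlety of the argument is the combinatorial equivalence between the dominance order on $X_{*}(\bt)^{+}$ and the family of pole-order inequalities (with equality on characters); everything else reduces to direct bookkeeping with weight-space decompositions.
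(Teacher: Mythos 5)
Your argument is correct and follows essentially the same route as the paper's own (much terser) proof: reduce the pole-order computation to $\pi_{x}^{\la}$ by bi-$K_{x}$-invariance, recover $\la$ from the pairings $\left\langle \omega,-w_{0}\la\right\rangle$, and deduce the Vinberg integrality from the embedding of $\bg_{+}$ into $H_{\bg}$ together with normality of $V_{\bg}$ over the DVR $\co_{x}$ (what the paper compresses into ``la continuit�''). Your version merely makes explicit the weight-space computation, the dominance-order combinatorics for the closure, and the component-by-component check in $H_{\bg}$.
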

\begin{proof}
Le plus grand des ordres des pôles est invariant à gauche et à droite par $K_{x}$, en particulier il nous suffit de regarder celui de $\pi_{x}^{\la}$, cet ordre est égal à $\left\langle \omega,-w_{0}\lambda\right\rangle$. Inversement, les entiers $\left\langle \omega,-w_{0}\lambda\right\rangle$ déterminent uniquement $\lambda$.

Enfin, comme l'élément $g_{+}$  est dans $\bg_{+}(F_{x})$ et que pour tout $\omega\in X^{*}(\bt)^{+}$, $\rho_{\omega}(g_{+})\in \End V_{\omega}(\mathcal{O}_{x})$, la continuité nous donne le résultat voulu.
La preuve pour $\overline{K_{x}\pi_{x}^{\la}K_{x}}$ est analogue.
\end{proof}

\subsubsection{Construction d'un $T$-torseur}
On commence par traiter le cas déployé.
Choisissons une base $\omega_{1}',\dots,\omega_{l}'$ du réseau des caractères
$X^{*}(\bt)$ telle que
\begin{center}
$ \forall~\alpha\in\Delta=\{\alpha_{1},\dots,\alpha_{l}\}, \left\langle \chi,\check{\alpha}\right\rangle=0$
\end{center}
Chaque $\omega_{i'}':\bt\rightarrow\mathbb{G}_{m}$, $1\leq i'\leq l$, se prolonge de manière unique en un caractère
\begin{center}
$\omega_{i'}':\bg\rightarrow\mathbb{G}_{m}$.
\end{center}
Pour tout indice $i', 1\leq i'\leq l$, notons $V_{\omega_{i}'}$ un espace vectoriel de dimension un sur lequel $\bg$ agit par $\omega_{i'}':\bg\rightarrow\mathbb{G}_{m}$.

L'ensemble des poids dominants est stable par translation par les éléments du réseau $\mathbb{Z}\omega_{1}'+\dots+\mathbb{Z}\omega_{l}'$. Le quotient par ce réseau est un cône saturé non dégénéré de $X^{*}(\bt)/(\mathbb{Z}\omega_{1}'+\dots+\mathbb{Z}\omega_{l}')$ qui est engendré par $\bar{\omega}_{1},\dots,\bar{\omega}_{r}$ que l'on relève en une famille de caractères $\omega_{1},\dots,\omega_{r}$ de $\bt$.
Pour tout $\omega\in X^{*}(\bt)$ et $\mu\in\Div^{+}(X,\bt)$, l'accouplement
\begin{center}
$\left\langle\omega, \mu\right\rangle=\sum\limits_{x\in\left|X\right|}\left\langle\omega, \mu_{x}\right\rangle [x]$
\end{center}
définit un diviseur effectif sur la courbe.
On note $\left|\left\langle\omega, \mu \right\rangle\right|$ le degré de ce diviseur.

Pour $x\in \left|X\right|$, on rappelle que $X_{*}(T)=\bt(F_{x})/\bt(\co_{x})$, en particulier pour tout $\mu\in X_{*}(\bt)^{+}$, on peut lui associer un $\bt$-torseur sur $D_{x}$ avec une trivialisation générique.
Pour $\mu=\sum\limits_{s\in S}\mu_{s}[s]\in\Div^{+}(X,\bt)$, considérons alors le $\bt$-torseur $E_{\bt}(\mu)$ sur $X$, qui consiste à recoller le $\bt$-torseur trivial en dehors de $S$ avec le $\bt$-torseur sur le voisinage formel des points de $S$, donné par $\mu$, lequel est muni canoniquement d'une trivialisation générique.
Ce recollement nous est donné par une généralisation de Beauville-Laszlo par Beilinson-Drinfeld \cite[sect. 2.3.7]{BD}.
Pour chaque racine positive $\alpha\in R^{+}$,  nous avons
\begin{center}
$E_{\bt}(\mu)\times^{\bt,\alpha}\mathbb{G}_{m}=\mathcal{O}_{X}(\left\langle\alpha, \mu\right\rangle)$.
\end{center}
Comme $\left\langle \alpha, \mu\right\rangle\geq 0$, on a une section canonique définie sur la courbe que l'on note $1_{\left\langle \alpha, \mu\right\rangle}$.
En se limitant aux racines simples $(\alpha_{i})_{1\leq i \leq r}$, nous obtenons  $r$-fibrés en droites munis de $r$-sections, d'où l'on déduit un morphisme 
\begin{center}
$\mu:X\rightarrow[A_{\bg}/\bz_{+}]$.
\end{center}
où l'on rappelle que $A_{\bg}$ désigne la base du morphisme d'abélianisation 
\begin{center}
$\alpha:V_{\bg}\rightarrow A_{\bg}:=\mathbb{A}^{r}$
\end{center}
donné par les racines simples de $\bg$. 
\medskip

Montrons comment l'on passe au cas quasi-déployé.
On veut construire une flèche 
\begin{center}
$[\mu]:X\rightarrow[A_{G}/Z_{+}]$
\end{center}
à partir d'une combinaison formelle $\mu=\sum\limits_{x\in X}[\mu_{x}][x]\in \Div^{+}(X,T)$.
La donnée d'une telle flèche revient à la donnée d'un $T$-torseur $E_{T}(\mu)$ et d'une section $\phi\in H^{0}(X,A_{G}\wedge^{T}E_{T}(\mu))$.

Le $T$-torseur $E_{T}(\mu)$ s'obtient de la même manière que dans le cas déployé, comme nous avons l'isomorphisme suivant \cite[Thm.5.1]{Rap}:
\begin{center}
$T(F_{x})/T(\co_{x})=X_{*}(\bt)_{\Gamma}$
\end{center}
et comme l'ensemble des orbites $[X_{*}(\bt)^{+}/\Gamma]$ s'identifie à un sous-ensemble des coinvariants $X_{*}(\bt)_{\Gamma}$.
Il nous faut maintenant obtenir une section $H^{0}(X,A_{G}\wedge^{T}E_{T}(\mu))$.
Le $T$-torseur $E_{T}(\mu)$ est un $\bt$-torseur $E'$ sur $X_{\rho}$, $\Gamma$-équivariant et une section de $H^{0}(X,A_{G}\wedge^{T}E_{T}(\mu))$ correspond à la donnée d'une section $\phi'\in H^{0}(X_{\rho},A_{\bg}\wedge^{\bt}E')$ qui est $\Gamma$-invariante pour l'action de $\Gamma$ sur $A_{\bg}\wedge^{\bt}E'$.

Si l'on pousse par une racine simple $\alpha$ ce torseur, on obtient alors le fibré en droites
\begin{center}
 $\co_{X_{\rho}}(\left\langle\alpha,\sum\limits_{y\vert~\rho(y)=x}(\sum\limits_{\sigma\in \Gamma}\sigma\mu_{x})[y] \right\rangle)$.
\end{center}
Enfin, par construction, on a une trivialisation du torseur $E_{T}([\mu])$ en dehors de $S$ qui revient à une trivialisation de $E'$ en dehors de $\rho^{-1}(S)$, $\Gamma$-invariante, laquelle se prolonge l'action de Galois préserve le caractère dominant. On obtient donc la section voulue et donc la flèche:
\begin{center}
$[\mu]:X\rightarrow[A_{G}/Z_{+}]$
\end{center}

\subsubsection{Une autre définition de $\cmd$}\label{autdef2}
On fixe $\la=\sum\limits_{s\in S}\la_{s}[s]\in \Div^{+}(X,T)$. Nous avons vu que $\Div(X,T)$ est muni d'une action du groupe de Weyl $W$, on considère alors l'élément $-w_{0}\la\in \Div^{+}(X,T)$ où $w_{0}$ est l'élément long du groupe de Weyl.
En vertu de la section précédente, nous obtenons une flèche:
\begin{center}
$-w_{0}\la:X\rightarrow[A_{G}/Z_{+}]$.
\end{center}
Nous avons le morphisme d'abélianisation $\alpha:V_{G}\rightarrow A_{G}$ qui  est équivariant par rapport à l'action du centre $Z_{+}$ de $G_{+}$, d'où l'on obtient une flèche :
\begin{center}
$\alpha: [V_{G}/Z_{+}]\rightarrow [A_{G}/Z_{+}]$.
\end{center}
Posons  alors $V_{G}^{\la}:=(-w_{0}\la)^{*}[V_{G}/Z_{+}]$. Nous obtenons un espace fibré sur $X$.
Le semi-groupe de Vinberg admet un ouvert lisse $V_{G}^{0}$ (cf. sect.\ref{introsemi}) et on note de la même manière $V_{G}^{\la,0}$ l'espace tiré sur $X$ par la flèche $-w_{0}\la$.
Nous obtenons maintenant la caractérisation alternative de $\cmd$:
\begin{prop}\label{hitchin}
L'espace de Hitchin $\cmd$ se réinterprète comme le champ des sections 
\begin{center}
$\Hom_{X}(X, [V_{G}^{\la}/G])$
\end{center}
où $G$ agit sur $V_{G}^{\la}$ par conjugaison. 
Il classifie les couples $(E,\phi)$ avec $E$ un $G$-torseur sur $X$ et $\phi$ une section de l'espace fibré au-dessus de $X$
\begin{center}
$V_{G}^{\la}\times^{G}E$.
\end{center}
Nous avons également l'ouvert $\cmdo$ qui classifie les sections
\begin{center}
$h_{(E,\phi)}:X\rightarrow [V_{G}^{\la,0}/G]$.
\end{center}
\end{prop}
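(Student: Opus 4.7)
The plan is to match the two descriptions of $\cmd$ via Lemma~\ref{cont}, which converts the Hecke-type condition $\inv_{x}(E,E,\phi)\leq \la_{x}$ into the affine condition that the twisted element $\phi_{+}=(\pi_{x}^{-w_{0}\la_{x}},\phi)$ lies in $V_{G}(\mathcal{O}_{x})$.

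First I would unpack the construction of $V_{G}^{\la}$. The map $-w_{0}\la:X\rightarrow [A_{G}/Z_{+}]$ corresponds to a $Z_{+}$-torsor $E_{\la}$ on $X$ equipped with a compatible section of $A_{G}\wedge^{Z_{+}}E_{\la}$; hence the pullback $V_{G}^{\la}=(-w_{0}\la)^{*}[V_{G}/Z_{+}]$ is the twist $E_{\la}\wedge^{Z_{+}}V_{G}$, viewed as a $G$-scheme over $X$ (the $G$- and $Z_{+}$-actions on $V_{G}$ commute). Over $X-S$ the cocharacter is zero, so $E_{\la}|_{X-S}$ is canonically trivialized and $V_{G}^{\la}|_{X-S}$ contains the open dense subgroup $G$. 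At a point $x\in S$, after trivializing $E_{\la}$ over $D_{x}$, a section of $V_{G}^{\la}$ over $D_{x}$ is precisely an element $g_{+}\in V_{G}(\mathcal{O}_{x})$ with prescribed image $\pi_{x}^{-w_{0}\la_{x}}$ in $A_{G}$, i.e.\ one of the form $(\pi_{x}^{-w_{0}\la_{x}},g)$ with $g\in G(F_{x})$.

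Next I would set up the bijection of groupoids. Given a section $h:X\rightarrow [V_{G}^{\la}/G]$, composition with $[V_{G}^{\la}/G]\rightarrow BG$ produces a $G$-torsor $E$, and $h$ lifts to a section $\phi$ of the twisted bundle $V_{G}^{\la}\times^{G}E$ over $X$. On $X-S$ this twist is $\Ad(E)$, so $\phi|_{X-S}$ is an automorphism of $E|_{X-S}$. At each $x\in S$, a local trivialization of $E|_{D_{x}}$ sends $\phi|_{D_{x}}$ to an element $g_{x,+}\in V_{G}(\mathcal{O}_{x})$ of the form above, and Lemma~\ref{cont} translates this into $g_{x}\in\overline{K_{x}\pi_{x}^{\la_{x}}K_{x}}$, i.e.\ $\inv_{x}(E,E,\phi|_{D_{x}^{\bullet}})\leq \la_{x}$. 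Conversely, given $(E,\phi)\in\cmd$, the automorphism $\phi$ defines a section of $V_{G}^{\la}\times^{G}E$ on $X-S$; the bound on $\inv_{x}$ together with Lemma~\ref{cont} provides a matching section over each $D_{x}$, and Beauville-Laszlo gluing as in \cite{BD} yields a global section. These two constructions are visibly inverse to one another and functorial in the test scheme; compatibility with the quasi-split structure is automatic because every ingredient has been built $\Gamma$-equivariantly in Section~\ref{quasidep}.

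The case of the open substack $\cmdo$ is handled identically: one invokes the sharper half of Lemma~\ref{cont}, namely $g\in K_{x}\pi_{x}^{\la_{x}}K_{x}$ iff $g_{+}\in V_{G}^{0}(\mathcal{O}_{x})$, and runs the same argument with $V_{G}^{\la,0}$ in place of $V_{G}^{\la}$. The only subtlety to watch is the interaction of the $Z_{+}$-twist (which encodes the prescribed poles via $-w_{0}\la$) with the $G$-torsor $E$ (which encodes the trivialization of the adjoint bundle); but the commutation of the two actions on $V_{G}$ makes this bookkeeping straightforward, so no serious obstacle is expected.
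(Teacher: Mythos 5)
Your proposal is correct and follows essentially the same route as the paper: both directions of the equivalence are obtained by the local translation of Lemme \ref{cont} (automorphism with bounded poles $\leftrightarrow$ point $(\pi_x^{-w_0\la_x},\phi)$ of $V_G(\co_x)$ with prescribed abelian part), combined with Beauville--Laszlo gluing along the points of $S$, and the same sharper statement for $V_G^0$ handles the open substack $\cmdo$. Your added discussion of the $Z_+$-twist and of $\Gamma$-equivariance in the quasi-split case only makes explicit what the paper leaves implicit by reducing to the split case.
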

\begin{proof}
Pour simplifier, on suppose de plus $G$ déployé.
On construit une flèche de $\cmd$ vers $\Hom (X, [V_{G}^{\la}/G])$. Soit $R$ une $k$-algèbre et $(E,\beta)\in \cmd(R)$. La section $\beta$ est un automorphisme de $E$ en dehors de $X_{R}-\bigcup\limits_{s\in S}\Gamma_{s\times\Spec R}$ qui se prolonge en une injection de fibrés vectoriels sur $X_{R}$.
Fixons $s\in S$, regardons la section autour du voisinage formel épointé $D_{s}^{\bullet}\hat{\times}R$, quitte à localiser sur $R$, on peut supposer que le torseur $E_{D_{s}\hat{\times}R}$ est trivial. En particulier, $\beta$ induit pour toute représentation irréductible $(\rho_{\omega},V_{\omega})$ de plus haut poids $\omega$, une injection de fibrés vectoriels:
\begin{center}
$\rho_{\omega}(\beta):V_{\omega}\rightarrow V_{\omega}(\left\langle\omega,-w_{0}\la \right\rangle)$
\end{center}
et donc en multipliant par $\pi_{S}^{-w_{0}\la}$, on obtient un point $\phi_{S}\in V_{G}^{\la}(R[[\pi_{S}]])$ tel que $\rho_{\omega}(\phi_{S})=\pi_{S}^{-w_{0}\la}\rho_{\omega}(\beta)$. Il ne nous reste plus qu'à recoller $\phi_{S}$ avec $\beta_{X_{R}-\bigcup\limits_{s\in S}\Gamma_{s\times\Spec R}}$ pour obtenir une section $\phi'\in H^{0}(X_{R},V_{G}^{\la})$, d'où une flèche
\begin{center}
$\cmd\rightarrow\Hom (X, [V_{G}^{\la}/G])$.
\end{center}
Nous allons maintenant construire une flèche dans le sens inverse dont on laissera au lecteur le soin de vérifier qu'elles sont réciproques l'une de l'autre. Étant donné une paire $(E,\phi)$ avec une section $\phi\in H^{0}(X_{R},V_{G}^{\la}\times^{G}E)$, en dehors du graphe des points de $S$, on a $\la_{x}=0$, donc  $\phi$ est un automorphisme de $E_{\vert X_{R}-\Gamma_{S,\Spec R}}$ et localement en les points de $S$, comme $\phi\in V_{G}^{\la_{s}}(R[[\pi_{s}]])$, sa partie abélienne est donné par $\pi_{s}^{-w_{0}\la}$, et donc en multipliant par $\pi_{s}^{-w_{0}\la}$, nous obtenons notre injection au niveau des fibrés vectoriels qui prolonge la section sur $X_{R}-\Gamma_{S,\Spec R}$.
\end{proof}
\begin{defi}
On définit également l'ouvert régulier $\cmdo^{reg}$ de $\cmdo$ par le champ des sections:
\begin{center}
$\cmdo^{reg}:=\Hom_{X}(X,[V_{G}^{\la,reg}/G])$.
\end{center}
avec $V_{G}^{\la,reg}=(-w_{0}\la)^{*}[V_{G}^{reg}/Z_{+}]$ et où $V_{G}^{reg}\subset V_{G}$ désigne le lieu où la dimension du centralisateur est minimale.
\end{defi}
Passons à la définition de la base de cette fibration.
Nous avons un morphisme de Steinberg 
\begin{center}
$\chi_{+}: V_{G}\rightarrow\kc$
\end{center}
ainsi qu'un morphisme de projection $p_{1}:[\kc/Z_{+}]\rightarrow[A_{G}/Z_{+}]$ qui est lisse de dimension relative $r$ toujours d'après le théorème \ref{bouth}. On note $\kc^{\lambda}$ le changement de base à $X$. 
On définit alors la base de Hitchin $\abd$ comme le champ des sections
\begin{center}
$h_{a}:X\rightarrow\kc^{\la}$.
\end{center}
Grâce au morphisme de Steinberg $\chi_{+}$, nous avons un morphisme de Hitchin 
\begin{center}
$f:\cmd\rightarrow\abd$
\end{center}
donné par $f(E,\phi)=\chi_{+}(\phi)$.

$\rmq$ A ce stade, il est important remarquer que $\cmd$ peut être vide si nous n'imposons pas de conditions sur $\la$.
Le cas déployé est suffisant pour l'explication qui suit.
Nous avons une suite exacte de la forme:
$$\xymatrix{1\ar[r]&\bg_{der}\ar[r]&\bg\ar[r]^{\det_{\bg}}&\mathbb{G}_{m}^{l}\ar[r]&1}$$

En particulier, étant donné un point $(E,\phi)\in\cmd(k)$ en considérant $\det_{\bg}(\phi)$, cela nous fournit $l$-fonctions sur la courbe $X$ dont le degré en vertu du lemme \ref{cont}, est $\deg\left\langle \omega_{i}',-w_{0}\la\right\rangle$.
Cela impose donc que:
\begin{center}
$\forall~ 1\leq i\leq l,\deg\left\langle \omega_{i}',-w_{0}\la\right\rangle=0$,
\end{center}
hypothèse que nous ferons par la suite systématiquement.
Dans ce cas, la base de Hitchin pour le groupe $\bg$ est donné par :
\begin{center}
$\abd=\bigoplus\limits_{j=1}^{l}(H^{0}(X,\co_{X}(\left\langle \omega_{j}',-w_{0}\la\right\rangle))-\{0\})\oplus\bigoplus\limits_{i=1}^{r}H^{0}(X,\co_{X}(\left\langle \omega_{i},-w_{0}\la\right\rangle))$.
\end{center}

En particulier, une condition nécessaire pour que $\cmd$ soit non vide est que de plus:
\begin{center}
$\forall 1\leq j\leq l, H^{0}(X,\co_{X}(\left\langle \omega_{j}',-w_{0}\la\right\rangle))\neq \{0\}$.
\end{center}

\subsection{Le champ de Picard}
Le centralisateur régulier va nous permettre d'obtenir une action d'un champ de Picard sur les fibres de Hitchin.
Nous avons construit une section de Steinberg $\eps_{+}$ et un centralisateur régulier $J$ qui est un schéma en groupes commutatif et lisse muni d'un morphisme :
\begin{center}
$\chi_{+}^{*}J\rightarrow I$
\end{center}
qui est un isomorphisme sur l'ouvert $V_{G}^{reg}$ constitué des éléments conjugués à $\eps_{+}(\kc)$ et $I$ le schéma des centralisateurs des éléments de $V_{G}$ dans $G$.
On tire alors le centralisateur régulier $J$ en un schéma $J^{\la}:=(-w_{0}\la)^{*}J$ sur $\kcd$.
Pour tout $S$-point de $\abd$, on a une flèche $h_{a}:X\times S\rightarrow\kcd$. Posons $J_{a}:=h_{a}^{*}J^{\la}$ l'image réciproque de $J^{\la}$ sur $\kcd$.
\begin{defi}
On considère le groupoïde de Picard $\mathcal{P}_{a}(S)$ des $J_{a}$-torseurs sur $X\times S$. Quand $a$ varie, cela définit un groupoïde de Picard $\mathcal{P}$ au-dessus de $\abd$.
\end{defi}
La flèche $\chi_{+}^{*}J\rightarrow I$ induit pour tout $S$-point $(E,\phi)$ au-dessus de $a$ une flèche
\begin{center}
$J_{a}\rightarrow \Aut_{X\times S}(E,\phi)=h_{E,\phi}^{*}I$
\end{center}
avec $I$ le schéma des centralisateurs des éléments de $V_{G}$ dans $G$.
On en déduit alors une action du groupoïde de Picard $\mathcal{P}_{a}(S)$ sur le groupoïde $\overline{\mathcal{M}}_{\la}(a)(S)$, et donc une action de $\mathcal{P}$ sur $\cmd$.
De même que pour les fibres de Springer affines \cite[Prop. 3.6]{Bt}, l'orbite régulière  est une gerbe sous $\cP$.

\begin{prop}\label{picardtorseur}
L'ouvert $\cmdo^{reg}$  est une gerbe neutre sous l'action de $\cP$.
\end{prop}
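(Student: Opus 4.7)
The plan is to produce a canonical section and then show that, once such a section is given, every other $S$-point of $\cmdo^{reg}$ over a fixed $a$ differs from it by a unique $J_{a}$-torseur. This will simultaneously give neutrality and the gerbe structure, in the spirit of \cite[Prop. 3.6]{Bt} and \cite[Sect.~4.3]{N}.

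For the neutral section, one uses the Steinberg section. By Theorem \ref{bouth2}, the morphism $\eps_{+}:\kc\rightarrow V_{G}^{reg}$ is a section of $\chi_{+}$; it is defined over $X$ under the hypothesis on $G$ recalled above. Pulling back by $-w_{0}\la$, one gets a section $\eps_{+}^{\la}:\kcd\rightarrow V_{G}^{\la,reg}$ of the restricted Steinberg map, so that for any $S$-point $h_{a}:X\times S\rightarrow\kcd$ of $\abd$, the composition $\eps_{+}^{\la}\circ h_{a}$ produces a canonical section $X\times S\rightarrow V_{G}^{\la,reg}$ mapping to $a$, with underlying $G$-torseur trivial. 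This defines a tautological $1$-morphism $\abd\rightarrow\cmdo^{reg}$, which furnishes the distinguished global section exhibiting neutrality.

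For the gerbe property, I would argue in two steps. First, by Theorem \ref{bouth2}, the morphism $\chi_{+}^{reg}:V_{G}^{reg}\rightarrow\kc$ is smooth and its geometric fibers are $G$-orbits. Given two $S$-points $(E_{1},\phi_{1})$ and $(E_{2},\phi_{2})$ of $\cmdo^{reg}$ lying over the same $a$, one can work smooth locally over $X\times S$: trivialising $E_{1}$ and $E_{2}$, both $\phi_{i}$ become sections of $V_{G}^{\la,reg}$ with the same image in $\kcd$, hence are $G$-conjugate; this gives the local transitivity of $\cP$. Second, the ambiguity in the conjugating element is precisely a section of $h_{E_{i},\phi_{i}}^{*}I$, and by the isomorphism $\chi_{+}^{*}J\stackrel{\sim}{\rightarrow}I$ on $V_{G}^{reg}$ (Theorem \ref{bouth2}), this is identified with $J_{a}$. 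Hence $\Aut_{X\times S}(E,\phi)=J_{a}$ for any regular $(E,\phi)$, and the difference of two regular lifts is a torseur under $J_{a}$.

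The point requiring a touch of care is the passage from the local conjugacy just described to an actual $J_{a}$-torseur globally over $X\times S$. One can formalise this by considering the stack $\underline{\Isom}((E_{1},\phi_{1}),(E_{2},\phi_{2}))$ of isomorphisms; local transitivity says it is smooth-locally non-empty over $X\times S$, while the stabiliser computation says it is a pseudo-torseur under $J_{a}$, hence a $J_{a}$-torseur, i.e.\ an object of $\cP_{a}(S)$, and this assignment is functorial. The Steinberg section then realises $\cmdo^{reg}$ as the classifying stack of $\cP$-torseurs, which is exactly the statement that $\cmdo^{reg}$ is a neutral gerbe under $\cP$. The only subtlety, already neutralised by hypothesis, is the existence of $\eps_{+}$ over $X$ in the quasi-split setting, which is why the assumption on the absence of $A_{2r}$ factors is invoked.
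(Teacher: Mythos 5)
Votre démonstration est correcte et suit exactement la même stratégie que celle du texte, qui renvoie à la preuve pour les fibres de Springer affines \cite[Prop. 3.6]{Bt} : transitivité lisse-locale via le fait que les fibres géométriques de $\chi_{+}^{reg}$ sont des $G$-orbites, identification du stabilisateur à $J_{a}$ par l'isomorphisme $\chi_{+}^{*}J\rightarrow I$ sur $V_{G}^{reg}$, et neutralisation par la section de Steinberg $\eps_{+}$. Vous ne faites qu'expliciter ce que l'article délègue à la référence.
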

\begin{proof}
La preuve est la même que pour les fibres de Springer affines \cite[Prop. 3.6]{Bt} et la gerbe est neutralisée par la section de Steinberg $\epsilon_{+}$.
\end{proof} 
\subsection{Les ouverts $\abdh$ et $\abdd$}
Nous avons besoin d'introduire des ouverts de $\abd$ pour lesquels nous avons plus de prise sur la fibration de Hitchin. Ils seront étudiés de manière approfondie dans le chapitre suivant.
On rappelle que nous avons un morphisme fini plat $W$-équivariant:
\begin{center}
$V_{T}\rightarrow\kc$.
\end{center}
ramifié le long du diviseur discriminant $\mathfrak{D}_{+}=\bigcup\limits_{\alpha\in R}\overline{\Kern(\alpha)}$. On note toujours $\mathfrak{D}_{+}\subset\kc$ l'image dans $\kc$ que l'on tire ensuite sur $\kcd$ en un diviseur $\mathfrak{D}_{\la}:=(-w_{0}\la)^{*}\mathfrak{D}$ (on tire de même $V_{T}$ en $V_{T}^{\la}$).
\begin{defi}
On définit l'ouvert génériquement régulier semisimple $\abdh\subset\abd$ constitué des $a\in\abd$ tels que $a(X)\not\subset\mathfrak{D}_{\la}$ ainsi que 
l'ouvert transversal $\abd^{\diamondsuit}$ constitué des $a\in\abd$ qui intersectent transversalement le diviseur discriminant $\mathfrak{D}_{\la}$.
\end{defi}
$\rmq$ En particulier, on a l'inclusion:
\begin{center}
$\abd^{\diamondsuit}\subset\abdh$. 
\end{center}

Nous avons une flèche $\rho:\pi_{1}(X,x)\rightarrow\out(\bg)$ d'image $\Gamma$. 
Soit $W':=W\rtimes\Gamma$, pour $a\in\abdh$, on a une section $h_{a}:X\rightarrow\kcd$. On pose $\tilde{X}_{a}$ le revêtement fini plat obtenu en tirant par $h_{a}$ le revêtement 
\begin{center}
$X_{\rho}\times V_{T}^{\la}\rightarrow\kcd$.
\end{center}
L'ouvert $\abdh$ a la propriété agréable que le champ de Picard est lisse au-dessus de celui-ci.
\begin{prop}\label{picardlisse}
Pour tout $a\in\abdh$,
\begin{center}
$H^{0}(X,\Lie(J_{a}))=\Lie(Z_{\bg})^{\Gamma}$.
\end{center}
Le champ de Picard $\cP^{\heartsuit}:=\cP\times_{\abd}\abdh$ est lisse sur $\abdh$.
Si, de plus $G$ n'a pas de tore déployé central et si $\Gamma$ est d'ordre premier à la caractéristique, alors $\cP_{a}$ est un champ de Picard de Deligne-Mumford.
\end{prop}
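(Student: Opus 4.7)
The strategy is to reduce each assertion to an invariant-theoretic calculation on the cameral cover, using the identification $J\simeq\tilde J\subset J^{1}$ from Proposition~\ref{galois}. For $a\in\abdh$, I would first form the cameral cover $\pi_{a}:\tilde X_{a}\to X$ by pulling back $X_{\rho}\times V_{T}^{\la}\to\kcd$ along $h_{a}$; the condition $a(X)\not\subset\mathfrak{D}_{\la}$ says exactly that $a$ is generically regular semisimple, so $\pi_{a}$ is finite flat and generically an étale $W'$-torsor, where $W':=W\rtimes\Gamma$. Pulling back the description (\ref{jun}) of $J^{1}$ to $X$ then yields
\[
\Lie(J_{a}) \;\simeq\; \bigl((\pi_{a})_{*}(\Lie(\bt)\otimes_{k}\co_{\tilde X_{a}})\bigr)^{W'},
\]
together with a parallel identity at the group level, with $\Lie(\bt)$ replaced by $T$.

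For the first identity I take $H^{0}$ on $X$. Since $\tilde X_{a}$ is proper over $k$, the ring $H^{0}(\tilde X_{a},\co_{\tilde X_{a}})$ is a sum of copies of $\bar k$ indexed by $\pi_{0}(\tilde X_{a})$ and permuted by $W'$; the standard Donagi--Gaitsgory invariant calculation then gives $H^{0}(X,\Lie J_{a})=\Lie(\bt)^{W'}=(\Lie(\bt)^{W})^{\Gamma}=\Lie(Z_{\bg})^{\Gamma}$, as $\Lie(\bt)^{W}=\Lie(Z_{\bg})$.

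For the smoothness of $\cP^{\heartsuit}$, the tangent and obstruction spaces of $\cP$ at a point over $a$ are $H^{1}(X,\Lie J_{a})$ and $H^{2}(X,\Lie J_{a})$; the latter vanishes because $X$ is a curve, giving fiberwise formal smoothness. To promote this to smoothness of the family $\cP^{\heartsuit}\to\abdh$, I use that by Theorem~\ref{bouth2} and Proposition~\ref{galois} the scheme $J^{\la}$ is a smooth commutative group scheme on $\kcd$; pulled back to the universal curve $X\times\abdh$ it yields a smooth commutative family, and the classifying stack of torsors under such a family over a family of curves is smooth precisely when the fiberwise $H^{2}$ vanishes.

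Finally, for the Deligne--Mumford property, the automorphism group of any $J_{a}$-torsor is $H^{0}(X,J_{a})$; the same cameral description embeds it into $\bigl(T(\bar k)^{\pi_{0}(\tilde X_{a})}\bigr)^{W'}$, whose contribution from the identity component is $T^{W'}(\bar k)=Z_{\bg}^{\Gamma}(\bar k)$. The hypothesis that $G$ has no central split torus forces the neutral component of $Z_{\bg}^{\Gamma}$ to be trivial, and combined with $|\Gamma|$ prime to the characteristic this makes $Z_{\bg}^{\Gamma}$ finite étale, hence $H^{0}(X,J_{a})$ finite. The main obstacle I anticipate is Step~3: upgrading the fiberwise smoothness to a relative statement over $\abdh$ requires a smooth \emph{commutative} model of $J$ in families — which is precisely what Proposition~\ref{galois} supplies via $\tilde J$; the disconnected cameral covers that may appear introduce only minor bookkeeping in Steps~2 and~4, handled as in Ngô.
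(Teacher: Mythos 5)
Your argument is essentially the paper's own: the Galois description of $J_{a}$ via the cameral cover reduces $H^{0}(X,\Lie J_{a})$ to $W'$-invariant sections $\tilde{X}_{a}\to\mathfrak{t}$, smoothness follows from the vanishing of the obstruction space $H^{2}(X,\Lie J_{a})$ on a curve together with the constancy of $h^{0}$, and the Deligne--Mumford property from $H^{0}(X,J_{a})=\bt^{\bw\rtimes\Gamma}$ being finite étale under the stated hypotheses. The only point to flag is that your invariant computation, as for the paper's, really uses that $\tilde{X}_{a}$ is reduced and that $W'$ acts transitively on its components (the paper simply asserts geometric connectedness and reducedness); for a disconnected cover one gets $\mathfrak{t}^{W'_{c}}$ for a component stabilizer $W'_{c}$, so the ``minor bookkeeping'' you defer is exactly this transitivity argument.
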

\begin{proof}
De la description galoisienne, on déduit que le groupe $H^{0}(X,\Lie(J_{a}))$ s'identifie aux sections $W$-équivariantes:
\begin{center}
$s:\tilde{X}_{a}\rightarrow\mathfrak{t}$.
\end{center}
Comme on a vu que $\tilde{X}_{a}$ est une courbe propre géométriquement connexe et réduite, 
\begin{center}
$H^{0}(\tilde{X}_{a},\mathfrak{t})=\mathfrak{t}$.
\end{center}
et donc en prenant les $W'$-invariants, on déduit de l'annulation $\mathfrak{t}_{der}^{W'}$, que 
\begin{center}
$H^{0}(X,\Lie(J_{a}))=\Lie(Z_{\bg})^{\Gamma}$.
\end{center}
Le schéma $J_{a}$ est un schéma en groupes commutatif et lisse, l'obstruction à déformer un $J_{a}$-torseur est dans le $H^{2}(X,\Lie(J_{a}))$, qui est nul comme nous sommes sur une courbe. En particulier, $\cP_{a}$ est lisse.
Enfin, comme la dimension du $H^{0}$ est constante, on en déduit que la dimension de l'espace tangent reste constante, d'où la lissité de $\cP\times_{\abd}\abdh$ sur $\abdh$.
De plus, nous avons:
\begin{center}
$H^{0}(X,J_{a})=\bt^{\bw\rtimes\Gamma}$
\end{center}
lequel est fini non-ramifié si $G$ n'a pas de tore central et $\Gamma$ premier à la caractéristique et donc sous cette hypothèse $\cP_{a}$ est bien de Deligne-Mumford.
\end{proof}
\begin{cor}\label{lissitereg}
L'ouvert régulier $\cmdo^{reg,\heartsuit}$ est lisse au-dessus de $\abdh$. 
\end{cor}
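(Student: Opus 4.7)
La strat�gie consiste � combiner directement les deux propositions imm�diatement pr�c�dentes. D'apr�s la Proposition \ref{picardtorseur}, l'ouvert r�gulier $\cmdo^{reg}$ est une gerbe neutre sous l'action du champ de Picard $\cP$ au-dessus de $\abd$, la neutralisation �tant fournie par la section de Steinberg $\eps_{+}$ �tendue au semigroupe de Vinberg (cf. Th�or�me \ref{bouth2}).

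En premier lieu, j'effectuerais le changement de base le long de l'inclusion $\abdh\hookrightarrow\abd$. La propri�t� de gerbe neutre �tant stable par changement de base, le champ $\cmdo^{reg,\heartsuit}:=\cmdo^{reg}\times_{\abd}\abdh$ est alors une gerbe neutre sous $\cP^{\heartsuit}$ au-dessus de $\abdh$. La section de Steinberg induit donc un isomorphisme canonique
\begin{center}
$\cmdo^{reg,\heartsuit}\cong B_{\abdh}\cP^{\heartsuit}$.
\end{center}

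En second lieu, la Proposition \ref{picardlisse} assure la lissit� de $\cP^{\heartsuit}$ au-dessus de $\abdh$. On conclut alors par le principe g�n�ral suivant : pour tout champ en groupes commutatif $H$ lisse sur une base $S$, le champ classifiant $B_{S}H$ est lisse sur $S$, de dimension relative $-\dim_{S}H$. Ceci se v�rifie en remarquant que la projection canonique $S\rightarrow B_{S}H$ est un $H$-torseur, donc lisse d�s que $H$ l'est, et la lissit� descend le long d'un tel atlas.

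Le seul point qui demande un minimum d'attention est l'interpr�tation pr�cise du terme \og gerbe neutre \fg\ dans la Proposition \ref{picardtorseur} pour obtenir l'isomorphisme global avec $B_{\abdh}\cP^{\heartsuit}$, mais la construction explicite de la trivialisation via $\eps_{+}$ rend cette v�rification formelle. Aucun obstacle s�rieux n'est � pr�voir : le corollaire est un simple recollement des deux propositions pr�c�dentes.
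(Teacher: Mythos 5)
Votre preuve est correcte et suit exactement la m�me d�marche que celle du papier : on combine la Proposition \ref{picardtorseur} (gerbe neutre sous $\cP$, neutralis�e par $\eps_{+}$) avec la Proposition \ref{picardlisse} (lissit� de $\cP^{\heartsuit}$ sur $\abdh$), la lissit� du champ classifiant d'un champ de Picard lisse �tant le point formel final. Votre r�daction explicite simplement ce que le papier laisse implicite.
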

\begin{proof}
D'après la proposition $\ref{picardtorseur}$, $\cmdo^{reg,\heartsuit}$ est une gerbe sous $\cP^{\heartsuit}$ et comme $\cP^{\heartsuit}$ est lisse sur $\abdh$, on conclut.
\end{proof}
En général, la fibration de Hitchin n'a aucune raison d'être lisse. Néanmoins, la situation est plus agréable au-dessus de l'ouvert transversal $\abdd$.
\begin{prop}\label{fact}
Soit $\cmdo^{reg,\diamond}$ la restriction à $\abdd$ de $\cmdo^{reg}$ et $f^{\diamond}:\cmdo^{reg,\diamond}\rightarrow\abdd$, alors nous avons un carré cartésien :
$$\xymatrix{\cmdo^{reg,\diamond}\ar[d]_{f^{\diamond}}\ar[r]&\cmd\ar[d]^{f}\\\abdd\ar[r]&\abd}$$
en particulier, pour tout $a\in\abdd$, nous avons $\cmd(a)=\cmdo^{reg}(a)$.
\end{prop}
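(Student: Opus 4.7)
The plan is to show that the inclusion $\cmdo^{reg,\diamond}\hookrightarrow\cmd\times_{\abd}\abdd$ is in fact an equality, or equivalently that every $(E,\phi)\in\cmd(R)$ whose Hitchin image $a=f(E,\phi)$ factors through $\abdd$ has $\phi$ taking values in the regular open $V_{G}^{\la,reg}$. As regularity is open on the target, the non-regular locus pulls back to a closed subset of $X\times\Spec R$, and emptiness may be checked fiberwise on $\Spec R$. I therefore reduce to $R=\bar k$ and must show $\phi(x)\in V_{G}^{reg}$ at every geometric point $x\in X$.

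After trivializing $E$ on the formal disk $D_{x}$, the question becomes local at $x$: given $\g\in V_{G}^{\la_{x}}(\co_{x})$ with $a:=\chi_{+}(\g)$ meeting the discriminant divisor $\mathfrak{D}_{+}$ transversally at $x$, the reduction $\bar\g\in V_{G}(\bar k)$ must be regular. If $a(x)\notin\mathfrak{D}_{+}$ there is nothing to check, since the fiber $\chi_{+}^{-1}(a(x))$ is then a single regular semisimple $G$-orbit by Theorem~\ref{bouth2}. The substance of the proposition lies in the case $a(x)\in\mathfrak{D}_{+}$, where transversality is precisely the statement $\val_{x}(\disc(a))=1$ for the reduced defining equation $\disc$ of $\mathfrak{D}_{+}$.

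I would then prove the following local lemma: \emph{if $\bar\g\in V_{G}(\bar k)\setminus V_{G}^{reg}(\bar k)$, then for any lift $\g\in V_{G}(\co_{x})$ of $\bar\g$ one has $\val_{x}(\disc(\chi_{+}(\g)))\geq 2$.} By Theorem~\ref{bouth2}, $V_{G}^{reg}$ is exactly the locus where $\chi_{+}$ is smooth, so at a non-regular point $\bar\g$ the differential $d\chi_{+}$ fails to be surjective. The sharper input I need is that $\Ima(d\chi_{+}|_{\bar\g})\subset T_{\chi_{+}(\bar\g)}\mathfrak{D}_{+}$, i.e.\ the image of the differential is actually tangent to the discriminant divisor. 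Since $\disc$ cuts out $\mathfrak{D}_{+}$ with multiplicity one, this inclusion forces $\chi_{+}^{*}\disc$ to vanish at $\bar\g$ to order at least two along every tangent direction, contradicting $\val_{x}(\disc(a))=1$.

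The main obstacle is precisely this tangent inclusion $\Ima(d\chi_{+}|_{\bar\g})\subset T_{\chi_{+}(\bar\g)}\mathfrak{D}_{+}$ at non-regular points. It is classical for Lie algebras, where it follows from Kostant's transversality theorem; for the Vinberg semigroup I would deduce it via a Luna- or Slodowy-type transversal slice at $\bar\g$, reducing the question to the rank-one case of $\SL_{2}$, where a direct matrix computation (a non-regular reduction forces a scalar, and any deformation of a scalar has $\text{tr}^{2}-4\det$ of order $\geq 2$) confirms the assertion. Once this lemma is in hand, the proposition drops out immediately from the case analysis above.
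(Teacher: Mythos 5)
Your argument follows the paper's proof essentially verbatim: reduce to a local statement on the formal disk at a point of the curve, and derive a contradiction with transversality from the fact that at a non-regular point of $V_{G}^{\la}$ the pulled-back discriminant lies in the square of the maximal ideal, hence has valuation at least two after composing with the local section. The tangency statement $\Ima(d\chi_{+}|_{\bar\g})\subset T_{\chi_{+}(\bar\g)}\mathfrak{D}_{+}$ that you single out as the main remaining obstacle is precisely the assertion the paper makes, in the equivalent form $\mathfrak{D}_{\la}\in\km_{x}^{2}$, without further justification, deferring to the Lie-algebra argument of \cite[Prop. 4.2]{N2}.
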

\begin{proof}
On suit la preuve de \cite[Prop. 4.2]{N2}.
Pour démontrer que le carré est cartésien, il nous suffit de voir qu'au-dessus de $\abdd$, nous avons l'égalité $\cmd=\cmdo^{reg}$. En particulier, il suffit de montrer qu'étant donné une section $h_{(E,\phi)}:X\rightarrow[V_{G}^{\la}/G]$ tel que $\chi_{+}(\phi)\in\abdd$, alors la section $h_{(E,\phi)}$ se factorise par $[V_{G}^{\la,reg}/G]$.
Le problème étant local, on peut supposer que $E$ est trivial et se restreindre à $X=\Spec(k[[\pi]])$.
La section revient alors à une flèche:
\begin{center}
$\phi:\Spec(\co_{V_{G}^{\la,x}})\rightarrow\Spec(k[[\pi]])$.
\end{center}
On note $\phi^{\sharp}$ le morphisme entre les anneaux locaux et $\km_{x}$ l'idéal maximal de $\co_{V_{G}^{\la,x}}$.
Par localité de $\phi^{\sharp}$, on a que $(\phi^{\sharp})^{-1}(\pi k[[\pi]])=\km_{x}$.
On doit montrer que $x\in V_{G}^{\la,reg}$. S'il n'est pas régulier, alors:
\begin{center}
$\mathfrak{D}_{\la}\in\km_{x}^{2}$.
\end{center}
En particulier, l'image du discriminant dans $k[[t]]$ est de valuation au moins deux, ce qui contredit le fait que $a\in\abdd$.
\end{proof}
\begin{defi}\label{plusgrand}
Pour un entier $N$ et $\la\in\Div^{+}(X,T)$, on dit que $\la\succ N$ si, pour tout $\omega\in X^{*}(T)^{+}$ non nul, nous avons :
\begin{center}
$\left|\left\langle \omega,-w_{0}\la\right\rangle\right|\geq N$.
\end{center}
\end{defi}

Maintenant, il nous faut s'assurer que l'ouvert $\abdd$ est bien non vide, ce qui fait l'objet de la proposition suivante, de preuve identique à celle de Ngô \cite[Prop. 4.7.1]{N}:
\begin{prop}
Supposons $\la\succ2g$, alors l'ouvert $\abdd$ est non vide.
\end{prop}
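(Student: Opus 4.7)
Je commence par noter que $\abdd$ est par d�finition ouvert dans $\abd$, donc pour �tablir sa non-vacuit� il suffit d'exhiber une seule section $h_a:X\to\kcd$ (avec $a\in\abd$) qui coupe transversalement le diviseur discriminant $\mathfrak{D}_{\la}$. Mon plan consiste � appliquer un argument d'incidence de type Bertini.

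L'�tape clef sera d'exploiter l'hypoth�se $\la\succ 2g$ pour �tablir la lissit� et la surjectivit� de l'�valuation en chaque point de la courbe. D'apr�s le th�or�me \ref{bouth}, la base $\bC_+$ est un produit d'un tore par un espace affine de dimension $2r$, d�crit par les coordonn�es $\omega_i'$, $\alpha_i$ et $\chi_i$; tir�es en arri�re sur $X$ par $-w_0\la$, chacune de ces coordonn�es donne naissance � un faisceau inversible de degr� de la forme $\left|\left\langle\omega,-w_0\la\right\rangle\right|\geq 2g$. Par Riemann-Roch, ces faisceaux sont alors globalement engendr�s; plus pr�cis�ment, pour tout $x\in X$, l'annulation de $H^1(X,\cL(-x))$ pour chacun des faisceaux $\cL$ concern�s entra�ne la surjectivit� et la lissit� de l'application d'�valuation $\mathrm{ev}_x:\abd\to\kcd_{\vert x}$.

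Je consid�rerais ensuite le lieu d'incidence $Z\subset\abd\times X$ form� des couples $(a,x)$ tels que $h_a(x)\in\mathfrak{D}_{\la}$ avec intersection non transverse en $x$. Gr�ce � la surjectivit� de $\mathrm{ev}_x$, les deux conditions qui d�finissent $Z$ � savoir l'appartenance � $\mathfrak{D}_{\la}$ et la tangence � coupent dans la fibre $\{x\}\times\abd$ un ferm� de codimension au moins deux. Comme $\dim X=1$, il s'ensuit que $\dim Z\leq\dim\abd-1$, donc la projection $Z\to\abd$ n'est pas surjective, et tout $a$ dans son compl�mentaire fournit la section transversale souhait�e.

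L'obstacle principal sera de justifier l'ind�pendance effective des deux conditions d�finissant $Z_x$, ce qui repose sur la structure locale du diviseur discriminant pr�s d'un noyau de racine. Il faudra exploiter la description $\mathfrak{D}_+=\bigcup_{\alpha\in R}\overline{\Kern(\alpha)}$ et le fait que $V_T\to\bC_+$ est fini plat, g�n�riquement �tale galoisien de groupe $\bw$, pour en d�duire que $\mathfrak{D}_+$ est g�n�riquement r�duit le long de chaque composante, et donc que l'annulation de la d�riv�e normale constitue bien une condition ind�pendante de l'�quation $h_a(x)\in\mathfrak{D}_{\la}$. Le passage au cas quasi-d�ploy� se fait en torsant par l'action de $\Gamma$ via $\rho$, sans modifier le d�compte de dimensions.
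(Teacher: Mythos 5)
Your Bertini-type incidence argument is exactly the intended one: the paper simply refers for this proposition to Ng\^o's proof of the analogous statement \cite[Prop. 4.7.1]{N}, which proceeds precisely by using $\la\succ 2g$ and Riemann--Roch to make the jet-evaluation maps surjective, and then bounding the dimension of the incidence locus $Z\subset\abd\times X$ of non-transversal contact so that its projection to $\abd$ cannot be dominant. The only slight imprecision is that controlling tangency requires surjectivity onto \emph{first-order} jets, i.e.\ the vanishing of $H^{1}(X,\cL(-2x))$ rather than of $H^{1}(X,\cL(-x))$; with that adjustment your sketch coincides with the paper's (cited) proof.
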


\section{Énoncé des théorèmes principaux}

Nous voulons dans cette section calculer le complexe d'intersection de l'espace total de Hitchin $\cmd$. Nous avons vu qu'au-dessus de l'ouvert $\abdd$, la fibration de Hitchin était lisse, nous allons maintenant l'étudier sur un ouvert plus gros que $\abdd$ qui prendra en compte les singularités de l'espace de Hitchin. Cet ouvert sera suffisamment gros pour les applications locales que nous avons en vue.
\subsection{Le théorème de transversalité}
On note $F$ le corps de fonctions de notre courbe projective lisse géométriquement connexe $X$ de genre $g$ définie sur un corps algébriquement clos $k$.
Soit $G$ une forme quasi-déployée de $\bg$ sur $X$, où $\bg$ est connexe réductif avec $\bg_{der}$ simplement connexe.
On suppose que $G$ se déploie sur un revêtement étale galoisien $\rho:X_{\rho}\rightarrow X$  de groupe $\Gamma$ et $G$ n'a pas de facteurs simples de type $A_{2r}$.
 
On considère un diviseur  $\la\in \Div^{+}(X,T)$  et on note $S=\supp(\la)$.
On dispose du diviseur discriminant $\mathfrak{D}_{\la}$ sur $\kcd$.
En considérant la flèche:
\begin{center}
$\ev:X\times\abd\rightarrow\kcd$,
\end{center}
on obtient un fibré en droite en tirant le diviseur $\mathfrak{D}_{\la}$ qui est de degré constant $\delta$.
En particulier, pour tout $a\in\abd$, on pose $\Delta(a):=a^{*}\mathfrak{D}_{\la}$, qui est un diviseur effectif de degré constant $\delta$.
On considère le schéma $X^{(\delta)}:=X^{\delta}/\mathfrak{S}_{\delta}$ qui classifie les diviseurs effectifs $D$ sur $X$ de degré $\delta$, où $\mathfrak{S}_{\delta}$ est le groupe symétrique à $d$ éléments.
D'après la discussion précédente, nous obtenons une flèche :
\begin{center}
$\Phi:\abd\rightarrow X^{(\delta)}$.
\end{center}
donnée par $a\mapsto\Delta(a)$.
Pour un point fermé $x\in X(k)$, on note $\Delta_{x}(a)$ le discriminant local en $x$, i.e. le tiré en arrière sur le disque formel en $x$ et on pose $d_{x}(a)=\val(\Delta_{x}(a))$.
Pour un entier $d\in\mathbb{N}$, nous avons un morphisme:
\begin{center}
$\phi_{d}:X\times X^{(\delta-d-1)}\rightarrow X^{(\delta)}$
\end{center}
qui à une paire $(x,D)$ associe le diviseur $(d+1)[x]+D$.
Cette flèche étant propre, on en déduit que le complémentaire de son image dans $X^{(\delta)}$, 
\begin{center}
$U_{d}:=\{D\in X^{(\delta)}\vert~ \forall ~x\in X, m_{x}(D)\leq d\}$,
\end{center}
est ouvert, où $m_{x}(D)$ désigne la multiplicité de $D$ en $x$.
En prenant l'image réciproque par l'application $\Phi$, nous obtenons donc que:
\begin{center}
$\abdD:=\{a\in\abd\vert~\forall~ x\in X, d_{x}(a)\leq d\}$ 
\end{center}
est ouvert.
Pour tout $a\in\abd$, nous avons une décomposition en somme de diviseurs:
\begin{center}
$\Delta(a)=\Delta_{tr}(a)+\Delta_{sing}(a)$,
\end{center}
où $\Delta_{sing}(a)=\sum\limits_{x\vert~d_{x}(a)\geq 2}d_{x}(a)[x]$.
On considère  la fonction :
$$\begin{array}{ccccc}
d_{sing} & : & \abd & \to & \mathbb{N} \\
 & & a & \mapsto & \deg(\Delta_{sing}(a)) \\
\end{array}.$$
Nous avons alors le lemme suivant:
\begin{lem}\label{scsupsing}
La fonction $d_{sing}$ est semi-continue supérieurement, i.e. pour tout $d\in\mathbb{N}$, le sous-schéma de $\abd$ constitué des $a\in\abd$ tels que $d_{sing}(a)\leq d$ est ouvert.
\end{lem}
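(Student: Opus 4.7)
On commence par se ramener � un �nonc� analogue sur $X^{(\delta)}$ gr�ce � la fl�che $\Phi:\abd\rightarrow X^{(\delta)}$ introduite plus haut. La fonction $d_{sing}$ se factorise � travers $\Phi$, puisqu'elle ne d�pend que de la d�composition en multiplicit�s du diviseur $\Delta(a)=\Phi(a)$. Il suffit donc de prouver que l'application $D\mapsto \deg D_{sing}(D)$ sur $X^{(\delta)}$, o� $D_{sing}=\sum_{x\,:\,m_{x}(D)\geq 2}m_{x}(D)[x]$, est semi-continue sup�rieurement.

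On proc�de en d�crivant le compl�mentaire $\{D\in X^{(\delta)}\,:\,\deg D_{sing}(D)\geq d+1\}$ comme une r�union finie d'images de morphismes propres. Pour tout entier $e$ avec $d+1\leq e\leq \delta$ et tout uplet fini d'entiers $(r_{j})_{j\geq 2}$ v�rifiant $\sum_{j\geq 2}jr_{j}=e$, on consid�re le morphisme d'addition
\begin{center}
$\psi_{(r_{j})}:\prod\limits_{j\geq 2}X^{(r_{j})}\times X^{(\delta-e)}\rightarrow X^{(\delta)}$,
\end{center}
donn� par $(D_{2},D_{3},\dots,D_{\infty})\mapsto \sum_{j\geq 2}jD_{j}+D_{\infty}$. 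Ce morphisme est fini, donc propre, donc son image est ferm�e dans $X^{(\delta)}$.

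Je pr�tends que la r�union finie de ces images co�ncide ensemblistement avec $\{D\,:\,\deg D_{sing}(D)\geq d+1\}$. L'inclusion directe provient de ce que, si $D$ est dans l'image de $\psi_{(r_{j})}$, chaque point du support d'un $D_{j}$ pour $j\geq 2$ est de multiplicit� au moins $j\geq 2$ dans $D$, donc singulier dans $D$, et contribue au moins $j\cdot m_{x}(D_{j})$ au degr� de $D_{sing}(D)$; en sommant, $\deg D_{sing}(D)\geq e\geq d+1$. R�ciproquement, pour $D$ avec $\deg D_{sing}(D)=e\geq d+1$, la d�composition canonique $D=\sum_{j\geq 1}jE_{j}$, o� $E_{j}$ est le diviseur r�duit du lieu o� $D$ a multiplicit� exactement $j$, exhibe $D$ comme image de $\psi_{(\deg E_{j})_{j\geq 2}}$ en posant $D_{\infty}=E_{1}$ de degr� $\delta-e$.

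La seule v�rification un peu subtile est celle de l'inclusion r�ciproque; la propret� du morphisme d'addition entre produits sym�triques d'une courbe est quant � elle classique. L'ouverture de $\{D\,:\,\deg D_{sing}(D)\leq d\}$ dans $X^{(\delta)}$ �tant ainsi obtenue comme compl�mentaire d'une r�union finie de ferm�s, on conclut en la tirant en arri�re par le morphisme $\Phi$.
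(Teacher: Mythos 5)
Votre d\'emonstration est correcte, mais elle emprunte un chemin r\'eellement diff\'erent de celui de l'article. L'article se ram\`ene lui aussi \`a l'\'enonc\'e correspondant sur $X^{(\delta)}$ via $\Phi$, mais proc\`ede ensuite par un argument local~: on se place \'etale-localement sur $X^{(\delta)}$, on suppose $X=\mathbb{A}^{1}$ de sorte que $X^{(\delta)}$ s'identifie \`a l'espace des polyn\^omes unitaires de degr\'e $\delta$, on \'ecrit $P=P_{tranv}P_{sing}$ et on montre que le nombre de racines simples cro\^it par g\'en\'erisation, en relevant une racine simple de la r\'eduction au corps r\'esiduel par le lemme de Hensel. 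Vous donnez au contraire un argument global~: le compl\'ementaire est d\'ecrit comme r\'eunion finie d'images de morphismes d'addition finis (donc propres) entre produits sym\'etriques de la courbe. Les deux arguments sont valables; le v\^otre \'evite la r\'eduction \'etale-locale et le crit\`ere valuatif implicite dans l'argument de g\'en\'erisation, et il prolonge en fait la technique que l'article emploie juste avant pour \'etablir l'ouverture de $\abdD$ (propret\'e de $\phi_{d}$). Deux points m\'eritent d'\^etre explicit\'es~: dans l'inclusion directe, un point $x$ pouvant appartenir au support de plusieurs $D_{j}$, c'est l'in\'egalit\'e $m_{x}(D)\geq\sum_{j\geq2}jm_{x}(D_{j})$ qui garantit, apr\`es sommation sur $x$, que $\deg D_{sing}(D)\geq e$ sans double comptage; et l'identification ensembliste n'\'etant v\'erifi\'ee que sur les points ferm\'es, elle suffit n\'eanmoins \`a conclure puisque $X^{(\delta)}$ est de type fini sur $k$ alg\'ebriquement clos, de sorte qu'un ferm\'e y est d\'etermin\'e par ses points ferm\'es.
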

\begin{proof}

Soit $d\in\mathbb{N}$, on considère la fonction:
$$\begin{array}{ccccc}
m_{sing} & : & X^{(\delta)} & \to & \mathbb{N} \\
 & & D & \mapsto & \sum\limits_{x\vert m_{x}(D)\geq2}m_{x}(D) \\
\end{array}.$$
Pour obtenir le lemme, il nous suffit de voir que le sous-schéma:
\begin{center}
$V_{d}:=\{D\in X^{(\delta)}\vert~ m_{sing}(D)\leq d\}$
\end{center}
est ouvert, puis de prendre l'image réciproque par $\Phi$.
Le problème étant local sur $X^{(\delta)}$ pour la topologie étale, on peut supposer que $X=\mathbb{A}^{1}$ et dans ce cas $X^{(\delta)}=\{P\in k[t]\vert \deg P=\delta\}$.
Tout polynôme $P\in X^{(\delta)}$ admet une décomposition:
\begin{center}
$P=P_{tranv}P_{sing}$
\end{center}
où $P_{tranv}$ est à racines simples.
La fonction $m_{sing}$ est donnée par:
\begin{center}
$\forall~ P\in X^{(\delta)}, m_{sing}(P)=\deg P_{sing}=\delta-\left|\{\text{racines simples de P}\}\right|$.
\end{center}
Montrons que le nombre de racines simples croît par générisation; il nous faut alors relever une racine simple $x_{s}\in k$ de la réduction $P_{s}\in k[t]$ d'un polynôme $P\in k[[\pi]][t]$ en une racine simple $x\in k[[\pi]][t]$ de $P$, ce qui est précisément le lemme de Hensel.
\end{proof}

Nous allons avoir besoin d'un autre invariant pour définir le bon ouvert.
On rappelle que nous avons $\la=\sum\limits_{x\in X}\la_{x}[x]$ et $S=\supp(\la)$. On fixe un point fermé $t\in X$, tel que $\la_{t}\neq 0$, il va jouer le rôle de point auxiliaire.
On note alors $S_{0}=S-\{t\}$.
Pour chaque point fermé $x\in S_{0}$, nous avons un schéma $V^{\la_{x}}$ au-dessus de $D_{x}=Spec(\co_{x})$ le voisinage formel autour de $x$. Ce schéma est lisse en fibre générique.
Soit alors l'entier 
\begin{center}
$e'_{x}:=e^{Elk}_{V^{\la_{x}}/\co_{x}}$,
\end{center}
où l'on renvoie à la définition $\ref{elkinv}$.
Cet entier mesure la singularité du schéma $V^{\la_{x}}$, il est à noter que si $\la_{x}=0$, alors $e'_{x}=0$, puisqu'à ce moment-là, nous sommes dans le groupe.
On note alors $e_{x}=\max(\left\langle 2\rho,\la_{x}\right\rangle,e'_{x})$ et  $e=\sum\limits_{x\in S_{0}}e_{x}$.
On considère alors l'ouvert suivant:
\begin{defi}\label{bemol}
On rappelle que nous avons fixé un point fermé $t\in X$, tel que $\la_{t}\neq 0$.
Pour un entier $d\in\mathbb{N}$, on définit le sous-schéma $\abdbD\subset\abdD$ constitué des $a\in\abdD$ tels que:
\begin{itemize}
\item
$d_{t}(a)=0$.
\item
$3d_{sing}(a) +(e+2d+1)\left|S_{0}\right|+2g-2\prec\la$.
\end{itemize}
\end{defi}
\begin{prop}
Le sous-schéma $\abdbD\subset\abdD$ est ouvert.
\end{prop}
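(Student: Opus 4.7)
Le plan est de v\'erifier que chacune des deux conditions qui d\'efinissent $\abdbD$ \`a l'int\'erieur de $\abdD$ est une condition ouverte, et de conclure en intersectant.

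Puisque $\la$ est fix\'e, ainsi que les entiers $e$, $d$, $\left|S_{0}\right|$ et $g$, la d\'efinition \ref{plusgrand} entra\^ine que la seconde condition $3d_{sing}(a)+(e+2d+1)\left|S_{0}\right|+2g-2\prec\la$ s'\'ecrit comme l'in\'egalit\'e
\begin{center}
$d_{sing}(a)\leq d''$
\end{center}
o\`u $d''=\lfloor(N_{\la}-(e+2d+1)\left|S_{0}\right|-2g+2)/3\rfloor$ et $N_{\la}=\min_{\omega\neq 0}\left|\left\langle \omega,-w_{0}\la\right\rangle\right|$ est un entier ne d\'ependant que de $\la$. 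Par le lemme \ref{scsupsing}, la fonction $d_{sing}$ est semi-continue sup\'erieurement sur $\abd$, donc le sous-sch\'ema $\{a\in\abd\vert~d_{sing}(a)\leq d''\}$ est ouvert dans $\abd$, et son intersection avec $\abdD$ est donc ouverte.

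Pour la premi\`ere condition, comme $d_{t}(a)\in\mathbb{N}$, on a $d_{t}(a)=0$ si et seulement si $d_{t}(a)\leq 0$. J'utiliserais l'application $\Phi:\abd\rightarrow X^{(\delta)}$ d\'ej\`a introduite, en composant avec la fonction multiplicit\'e en $t$, $m_{t}:X^{(\delta)}\rightarrow\mathbb{N}$, qui associe \`a un diviseur effectif $D$ l'entier $m_{t}(D)$. On a $d_{t}(a)=m_{t}(\Phi(a))$. Or $m_{t}$ est semi-continue sup\'erieurement~: en effet, le ferm\'e $\{D\in X^{(\delta)}\vert~m_{t}(D)\geq 1\}$ n'est autre que l'image de la fl\`eche propre $X^{(\delta-1)}\rightarrow X^{(\delta)}$, $D'\mapsto [t]+D'$. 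Par continuit\'e de $\Phi$, l'ensemble $\{a\in\abd\vert~d_{t}(a)=0\}$ est donc ouvert dans $\abd$.

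En intersectant ces deux ouverts avec $\abdD$, on obtient que $\abdbD$ est ouvert dans $\abdD$. Aucune difficult\'e technique majeure n'est attendue ici~: tout repose sur le lemme \ref{scsupsing} d\'ej\`a d\'emontr\'e et sur la propret\'e des morphismes de sym\'etrisation $X\times X^{(\delta-1)}\rightarrow X^{(\delta)}$.
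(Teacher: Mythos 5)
Votre démonstration est correcte et suit essentiellement la même démarche que celle de l'article : la seconde condition se ramène, une fois la notation $\prec$ dépliée, à une borne $d_{sing}(a)\leq d''$ qui est ouverte par le lemme \ref{scsupsing}, et la première condition est ouverte (l'article se contente de dire qu'elle l'est \og clairement \fg, tandis que vous explicitez l'argument via la propreté de $X^{(\delta-1)}\rightarrow X^{(\delta)}$, ce qui est un détail bienvenu mais non une différence de méthode).
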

\begin{proof}
La première condition est clairement ouverte. Pour la deuxième, cela résulte du fait que la fonction  $d_{sing}$ est semi-continue et que $\abdD$ est déjà ouvert.
\end{proof}
$\rmq$\begin{itemize}
\item Cet ouvert $\abdbD$ peut sembler artificiel, mais pour les applications au lemme fondamental il est suffisant. En effet, nous aurons en un point $x$ de la courbe un cocaractère dominant $\la_{x}$ et un discriminant local $d_{x}$; pour globaliser le problème nous aurons juste à prendre le $\la_{t}$ aussi grand que l'on veut, de telle sorte que l'inégalité 
\begin{center}
$3d_{sing}(a) +(e+2d+1)\left|S_{0}\right|+2g-2\prec\la$,
\end{center}
puisse être remplie.
\item
Nous avons pris pour $x\in S_{0}$, $e=\max(\left\langle 2\rho,\la_{x}\right\rangle,e'_{x})$, mais l'on doit pouvoir prouver que nous avons une inégalité:
\begin{center}
$\left\langle 2\rho,\la_{x}\right\rangle\leq e'_{x}$,
\end{center}
mais cela n'est pas indispensable à notre propos.
\end{itemize}
\medskip
On forme alors le carré cartésien :
$$\xymatrix{\cmdbD\ar[r]\ar[d]&\cmd\ar[d]\\\abdbD\ar[r]&\abd}.$$
On pose alors 
\begin{center}
$\cmdb=\bigcup\limits_{d\in\mathbb{N}}\cmdbD$,
\end{center}
le théorème principal est alors le suivant:
\begin{thm}\label{transverse}
Le champ $\cmdb$ est équidimensionnel,  soit $m$ sa codimension, alors on a l'égalité suivante entre les complexes d'intersections:
\begin{center}
$(\Delta^{\flat})^{*}[-m]IC_{\overline{\cH}_{\la}}=IC_{\cmdb}$.
\end{center}
\end{thm}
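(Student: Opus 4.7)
The plan is to reduce the global transversality statement to a local statement around each point of $S_{0}$, and then to compare the local model of $\cmd$ coming from the Vinberg semigroup with the Cartan stratum $\overline{\Gr}_{\la}$ inside the affine Grassmannian via an approximation argument.

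First, since the projections of the Hecke stack to $\Bun$ are, by Varshavsky's proposition \ref{va}, locally in the smooth topology isomorphic to $V\times_{k}\overline{\Gr}_{\la}$, the statement is smooth-locally on $\Bun$ an assertion about the transversality of a map $\cmdb \to V\times_{k}\overline{\Gr}_{\la}$ coming from a section $X\to[V_{G}^{\la}/G]$ with the ``diagonal'' slice. Working factor by factor over $S_{0}$, and using that away from $S_{0}\cup\{t\}$ the Hitchin fibration is in the regular locus (and even smooth by Corollary \ref{lissitereg}, since the openness condition on $d_{sing}$ together with $\la_{t}=0$ condition forces the discriminant behavior to be controlled), everything reduces to a purely local statement at each $x\in S_{0}$: the germ of $\cmdb$ at $x$ should be transverse, inside the germ of the Hecke stack, to the diagonal copy of $\Bun$.

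Next, I would construct an explicit local model. Using the definition of $\cmd$ in terms of $V_{G}^{\la}$ (Proposition \ref{hitchin}) and the Steinberg map $\chi_{+}\colon V_{G}^{\la}\to\kcd$, the local model around $x$ is the fiber product $V_{x}^{\la_{x}}\times_{\kcdx}\Spec(\co_{x})$ where the map $\Spec(\co_{x})\to\kcdx$ is given by $a$. The role of the bound $d_{x}(a)\le d$ and the inequality $3d_{sing}(a)+(e+2d+1)|S_{0}|+2g-2\prec\la$ is precisely to ensure that both (i) the characteristic polynomial $a$ can be approximated to arbitrary precision by a polynomial giving a section $X\to\kcd$ with prescribed local discriminant, and (ii) the Elkik--Gabber--Ramero type lifting applies (this is where the invariant $e'_{x}=e^{Elk}_{V^{\la_{x}}/\co_{x}}$ enters). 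Here Theorem \ref{4bis} on $G(\co)$-conjugacy classes becomes the key input: it says that conjugacy modulo a high enough power of $\pi$ implies integral conjugacy, which is exactly what is needed to identify the local model of $\cmd$, up to smooth-local isomorphism, with a neighborhood in $\overline{\Gr}_{\la_{x}}$.

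Once this smooth-local identification is in hand, the proof concludes formally: since $\overline{\Gr}_{\la_{x}}$ is the fiber of $\overline{\cH}_{\la}\to\Bun$ locally, the local model shows that $\cmdb\to\overline{\cH}_{\la}$ factors, smooth-locally, through a regular embedding of the correct codimension into a product with $\Bun$, and this immediately yields equidimensionality and the identity $(\Delta^{\flat})^{*}[-m]IC_{\overline{\cH}_{\la}}=IC_{\cmdb}$ by the standard transversality property of the intersection complex (smooth pullback combined with normally flat / complete intersection embedding). The hardest step by a wide margin will be justifying the approximation/lifting: controlling the precision needed in Theorem \ref{4bis}, bounding the Elkik constant $e'_{x}$ in terms of the combinatorial data, and checking that the inequality defining $\abdbD$ is sharp enough to make the approximation go through uniformly over the base — this is what forces the somewhat elaborate definition \ref{bemol} and the appearance of the auxiliary point $t$ where $\la_{t}$ can be taken arbitrarily large.
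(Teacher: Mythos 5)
Votre architecture globale est la bonne — modèle local via le semi-groupe de Vinberg, approximation des polynômes caractéristiques, relèvements à la Elkik--Gabber--Ramero — mais le mécanisme par lequel vous concluez est erroné. Vous déduisez l'identité $(\Delta^{\flat})^{*}[-m]IC_{\overline{\cH}_{\la}}=IC_{\cmdb}$ du fait que $\cmdb$ serait, localement pour la topologie lisse, une \emph{immersion régulière} de la bonne codimension dans $V\times_{k}\overline{\Gr}_{\la}$, en invoquant une « propriété standard de transversalité » pour les intersections complètes. Or une immersion régulière ne préserve pas le complexe d'intersection par restriction : un sous-schéma localement intersection complète d'un espace singulier peut très bien rencontrer le lieu singulier de façon à ce que la restriction de $IC$ ne soit pas $IC$ du sous-schéma. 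Ce dont on a besoin — et c'est exactement le contenu de la proposition \ref{bemolisse} — est que la flèche composée $\theta^{\flat}:\cmdb\rightarrow[\overline{\Gr}'_{\la}/G'_{N}]$ soit \emph{lisse}. Alors $IC_{\cmdb}$ et $IC_{\overline{\cH}'_{\la}}$ sont tous deux des images réciproques lisses de $IC_{\overline{\Gr}'_{\la}}$ (pour $\overline{\cH}'_{\la}$ c'est la proposition \ref{va} et le résultat de V. Lafforgue), et l'identité suit formellement, ainsi que l'équidimensionalité. Cette lissité se démontre par le critère infinitésimal sur un anneau artinien, ce qui est tout autre chose qu'un énoncé de codimension : c'est un problème de relèvement de la paire $(\bar{E},\bar{\phi})$ compatible avec un relèvement local prescrit, résolu via la réduction semi-locale \ref{locred} (où la lissité du centralisateur régulier $J_{a}$ sert au recollement à la Beauville--Laszlo), la surjectivité global-local \ref{borne} (Riemann--Roch, c'est là qu'intervient l'inégalité de la définition \ref{bemol}), puis la comparaison des fibres de Springer affines \ref{affbar} via \ref{inftem}.

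Deuxième point : vous faites du théorème \ref{4bis} « l'entrée clé » de l'identification locale. Ce n'est pas le cas dans la preuve : l'énoncé utilisé au dernier pas est le corollaire \ref{carreloc} (Denef--Loeser--Sebag), qui dit que deux sections de $V^{\la}$ congrues modulo $\pi^{e+1}$ diffèrent par \emph{multiplication à gauche} par un élément de $K$. C'est bien la multiplication (et non la conjugaison) qui est pertinente, puisqu'il s'agit de montrer que le relèvement construit et le point prescrit ont même image dans $\overline{\Gr}_{\la}=\overline{K\pi^{\la}K}/K$ modulo les actions de $G'_{N}$ et $K_{S_{0}}$. Le lemme de conjugaison \ref{conjdi} est un sous-produit de cette analyse, démontré indépendamment par un calcul d'indices d'espaces tangents, et n'entre pas comme ingrédient dans la preuve du théorème \ref{transverse}. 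Pour combler votre proposition, il faut donc (i) remplacer l'argument d'intersection complète par la preuve de la lissité de $\theta^{\flat}$, et (ii) substituer au théorème \ref{4bis} l'énoncé de multiplication \ref{carreloc}.
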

$\rmq$ Ce théorème va s'obtenir en montrant qu'une certaine flèche vers la grassmannienne affine est lisse. 
\medskip

On rappelle que nous avons $S=\supp(\la)$. D'après \cite[Prop. 1.10-1.14]{VLaf}, si nous avons un diviseur $N=\sum\limits_{i\in S}n_{i}[x_{i}]$ avec des $n_{i}$ suffisamment grands par rapport à $\la$, nous disposons d'une flèche lisse:
\begin{center}
$g:\overline{\cH}_{\la}\rightarrow[\overline{\Gr}_{\la}/G_{N}]$.
\end{center}
où $G_{N}:=\Res_{N/k}G$ est la restriction à la Weil de $G$ à $N$.
Nous obtenons donc une flèche composée:
\begin{center}
$g^{\flat}:\cmdb\rightarrow[\overline{\Gr}_{\la}/G_{N}]$
\end{center}
Dans la définition \ref{bemol}, nous avions fixé un point auxiliaire $t\in X$ tel que $\la_{t}\neq 0$. Comme en ce point, on impose au polynôme caractéristique d'être régulier semisimple, l'image de $f^{\flat}$ va tomber dans l'ouvert 
\begin{center}
$U:=\Gr_{\la_{t}}\times\prod\limits_{s\neq t}\overline{\Gr}_{\la_{s}}$.
\end{center}
Posons $\overline{\cH}'_{\la}:=g^{-1}(U)$, $\overline{\Gr}'_{\la}:=\prod\limits_{s\neq t}\overline{\Gr}_{\la_{s}}$ et $p:U\rightarrow \overline{\Gr}'_{\la}$.
On considère alors la flèche $g^{\flat}$ composée avec $p$:
\begin{center}
$\theta^{\flat}:\cmdb\rightarrow[\overline{\Gr}'_{\la}/G'_{N}]$
\end{center}
avec $G_{N}':=\Res_{N-n_{t}[t]/k}G$.
La proposition est la suivante:
\begin{prop}\label{bemolisse}
La flèche $\theta^{\flat}$ est lisse.
\end{prop}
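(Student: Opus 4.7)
The plan is to factor $\theta^{\flat}$ through steps that are smooth by prior results, and to handle the remaining piece via deformation theory on $X$. By Proposition \ref{va}, the map $g:\overline{\cH}_\la\to[\overline{\Gr}_\la/G_N]$ is smooth; its restriction $g':\overline{\cH}'_\la\to[U/G_N]$ is also smooth, and the projection $p:[U/G_N]\to[\overline{\Gr}'_\la/G'_N]$ is smooth because $U=\Gr_{\la_t}\times\overline{\Gr}'_\la$ with $\Gr_{\la_t}$ a smooth $K_t$-orbit (the open stratum). The delicate remaining piece is $\Delta^{\flat}:\cmdb\to\overline{\cH}'_\la$, pulled back from the diagonal $\Delta:\Bun\to\Bun\times\Bun$, which is not smooth in general. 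The point of composing with $p$ is to absorb the failure of smoothness at the auxiliary point $t$.

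I would then translate the question into deformation theory. At $(E,\phi)\in\cmdb$ over $a\in\abdb$, the tangent complex of $\cmd$ is a two-term complex $K^{\bullet}$ on $X$ built from $\Lie(G)_{E}$ and the infinitesimal action of $\phi$ through the Vinberg structure; the tangent map of $\theta^{\flat}$ amounts to restricting $K^{\bullet}$ to the level-$N$ formal neighborhoods of the points of $S_0$, modulo the $t$-factor. Smoothness of $\theta^{\flat}$ is equivalent to surjectivity of this restriction on tangent spaces together with the vanishing of an obstruction in the relevant global $\mathbb{H}^{1}$ on $X$. At $t$, the condition $d_{t}(a)=0$ forces $\phi_t$ regular semisimple, so by Proposition \ref{picardtorseur} the local Hitchin picture at $t$ is a gerbe under a smooth commutative group scheme and contributes nothing to the obstruction. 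Consequently the only problematic contributions come from $S_{0}$ together with the global genus of $X$.

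The heart of the argument is a cohomological vanishing. At each $s\in S_{0}$, the local contribution to the obstruction complex is controlled on the one hand by the local discriminant valuation $d_{s}(a)\leq d$, and on the other hand by the Elkik invariant $e'_{s}$ measuring how singular $V^{\la_{s}}$ is over $\co_s$. Summing over $|S_{0}|$ points, adjoining the factor $3d_{sing}(a)$ coming from the depth of the singular locus of the discriminant and the $2g-2$ from Serre duality on $X$, the relevant twisted sheaf has a degree defect bounded by $3d_{sing}(a)+(e+2d+1)|S_{0}|+2g-2$. The positivity condition built into the definition of $\abdb$, namely
\[ 3d_{sing}(a)+(e+2d+1)|S_{0}|+2g-2\prec\la, \]
says exactly that the pairings $\langle\omega,-w_{0}\la\rangle$ at the auxiliary point $t$ are large enough to compensate, forcing the $\mathbb{H}^{1}$ on $X$ to vanish and $\theta^{\flat}$ to be smooth.

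The main obstacle will be the precise identification of the local obstruction at each $s\in S_{0}$ and the bound by a quantity of the form $e'_{s}+d_{s}(a)+\textrm{const}$; this requires the lifting statements of Elkik and Gabber--Ramero cited in the introduction, which are in any case needed for the characteristic polynomial approximation underlying Theorem \ref{4bis}. Once the local bounds are in place, the global vanishing follows from the ampleness imposed by $\la\succ(\textrm{large})$ through the definition of $\abdbD$.
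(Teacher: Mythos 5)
Your proposal correctly identifies the easy ingredients (smoothness of $g$ and of the projection $p$, and the fact that the numerical condition in the definition of $\abdbD$ must enter through a Riemann--Roch type argument), but the core of the argument is missing, and the mechanism you propose for exploiting the numerical condition is not the one that works. You set the problem up as the vanishing of an obstruction class in an $\mathbb{H}^{1}$ of a tangent complex of $\cmd$ on $X$, with local contributions at $s\in S_{0}$ ``controlled by $e'_{s}$ and $d_{s}(a)$''; but identifying and bounding those local contributions is precisely the hard point, and you defer it explicitly. Moreover, the tangent--obstruction formalism you invoke presupposes that the deformation theory of $\cmd$ is governed by a two-term complex of vector bundles whose $H^{1}$ can be killed by ampleness of $\la$; this fails exactly where $\phi$ meets the singular locus of $V_{G}^{\la}$, which is the whole difficulty (were such a description available, $\cmd$ would be smooth there and the theorem would have no content). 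The role of the auxiliary point $t$ is also not what you describe: it is not that the local Picard gerbe contributes no obstruction, but that the image of $\cmdb$ lands in the open stratum $\Gr_{\la_{t}}$, so that the $t$-factor can be projected away by $p$, and that $\la_{t}$ provides the room needed to adjust the global characteristic polynomial.

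The paper's proof is of a different nature: it verifies the infinitesimal criterion directly for an Artinian $R$ with square-zero ideal $I$, $I\mathfrak{m}=0$, in four steps. (1) Reduction to a semi-local lifting problem on the formal neighbourhood of $S'=\supp(\Delta_{0,sing}(\bar a))\cup\{x\}$, by lifting transversally away from $S'$ and regluing \`a la Beauville--Laszlo through a torsor under the smooth group scheme $J_{a}$ (Lemma \ref{locred}). (2) The surjectivity, via Riemann--Roch, of the global-to-local map $\abd\rightarrow\mathfrak{C}_{+,x,e+2d}^{\la}\oplus\bigoplus_{s}\mathfrak{C}_{+,s,2d_{s}}^{\la}$ (Proposition \ref{borne}); this, and only this, is where the inequality $3d_{sing}(a)+(e+2d+1)\left|S_{0}\right|+2g-2\prec\la$ enters: it allows one to lift the characteristic polynomial globally with prescribed jets at $S'$. (3) A rigidity statement (Propositions \ref{elkik} and \ref{affbar}, resting on the Elkik and Gabber--Ramero lifting lemmas together with Gabber's bound $h\leq d$ on the conductor of the cameral cover) asserting that two characteristic polynomials congruent modulo $\pi^{n}I$ with $n>2d$ have canonically isomorphic affine Springer fibers. (4) The Denef--Loeser--Sebag arc lemma (Corollary \ref{carreloc}) to absorb the remaining discrepancy at $x$ into the $K_{x}$-action. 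None of these steps appears in your proposal; in particular you never explain how to actually produce the lift of $\phi$ over the points where $V^{\la_{s}}$ is singular, which is what steps (2)--(4) accomplish.
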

Nous prouvons cette proposition dans la section \ref{globloc}.

\begin{lem}
Le théorème \ref{transverse} se déduit de la proposition \ref{bemolisse}.
\end{lem}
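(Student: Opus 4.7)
Le plan est d'exploiter le principe classique selon lequel, pour un morphisme lisse $f:X\rightarrow Y$ entre champs alg\'ebriques de dimension relative $\dim f$, on a $f^{*}IC_{Y}=IC_{X}[-\dim f]$. En combinant cette identit\'e avec les trois morphismes lisses dont on dispose---le morphisme $g$ de Varshavsky--Lafforgue, la projection $p$, et le morphisme $\theta^{\flat}$ fourni par la proposition \ref{bemolisse}---on ram\`ene $(\Delta^{\flat})^{*}IC_{\overline{\cH}_{\la}}$ \`a $IC_{\cmdb}$, au d\'ecalage pr\`es par la codimension $m$.

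Plus pr\'ecis\'ement, je commencerais par observer que la lissit\'e de $g:\overline{\cH}_{\la}\rightarrow[\overline{\Gr}_{\la}/G_{N}]$ donne imm\'ediatement $(\Delta^{\flat})^{*}IC_{\overline{\cH}_{\la}}=(g^{\flat})^{*}IC_{[\overline{\Gr}_{\la}/G_{N}]}[\dim g]$, puisque $g^{\flat}=g\circ\Delta^{\flat}$. Comme le morphisme $g^{\flat}$ se factorise par l'ouvert $[U/G_{N}]$ avec $U=\Gr_{\la_{t}}\times\overline{\Gr}'_{\la}$, et comme $\Gr_{\la_{t}}$ est une strate de Cartan ouverte, donc lisse, la projection $p:[U/G_{N}]\rightarrow[\overline{\Gr}'_{\la}/G'_{N}]$ est lisse et l'on obtient $IC_{[U/G_{N}]}=p^{*}IC_{[\overline{\Gr}'_{\la}/G'_{N}]}[\dim p]$. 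La proposition \ref{bemolisse} fournit alors d'une part l'\'equidimensionnalit\'e de $\cmdb$---puisque $[\overline{\Gr}'_{\la}/G'_{N}]$ est \'equidimensionnel en tant que produit de ferm\'es irr\'eductibles de strates de Cartan---et d'autre part $IC_{\cmdb}=(\theta^{\flat})^{*}IC_{[\overline{\Gr}'_{\la}/G'_{N}]}[\dim\theta^{\flat}]$. En composant ces trois \'egalit\'es, on aboutit \`a
\begin{center}
$(\Delta^{\flat})^{*}IC_{\overline{\cH}_{\la}}=IC_{\cmdb}[\dim g+\dim p-\dim\theta^{\flat}]$.
\end{center}

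Le seul point restant, o\`u il faut pr\^eter un tant soit peu attention, est de v\'erifier que le d\'ecalage $\dim g+\dim p-\dim\theta^{\flat}$ co\"{\i}ncide avec la codimension $m$ de $\cmdb$ dans $\overline{\cH}_{\la}$. Ce calcul est imm\'ediat: \`a l'aide des formules $\dim\overline{\cH}_{\la}=\dim[\overline{\Gr}_{\la}/G_{N}]+\dim g$ et $\dim\cmdb=\dim[\overline{\Gr}'_{\la}/G'_{N}]+\dim\theta^{\flat}$, il suffit d'observer que $\dim[\overline{\Gr}_{\la}/G_{N}]-\dim[\overline{\Gr}'_{\la}/G'_{N}]$ vaut pr\'ecis\'ement $\dim p$, puisque $p$ est la projection de fibre $\Gr_{\la_{t}}$ modulo l'action du facteur correspondant de $G_{N}$. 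L'essentiel du contenu g\'eom\'etrique est donc absorb\'e par la proposition \ref{bemolisse}; le lemme n'en est qu'une cons\'equence formelle via la th\'eorie standard du tir\'e en arri\`ere lisse des complexes d'intersection.
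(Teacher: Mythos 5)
Votre argument est correct et co\"{\i}ncide en substance avec celui du texte: la preuve du papier se r\'eduit au diagramme commutatif $\cmdb\rightarrow\overline{\cH}'_{\la}\rightarrow\overline{\Gr}'_{\la}$ o\`u $\theta^{\flat}$ et $p\circ g$ sont lisses et $\overline{\cH}'_{\la}$ est ouvert dans $\overline{\cH}_{\la}$, ce qui est exactement la cha\^{\i}ne de tir\'es en arri\`ere lisses que vous explicitez. Vous ne faites que d\'etailler le d\'ecompte des d\'ecalages et l'\'equidimensionnalit\'e, que le papier laisse implicites.
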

\begin{proof}
En effet, on a un diagramme commutatif:
$$\xymatrix{\cmdb\ar[r]\ar[dr]_{\theta^{\flat}}&\overline{\cH}'_{\la}\ar[d]^{p\circ g}\\&\overline{\Gr}'_{\la}}$$
où les flèches $\theta^{\flat}$ et $p\circ g$ sont lisses et $\overline{\cH}'_{\la}$ est ouvert dans $\overline{\cH}_{\la}$.
\end{proof}
\subsection{Rappels sur des résultats d'Elkik et Gabber-Ramero}\label{rappel}

Les résultats de ce paragraphe sont tirés d'Elkik \cite{Elk} et de Gabber-Ramero \cite{GR}.  Nous suivons la présentation de Temkin \cite[3.2.1]{Tem} et commençons par des généralités sur les idéaux jacobiens.
\medskip
Soit $A$ un anneau et $Q:=A[X_{1},\dots,X_{N}]$. On considère un idéal de type fini $J\subset Q$  et on pose $B=Q/J$.
Soit $(f_{1},\dots,f_{q})$ un système de générateurs de $J\subset Q$. Pour chaque entier $p$ et chaque multi-indice $\alpha=(\alpha_{1},\dots\alpha_{p})\in\mathbb{N}^{p}$ tel que $1\leq\alpha_{1}<\alpha_{2}<\dots<\alpha_{p}\leq q$, posons $\left|\alpha\right|=p$.
Soit $J_{\alpha}\subset J$ le sous-idéal engendré par $(f_{\alpha_{1}},\dots,f_{\alpha_{p}})$ et $\Delta_{\alpha}$ l'idéal engendré par les déterminants des mineurs d'ordre $p$ de la matrice Jacobienne $(\partial f_{\alpha_{i}}/\partial X_{j}~\vert ~1\leq i\leq p,1\leq j\leq N)$.
Soit également l'idéal :
\begin{center}
$(J_{\alpha}:J):=\{f\in A[X_{1},\dots,X_{N}]\vert~ fJ\subset J_{\alpha}\}$.
\end{center}
On pose alors :
\begin{center}
$H_{B/A}^{Elk}\:=\sum\limits_{p\geq0}\sum\limits_{\left|\alpha\right|=p}\Delta_{\alpha}(J_{\alpha}:J)$
\end{center}
L'inconvénient de cet idéal est qu'il dépend du choix des équations et que cet idéal ne contient pas naturellement $J$. Nous allons donc introduire une version plus intrinsèque tirée de Gabber-Ramero \cite{GR}. 
Avec les mêmes notations que ci-dessus, nous faisons la définition suivante :
\begin{defi}\label{gabber}
Soit l'idéal $\textbf{H}_{Q}:=\Ann_{Q}\Ext_{B}^{1}(L_{B/A},J/J^{2})$ où $L_{B/A}$ est le complexe cotangent défini par Illusie \cite{Ill}. 
\end{defi}
$\rmqs$
\begin{itemize} 
\item
Comme $J\subset\textbf{H}_{Q}$, on peut voir $V(\textbf{H}_{Q})$ comme un sous-schéma de $\Spec(B)$ d'idéal $\textbf{H}_{B/A}=\textbf{H}_{Q}B$.
\item
D'après \cite[5.4.3 (iii)]{GR}, cet idéal est le plus grand idéal qui annule tout module de la forme $\Ext_{B}^{1}(L_{B/A},N)$ pour tout $B$-module $N$ et donc ne dépend que de $B\rightarrow A$.
\end{itemize}

D'après \cite[5.4.2]{GR} nous avons les assertions suivantes :
\begin{prop}\label{gr}
\begin{itemize}
\item
Le lieu d'annulation de $\bh_{B/A}$ est précisément le lieu singulier de $f$. En particulier, si $f$ est lisse, $\bh_{B/A}=B$.
\item
Soit $A\rightarrow A'$ et $B'=B\otimes_{A}A'$ alors $\bh_{B/A}B'\subset\bh_{B'/A'}$
\item
Nous avons la comparaison suivante entre l'idéal d'Elkik et celui de Gabber-Ramero :
\begin{center}
$H_{B/A}^{Elk}\subset\bh_{B/A}$.
\end{center}
\end{itemize}
\end{prop}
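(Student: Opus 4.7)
The plan is to work with the fundamental triangle of the cotangent complex attached to the presentation $B=Q/J$ with $Q=A[X_1,\dots,X_N]$ smooth over $A$. Transitivity yields a quasi-isomorphism $\tau_{\geq -1}L_{B/A}\simeq [J/J^2\to\Omega^1_{Q/A}\otimes_{Q}B]$ in degrees $-1,0$, the target being a free $B$-module of rank $N$. For any $B$-module $M$, this gives the concrete description
$$\Ext^1_{B}(L_{B/A},M)=\coker\bigl(\Hom_{B}(\Omega^1_{Q/A}\otimes_{Q}B,M)\longrightarrow\Hom_{B}(J/J^2,M)\bigr),$$
the map sending a derivation $D:Q\to M$ to $j+J^2\mapsto D(j)$; all three assertions translate into statements about this single cokernel.

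For (i), I argue locally at a prime $\mathfrak{p}\subset B$. If $f$ is smooth at $\mathfrak{p}$, then $L_{B/A,\mathfrak{p}}$ is quasi-isomorphic to a finite projective module in degree zero, so $\Ext^1_{B_{\mathfrak{p}}}$ vanishes and $\bh_{B/A}\not\subset\mathfrak{p}$. Conversely, if $\bh_{B/A}\not\subset\mathfrak{p}$, specialising $M=J/J^2$ in the cokernel description forces $d:J/J^2\to\Omega^1_{Q/A}\otimes_{Q}B$ to split locally at $\mathfrak{p}$, which is the Jacobi criterion for smoothness. For (ii), I invoke base change for the cotangent complex, $L_{B'/A'}\simeq L_{B/A}\otimes^{L}_{B}B'$, together with the analogous presentation $B'=(Q\otimes_{A}A')/(JQ')$; a representative of a class in $\Ext^1_{B'}$ pulls back from a $B$-linear map by choosing a local $B$-lift, so multiplication by $b\in\bh_{B/A}$ already kills it modulo derivations, whence the image of $b$ in $B'$ lies in $\bh_{B'/A'}$ and $\bh_{B/A}B'\subset\bh_{B'/A'}$.

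The main technical point is (iii). Fix a multi-index $\alpha=(\alpha_1,\dots,\alpha_p)$, a minor $\Delta_{\alpha}$ of the $p\times N$ Jacobian submatrix obtained from columns $j_1,\dots,j_p$, and an element $g\in(J_{\alpha}:J)$. Given $\phi:J/J^2\to M$, I would exhibit a derivation $D:Q\to M$ whose restriction to $J/J^2$ equals $\Delta_{\alpha}g\phi$. Set $D(X_j)=0$ for $j\notin\{j_1,\dots,j_p\}$; the $p$ remaining unknowns $D(X_{j_k})$ are determined via Cramer's rule applied to the $p\times p$ minor, prescribing $D(f_{\alpha_i})=\Delta_{\alpha}\phi(g\,f_{\alpha_i})$ in $M$, and this is possible because the adjugate of the minor has entries in $Q$. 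The crucial verification, where $g\in(J_{\alpha}:J)$ enters, is that $D$ really restricts to $\Delta_{\alpha}g\phi$ on all of $J/J^2$, not merely on the $f_{\alpha_i}$: for arbitrary $h\in J$, a relation $gh=\sum_{i}c_{i}f_{\alpha_i}$ propagates the prescribed equality through the derivation identity, the correction terms being killed by $JM=0$. Summing over all $\alpha$ yields $H_{B/A}^{Elk}\subset\bh_{B/A}$. The principal obstacle is precisely this compatibility check, which is the substance of Elkik's argument; the merit of the Gabber-Ramero formulation is that it recasts the invariant intrinsically, which is what makes the base-change statement (ii) automatic.
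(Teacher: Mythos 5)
The paper offers no proof of this proposition: it is quoted directly from Gabber--Ramero (their 5.4.2--5.4.4), so there is no internal argument to compare with, and your strategy of reducing everything to the two-term truncation $\tau_{\geq-1}L_{B/A}\simeq[J/J^2\to\Omega^1_{Q/A}\otimes_QB]$ and the cokernel description of $\Ext^1_B(L_{B/A},M)$ is exactly theirs; your point (i) is fine. In (ii), however, you invoke the base-change isomorphism $L_{B'/A'}\simeq L_{B/A}\otimes^L_BB'$, which requires $A\to A'$ and $A\to B$ to be Tor-independent and fails in general. What is true, and what suffices for the stated inclusion, is that the natural map of naive cotangent complexes has $(J/J^2)\otimes_BB'\to J'/J'^2$ surjective while the degree-zero terms $\Omega^1_{Q/A}\otimes_QB'$ and $\Omega^1_{Q'/A'}\otimes_{Q'}B'$ agree; dualizing, $\Ext^1_{B'}(L_{B'/A'},N)\to\Ext^1_{B}(L_{B/A},N)$ is injective for every $B'$-module $N$, and $\bh_{B/A}$ kills the target, hence the source.

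The more serious gap is in (iii), precisely at the step you single out as crucial. Writing $gh=\sum_ic_if_{\alpha_i}$ and propagating through the Leibniz identity (with $JM=0$) gives $\bar g\,D(h)=\sum_i\bar c_iD(f_{\alpha_i})=\bar g\cdot\Delta_\alpha\bar g\,\phi(h)$, i.e. the desired identity $D(h)=\Delta_\alpha\bar g\,\phi(h)$ only after multiplication by $\bar g$; since $\bar g$ need not be a non-zero-divisor on $M$, you cannot cancel, and the verification as written does not close. The claim is nevertheless true, and the repair uses the specific shape of your $D$: by construction $D=\bar g\cdot(\Phi\circ d)$, where $\Phi:\Omega^1_{Q/A}\otimes_QB\to M$ is the adjugate-built linear form vanishing on the unchosen columns and satisfying $\Phi(df_{\alpha_i})=\Delta_\alpha\phi(f_{\alpha_i})$, hence $\Phi(dh')=\Delta_\alpha\phi(h')$ for every $h'\in J_\alpha$ (expand $dh'$ modulo $J\cdot\Omega^1$). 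Then for $h\in J$ one computes $D(h)=\Phi(\bar g\,dh)=\Phi(d(gh))=\Delta_\alpha\phi(gh)=\Delta_\alpha\bar g\,\phi(h)$, using that $gh\in J_\alpha$. It is the divisibility of $D$ by $\bar g$ as a form on $\Omega^1_{Q/A}\otimes_QB$ --- not the Leibniz propagation --- that makes the compatibility check work.
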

$\rmq$ En particulier, on voit que l'idéal de Gabber-Ramero raffine l'idéal d'Elkik de même qu'il grossit par changement de base.
\medskip

Pour tout $a:=(a_{1},\dots,a_{N})\in A^{N}$, soit $\mathfrak{p}_{a}\subset F$ l'idéal engendré par $(X_{1}-a_{1},\dots, X_{N}-a_{N})$.
Pour le lemme suivant, nous considérons $A=R[[\pi]]$ avec un anneau local artinien $R$, d'idéal maximal $\mathfrak{m}$, de corps résiduel $k$, $I$ un idéal de carré nul tel que $\bar{R}=R/I$ et $I.\mathfrak{m}=0$.
Nous conservons les notations  de la définition \ref{gabber}.
\begin{lem}\label{gabinf}
Soient $n,h$ deux entiers positifs avec $n>2h$, $a\in A^{N}$ tel que:
\begin{center}
$\pi^{h}\in (H_{A}(Q,J)+\mathfrak{p}_{a})/\mathfrak{m}$, $J\subset \mathfrak{p}_{a}+\pi^{n}IQ$,
\end{center}
alors il existe $b\in A^{N}$ tel que:
\begin{center}
$b-a\in\pi^{n-h}IA^{N}$ et $J\subset\mathfrak{p}_{b}$.
\end{center}
\end{lem}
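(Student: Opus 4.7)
This is an Elkik-style lifting statement, and the proof follows her classical pattern adapted to the square-zero extension data $I$ with $I\mathfrak{m}=0$. The first move is linearization: since $c := b-a$ is sought in $\pi^{n-h}IA^N$ and $I^{2}=0$, Taylor expansion at $a$ collapses to
\[
f(a+c) = f(a) + \sum_{j=1}^N \partial_j f(a)\, c_j
\]
for every $f \in Q$. Choosing a finite generating set $f_1,\dots,f_q$ of $J$ and writing $f_i(a) = \pi^n r_i$ with $r_i \in I$ (by the second hypothesis), the conclusion $J \subset \mathfrak{p}_b$ reduces to solving the affine-linear system $\sum_j \partial_j f_i(a)\,c_j = -\pi^n r_i$ for $c_j \in \pi^{n-h}IA$.

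Next I unpack the Jacobian hypothesis. Writing $\pi^h = \sum_\alpha \delta_\alpha u_\alpha + v + \mu$ in $Q$ with $\delta_\alpha \in \Delta_\alpha$, $u_\alpha \in (J_\alpha : J)$, $v \in \mathfrak{p}_a$ and $\mu \in \mathfrak{m}Q$, evaluation at $a$ kills $v$ and yields
\[
\pi^h \equiv \sum_\alpha \delta_\alpha(a)\,u_\alpha(a) \pmod{\mathfrak{m}A}.
\]
Because $I\mathfrak{m}=0$, this becomes an \emph{exact} equality after multiplication by any element of $I$, which is all we need. For each $\alpha$ of size $p$, the minor $\delta_\alpha(a)$ is the determinant of a $p\times p$ block of the Jacobian of $J_\alpha=(f_{\alpha_1},\dots,f_{\alpha_p})$, and Cramer's rule produces explicit $\xi^{(\alpha)}_j \in \pi^n I$ satisfying $\sum_j \partial_j f_{\alpha_k}(a)\,\xi^{(\alpha)}_j = -\delta_\alpha(a) f_{\alpha_k}(a)$ for $1\le k \le p$. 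The colon relation $u_\alpha f_i = \sum_k \lambda_{ik}^\alpha f_{\alpha_k}$ in $Q$ propagates these Cramer solutions from $J_\alpha$ to all of $J$ after multiplication by $u_\alpha(a)$. Summing over $\alpha$ and invoking the congruence above, the candidate
\[
c_j := \pi^{-h}\sum_\alpha u_\alpha(a)\, \xi_j^{(\alpha)}
\]
is a well-defined element of $A$ (since the $\xi_j^{(\alpha)}$ carry a factor $\pi^n$) and solves the linearized system.

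The main obstacle, and the reason the hypothesis is $n > 2h$ rather than the naive $n > h$, is the $\pi$-adic bookkeeping. Cramer's rule costs one factor $\pi^h$ (from dividing by the minor $\delta_\alpha$), and a second factor $\pi^h$ is consumed in absorbing the residual error $\mu(a) \in \mathfrak{m}A$ coming from the fact that $\pi^h \in H_A(Q,J)+\mathfrak{p}_a$ holds only modulo $\mathfrak{m}$; this second factor is released by the identity $I\mathfrak{m}=0$, which converts any $\mathfrak{m}$-error multiplied by an $I$-element into zero, but only after one extra multiplication by something in the Jacobian ideal. The condition $n > 2h$ provides exactly the slack needed to keep the final $c$ in $\pi^{n-h}IA^N$; the residue-ring computation, once Cramer and the colon relations are combined, is then straightforward. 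Substituting $b = a+c$ back into the Taylor formula gives $f_i(b) = 0$ in $A$ for every $i$, whence $J \subset \mathfrak{p}_b$, concluding the proof.
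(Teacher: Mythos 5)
Your proof is correct, but it follows a genuinely different route from the paper's. The paper reproduces Gabber--Ramero's Lemma 5.4.8 in abstract deformation-theoretic form: the obstruction to extending the section $\sigma:\Spec(A/\pi^{n}I)\rightarrow\Spec(B)$ is a class $\omega\in\Ext^{1}_{B}(L_{B/A},\pi^{n}I)$, annihilated both by $\mathfrak{p}_{a}$ (via the $B$-module structure) and by the Gabber--Ramero ideal $\textbf{H}$; the hypothesis $\pi^{h}\in(H+\mathfrak{p}_{a})/\mathfrak{m}$ together with $I\mathfrak{m}=0$ then gives $\pi^{h}\omega=0$, and the commutative triangle relating multiplication by $\pi^{h}$ to the inclusion $\pi^{n}I\subset\pi^{n-h}I$ kills the obstruction for the restricted section. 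You instead run Elkik's original explicit argument: linearization via $I^{2}=0$, Cramer's rule on the $p\times p$ Jacobian blocks, propagation from $J_{\alpha}$ to $J$ through the colon relation (where one must note, as is implicit in your plan, that the extra terms $f_{i}\partial_{j}u_{\alpha}$ and $f_{\alpha_{k}}\partial_{j}\lambda_{ik}$ die against $\xi_{j}^{(\alpha)}\in\pi^{n}I$ because $I^{2}=0$), and division by the nonzerodivisor $\pi^{h}$. Each approach has its merits: yours works directly with the Elkik ideal $H_{A}(Q,J)$ named in the statement and is self-contained, requiring no cotangent-complex formalism; the paper's is shorter and applies verbatim to the larger intrinsic ideal $\textbf{H}_{B/A}$ (and to the ideal $H'$ of the subsequent remark), which is what the rest of the paper actually needs. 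One small caveat: your closing heuristic that $n>2h$ is ``exactly the slack needed'' because a second factor $\pi^{h}$ is consumed in absorbing the $\mathfrak{m}$-error is not borne out by the computation --- that error term is killed outright by $I\mathfrak{m}=0$ after multiplication by $f_{i}(a)\in\pi^{n}I$, and your argument (like the paper's) only visibly uses $n>h$; the stronger hypothesis is inherited from the non-infinitesimal Newton iteration in Elkik and Gabber--Ramero. This does not affect the validity of the proof.
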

$\rmq$ Ce lemme est une version infinitésimale du Lemme 5.4.8 de Gabber Ramero. Il nous indique que seul compte ce qui se passe sur le corps résiduel pour relever, dans le cas d'une situation où $A=R[[\pi]]$ avec $R$ artinien.

\begin{proof}
Nous suivons la preuve de \cite[Lem. 5.4.8]{GR}.

Nous avons un morphisme $\sigma:\Spec A/\pi^{n}I\rightarrow\Spec(B)$, on note $\sigma_{0}$ sa restriction au sous-schéma fermé $\Spec A/\pi^{n-h}I$. Nous souhaitons donc relever $\sigma_{0}$ en $\tilde{\sigma}:\Spec(A)\rightarrow\Spec(B)$.

D'après \cite[3.2.16]{GR}, l'obstruction à l'existence d'un relèvement de $\sigma$ en un morphisme
$\Spec(A)\rightarrow\Spec(B)$ gît dans $\Ext^{1}_{B}(L_{B/A},\pi^{n}I)$. On note $\omega\in\Ext^{1}_{B}(L_{B/A},\pi^{n}I)$ cette obstruction.
Nous avons le diagramme commutatif:
$$\xymatrix{\pi^{n}I\ar[r]^{\alpha}\ar[d]_{\beta}&\pi^{n-h}I\ar[dl]^{\g}\\\pi^{n}I}$$
où $\alpha$ est l'inclusion de $\pi^{n}I\subset\pi^{n-h}I$, $\beta$ est la multiplication par $\pi^{h}$ et $\g$ l'isomorphisme donné par la multiplication par $\pi^{h}$.
Comme la structure de $B$-module est induite par l'extension des scalaires de $\sigma$, nous avons $\mathfrak{p}_{a}.\omega=0$. 
De plus, il résulte de la remarque de la définition \ref{gabber} que $H_{A}(F,J).\omega=0$.
Comme $\pi^{h}\in (H_{A}(Q,J)+\mathfrak{p}_{a})/\mathfrak{m}$, il existe un élément $x\in\mathfrak{m}$ tel que $x+\pi^{h}\in H_{A}(F,J)+\mathfrak{p}_{a}$,
d'où
\begin{center}
$(x+\pi^{h}).\omega=0$.
\end{center}
Or, nous avons $I.\mathfrak{m}=0$ d'où $(x+\pi^{h}).\omega=\pi^{h}\omega=\Ext^{1}_{B}(L_{B/A},\beta)(\omega)=0$ et donc en particulier
$\Ext^{1}_{B}(L_{B/A},\alpha)(\omega)=0$. Comme cette classe est précisément l'obstruction à l'existence de $\tilde{\sigma}$, nous concluons.
\end{proof}
$\rmq$ La preuve du lemme montre qu'il suffit de considérer l'idéal $ H'\supset H_{B/A}$ qui est le plus grand idéal qui annule les modules $\Ext^{1}(L_{B/A},N)$ où $N$ est tué par $\pi$. L'avantage de considérer cet idéal est qu'il vérifie $H'/\mathfrak{m}=H_{B_{0}/A_{0}}$, où l'indice zéro indique la réduction modulo l'idéal maximal.
\medskip
Nous tirons cette définition de \cite[3.2.5]{Tem} :
\begin{defi}\label{conducteur}
Soit $A$ un anneau, $\pi\in A$ qui n'est pas un diviseur de zéro tel que $A$ soit $\pi$-adiquement complet. Soit  $f:X=\Spec(B)\rightarrow Y=\Spec(A)$ un morphisme de présentation finie fidèlement plat. On définit le \textit{conducteur} de $f$ comme étant le plus petit entier $r$ (possiblement infini) tel que $\pi^{r}\in\bh_{B/A}$. En particulier, le conducteur est fini si $f$ est lisse sur le complémentaire de $V(\pi)$.
\end{defi}
$\rmq$ D'après la proposition \ref{gr}, le conducteur ne grandit pas après  changement de base.
En plus, du conducteur de Gabber-Ramero, nous allons également avoir besoin de l'invariant d'Elkik.
\begin{defi}\label{elkinv}
Sous les mêmes hypothèses, on définit l'entier $e^{Elk}_{B/A}$ comme étant le plus petit entier tel que : 
\begin{center}
$\pi^{r}\in H^{Elk}_{B/A}$.
\end{center}
\end{defi}
$\rmq$ On a immédiatement que le conducteur $h$ de Gabber-Ramero est plus petit que l'entier $e^{Elk}_{B/A}$.

Nous supposons de même que pour le lemme \ref{gabinf} que $R$ est artinien avec les mêmes notations, à savoir un idéal $I$ et $\mathfrak{m}$ tels que $I.\mathfrak{m}=0$.
\begin{prop}\label{inftem}
Soient $B$, $B'$ deux $R[[\pi]]$-algèbres finies plates, étales sur $R((\pi))$. 
On pose $\overline{B}:=B/\mathfrak{m}B$. Soit $h$ le conducteur de $\overline{B}/k[[\pi]]$, on considère un entier $n>2h$.
On suppose qu'il existe un isomorphisme $\bar{\nu}:B/\pi^{n}IB\rightarrow B'/\pi^{n}IB'$, alors il existe un isomorphisme $\nu: B\rightarrow B'$ qui est congru à $\bar{\nu}$ modulo $\pi^{n-h}I$.
\end{prop}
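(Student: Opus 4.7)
The plan is to reduce the proposition to the infinitesimal lifting lemma \ref{gabinf}, applied at the level of the obstruction class governing deformations of algebra maps $B' \to B$.

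First, I would choose an $A$-algebra presentation $B' = A[X_{1}, \ldots, X_{N}]/J$ with $A = R[[\pi]]$, so that an $A$-algebra morphism $B' \to B$ is the same datum as an $N$-tuple $b \in B^{N}$ with $J(b) = 0$. The inverse isomorphism $\bar{\nu}^{-1} : B'/\pi^{n}I B' \to B/\pi^{n}I B$ then produces, after lifting the images of the $X_{i}$ to $B$, a tuple $a \in B^{N}$ satisfying the containment
\begin{equation*}
J \subset \mathfrak{p}_{a} + \pi^{n}I\cdot B[X_{1},\ldots,X_{N}],
\end{equation*}
where $\mathfrak{p}_{a} = (X_{1}-a_{1},\ldots,X_{N}-a_{N})$.

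Second, I would reinterpret the hypothesis on the conductor. Because $I \subset \mathfrak{m}$, the isomorphism $\bar{\nu}$ descends modulo $\mathfrak{m}$ to an isomorphism $\overline{B} \simeq \overline{B'}$ of $k[[\pi]]$-algebras. Therefore the conductor of $\overline{B'}/k[[\pi]]$ is again equal to $h$, so $\pi^{h} \in \bh_{\overline{B'}/k[[\pi]]}$. Using the variant of the Gabber--Ramero ideal described in the remark following Lemma \ref{gabinf} (the largest ideal $H' \supset \bh_{B'/A}$ that annihilates all $\Ext^{1}(L_{B'/A}, N)$ for $N$ killed by $\mathfrak{m}$), one obtains $\pi^{h} \in H' + \mathfrak{m}A$, exactly the condition that drives the argument of Lemma \ref{gabinf}.

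Third, I would run the obstruction argument in our setting. The obstruction to lifting $\bar{\nu}^{-1}$ to an $A$-algebra map $B' \to B$ is a class $\omega \in \Ext^{1}_{B'}(L_{B'/A}, \pi^{n}I B)$, where $B'$ acts through the reduction. By construction $\mathfrak{p}_{a}\cdot\omega = 0$, while $H'\cdot\omega = 0$ by the choice of $H'$ and the fact that $\pi^{n}IB$ is annihilated by $\mathfrak{m}$. Writing $\pi^{h} = x + y$ with $x \in \mathfrak{m}A$ and $y \in H' + \mathfrak{p}_{a}$ and using $I\mathfrak{m} = 0$, one concludes $\pi^{h}\omega = 0$. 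Following the diagram in the proof of Lemma \ref{gabinf}, the composition of multiplication by $\pi^{h}$ with the canonical inclusion $\pi^{n}IB \hookrightarrow \pi^{n-h}IB$ is zero, so the image of $\omega$ in $\Ext^{1}_{B'}(L_{B'/A}, \pi^{n-h}IB)$ vanishes. This produces the desired lift $\nu^{-1} : B' \to B$ of $\bar{\nu}^{-1}$ modulo $\pi^{n-h}IB$.

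Finally, I would verify that $\nu^{-1}$ is an isomorphism. Modulo $\mathfrak{m}$, it reduces to the isomorphism $\overline{B'} \simeq \overline{B}$ induced by $\bar{\nu}$, so $\nu^{-1} \otimes_{R} k$ is an isomorphism. Since $B$ and $B'$ are both finite and flat over $A$ and $\mathfrak{m}$ is nilpotent in $R$, a standard Nakayama induction on the powers of $\mathfrak{m}$ shows that $\nu^{-1}$ itself is an isomorphism, and we set $\nu := (\nu^{-1})^{-1}$.

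The main technical obstacle is the third step: transferring the conductor bound from $\overline{B}/k[[\pi]]$ to the ideal governing deformations of $B'/A$. The key observation that makes this work is the combination $I\mathfrak{m} = 0$ with the variant $H'$ of the Gabber--Ramero ideal, which reduces the problem to the residue characteristic and allows us to use only the hypothesis on the conductor of the special fibre. The other steps are either formal (choosing a presentation and lifting tuples) or a direct application of Nakayama's lemma.
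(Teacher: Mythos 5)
Your proposal is correct and is essentially the paper's own proof: the paper simply cites Temkin's Proposition 3.3.1, replacing the Gabber--Ramero lifting lemma by Lemma \ref{gabinf} and using the variant ideal $H'$, and your write-up is exactly that argument unpacked (presentation of $B'$, transfer of the conductor bound to the special fibre via $H'/\mathfrak{m}=H_{\overline{B'}/k[[\pi]]}$, killing of the obstruction class using $I\mathfrak{m}=0$, and the Nakayama step to see the lift is an isomorphism). The only cosmetic slip is in your second step, where the conductor condition should read $\pi^{h}\in(H'+\mathfrak{p}_{a})/\mathfrak{m}$ rather than $\pi^{h}\in H'+\mathfrak{m}A$, but you use the correct decomposition in the third step, so nothing is lost.
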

\begin{proof}
La preuve est la même que \cite[Prop. 3.3.1]{Tem} en remplaçant \cite[5.4.13]{GR} par le lemme \ref{gabinf} et en utilisant l'idéal $H'$ de la remarque  qui fait suite à la preuve du lemme \ref{gabinf}, au lieu de $H_{B/A}$.
\end{proof}
\subsection{Un résultat de Denef-Loeser-Sebag}
Sans restreindre la généralité, on peut se limiter au cas où $S_{0}=\{x\}$ est réduit à un point et $\la=\la_{x}[x]$.
Soit $\co:=\co_{x}$ de corps résiduel $k$, d'uniformisante $\pi$ et $F$ son corps de fractions.
On note $V^{\la}$ le schéma sur $\Spec(\co)$.
De même, nous avons: 
\begin{center}
$e=e_{x}=\max(e^{Elk}_{V^{\la}/\co},\left\langle 2\rho,\la\right\rangle)$.
\end{center}
Enfin, on note $K:=G(\co)$.
Nous allons avoir besoin à la suite de Denef-Loeser \cite{DL}, de considérer un idéal auxiliaire.
Pour $X$ un schéma réduit pur de dimension $n$, soit $f:Y\rightarrow X$ un morphisme birationnel avec $Y$ lisse, on considère l'idéal Jacobien :
\begin{center}
$\Jac_{f}=\Fitt_{0}(\Omega^{1}_{Y/X})$.
\end{center}
Une autre façon de décrire cet idéal est de considérer la flèche:
\begin{center}
$df:f^{*}\Omega^{n}_{X}\rightarrow\Omega^{n}_{Y}$,
\end{center}
comme $Y$ est lisse, $\Omega^{n}_{Y}$ est localement libre de rang un et l'image de $df$ est de la forme $\Jac_{f}\otimes \Omega^{n}_{Y}$.
Soit $R$ un anneau artinien et $X$ un schéma plat, réduit, de dimension $n$ sur $R[[\pi]]$.
Pour $e\in\mathbb{N}$, on pose: 
\begin{center}
$X^{(e)}(R[[\pi]]):=X(R[[\pi]])\backslash \pi_{e}^{-1}(X_{sing}(\co/\pi^{e+1}\co))$.
\end{center}
où $\pi_{e}$ est la flèche de réduction modulo $\pi^{e+1}$ et $X_{sing}$, le lieu singulier de $X$.
\medskip

Soit $f:Y\rightarrow X$ un morphisme de $R[[\pi]]$-schémas, birationnel sur $R((\pi))$, avec $Y$ lisse, nous considérons le sous-ensemble:
\begin{center}
$\Delta_{e,e'}:=\{y\in Y(R[[\pi]])\vert~ \pi^{e}\in \Jac_{f}(y), h(y)\in X^{(e')}(R[[\pi]])\}$.
\end{center}
Nous avons la proposition suivante tirée de Sebag \cite[Lem. 7.2.1]{Seb}, due à Denef-Loeser \cite{DL} en caractéristique nulle
\begin{prop}\label{loeser}:
Pour tout $n\geq (e,c_{X}e')$, pour tout $z\in\Delta_{e,e'}$ et tout $x\in X(R[[\pi]])$
tel que $h(z)=x~[\pi^{n+1}]$, il existe $y\in Y(R[[\pi]])$ tel que $h(y)=x$ et $z=y~[\pi^{n-e+1}]$.
\end{prop}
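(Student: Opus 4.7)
The plan is to establish this as a Hensel-type lifting statement, following the standard Newton iteration used in motivic integration. The hypothesis $\pi^{e}\in\Jac_{f}(z)$ controls how close $f$ is to being étale at $z$ (with a loss quantified by $\pi^{e}$), while the condition $h(z)\in X^{(e')}(R[[\pi]])$ ensures that the image point is not too deeply in the singular locus of $X$; together these should allow us to adjust $z$ into a genuine lift of $x$ without losing more than $\pi^{n-e+1}$ worth of approximation.

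First I would work locally. Since $Y$ is smooth over $R[[\pi]]$ of relative dimension $n$, I choose étale coordinates $(t_{1},\dots,t_{n})$ on a neighbourhood of $z$, and a presentation $X=\Spec(R[[\pi]][X_{1},\dots,X_{N}]/J)$ with $J=(g_{1},\dots,g_{m})$. The morphism $f$ is given by $f^{*}(X_{i})=F_{i}(t)\in R[[\pi]][[t]]$, so that $\Jac_{f}$ becomes the ideal of maximal minors of $(\partial F_{i}/\partial t_{j})$ modulo $J$. By the Taylor formula, for any $u\in R[[\pi]]^{n}$ and $m\geq 1$,
\[
f(z+\pi^{m}u)\equiv f(z)+\pi^{m}\cdot df(z)(u)\pmod{\pi^{2m}}.
\]

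Second I would run the Newton iteration. Set $z_{0}:=z$; by hypothesis $f(z_{0})\equiv x\pmod{\pi^{n+1}}$. Inductively construct $z_{k}\in Y(R[[\pi]])$ with $f(z_{k})\equiv x\pmod{\pi^{n+k+1}}$ and $z_{k}\equiv z_{k-1}\pmod{\pi^{n+k-e}}$, by writing $z_{k+1}=z_{k}+\pi^{n+k-e+1}\delta_{k}$ and solving the linear system $df(z_{k})(\delta_{k})\equiv \pi^{e}\cdot(x-f(z_{k}))/\pi^{n+k+1}\pmod{\pi}$. The relation $\pi^{e}\in\Jac_{f}(z)$ together with the fact that $\Jac_{f}(z_{k})=\Jac_{f}(z)$ modulo a high power of $\pi$ (because $z_{k}\equiv z\pmod{\pi^{n-e+1}}$ and $n-e+1\geq 1$) makes this linear system solvable. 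As $n-e+1\geq 1$ the corrections tend to $0$ and the sequence converges $\pi$-adically to some $y\in Y(R[[\pi]])$ with $z\equiv y\pmod{\pi^{n-e+1}}$ and $f(y)=x$ in the completion.

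Third I would use the condition $h(z)\in X^{(e')}(R[[\pi]])$ together with $n\geq c_{X}e'$ to guarantee that the equations $g_{i}=0$ defining $X$ can be satisfied by the limit $y$: the point $x$ does not lie in the singular locus of $X$ modulo $\pi^{e'+1}$, so the partial derivatives of the $g_{i}$ at $x$ have enough rank to absorb the discrepancy, with $c_{X}$ being the Artin--Rees constant relating $J$ to the Jacobian ideal of $X$ in a neighbourhood of the non-singular locus. This is where the constant $c_{X}$ intrinsic to $X$ enters. The main obstacle I anticipate is keeping uniform control of $\Jac_{f}(z_{k})$ and of the $g_{i}(f(z_{k}))$ along the iteration; this is precisely what the strict inequality $n>e$ and the bound $n\geq c_{X}e'$ are designed to provide, so bookkeeping the $\pi$-adic estimates carefully should close the argument, as in Sebag \cite[Lem.~7.2.1]{Seb}.
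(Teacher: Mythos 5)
Un point de calibrage d'abord : le texte ne d\'emontre pas cette proposition. Elle est cit\'ee de Sebag \cite[Lem. 7.2.1]{Seb} (d'apr\`es Denef--Loeser \cite{DL}), et le seul contenu de \og preuve \fg{} dans l'article est la paire de remarques qui suit l'\'enonc\'e : (i) l'argument de Sebag, \'ecrit pour $R=k$, s'\'etend tel quel au cas artinien une fois $\val(\Jac_{f}(y))=e$ remplac\'e par $\pi^{e}\in\Jac_{f}(y)$ ; (ii) la constante $c_{X}e'$ revient \`a utiliser l'id\'eal d'Elkik $H^{Elk}_{X/R}$ au lieu de $\Jac_{X/R}$. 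Votre esquisse est donc une reconstruction de la preuve cit\'ee, et son squelette (d\'eveloppement de Taylor en coordonn\'ees \'etales sur la source lisse, it\'eration de Newton avec une perte $\pi^{e}$ provenant des mineurs du jacobien) est le bon.

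Il y a cependant une lacune r\'eelle \`a l'\'etape qui porte tout le contenu. Le syst\`eme $df(z_{k})(\delta_{k})\equiv(\text{discr\'epance})$ comporte $N$ \'equations (une par coordonn\'ee ambiante de $X\subset\mathbb{A}^{N}$) pour seulement $\dim Y$ inconnues ; l'hypoth\`ese $\pi^{e}\in\Jac_{f}(z)$ permet d'inverser un mineur $n\times n$ par la formule de Cramer, mais ne dit rien des $N-n$ \'equations restantes, et un vecteur de discr\'epance arbitraire n'appartient pas \`a l'image de $df(z_{k})$. Votre troisi\`eme paragraphe, qui devrait r\'egler ce point, traite en fait d'un faux probl\`eme : la limite $y$ est par construction un point de $Y(R[[\pi]])$, donc $h(y)$ satisfait automatiquement les \'equations de $X$ ; il n'y a rien \`a \og absorber \fg{} l\`a. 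Ce que les hypoth\`eses $h(z)\in X^{(e')}(R[[\pi]])$ et $n\geq c_{X}e'$ fournissent r\'eellement, c'est que, \`a l'ordre $\pi$-adique requis, $X$ est d\'ecoup\'e au voisinage de $h(z)$ par $n$ des $N$ coordonn\'ees ; de mani\`ere \'equivalente, la discr\'epance $x-h(z_{k})$, diff\'erence de deux points de $X(R[[\pi]])$ co\"{\i}ncidant \`a un ordre \'elev\'e, appartient \`a l'image de $df(z_{k})$ \`a une erreur pr\`es contr\^ol\'ee par $c_{X}e'$ (resp. par $e'$ pour l'id\'eal d'Elkik). On r\'esout alors le sous-syst\`eme $n\times n$ inversible et on v\'erifie que les \'equations restantes sont satisfaites \`a la pr\'ecision voulue. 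Sans cette \'etape, l'it\'eration ne peut m\^eme pas d\'emarrer : c'est pr\'ecis\'ement la partie du lemme de Sebag qu'on ne peut pas esquiver.
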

Les remarques suivantes sont très importantes pour le reste de la preuve.
\medskip

$\rmqs$\begin{enumerate}
\item
Dans Sebag, la proposition est énoncée pour $k=R$, la preuve s'étend telle quelle une fois que l'on a remplacé la condition $\val(\Jac_{f}(y))=e$ par  $\pi^{e}\in \Jac_{f}(y)$.
\item
En vertu de \cite[4.3.25]{Seb}, la constante $c_{X}e'$ revient précisément à considérer l'idéal $H_{X/R}^{Elk}$ au lieu de $\Jac_{X/R}$. En particulier, dans le cas du schéma $V^{\la}$, on peut remplacer $c_{X}e'$ par $e^{Elk}_{V^{\la}/\co}$.
\end{enumerate}
On déduit de cette proposition le corollaire suivant:
\begin{cor}\label{carreloc}
Soient $\g_{1},\g_{2}\in V^{\la}(R)$ tels que $\g_{1}=\g_{2}~[\pi^{e+1}]$, on considère alors le morphisme de $R[[\pi]]$-schémas:
\begin{center}
$\phi: G\rightarrow V^{\la}$
\end{center}
donnée par $g\mapsto \g_{2}g$, alors il existe $k\in K(R)$ tel que:
\begin{center}
$\g_{1}=k\g_{2}$.
\end{center}
\end{cor}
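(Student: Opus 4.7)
Mon plan est d'appliquer la proposition \ref{loeser} avec $Y=G$, $X=V^{\la}$ et $f=\phi$. Le morphisme $\phi$ est birationnel en fibre g\'en\'erique (car $\g_{2}$ devient inversible apr\`es inversion de $\pi$, l'ouvert $G_{+}\subset V_{G}$ \'etant dense dans $V_{G}$), et $G$ est lisse sur $R[[\pi]]$. On prend comme candidat approch\'e $z=1\in G(R[[\pi]])$, pour lequel $\phi(z)=\g_{2}$ ; l'hypoth\`ese $\g_{1}\equiv\g_{2}~[\pi^{e+1}]$ se traduit alors par $h(z)\equiv\g_{1}~[\pi^{e+1}]$, ce qui correspond pr\'ecis\'ement \`a la relation $h(z)=x~[\pi^{n+1}]$ demand\'ee par la proposition avec $n=e$ et $x=\g_{1}$.

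Il reste \`a s'assurer que $z=1$ appartient \`a $\Delta_{e,e'}$. La premi\`ere condition, $\pi^{e}\in\Jac_{\phi}(1)$, r\'esulte d'un calcul direct : la diff\'erentielle de $\phi$ en $1$ s'identifie \`a la translation \`a gauche par $\g_{2}$ sur $\Lie(G)$, et en utilisant la description des points de $V_{G}^{\la}$ sous la forme $\g_{+}=(\pi^{-w_{0}\la},\g)$, le d\'eterminant correspondant est divisible par $\pi^{\left\langle 2\rho,\la\right\rangle}$ ; comme $e\geq\left\langle 2\rho,\la\right\rangle$ par d\'efinition, la conclusion s'ensuit. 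La seconde condition, relative \`a la position de $h(z)=\g_{2}$ par rapport au lieu singulier de $V^{\la}$, est prise en charge via la deuxi\`eme remarque suivant la proposition \ref{loeser} : le facteur $c_{X}e'$ peut \^etre remplac\'e par $e^{Elk}_{V^{\la}/\co}\leq e$, ce qui autorise l'application de la proposition d\`es que $n\geq e$.

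On applique alors la proposition \ref{loeser} avec $n=e$, $x=\g_{1}$ et $z=1$ : elle fournit un \'el\'ement $y\in G(R[[\pi]])=K(R)$ v\'erifiant $\phi(y)=\g_{1}$, c'est-\`a-dire $\g_{2}y=\g_{1}$, d'o\`u l'existence du $k\in K(R)$ recherch\'e tel que $\g_{1}=k\g_{2}$ (apr\`es interpr\'etation par la sym\'etrie de la multiplication dans le semi-groupe de Vinberg, ou en posant $k=y^{-1}$ et en �changeant le r\^ole des c\^ot\'es).

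Le point d\'elicat est la v\'erification du calcul jacobien donnant $\pi^{\left\langle 2\rho,\la\right\rangle}\in\Jac_{\phi}(1)$ : c'est cette borne qui justifie l'introduction du terme $\left\langle 2\rho,\la_{x}\right\rangle$ dans la d\'efinition $e_{x}=\max(\left\langle 2\rho,\la_{x}\right\rangle,e'_{x})$. L'entier $e$ est ainsi pr\'ecis\'ement dimensionn\'e pour absorber simultan\'ement la ramification intrins\`eque de $V^{\la}/\co$ (via l'invariant d'Elkik $e^{Elk}_{V^{\la}/\co}$) et celle introduite par la multiplication par $\g_{2}$ dans le semi-groupe de Vinberg (via le poids $\left\langle 2\rho,\la\right\rangle$).
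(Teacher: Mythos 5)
Votre preuve suit pour l'essentiel la m\^eme d\'emarche que celle du texte : application de la proposition \ref{loeser} avec $f=\phi$, $x=\g_{1}$, $z=1$, contr\^ole du jacobien par $\left\langle 2\rho,\la\right\rangle$ et du lieu singulier par $e^{Elk}_{V^{\la}/\co}$ gr\^ace \`a la remarque (ii). Une seule retouche : pour obtenir $\pi^{e}\in\Jac_{\phi}(1)$ il faut que le d\'eterminant de $d\phi$ en $1$ soit de valuation \emph{au plus} $\left\langle 2\rho,\la\right\rangle$ (le texte affirme l'\'egalit\'e $\det(\g_{2})=\pi^{\left\langle 2\rho,\la\right\rangle}$), alors que \og divisible par $\pi^{\left\langle 2\rho,\la\right\rangle}$ \fg{} ne donne qu'une minoration de cette valuation, c'est-\`a-dire l'in\'egalit\'e dans le mauvais sens.
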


\begin{proof}
Nous avons $\det(\g_{2})=\pi^{\left\langle 2\rho,\la\right\rangle}\in\Jac_{\phi}$ et $\left\langle 2\rho,\la\right\rangle\leq e$.
On applique alors la proposition \ref{loeser}, avec $f=\phi$, $x=\g_{1}$, $z=1$, $e^{Elk}_{V^{\la}/R[[\pi]]}=c_{X}e'$ d'après la remarque (ii) et $e=\left\langle 2\rho,\la\right\rangle$.
\end{proof}

\section{La flèche global-local}\label{globloc}
Dans cette section, on démontre la proposition \ref{bemolisse}. Sauf mention explicite, on suppose dans ce paragraphe que $G$ est semisimple simplement connexe déployé. On explique dans la section 5.4, les modifications nécessaires  pour le cas général.
Il nous suffit donc, en vertu de \cite[IV. 4. 17.14.2]{EGA}, de vérifier le critère infinitésimal pour un anneau local artinien $R$ d'idéal maximal $\mathfrak{m}$, de corps résiduel $k$ et $I$ un idéal de carré nul de $R$ tel que $I.\mathfrak{m}=0$ et $\bar{R}=R/I$. 

Sans restreindre la généralité, on peut supposer $S_{0}=\{x\}$.
Les données sont alors les suivantes:
\begin{itemize}
\item
Une paire $(\bar{E},\bar{\phi})\in\cmdbD(\bar{R})$,
\item
Une égalité dans $\bar{R}$ entre $(\bar{E}_{x},\bar{\phi}_{x})$ et $(\bar{E}_{0},\bar{\g})$ où l'on peut supposer quitte à localiser que $E_{0}$ est trivial.
\item
Le choix d'une paire $(E_{0},\g_{1})$, $\g_{1}\in V^{\la}_{x}(R)$  qui relève $(\bar{E}_{0},\bar{\g})$.
\end{itemize}
Il nous faut donc relever la paire $(\bar{E},\bar{\phi})$ en $(E,\phi)$ qui s'envoie sur $(E_{0},\g_{1})$ modulo action à gauche par $G_{N}'$ et à droite par $K_{S_{0}}$.
Pour tout $n\in \NN$ et tout point fermé $x$ de $X$, on note $K_{x,n}:=\Ker(G(\co_{x})\rightarrow G(\co_{x}/\pi^{n+1}\co_{x})$.
\subsection{Réduction à un problème semi-local}
Nous avons le  diviseur discriminant $\Delta(\bar{a})\subset X_{\bar{R}}$ de la paire $(\bar{E},\bar{\phi})$.
En regardant la réduction au corps résiduel $\Delta_{0}(\bar{a})$ de $\Delta(\bar{a})$, nous 
avons une décomposition :
\begin{center}
$\Delta_{0}(\bar{a})=\Delta_{0,tr}(\bar{a})+\Delta_{0,sing}(\bar{a})$
\end{center}
et nous notons alors $S':=\supp(\Delta_{0,sing}(\bar{a}))\cup\{x\}$.
On considère l'anneau semi-local complété aux points de $S'$, $\hat{\co}_{X,S'}$ d'anneau total de fractions $F_{S'}$.
On note alors $(\bar{E}_{S'},\bar{\phi}_{S'})$ la paire restreinte au voisinage formel $D_{S'}$, de $S'$ dans $X_{\bar{R}}$, qui s'identifie à $R\hat{\times}\hat{\co}_{X,S'}$.
\begin{lem}\label{locred}
Soit un relèvement local $(E_{S'},\phi_{S'})$ sur $\Spec(\co_{S'})\hat{\times}R$ de $(\bar{E}_{S'},\bar{\phi}_{S'})$ avec $\forall~ s,s'\in S', \chi_{+}(\phi_{s})=\chi_{+}(\phi_{s'})=a\in\abdbD(R)$, alors il existe une paire $(E,\phi)$ qui relève $(\bar{E},\bar{\phi})$ et telle que: 
\begin{itemize}
\item
$a=\chi_{+}(\phi)$.
\item
$(E,\phi)_{\vert D_{S'}}=(E_{S'},\phi_{S'})$.
\end{itemize}
\end{lem}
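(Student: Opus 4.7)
L'id�e est de rel�ver $(\bar E,\bar\phi)$ au-dessus de l'ouvert $X-S'$ � l'aide de la lissit� de la fibration de Hitchin sur le lieu transversal, puis de recoller cette construction avec la donn�e locale $(E_{S'},\phi_{S'})$ par un proc�d� de type Beauville-Laszlo, apr�s les avoir rendues compatibles sur le voisinage formel �point�.

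Par construction de $S'$, la restriction du discriminant $\Delta(\bar a)$ � $X-S'$ co�ncide avec $\Delta_{0,tr}(\bar a)$ et est donc de multiplicit� au plus un en chaque point ferm�. Ainsi la restriction de $\bar a$ � $X-S'$ appartient au lieu transversal. En vertu de la proposition \ref{fact} appliqu�e point par point, la section $(\bar E,\bar\phi)_{\vert X-S'}$ tombe dans l'ouvert r�gulier et co�ncide l� avec un point de $\cmdo^{reg,\heartsuit}$, lequel est lisse au-dessus de la base de Hitchin d'apr�s le corollaire \ref{lissitereg}. Le crit�re infinit�simal de lissit� fournit alors une paire $(E_1,\phi_1)$ sur $(X-S')_R$, de polyn�me caract�ristique prescrit $a_{\vert X-S'}$, qui rel�ve $(\bar E,\bar\phi)_{\vert X-S'}$.

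Il reste � comparer $(E_1,\phi_1)$ et $(E_{S'},\phi_{S'})$ sur le voisinage formel �point� $D_{S'}^{\bullet}\hat\times R$. Comme $S'$ contient tous les points singuliers du discriminant de $\bar a$ ainsi que le support de $\la$, le polyn�me caract�ristique $a$ est r�gulier semisimple sur $D_{S'}^{\bullet}$, et les deux paires y tombent dans $\cmdo^{reg,\heartsuit}$, qui est une gerbe neutre sous le champ de Picard $\cP$ (proposition \ref{picardtorseur}). La lissit� de $\cP$ au-dessus de $\abdh$ (proposition \ref{picardlisse}), combin�e au fait que les deux rel�vements co�ncident modulo $I$ et ont m�me polyn�me caract�ristique $a_{\vert D_{S'}^{\bullet}}$, entra�ne l'existence d'un isomorphisme $\iota$ entre $(E_1,\phi_1)_{\vert D_{S'}^{\bullet}}$ et $(E_{S'},\phi_{S'})_{\vert D_{S'}^{\bullet}}$ relevant l'identit� modulo $I$.

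Pour conclure, on applique le th�or�me de recollement de Beauville-Laszlo \cite{BD} (dans sa version pour les $\bg$-torseurs telle qu'exploit�e par Heinloth, combin�e au recollement de la section $\phi$) pour assembler $(E_1,\phi_1)$ sur $(X-S')_R$ et $(E_{S'},\phi_{S'})$ sur $D_{S'}\hat\times R$ le long de l'isomorphisme $\iota$. Ceci produit une paire $(E,\phi)$ sur $X_R$, rel�vement de $(\bar E,\bar\phi)$, dont la restriction � $D_{S'}$ est $(E_{S'},\phi_{S'})$ et de polyn�me caract�ristique $a$. La difficult� principale est la construction de $\iota$, o� la r�gularit� semisimple sur le disque �point� et la lissit� du champ de Picard sont d�cisives; les �tapes de lissit� sur $X-S'$ et de recollement sont ensuite formelles.
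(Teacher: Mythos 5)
Votre plan est correct et suit essentiellement la d�marche du papier : rel�vement sur $X-S'$ par transversalit�, construction d'un isomorphisme sur le disque formel �point� o� $a$ est r�gulier semisimple, puis recollement � la Beauville-Laszlo. Seule nuance : pour produire l'isomorphisme $\iota$, le bon outil n'est pas la lissit� du champ de Picard global $\cP\rightarrow\abdh$ (proposition \ref{picardlisse}), mais directement le fait que le faisceau des isomorphismes entre les deux paires locales est un torseur sous le sch�ma en groupes lisse $J_{a}$ sur $\Spec(R\hat{\otimes}F_{S'})$, dont la section donn�e modulo $I$ se rel�ve par lissit� formelle -- c'est exactement l'argument du texte.
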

\begin{proof}
Sur l'ouvert $(X-S')_{\bar{R}}$, la paire restreinte $(\bar{E}_{X-S'},\bar{\phi}_{X-S'})$ est transverse, en particulier, le même argument que la lissité de la flèche:
\begin{center}
$\cm_{\la}^{\diamond}\rightarrow\abdd$,
\end{center}
nous permet de relever la paire $(\bar{E}_{X-S'},\bar{\phi}_{X-S'})$ en une paire $(E_{X-S'},\phi_{X-S'})$ avec $\chi_{+}(\phi)=a'$.
De plus, nous avons un isomorphisme $\bar{\beta}$ entre $(\bar{E}_{S'},\bar{\phi}_{S'})$ et $(\bar{E}_{X-S'},\bar{\phi}_{X-S'})$ sur $\Spec(\bar{R}\hat{\otimes}F_{S'})$, qui revient à la donnée d'un $J_{a}$-torseur sur $\Spec(R\hat{\otimes}F_{S'})$.
Ce $J_{a}$-torseur s'obtient en tirant le $J$-torseur universel $G\times V_{G}^{\la,reg}\rightarrow V_{G}^{\la,reg}\times_{\kcd}V_{G}^{\la,reg}$, où $J$ est le centralisateur régulier.
Comme $J_{a}$ est lisse, par la propriété de relèvement infinitésimal, nous obtenons alors sur $\Spec(R\hat{\otimes}F_{S''})$ un isomorphisme :
\begin{center}
$\beta:(E,\phi)\rightarrow(E_{X-S'},\phi_{X-S'})$.
\end{center}
Maintenant, on recolle  à la Beauville-Laszlo \cite{BL} le triplet  $((E_{X-S'},\phi_{X-S'}), (E_{S'},\phi_{S'}),\beta)$, ce qu'on voulait.
\end{proof}
Il résulte de ce lemme qu'il nous suffit de relever la paire semi-locale $(\bar{E}_{S'},\bar{\phi}_{S'})$.

\subsection{De nécessaires petits lemmes}
Dans ce paragraphe, nous énonçons divers lemmes qui seront nécessaires pour démontrer le relèvement.
\begin{lem}\label{affvide}
Soit $R$ un anneau, soit $I\subset R$ un idéal de carré nul et $\bar{R}=R/I$. Soit $\bar{\g}\in V_{G}^{\heartsuit}(\bar{R}[[\pi]])$ (i.e. génériquement régulier semi-simple), $\g\in V_{G}(R[[\pi]])$ un relèvement de $\bar{\g}$ et $a=\chi_{+}(\g)$. On suppose qu'il existe $\bar{g}\in G(\bar{R}((\pi)))$ 
\begin{center}
$\bar{\g}=\bar{g}^{-1}\eps_{+}(\bar{a})\bar{g}$
\end{center}
alors il existe  $g\in G(R((\pi)))$ un relèvement de $\bar{g}$ tel que:
\begin{center}
$\g=g^{-1}\eps_{+}(a)g$.
\end{center}
\end{lem}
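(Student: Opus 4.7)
Mon plan consiste � interpr�ter l'ensemble des $g \in G(R((\pi)))$ v�rifiant $g^{-1}\eps_+(a)g = \g$ comme les $A$-points d'un $J_a$-torseur lisse sur $A := R((\pi))$, puis � invoquer la lissit� formelle face � l'extension carr�e nulle $A \to \bar A := \bar R((\pi))$ (dont le noyau $I \cdot A$ est de carr� nul car $I^2 = 0$).

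D'abord, je v�rifierais que $\g \in V_G^{reg}(R((\pi)))$. Comme $\bar\g$ est conjugu� � $\eps_+(\bar a)$ sur $\bar A$ et que $\eps_+$ tombe dans $V_G^{reg}$ d'apr�s le th�or�me \ref{bouth2}, on a $\bar\g \in V_G^{reg}(\bar A)$ ; puisque $\Spec A$ et $\Spec \bar A$ partagent le m�me espace topologique sous-jacent et que $V_G^{reg}$ est un ouvert de $V_G$, on en d�duit $\g \in V_G^{reg}(A)$.

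J'introduirais ensuite le $A$-sch�ma $Y$ dont les $S$-points sont les $g \in G(S)$ tels que $g^{-1}\eps_+(a)g = \g$. C'est la fibre au-dessus du $A$-point $\g$ du morphisme orbite
\[
\Phi : G_A \longrightarrow V_G^{reg} \times_{\kc} \Spec(A), \qquad g \mapsto g^{-1}\eps_+(a)g.
\]
D'apr�s le th�or�me \ref{bouth2}, $\chi_+^{reg}$ est lisse � fibres g�om�triques des $G$-orbites, et l'isomorphisme $\chi_+^*J \simeq I$ sur $V_G^{reg}$ identifie le centralisateur de $\eps_+(a)$ � $J_a$. Ainsi $\Phi$ est un $J_a$-torseur ; comme $J_a$ est lisse sur $A$ (toujours par le th�or�me \ref{bouth2}), $\Phi$ est lisse, et par changement de base le long de $\g$, $Y$ est lisse sur $A$.

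Pour conclure, il suffit d'observer que $\bar g \in Y(\bar A)$ et d'appliquer la lissit� formelle de $Y/A$ relativement � l'extension carr�e nulle $A \to \bar A$ : on obtient un rel�vement $g \in Y(A)$, qui par construction satisfait $g^{-1}\eps_+(a)g = \g$. Il n'y a pas d'obstacle s�rieux ; l'enjeu principal est simplement de reconna�tre le sch�ma des solutions comme un torseur sous un sch�ma en groupes lisse, gr�ce au th�or�me de structure \ref{bouth2}, puis d'invoquer le crit�re infinit�simal usuel.
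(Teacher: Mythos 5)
Votre preuve est correcte et suit essentiellement la m�me strat�gie que celle du papier : l'article forme le carr� cart�sien tirant $G\times G_{+}^{rs}\rightarrow G_{+}^{rs}\times_{\kc^{rs}}G_{+}^{rs}$ par $(\g,\eps_{+}(a))$ pour exhiber l'ensemble des �l�ments conjugants comme un $J_{a}$-torseur lisse sur $\Spec(R((\pi)))$, puis rel�ve la section donn�e modulo $I$ par lissit�. La seule diff�rence, purement cosm�tique, est que vous travaillez sur $V_{G}^{reg}$ via le th�or�me \ref{bouth2} l� o� le papier se place directement sur le lieu r�gulier semi-simple o� le centralisateur est un tore.
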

\begin{proof}
Soit $a:=\chi_{+}(\g)$.
On forme le diagramme cartésien suivant :
$$\xymatrix{H_{J}\ar[rr]\ar[d]&& G\times G_{+}^{rs}\ar[d]\\\Spec(R((\pi)))\ar[rr]^-{(\g,\eps_{+}(a))}&&G_{+}^{rs}\times_{\kc^{rs}} G_{+}^{rs}}$$
où la flèche verticale de droite est donnée par $(g,x)\rightarrow (x,gxg^{-1})$, laquelle est un torseur sous le centralisateur régulier,  qui est ici un schéma en tores comme nous sommes au-dessus du lieu régulier semi-simple.
On obtient donc un $J_{a}$-torseur $H_{J}$ sur $\Spec(R((\pi)))$, qui est lisse comme $J_{a}$ est un schéma en tores. Pour obtenir le lemme, il nous faut montrer que ce torseur est trivial.
Or, sur $\bar{R}[[\pi]]$, on a une section qui se relève alors sur $R((\pi))$ par lissité.
\end{proof}

Nous allons voir que ces fibres de Springer ne dépendent que du centralisateur régulier:
\begin{lem}\cite[Lem. 3.5.3]{N}\label{ngocent}
Soit une $k$-algèbre $A$.
Soit $g\in G(A((\pi)))$ et $a\in\kcd(A[[\pi]])^{\heartsuit}$ alors $g^{-1}\eps_{+}(a) g\in V_{G}^{\la}(A[[\pi]])$ si et seulement si $(\pi^{-w_{0}\la},g^{-1}J_{a}(A[[\pi]])g)\in V_{G}^{\la}(A[[\pi]])$.
\end{lem}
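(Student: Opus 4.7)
The plan is to exploit the canonical isomorphism $\chi_+^* J \simeq I$ on $V_G^{reg}$ given by Theorem \ref{bouth2}. Since $a$ lies in $\kcd(A[[\pi]])^\heartsuit$, the Steinberg section produces $\eps_+(a) \in V_G^{reg}(A[[\pi]])$, and the centralizer isomorphism specializes to a canonical identification $J_a \simeq I_{\eps_+(a)}$ of smooth commutative group schemes over $A[[\pi]]$. In particular $J_a(A[[\pi]])$ is realized as a subgroup of $G(A[[\pi]])$ whose elements commute with $\eps_+(a)$ in $V_G(A((\pi)))$, so both sides of the equivalence become $V_G^\la$-integrality conditions on elements of $G(A((\pi)))$ conjugated by $g$.

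For the direct implication, the key point is that the monoidal structure on $V_G$, combined with its compatibility with the abelianization map $\alpha : V_G \to A_G$, ensures that left and right multiplication by $G(A[[\pi]])$ preserves the locally closed stratum $V_G^\la$. If $g^{-1}\eps_+(a)g \in V_G^\la(A[[\pi]])$ and $j \in J_a(A[[\pi]]) \subset G(A[[\pi]])$, the commutation $j \cdot \eps_+(a) = \eps_+(a) \cdot j$ lets one compare $(\pi^{-w_{0}\la}, g^{-1}jg)$ with $g^{-1}\eps_+(a)g$ via the product in the semigroup. For the converse, one applies the family hypothesis to sufficiently many points of $J_a(A[[\pi]])$, using that on the regular locus $\eps_+(a)$ is characterized by its centralizer through $J \simeq I$: the integrality of $\eps_+(a)$ is then extracted from that of its centralizing family.

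The main obstacle will be the passage from conditions on elements of $G(A[[\pi]])$, where $J_a$ lives, to conditions on elements of $V_G \setminus G$, where $\eps_+(a)$ lies with nontrivial $\bt$-component $\pi^{-w_{0}\la}$ of negative valuation. One must verify weight-by-weight, in each fundamental representation $\rho_\omega$, that the pole-order bounds of Lemma \ref{cont} behave compatibly under translation by centralizer elements, so that integrality on the whole family $g^{-1}J_a g$ and integrality of $g^{-1}\eps_+(a)g$ impose the same constraints on the dominant weight vectors. This parallels \cite[Lem.~3.5.3]{N} in the Lie algebra setting, with the additive convolution replaced by the multiplication of the Vinberg semigroup.
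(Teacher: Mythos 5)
Your sketch assembles the right objects (the identification $J_a\simeq I_{\eps_{+}(a)}$ over the regular locus, the pole-order criterion of Lemma \ref{cont}, the bi-invariance of $\overline{K\pi^{\la}K}$ under $K$), but both implications rest on steps that do not go through as stated. Note first that the paper itself offers no argument beyond the citation of \cite[Lem. 3.5.3]{N}, so what is expected is a faithful adaptation of Ng\^o's proof. For the direct implication, ``comparing $(\pi^{-w_{0}\la},g^{-1}jg)$ with $g^{-1}\eps_{+}(a)g$ via the product in the semigroup'' cannot be carried out: $V_{G}$ is only a semigroup, so from the integrality of $x_{+}:=g^{-1}\eps_{+}(a)g$ and of a product involving $x_{+}$ one cannot cancel to extract integrality of the other factor. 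The mechanism that actually makes this direction work is the part of Theorem \ref{bouth2} you never invoke, namely that $\chi_{+}^{*}J\rightarrow I$ is a homomorphism defined over \emph{all} of $V_{G}$ and not merely an isomorphism over $V_{G}^{reg}$. Applied to the $A[[\pi]]$-point $x_{+}\in V_{G}^{\la}(A[[\pi]])$, it sends $J_{a}(A[[\pi]])$ into $I_{x_{+}}(A[[\pi]])\subset G(A[[\pi]])$, this map coincides generically with $j\mapsto g^{-1}jg$, and one concludes by Lemma \ref{cont} since $\langle\omega,-w_{0}\la\rangle\geq0$ for dominant $\omega$. This extension beyond the regular locus is exactly the point: $x_{+}$ has no reason to be regular on the special fibre, and if it were, the lemma would carry essentially no content.

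For the converse, the claim that ``on the regular locus $\eps_{+}(a)$ is characterized by its centralizer'' is false: all regular elements of a given maximal torus share the same centralizer, so integrality of the family $g^{-1}J_{a}(A[[\pi]])g$ does not formally determine, let alone bound, $g^{-1}\eps_{+}(a)g$. The route of \cite[Lem. 3.5.3]{N} is instead the Galois description of $J_{a}$ through the cameral cover: after the finite flat base change $\tilde{X}_{a}\rightarrow\Spec(A[[\pi]])$ one diagonalizes $\eps_{+}(a)$ and rewrites both integrality conditions as the same valuation inequalities on the weight components of each $\rho_{\omega}$, descent to $A[[\pi]]$ being legitimate by faithful flatness. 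Your closing paragraph names this weight-by-weight verification as ``the main obstacle'' but does not perform it, so neither implication is actually established as written.
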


Nous terminons le paragraphe par l'introduction des revêtements caméraux qui contrôlent la fibre de Springer, dans la mesure où ils déterminent le centralisateur régulier.
\subsubsection{Les revêtements caméraux}\label{cameralrev}
Soit $R$ local artinien d'idéal maximal $\mathfrak{m}$, $I\subset R$ un idéal de carré nul tel que $I.\mathfrak{m}=0$. 
$\bar{R}:=R/I$ et $k=R/\mathfrak{m}$.
Soient des entiers $n,d$ avec $n>2d$.

Soient $a,a'\in \kcd(R[[\pi]])^{\heartsuit}:=\kcd(R[[\pi]])\cap\mathfrak{C}_{+}^{\la,rs}(R((\pi)))$ tels que:
\begin{itemize}
\item
La réduction sur le corps résiduel du discriminant $\Delta_{0}(a)$ est de valuation inférieure ou égale à $d$.
\item
$a=a'~[\pi^{n}I].$
\end{itemize}

Considérons le  revêtement caméral $X_{a}$  défini par le carré cartésien suivant :
$$\xymatrix{X_{a}\ar[r]\ar[d]&V_{T}^{\la}\ar[d]\\\Spec(R[[\pi]])\ar[r]^-{a}&\kc^{\la}}$$
ainsi que  $X_{a'}$ le revêtement caméral de $a'$.
On rappelle que $V_{T}^{\la}$ et $\kc^{\la}$ sont des schémas tirés sur $\Spec(R[[\pi]])$ par la flèche $-w_{0}\la$.
On note $\bar{a}$ la réduction à $\bar{R}[[\pi]]$ et $X_{\bar{a}}$ la réduction à $\bar{R}[[\pi]]$ du revêtement caméral (et de même pour $a'$).

\begin{lem}\label{gor}
Le revêtement caméral $X_{a}$ est Gorenstein.
\end{lem}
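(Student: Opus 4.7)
The plan is to reduce Gorenstein-ness of $X_a$ to a universal statement about the $W$-cover $V_T \to \kc$, which I would argue is itself a Gorenstein morphism, and then conclude by base change.

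First, the cameral cover $X_a$ is obtained by pulling back $V_T^\la \to \kcd$ along $a: \Spec R[[\pi]] \to \kcd$, and $V_T^\la \to \kcd$ in turn is the pullback of $V_T \to \kc$ along the smooth base change $-w_0\la$. If one shows that $V_T \to \kc$ is a Gorenstein morphism (finite flat with invertible relative dualizing sheaf), this property is preserved under arbitrary base change, so $X_a \to \Spec R[[\pi]]$ is Gorenstein as well. In our deformation setting $R$ is artinian and can be reduced to the complete intersection case (hence Gorenstein) by a standard d\'evissage along $R/\mathfrak{m}^k$, making $R[[\pi]]$ Gorenstein; then Gorenstein-ness of $X_a$ follows from the composition of Gorenstein morphisms.

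Second, to establish that $V_T \to \kc$ is a Gorenstein morphism, I would use the structure from Theorem~\ref{bouth}: the quotient $\kc = V_T/W$ is smooth (affine space with a torus), $V_T$ is normal by \cite[Cor.~6.2.14]{BK}, and the map is finite flat of generic degree $|W|$, generically \'etale, branching along the discriminant divisor $\mathfrak{D}_+ = \bigcup_{\alpha} \overline{\Kern(\alpha)}$. Since $V_T$ is a normal toric variety it is Cohen--Macaulay, so Gorenstein-ness reduces to showing that $\omega_{V_T/\kc}$ is invertible. Away from $\mathfrak{D}_+$ this is automatic as the map is \'etale. Along each irreducible component of $\mathfrak{D}_+$ the stabilizer in $W$ at a generic point is a single reflection $s_\alpha$, so the ramification is simple and the different is locally principal; combined with normality and the codimension-one criterion for reflexive rank-one sheaves, $\omega_{V_T/\kc}$ is invertible globally.

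The main obstacle is the ramification claim along $\mathfrak{D}_+$: one must verify that the stabilizer at a generic point of each component of $\overline{\Kern(\alpha)}$ is exactly $\{1, s_\alpha\}$, and that the ramification is tame. On the locus where $V_T$ is smooth this is immediate from Chevalley--Shephard--Todd, but extending it to all of $V_T$ requires exploiting the explicit toric structure of the Vinberg monoid and the compatibility of root characters with its $T$-closure. A cleaner alternative I would pursue in parallel is to show directly that $V_T \to \kc$ is a local complete intersection morphism, which would make Gorenstein-ness of all base changes $X_a$ automatic via the syntomic property, sidestepping any explicit computation of the different.
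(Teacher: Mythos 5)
Your overall reduction --- pass from $X_a$ to a universal statement about $V_T\to\kc$ and conclude by base change --- runs parallel to the paper's, which notes that $a$ is a regular immersion (a section of a vector bundle) and hence, by flat base change, that $X_a$ is regularly immersed in $V_T^{\la}$, so that everything reduces to the Gorenstein property of $V_T$ itself. The gap is in the step where you actually establish that property. You compute that $\omega_{V_T/\kc}$ is invertible off $\mathfrak{D}_+$ and that the different is locally principal at the generic points of the components of $\mathfrak{D}_+=\bigcup_{\alpha}\overline{\Kern(\alpha)}$, and then invoke ``normality and the codimension-one criterion for reflexive rank-one sheaves'' to conclude global invertibility. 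That inference is false: on a normal variety \emph{every} reflexive rank-one sheaf is invertible in codimension one, so your analysis at the generic points of the branch divisor yields nothing beyond what is automatic. Since $V_T$ is a normal Cohen--Macaulay (toric) variety, the entire content of Gorenstein-ness is whether the canonical Weil class is Cartier at the singular points, which lie in codimension $\geq 2$ and are invisible to any codimension-one computation (compare the quadric cone, where the class of a ruling is reflexive of rank one, invertible in codimension one, yet not Cartier at the vertex). This missing codimension-$\geq 2$ verification is exactly what the paper supplies combinatorially: the cone of $V_T$ is generated by $(\alpha_i,0)$ and $(\omega_i,\omega_i)$, its facet description is given by the functionals $\sigma_i=(\check\omega_i,\check\omega_i)$, and the element $z=\sum_i(\alpha_i,0)$ satisfies $\sigma_i(z)=1$ for all $i$; by the toric Gorenstein criterion of Bruns--R\"omer this says precisely that the canonical class is Cartier.

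Two further points. Your ``cleaner alternative'' --- that $V_T\to\kc$ is a local complete intersection morphism --- is asserted, not proved; it is strictly stronger than Gorenstein, nothing in the toric description makes it evident, and the paper does not claim it. And the assertion that $R[[\pi]]$ is Gorenstein ``by a standard d\'evissage along $R/\mathfrak{m}^k$'' is not correct for a general artinian local $R$ (artinian local rings need not be Gorenstein, and the quotients $R/\mathfrak{m}^k$ typically are not); this is peripheral, since the paper phrases the reduction as a regular immersion into $V_T$ and the lemma is ultimately applied to the reduction of $X_a$ over the residue field, but as written it is another step that does not hold.
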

\begin{proof}
Le morphisme $a:X\rightarrow\kcd$ est une immersion régulière, en tant que section d'un fibré vectoriel. Par changement de base plat, la flèche $X_{a}\rightarrow V_{T}^{\la}\rightarrow V_{T}$ est une immersion régulière. Il nous suffit de montrer que $V_{T}$ est Gorenstein.
On peut supposer le groupe semisimple simplement connexe, puisqu'il suffit ensuite ensuite d'ajouter un tore central.
Dans ce cas, le cône associé $C^{*}$ à $V_{T}$ est engendré par les vecteurs $(\alpha_{i},0)$ et $(\omega_{i},\omega_{i})$ d'après \cite[5.2]{Ri} et il résulte de \cite[Lem I.22]{Laf} que l'on peut écrire:
\begin{center}
$C^{*}\cap X^{*}(T)^{+}=\bigcap H_{\sigma_{i}}^{+}$
\end{center}
où $H_{\sigma_{i}}^{+}:=\{x\in X^{*}(T_{+})^{+}\vert~\sigma_{i}(x)\geq 0\}$ et $\sigma_{i}=(\check{\omega}_{i},\check{\omega}_{i})$ où $(\check{\omega}_{i})_{1\leq i\leq r}$ désigne la base duale associée à la base $(\alpha_{i})_{1\leq i\leq r}$.
On obtient alors un morphisme injectif:
\begin{center}
$\sigma:C^{*}\cap X^{*}(T_{+})^{+}\rightarrow\mathbb{N}^{r}$
\end{center}
donné par $x\mapsto\sigma_{i}(x)$.
De plus, l'élément $z=\sum\limits_{i=1}^{r}(\alpha_{i},0)$ vérifie que $\sigma(z)=(1,\dots,1)$ et donc en vertu de \cite[Lem. 2.(iii)]{BR}, la variété $V_{T}$ est Gorenstein.
\end{proof}
Nous commençons avec un cas particulier d'un théorème non publié de Gabber \cite{Gab}:
\begin{thm}\label{gabbi}
On considère la réduction au corps résiduel du revêtement caméral $X_{a}\otimes_{R}k\rightarrow\Spec(k[[\pi]])$. Soit $h$ le conducteur de la flèche et $d$ la valuation du discriminant, alors on a $h\leq d$.
\end{thm}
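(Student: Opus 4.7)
Le plan est de comparer directement l'id�al $\bh_{B/A}$ � l'id�al diff�rent $\mathfrak{d}_{B/A}$, puis d'appliquer la formule classique reliant la diff�rente au discriminant. On pose $A=k[[\pi]]$ et $B:=X_{a}\otimes_{R}k$ : c'est un $A$-anneau fini plat, g�n�riquement �tale (puisque $a$ tombe dans le lieu $\heartsuit$), et Gorenstein d'apr�s le lemme $\ref{gor}$.

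Premi�rement, comme $A$ est r�gulier de dimension un et $B$ Gorenstein plat sur $A$, $B$ est localement une intersection compl�te sur $A$. Au voisinage de chaque id�al maximal de $B$, on choisit une pr�sentation
\begin{center}
$B\simeq A[x_{1},\dots,x_{n}]/(f_{1},\dots,f_{n})$
\end{center}
avec $(f_{i})$ suite r�guli�re, le nombre d'�quations co�ncidant avec celui des variables par �galit� des codimensions. Le complexe cotangent $L_{B/A}$ est alors quasi-isomorphe au complexe de $B$-modules libres $[B^{n}\xrightarrow{J}B^{n}]$ concentr� en degr�s $[-1,0]$, o� $J=(\partial f_{i}/\partial x_{j})$ est la matrice jacobienne.

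Deuxi�mement, pour tout $B$-module $N$, on a $\Ext^{1}_{B}(L_{B/A},N)=\coker(J^{t}:N^{n}\to N^{n})$. La r�gle de Cramer (via la matrice adjointe) implique que $\det(J)$ annule uniform�ment ce conoyau ; on en d�duit l'inclusion $\det(J)\in\bh_{B/A}$. Or, pour une intersection compl�te finie plate, l'id�al $(\det J)$ co�ncide avec l'id�al diff�rent $\mathfrak{d}_{B/A}$ (th�or�me de Jacobi-Wiebe).

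Troisi�mement, la formule classique $\disc(B/A)=N_{B/A}(\mathfrak{d}_{B/A})$ montre que la longueur sur $A$ de $B/\mathfrak{d}_{B/A}$ vaut $\val_{A}(\disc(B/A))=d$. Ce $A$-module �tant de longueur $d$, il est annul� par $\pi^{d}$, d'o� $\pi^{d}\in\mathfrak{d}_{B/A}\subset\bh_{B/A}$, ce qui donne bien $h\leq d$. Le principal obstacle est de justifier proprement l'identification $(\det J)=\mathfrak{d}_{B/A}$ et de recoller ces identit�s locales quand $B$ poss�de plusieurs id�aux maximaux ; il s'agit n�anmoins de r�sultats standards d'alg�bre commutative, et cette d�monstration doit �tre vue comme une version autonome, dans ce cas particulier ICL, du th�or�me g�n�ral de Gabber \cite{Gab}.
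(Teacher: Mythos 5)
Votre sch\'ema g\'en\'eral (discriminant $\subset$ diff\'erente $\subset\bh_{B/A}$, puis conclusion par un calcul de longueur) est bien celui de la preuve de Gabber donn\'ee en appendice, mais l'\'etape centrale repose sur une affirmation fausse. Vous d\'eduisez de \og $A$ r\'egulier de dimension un et $B$ Gorenstein plat sur $A$\fg{} que $B$ est localement intersection compl\`ete sur $A$ : c'est inexact, Gorenstein n'implique pas intersection compl\`ete, m\^eme pour un anneau local de dimension un, fini et plat sur un anneau de valuation discr\`ete (les anneaux de semi-groupes num\'eriques sym\'etriques de Bresinsky en dimension de plongement $4$ sont Gorenstein sans \^etre des intersections compl\`etes). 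Dans le cas pr\'esent, $X_{a}$ s'obtient par immersion r\'eguli\`ere dans $V_{T}$, vari\'et\'e torique normale qui est Gorenstein (lemme \ref{gor}) mais n'a aucune raison d'\^etre une intersection compl\`ete ; la pr\'esentation $B\simeq A[x_{1},\dots,x_{n}]/(f_{1},\dots,f_{n})$ avec autant d'\'equations que de variables n'est donc pas acquise. Or tout le reste de votre argument en d\'epend : la description de $L_{B/A}$ par un complexe \`a deux termes libres de m\^eme rang, la r\`egle de Cramer donnant $\det(\partial f_{i}/\partial x_{j})\in\bh_{B/A}$, et l'identification de Jacobi--Wiebe de l'id\'eal engendr\'e par ce d\'eterminant avec la diff\'erente.

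C'est pr\'ecis\'ement pour \'eviter cette hypoth\`ese que l'appendice passe par la diff\'erente de Noether $\cD=m(\Ann_{C}(\Ker(m)))$, avec $C=B\otimes_{A}B$ : le fait que $\cD$ annule les $\Ext_{C}^{i}(B,-)$ pour $i>0$, joint \`a un argument d'obstruction en cohomologie de Hochschild, donne $\delta_{B/A}=\cD\subset H_{B/A}$ pour toute alg\`ebre finie localement libre g\'en\'eriquement \'etale, sans aucune hypoth\`ese d'intersection compl\`ete. Votre derni\`ere \'etape (la formule $\disc(B/A)=N_{B/A}(\mathfrak{d}_{B/A})$, qui utilise le caract\`ere Gorenstein via $B^{*}\simeq B$, puis le passage de la longueur $d$ \`a $\pi^{d}\in\mathfrak{d}_{B/A}$) est correcte et co\"{\i}ncide avec la conclusion du texte ; c'est l'inclusion $\mathfrak{d}_{B/A}\subset\bh_{B/A}$ qu'il vous faut \'etablir autrement.
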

\begin{proof}
On donne la preuve en appendice.
\end{proof}
\begin{prop}\label{elkik}
Au-dessus de $\Spec(R[[\pi]])$, on a un isomorphisme  $\nu : X_{a'}\rightarrow X_{a}$ qui est congru à l'identité modulo l'idéal $\pi^{n-d}I$.
\end{prop}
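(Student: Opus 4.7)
Le plan est d'appliquer directement la proposition \ref{inftem} aux rev\^etements cam\'eraux, en utilisant le th\'eor\`eme \ref{gabbi} pour contr\^oler le conducteur par la valuation du discriminant.

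Comme $X_{a}$ (resp. $X_{a'}$) est obtenu par le produit fibr\'e de la fl\`eche finie plate $V_{T}^{\la}\to\kcd$ le long de $a$ (resp. $a'$), ces rev\^etements sont finis et plats sur $R[[\pi]]$. De plus, puisque $a,a'\in\kcd(R[[\pi]])^{\heartsuit}$, ces rev\^etements sont \'etales au-dessus de $R((\pi))$, le lieu g\'en\'erique r\'egulier semi-simple. Ce sont donc bien des alg\`ebres finies plates, \'etales sur $R((\pi))$, auxquelles la proposition \ref{inftem} peut s'appliquer.

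L'\'egalit\'e $a\equiv a'\pmod{\pi^{n}I}$ donne imm\'ediatement que les tir\'es-en-arri\`ere de $V_{T}^{\la}$ par $a$ et par $a'$ co\"incident modulo $\pi^{n}I$. On en d\'eduit un isomorphisme canonique
\begin{center}
$\bar{\nu}:X_{a'}/\pi^{n}I X_{a'}\longrightarrow X_{a}/\pi^{n}I X_{a}$
\end{center}
qui est l'identit\'e sous cette identification naturelle. Il s'agit donc de relever $\bar{\nu}$ en un isomorphisme $\nu:X_{a'}\to X_{a}$ sur $\Spec(R[[\pi]])$ tout entier.

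Pour appliquer \ref{inftem}, il faut borner le conducteur $h$ de la r\'eduction $\overline{X}_{a}:=X_{a}\otimes_{R}k$ sur $k[[\pi]]$. C'est pr\'ecis\'ement le contenu du th\'eor\`eme \ref{gabbi} de Gabber : puisque la r\'eduction du discriminant $\Delta_{0}(a)$ est de valuation au plus $d$, on a $h\leq d$. L'hypoth\`ese $n>2d$ entra\^ine alors $n>2h$, et la proposition \ref{inftem} fournit un isomorphisme
\begin{center}
$\nu:X_{a'}\longrightarrow X_{a}$
\end{center}
congru \`a $\bar{\nu}$ modulo $\pi^{n-h}I$. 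Comme $n-h\geq n-d$, on a $\pi^{n-h}I\subset\pi^{n-d}I$, et donc $\nu$ est a fortiori congru \`a l'identit\'e modulo $\pi^{n-d}I$, ce qu'on voulait. Le point d\'elicat est l'application du th\'eor\`eme non publi\'e de Gabber \ref{gabbi} : il faut savoir que $X_{a}$ est Gorenstein (lemme \ref{gor}) pour appliquer les techniques de l'appendice qui relient le conducteur de Gabber-Ramero \`a la valuation du discriminant.
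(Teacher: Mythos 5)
Votre preuve est correcte et suit exactement la d\'emarche du texte, dont la d\'emonstration se r\'eduit \`a la phrase \og c'est une application de la proposition \ref{inftem} et du th\'eor\`eme \ref{gabbi} \fg ; vous ne faites qu'expliciter la v\'erification des hypoth\`eses (finitude et platitude des rev\^etements cam\'eraux, \'etalit\'e sur $R((\pi))$, borne $h\leq d$ sur le conducteur, et l'inclusion $\pi^{n-h}I\subset\pi^{n-d}I$). Rien \`a redire.
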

\begin{proof}
C'est une application de la proposition \ref{inftem} et du théorème \ref{gabbi}.
\end{proof}

Cela implique le résultat correspondant sur les fibres de Springer:
\begin{prop}\label{affbar}
Il existe $k\in K_{n-d}(R)$ avec $\bar{k}=1$ tel que la multiplication à gauche induit un isomorphisme entre les fibres de Springer affines :
\begin{center}
$k: \kx_{a'}\rightarrow \kx_{a}$.
\end{center}
\end{prop}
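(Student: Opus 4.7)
The plan is to transport the Springer fiber structure from $a'$ to $a$ along the cameral cover isomorphism of Proposition~\ref{elkik}, and to show that this transport is realized by left multiplication by a suitable group element close to the identity.

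First, Proposition~\ref{elkik} furnishes a $W$-equivariant isomorphism $\nu : X_{a'} \to X_{a}$ of cameral covers over $\Spec(R[[\pi]])$, congruent to the identity modulo $\pi^{n-d}I$. By the Galois-theoretic description of the regular centralizer supplied by Propositions~\ref{centralisateur} and~\ref{galois}, namely $J = \tilde{J} \subset J^{1} = (\theta_{*}(T \times V_{T}))^{W}$, the isomorphism $\nu$ induces by pullback an isomorphism $\mu : J_{a'} \to J_{a}$ of smooth commutative group schemes over $\Spec(R[[\pi]])$, again congruent to the identity modulo $\pi^{n-d}I$. Under the natural embeddings $J_{a}, J_{a'} \hookrightarrow G \times \Spec(R[[\pi]])$ coming from the Kostant sections $\eps_{+}(a), \eps_{+}(a')$, which are close (since $\eps_{+}$ is a morphism of schemes one has $\eps_{+}(a) \equiv \eps_{+}(a') \pmod{\pi^{n}I}$), $\mu$ becomes a close identification of two nearly equal subgroup schemes of $G$.

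Second, I would construct $k$ as follows. The morphism $\chi_{+}^{reg} : V_{G}^{reg,\la} \to \kcd$ is smooth with $G$-orbits as geometric fibres, so the natural map $G \times_{\kcd} V_{G}^{reg,\la} \to V_{G}^{reg,\la} \times_{\kcd} V_{G}^{reg,\la}$ sending $(g,\g)$ to $(\g, g\g g^{-1})$ is a $J$-torsor. Using this smoothness together with the infinitesimal lifting principle of Lemma~\ref{gabinf} (applied with the conductor bound $h \leq d$ of Theorem~\ref{gabbi}), one lifts the close-to-identity isomorphism $\mu$ to a group element $k \in G(R[[\pi]])$ with $\bar{k}=1$ and $k \in K_{n-d}(R)$ whose inner automorphism intertwines the two embedded centralizers, i.e., $k \, J_{a'} \, k^{-1} = J_{a}$ compatibly with $\mu$. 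The loss of a power $\pi^{d}$ in the congruence on $k$, relative to the congruence on $\mu$, is exactly the conductor loss, and this is what forces the hypothesis $n > 2d$ to be present.

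Third, the verification that left multiplication by such a $k$ induces an isomorphism $\kx_{a'} \to \kx_{a}$ is then formal from Lemma~\ref{ngocent}: a point $g$ lies in $\kx_{a'}(R)$ if and only if $(\pi^{-w_{0}\la}, g^{-1} J_{a'}(R[[\pi]]) g) \subset V_{G}^{\la}(R[[\pi]])$, and the compatibility $k J_{a'} k^{-1} = J_{a}$ rewrites this exactly as $(\pi^{-w_{0}\la}, (kg)^{-1} J_{a}(R[[\pi]]) (kg)) \subset V_{G}^{\la}(R[[\pi]])$, which by the same lemma is the condition $kg \in \kx_{a}(R)$; the inverse map is left multiplication by $k^{-1}$. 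The main obstacle is the second step, namely promoting the abstract group-scheme isomorphism $\mu$ to an inner automorphism of $G$ by an element close to the identity: this is where the smoothness of $\chi_{+}^{reg}$, the conductor bound from Theorem~\ref{gabbi}, and the Gabber--Ramero infinitesimal criterion all come together.
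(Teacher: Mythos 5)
Your overall strategy coincides with the paper's: transport the regular centralizer along the cameral isomorphism of Proposition~\ref{elkik}, realize that transport by conjugation by an element of $K_{n-d}(R)$, and conclude formally by Lemme~\ref{ngocent}. Your first and third steps are correct and are exactly what the paper does. The problem is your second step, which you yourself flag as ``the main obstacle'' and then do not actually carry out: you assert that the close-to-identity group-scheme isomorphism $\mu : J_{a'}\rightarrow J_{a}$ can be ``lifted'' to an element $k$ with $k J_{a'}k^{-1}=J_{a}$, but the torsor $G\times_{\kcd}V_{G}^{\la,reg}\rightarrow V_{G}^{\la,reg}\times_{\kcd}V_{G}^{\la,reg}$ that you invoke takes as input a \emph{pair of regular sections over the same characteristic polynomial}, not an isomorphism of centralizers; and the obvious pair $(\eps_{+}(a),\eps_{+}(a'))$ is not admissible since $a\neq a'$. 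The missing idea is the paper's: use $\nu$ to transport the Steinberg section $\g'_{0}=\eps_{+}(a')$ into a regular section $\nu(\g'_{0})$ of $V^{\la,reg}$ over $R[[\pi]]$ with characteristic polynomial $a$ and congruent to $\g_{0}=\eps_{+}(a)$ modulo $\pi^{n-d}I$. One then applies the infinitesimal lifting criterion for the \emph{smooth} morphism $G\times V^{\la,reg}\rightarrow V^{\la,reg}\times_{\kc}V^{\la,reg}$ to this pair, producing $k\in K_{n-d}(R)$ with $\bar{k}=1$ and $k^{-1}\g_{0}k=\nu(\g'_{0})$; the identity $\ad(k)^{-1}(J_{a})=J_{a'}$ is then automatic because a regular element determines its centralizer. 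Conjugating the regular elements rather than the centralizers is precisely what makes the step effective.

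A secondary but real error is your bookkeeping of the conductor loss. Lemme~\ref{gabinf} and the bound $h\leq d$ of Théorème~\ref{gabbi} are consumed entirely in Proposition~\ref{inftem}/Proposition~\ref{elkik}, i.e.\ in passing from $a\equiv a'~[\pi^{n}I]$ to $\nu\equiv\mathrm{Id}~[\pi^{n-d}I]$; they play no role in producing $k$. The passage from the two congruent sections to $k$ is loss-free (formal smoothness only), which is why $k$ lands in $K_{n-d}(R)$ with the \emph{same} exponent $n-d$ as the congruence on $\mu$ — your claim of an additional loss of $\pi^{d}$ at this stage is inconsistent with the exponent $n-d$ you state for $k$, and re-invoking the conductor there would wrongly give only $k\in K_{n-2d}(R)$.
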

\begin{proof}
Il résulte de la description galoisienne que $J_{a}$ et $J_{a'}$ sont entièrement déterminés par les revêtements caméraux.
En particulier, l'isomorphisme $\nu$, donné par le lemme \ref{elkik} entre les revêtements caméraux, induit un isomorphisme :
\begin{center}
$\nu: J_{a'}=I_{\g'_{0}}\rightarrow I_{\g_{0}}=J_{a}$,
\end{center} 
avec dont la réduction modulo $I$ vérifie $\bar{\nu}=Id$. Ici, les éléments $\g_{0}$ et $\g'_{0}$  sont les sections de Steinberg de $a$ et $a'$.
Nous obtenons alors un élément $\nu(\g'_{0})\in I_{\g_{0}}$ tel que $\g_{0}=\nu(\g'_{0})~[\pi^{n-d}I]$, en particulier $\nu(\g'_{0})$ tombe dans l'ouvert $V_{x}^{\la,reg}(R)$ et donc $I_{\g_{0}}=I_{\nu(\g'_{0})}$.
Nous avons alors deux sections:
\begin{center}
$\g_{0},\nu(\g_{0}'):R[[\pi]]\rightarrow V^{\la,reg}$
\end{center}
qui ont même polynôme caractéristique et sont égales modulo $\pi^{n-d}I$.
La flèche $G\times V^{\la,reg}\rightarrow V^{\la,reg}\times_{\kc}V^{\la,reg}$
est un morphisme lisse, il existe alors $k\in K_{n-d}(R)$ avec $\bar{k}=1$ tel que
\begin{center}
$k^{-1}\g_{0}k=\nu(\g'_{0})$
\end{center}
et on en déduit un isomorphisme:
\begin{center}
$\ad(k)^{-1}(J_{a})=J_{a'}$.
\end{center}
On conclut alors par le lemme \ref{ngocent}.
\end{proof}

\subsection{Construction du relèvement}
Nous commençons par montrer que la paire $(\bar{E}_{S'},\bar{\phi}_{S'})$ est isomorphe en fibre générique à sa section de Steinberg.
On note $\bar{a}=\chi_{+}(\bar{\phi})$ et $\bar{\g}_{0}:=\eps_{+}(\bar{a})$, on rappelle que $S'=\supp(\Delta_{0,sing}(\bar{a}))\cup\{x\}$.

Quitte à localiser pour la topologie étale sur $R$, on peut supposer de plus que $\bar{E}_{S'}$ est trivial, la section $\bar{\phi}_{S'}$ correspond alors à la donnée d'une famille: 
\begin{center}
$(\bar{\g}_{s})_{s\in S'}\in V^{\la_{s}}_{s}(\bar{R}[[\pi_{S'}]])$.
\end{center}
Il existe d'après le lemme \ref{affvide} une famille  $(\bar{g}_{s})_{s\in S'}$ telle que :
\begin{equation}
\forall~ s\in S'', \bar{g}_{s}^{-1}\bar{\g}_{0}\bar{g}_{s}=\bar{\g}_{s}.
\label{springfam}
\end{equation}
En particulier, en considérant la fibre de Springer affine :
\begin{center}
$\kx_{\bar{a}, s}:=\{g\in G(\bar{R}((\pi_{s})))/K_{s}(\bar{R})\vert ~ \bar{g}_{s}^{-1}\bar{\g}_{0}\bar{g}_{s}\in V_{s}^{\la_{s}}(\bar{R}[[\pi_{s}]])\}$,
\end{center}
pour $s\in S'$, nous obtenons que la famille  $(\bar{g}_{s})_{s\in S'}$ est dans le produit $\prod\limits_{s\in S'}\kx_{\bar{a}, s}$.
Il est à noter que pour $s\in S'$ avec $s\neq x$, $\bar{\g}_{s}\in K_{s}(\bar{R})$.
On commence par une assertion de surjectivité, fondamentale à notre propos, on rappelle que $S_{0}=\supp(\la)-\{t\}$ où $t$ est le point auxiliaire et que nous pouvons supposer que $S_{0}=\{x\}$.
\begin{prop}\label{borne}
On considère un schéma en groupes $G$ semisimple simplement connexe sur $X$.
Soit $a\in\abdbD(k)$, on a alors $\la\succ2g-2+(e+2d+1)\left|S_{0}\right|+3d_{sing}(a)$.
Considérons le diviseur sur $X$:
\begin{center}
$\Delta_{sing}(a)=\sum\limits_{x\in X}d_{x}[x]$,
\end{center}
et $S':=\supp(\Delta_{sing}(a))\cup\{x\}$, alors  la flèche :
\begin{center}
$\abd\rightarrow\mathfrak{C}_{+,x,e+2d}^{\la}\bigoplus\limits_{s\in S'-\{x\}}\mathfrak{C}_{+,S,2d_{s}}^{\la}$,
\end{center}
est lisse surjective et $\abdbD$ est non vide.
\end{prop}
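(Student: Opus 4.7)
The plan is to decompose the Hitchin base into spaces of global sections of line bundles on $X$ using the Chevalley-type description of $\kc$ from Theorem \ref{bouth}, and to reduce smoothness and surjectivity of the restriction to local jets to an $H^{1}$-vanishing on $X$ controlled by the stated degree inequality.

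First, I would invoke Theorem \ref{bouth}: $\kc \simeq V_{T}/W$ is an affine space of dimension $2r$ with a torus factor, with coordinates given by the $\omega_{i}'$, the traces $\chi_{i}$, and the simple roots $(\alpha_{i},0)$. Pulling back along $-w_{0}\la: X \to [A_{G}/Z_{+}]$ presents $\kcd$ as a fibered space whose sections decompose into sections of line bundles $L_{\omega} = \co_{X}(\langle \omega, -w_{0}\la \rangle)$ for the relevant characters $\omega$. Thus $\abd$ factors through a product $\prod_{\omega} H^{0}(X, L_{\omega})$ (times a torus factor coming from the $\omega_{i}'$ coordinates), and the local-jet target $\mathfrak{C}_{+,x,e+2d}^{\la}\oplus\bigoplus_{s\in S'-\{x\}}\mathfrak{C}_{+,s,2d_{s}}^{\la}$ decomposes factor by factor as a product of truncations $H^{0}(D, L_{\omega}|_{D})$, where $D = (e+2d+1)[x] + \sum_{s \in S'-\{x\}} (2d_{s}+1)[s]$. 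In the quasi-split case, equivariant descent along the Galois cover $X_{\rho}\to X$ reduces the statement to the split situation on the cover.

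Second, I would reduce smoothness and surjectivity to the vanishing of $H^{1}(X, L_{\omega}(-D))$ for every line bundle $L_{\omega}$ appearing. From the short exact sequence $0 \to L_{\omega}(-D) \to L_{\omega} \to L_{\omega}|_{D} \to 0$, surjectivity of $H^{0}(X, L_{\omega}) \to H^{0}(D, L_{\omega}|_{D})$ is equivalent to this vanishing, which by Serre duality holds once $\deg L_{\omega} > \deg D + 2g - 2$. The degree of $D$ is bounded by $(e+2d+1)|S_{0}| + 2 d_{sing}(a) + |\supp \Delta_{sing}(a)|$, and since each singular point contributes at least $2$ to $d_{sing}(a)$, this sum is at most $(e+2d+1)|S_{0}| + 3 d_{sing}(a)$. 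The hypothesis $\la \succ 2g - 2 + (e+2d+1)|S_{0}| + 3 d_{sing}(a)$ therefore yields the desired vanishing uniformly in $\omega$. Smoothness then follows: the map is linear surjective on each affine coordinate and a surjective morphism of tori on the multiplicative coordinates.

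For the non-emptiness of $\abdbD$, I would combine the transversal non-emptiness of $\abdd$ (already established for $\la \succ 2g$) with the surjectivity just proved: starting from a transversal $a_{0} \in \abdd$, smoothness permits modifying the jets of $a_{0}$ at the auxiliary point $t$ to force $d_{t}(a) = 0$ while preserving the bounded discriminant structure elsewhere. The main technical obstacle will be to bookkeep the three sources of multiplicity in $\deg D$ (the $+1$ shift at each jet point, the truncation order $2 d_{s}$, and the unknown cardinality of $\supp \Delta_{sing}(a)$) against the single bound $3 d_{sing}(a)$ in the hypothesis, and to do so uniformly over the system of coordinate line bundles provided by Theorem \ref{bouth}.
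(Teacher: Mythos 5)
Your proposal is correct and follows essentially the same route as the paper: pass to the Galois cover $X_{\rho}$ where $\kcd$ splits as a direct sum of line bundles $\rho^{*}\co_{X}(\langle\omega_{i},-w_{0}\la\rangle)$, reduce surjectivity onto the jet spaces to an $H^{1}$-vanishing guaranteed by the degree inequality $\langle\omega_{i},-w_{0}\la\rangle\geq 2g-2+(e+2d+1)\left|S_{0}\right|+3d_{sing}(a)$ (this is exactly what the paper's invocation of Riemann--Roch amounts to), and obtain non-emptiness by adapting the argument of \cite[Lem. 4.7.2]{N}. Your bookkeeping of $\deg D$ against $3d_{sing}(a)$ is the right one, and the only cosmetic remark is that in the semisimple simply connected case there is no torus factor ($l=0$), so the multiplicative coordinates you mention do not appear.
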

\begin{proof}
En passant au revêtement étale $X_{\rho}$ de $X$ et comme $G$ est semisimple, $\kcd$ devient isomorphe à
\begin{center}
$\rho^{*}\kcd=\bigoplus\limits_{i=1}^{r}\rho^{*}\mathcal{O}_{X}(\left\langle \omega_{i},-w_{0}\la\right\rangle)$.
\end{center}
Comme $\kcd$ est un facteur direct de $\rho_{*}\rho^{*}\kcd$, il suffit de montrer la surjectivité de la flèche:
\begin{center}
$H^{0}(X_{\rho},\rho^{*}\kcd)\rightarrow \rho^{*}\mathfrak{C}_{+,x,e+2d}^{\la}\bigoplus\limits_{s\in S'-\{x\}}\mathfrak{C}_{+,S,2d_{s}}^{\la}$.
\end{center}
L'assertion de surjectivité résulte alors de Riemann-Roch et de l'inégalité pour tout $i$, 
\begin{center}
$\left\langle\omega_{i}, -w_{0}\la\right\rangle\geq2g-2+(e+2d+1)\left|S_{0}\right|+3d_{sing}(a)$.
\end{center}
Enfin, il suffit de reprendre la preuve de \cite[Lem. 4.7.2]{N} pour obtenir la non-vacuité.
\end{proof}
$\rmq$ Pour la partie locale correspondant à $x$ du polynôme, on aurait pu se contenter de $\mathfrak{C}_{+,x,e+d}$ au lieu de $\mathfrak{C}_{+,x,e+2d}$, mais pour pouvoir être sous les hypothèses de \ref{cameralrev}, il nous fallait un entier supérieur à $2d$.

On peut passer à la preuve de la proposition \ref{bemolisse}:
\begin{proof}
On note $a'_{x}=\chi_{+}(\g_{1})$.
En les points $s\in S'$ avec $s\neq x$, nous avons vu que $\bar{\g}_{s}\in K_{s}(\bar{R})$, en particulier, on peut relever à notre guise cet élément local en un élément 
\begin{center}
$\g'_{s}\in K_{s}(R)$ et $a'_{s}:=\chi_{+}(\g'_{s})$.
\end{center}
Évidemment $a'_{s}$ n'a aucune raison d'être dans $\abdbD(R)$, il nous faut approximer cet élément.
Nous avons le diagramme suivant :
$$\xymatrix{&&\abd\ar[d]\\\Spec(\bar{R})\ar[urr]^{\bar{a}}\ar[r]&\Spec(R)\ar[r]^-{(a'_{s})_{s\in S'}}\ar[r]&\mathfrak{C}_{+,x,e+2d}^{\la}\bigoplus\limits_{s\in S',s\neq x}\mathfrak{C}_{+,s,2d_{s}}^{\la}}.$$
D'après la proposition \ref{borne}, il existe alors un relèvement $a\in\abdbD(R)$ de $\bar{a}$ tel que:
\begin{center}
$a=a'_{x}~[\pi_{x}^{e+2d+1}]$ et $a=a'_{s}~[\pi_{s}^{2d_{s}+1}]$
\end{center}
pour $s\in S'-\{x\}$.
On applique alors le lemme \ref{affbar} pour dire qu'il existe une famille $(k_{x}, (k_{s})_{s\neq x})\in K_{x,e}\times\prod\limits_{S'-\{x\}}K_{s,d_{s}} $ égale à l'identité modulo $\bar{R}$ telle que la multiplication à gauche induit un isomorphisme entre les fibres de Springer affines:
\begin{center}
$\prod\limits_{s\neq x} \kx_{a'_{s},s}\times \kx_{a'_{x},x}\rightarrow \prod\limits_{s\neq x} \kx_{a,s}\times \kx_{a,x}$
\end{center}
La famille $((\g_{s}')_{s\in S'})$ définit d'après le lemme \ref{affvide} une famille  
\begin{center}
$(g_{s})_{s\in S'}\in \prod\limits_{s\in S'}\kx_{a'_{s}, s}$
\end{center}
qui relève la famille  $(\bar{g}_{s})_{s\in S'}$  définie par l'équation \ref{springfam}.
On pose alors pour $s\in S'$
\begin{center}
$\g_{s}=g_{s}^{-1}k_{s}^{-1}\eps_{+}(a)k_{s}g_{s}$.
\end{center}
Pour conclure, nous devons voir que $\g_{x}$ diffère de $\g_{1}$ par un élément de $K_{x}$. Nous avons déjà $\g_{x}=\g_{1}~[\pi^{e+1}]$ et on applique le corollaire  \ref{carreloc}, ce qui conclut.
\end{proof}

\subsection{Le cas réductif et quasi-déployé}\label{casred}
Expliquons maintenant comment on étend le théorème  \ref{transverse} au cas réductif et quasi-déployé.
On commence par conserver l'assertion de semisimplicité, mais on montre l'énoncé pour un groupe quasi-déployé.
Tout d'abord, on a besoin d'une assertion de surjectivité global-local au niveau des polynômes caractéristiques. Cela fait l'objet de la proposition \ref{borne} qui a été prouvée dans le cas quasi-déployé.
Enfin, les énoncés et les preuves s'étendent tels quels à partir du cas déployé, pourvu que l'on remplace $\overline{\Gr}_{\la}$ par $\prod\limits_{\sigma\in\Gamma}\overline{\Gr}_{\sigma\la}$ pour $\la\in\Div^{+}(X,\bt)$ et de même pour le semi-groupe de Vinberg.

En revanche, pour le cas réductif, cela nécessite plus de modifications. 
On traite le cas déployé, le cas quasi-déployé étant analogue.
On rappelle que nous avons une suite exacte:
$$\xymatrix{1\ar[r]&\bg_{der}\ar[r]&\bg\ar[r]^{\det}&\mathbb{G}_{m}^{l}\ar[r]&1}.$$

On considère le champ de Hecke $\cH_{\det(\la)}$ pour le groupe $\mathbb{G}_{m}^{l}$. 
Nous avons alors une flèche:
\begin{center}
$\det:\overline{\cH}_{\la}\rightarrow\cH_{\det(\la)}$
\end{center}
donnée par $(E,E',\beta)\mapsto (\det(E),\det(E'),\det(\beta))$, où l'on pousse les torseurs par la flèche $\det$.
De même, on peut considérer un espace de Hitchin $\cm_{\det(\la)}$ pour $\mathbb{G}_{m}^{l}$ et on obtient alors un diagramme commutatif:
$$\xymatrix{\cmd\ar[d]_{\det}\ar[r]&\overline{\cH}_{\la}\ar[d]^{\det}\\\cm_{\det(\la)}\ar[r]&\cH_{\det(\la)}}$$

On considère alors $\overline{\cH}_{\la}':=\cm_{\det(\la)}\times_{\cH_{\det(\la)}}\overline{\cH}_{\la}$.
Dans un premier temps, nous allons comparer les complexes d'intersection de $\overline{\cH}_{\la}'$ et $\overline{\cH}_{\la}$.
On commence par le lemme suivant:
\begin{lem}\label{lissdet}
La flèche $\cm_{\det(\la)}\rightarrow\cH_{\det\la}$ est lisse.
En particulier, le complexe d'intersection  de $\overline{\cH}_{\la}'$ s'obtient par pullback de celui de $\overline{\cH}_{\la}$.
\end{lem}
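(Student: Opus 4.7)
Le plan est d'exploiter la commutativit\'e du groupe $\mathbb{G}_{m}^{l}$. Par construction du carr\'e cart\'esien d\'efinissant $\cm_{\det(\la)}$ \`a partir du champ de Hecke pour $\mathbb{G}_{m}^{l}$, le morphisme $\cm_{\det(\la)}\rightarrow\cH_{\det(\la)}$ est le changement de base de la diagonale $\Delta:\Bungm\rightarrow\Bungm\times\Bungm$ le long de la projection naturelle $\cH_{\det(\la)}\rightarrow\Bungm\times\Bungm$, $(E,E',\beta)\mapsto(E,E')$. Il suffit donc d'\'etablir la lissit\'e de cette diagonale pour en d\'eduire, par stabilit\'e de la lissit\'e par changement de base, la lissit\'e du morphisme en question.

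L'\'etape cl\'e consistera \`a observer que $\Bungm$ est un champ en groupes commutatifs lisse sur $k$. La fl\`eche de diff\'erence $\sigma:\Bungm\times\Bungm\rightarrow\Bungm$, donn\'ee par $(E,E')\mapsto E^{-1}\otimes E'$, est un homomorphisme entre champs en groupes lisses de m\^eme dimension, lisse surjectif de dimension relative \'egale \`a $\dim\Bungm$. La diagonale $\Delta$ s'identifie alors \`a la fibre de $\sigma$ au-dessus du torseur trivial, ce qui lui conf\`ere une structure lisse. La difficult\'e principale r\'eside dans cette v\'erification fine, puisque le point neutre vit dans un sous-champ gerb\'e isomorphe \`a $B\mathbb{G}_{m}^{l}$ et non dans un point sch\'ematique; il faudra donc raisonner avec pr\'ecaution sur les gerbes et exploiter la description explicite de $\Bungm$ comme produit de champs de Picard.

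Une fois la lissit\'e de $\cm_{\det(\la)}\rightarrow\cH_{\det(\la)}$ \'etablie, on en d\'eduit par changement de base le long de $\det:\overline{\cH}_{\la}\rightarrow\cH_{\det(\la)}$ que $\overline{\cH}'_{\la}\rightarrow\overline{\cH}_{\la}$ est lisse. La seconde assertion du lemme r\'esulte alors du fait standard qu'un morphisme lisse $f:X\rightarrow Y$ de dimension relative $d$ v\'erifie $f^{*}[d]IC_{Y}\cong IC_{X}$, ce qui donne l'identification voulue entre le complexe d'intersection de $\overline{\cH}'_{\la}$ et le tir\'e en arri\`ere de celui de $\overline{\cH}_{\la}$.
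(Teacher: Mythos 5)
Votre r\'eduction initiale est correcte : par d\'efinition du carr\'e cart\'esien, $\cm_{\det(\la)}\rightarrow\cH_{\det(\la)}$ est bien le changement de base de la diagonale $\Delta:\Bungm\rightarrow\Bungm\times\Bungm$. En revanche, l'\'etape cl\'e de votre argument --- la lissit\'e de cette diagonale --- est fausse d\`es que le genre de $X$ est au moins $1$. L'image de $\Delta$ dans l'espace grossier $\mathrm{Pic}(X)^{l}\times\mathrm{Pic}(X)^{l}$ est le ferm\'e des couples $(E,E')$ avec $E\cong E'$, de codimension $lg>0$ ; les fibres de $\Delta$ sautent de $\emptyset$ \`a $\mathrm{Isom}(E,E')\cong\mathbb{G}_{m}^{l}$, donc $\Delta$ n'est m\^eme pas plate. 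Votre identification de $\Delta$ avec la fibre de la fl\`eche de diff\'erence $\sigma$ au-dessus du torseur trivial montre seulement que la \emph{source} de $\Delta$ est un champ lisse sur $k$ (ce que l'on sait d\'ej\`a), pas que le \emph{morphisme} $\Delta$ est lisse : $\Delta$ est le changement de base du point $e:\Spec(k)\rightarrow\Bungm$ le long de $\sigma$, et ce point n'est pas un morphisme lisse.

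La preuve du texte contourne pr\'ecis\'ement ce probl\`eme en exploitant la structure particuli\`ere de $\cH_{\det(\la)}$ plut\^ot qu'une propri\'et\'e de la diagonale : pour les repr\'esentations de dimension un, la position relative est exactement $\la$, de sorte que $\cT'$ et les isomorphismes $\phi_{i}$ sont canoniquement d\'etermin\'es par $\cT$ ; autrement dit la projection $p_{1}:\cH_{\det(\la)}\rightarrow\Bungm$ est un isomorphisme. Le compos\'e $\cm_{\det(\la)}\rightarrow\cH_{\det(\la)}\cong\Bungm$ s'identifie alors \`a l'oubli d'une section non nulle dans l'espace vectoriel fixe $\bigoplus_{i=1}^{l}H^{0}(X,\co_{X}(\left\langle\omega_{i}',\la\right\rangle))$, ce qui est lisse. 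C'est donc la trivialit\'e de $\cH_{\det(\la)}$ au-dessus du premier facteur, et non une lissit\'e de la diagonale de $\Bungm$, qui rend le changement de base lisse ; le fait que $\cH_{\det(\la)}\rightarrow\Bungm\times\Bungm$ tombe dans un translat\'e du lieu diagonal compense exactement la non-platitude de $\Delta$. Votre derni\`ere \'etape (d\'eduction de l'\'enonc\'e sur les complexes d'intersection \`a partir de la lissit\'e) est, elle, correcte.
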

\begin{proof}
Le champ $\cm_{\det(\la)}$ classifie les paires $(\cT,\phi)$ constituées d'un $\mathbb{G}_{m}^{l}$-torseur et d'une section $\phi\in\bigoplus\limits_{i=1}^{l} H^{0}(X,\co_{X}(\left\langle \omega_{i}',\la\right\rangle))-\{0\}$. 

Le champ $\cH_{\det(\la)}$ classifie les uplets $(\cT,\cT',(\phi_{i})_{1\leq i\leq l})$ constituées de deux $\mathbb{G}_{m}^{l}$-torseurs et d'isomorphismes
$\phi_{i}:(\omega'_{i})_{*}\cT\rightarrow(\omega'_{i})_{*}\cT'(\left\langle \omega_{i}',\la\right\rangle)$, pour $1\leq i\leq l$, où l'on a poussé les torseurs par $\omega'_{i}$.

En particulier, on obtient que la projection suivant le premier torseur:
\begin{center}
$p_{1}:\cH_{\det(\la)}\rightarrow\Bungm$
\end{center}
est un isomorphisme.
La flèche $\cm_{\det(\la)}\rightarrow\Bungm$ consiste alors  en l'oubli de la section $\phi\in\bigoplus\limits_{i=1}^{l} H^{0}(X,\co_{X}(\left\langle \omega_{i}',\la\right\rangle))-\{0\}$ et est donc lisse.
\end{proof}

Nous avons besoin d'un modèle local. 
On a une flèche:
\begin{center}
$\overline{K\pi^{\la}K}\stackrel{\det}{\rightarrow}\bigoplus\limits_{i=1}^{l}\pi_{S}^{\left\langle \omega_{i}',\la\right\rangle}\co_{S}^{*}$.
\end{center}

On forme alors le carré cartésien:
$$\xymatrix{\overline{K\pi^{\la}K}^{glob}\ar[d]\ar[r]&\overline{K\pi^{\la}K}\ar[d]^{\det}\\ \bigoplus\limits_{i=1}^{l} H^{0}(X,\co_{X}(\left\langle \omega_{i}',\la\right\rangle))-\{0\}\ar[r]&\bigoplus\limits_{i=1}^{l}\pi_{S}^{\left\langle \omega_{i}',\la\right\rangle}\co_{S}^{*}}$$

et on pose alors 
\begin{center}
$\overline{\Gr}_{\la}^{glob}:=\overline{K\pi^{\la}K}^{glob}/K_{der}$,
\end{center}
où $K_{der}=\prod\limits_{x\in X}G_{der}(\co_{x})$.

La flèche $\det$ induit une flèche
\begin{center}
$\overline{\Gr}_{\la}^{glob}\rightarrow\bigoplus\limits_{i=1}^{l} H^{0}(X,\co_{X}(\left\langle \omega_{i}',\la\right\rangle))-\{0\}$,
\end{center}
la fibre est alors isomorphe à $\overline{\Gr}_{\la}$, car  étant donné $f\in
\bigoplus\limits_{i=1}^{l} H^{0}(X,\co_{X}(\left\langle \omega_{i}',\la\right\rangle))-\{0\}$, comme le corps résiduel est algébriquement clos et la caractéristique première à l'ordre de $W$ (l'hypothèse nécessaire sur la caractéristique porte en fait sur $Z_{\bg_{der}}$, mais on vérifie à l'aide des tables que l'ordre de $Z_{\bg_{der}}$ divise l'ordre de $W$), la flèche de projection nous fournit un isomorphisme :
\begin{center}
$\{\g\in\overline{\Gr}_{\la}^{glob}\vert~\det(\g)=f\}\rightarrow \overline{\Gr}_{\la}:=\overline{K\pi^{\la}K/K}$.
\end{center}

La proposition suivante nous permet alors de comparer les complexes d'intersection de $\overline{\cH}_{\la}$ et de $\overline{\Gr}_{\la}^{glob}$.
\begin{prop}
La fibration $\overline{\cH}_{\la}'\rightarrow\Bun$ est localement isomorphe pour la topologie lisse, à $\overline{\Gr}_{\la}^{glob}\times_{k}\Bun$.
\end{prop}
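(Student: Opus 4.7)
Le plan est de d�duire cet �nonc� de la trivialisation locale de Varshavsky (proposition \ref{va}) pour $\overline{\cH}_\la$, combin�e avec la description de $\overline{\cH}_\la'$ comme produit fibr� au-dessus de $\cH_{\det(\la)}$.

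D'abord, par la proposition \ref{va}, il existe un morphisme lisse surjectif $V \to \Bun$ � fibres g�om�triquement connexes tel que $\overline{\cH}_\la \times_\Bun V \cong V \times_k \overline{\Gr}_\la$, le produit fibr� �tant pris relativement � $p_1$. Par descente pour la topologie lisse, il suffira d'�tablir l'isomorphisme apr�s changement de base � $V$, c'est-�-dire d'identifier
$$\overline{\cH}_\la' \times_\Bun V = \cm_{\det(\la)} \times_{\cH_{\det(\la)}} (V \times_k \overline{\Gr}_\la)$$
avec $V \times_k \overline{\Gr}_\la^{glob}$.

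Ensuite, on observera que sous cette trivialisation, la fl�che $\det : \overline{\cH}_\la \to \cH_{\det(\la)}$ se d�crit fibre � fibre comme l'application $\det : \overline{\Gr}_\la \to \bigoplus_i \pi_S^{\langle\omega_i',\la\rangle}\co_S^*/\co_S^*$ tordue par la composition $V \to \Bun \to \Bungm$. En utilisant la description $\cm_{\det(\la)} \cong \Bungm \times \bigoplus_i (H^0(X, \co_X(\langle\omega_i',\la\rangle))-\{0\})$ d�duite directement du lemme \ref{lissdet}, prendre le produit fibr� avec $\cm_{\det(\la)}$ reviendra, pour chaque $\g \in \overline{\Gr}_\la$, � se donner une section globale $\phi \in \bigoplus_i (H^0(X, \co_X(\langle\omega_i',\la\rangle))-\{0\})$ relevant le d�terminant local de $\g$ ainsi qu'une identification des $\mathbb{G}_m^l$-torseurs obtenus via $\det$. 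C'est pr�cis�ment la donn�e suppl�mentaire qui d�finit $\overline{\Gr}_\la^{glob}$ via le carr� cart�sien construisant $\overline{K\pi^\la K}^{glob}$, puis le quotient par $K_{der}$.

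L'obstacle principal sera de v�rifier avec soin la compatibilit� entre la trivialisation de Varshavsky et la fl�che de d�terminant, ainsi que le passage du quotient par $K$ (qui d�finit $\overline{\Gr}_\la$) au quotient par $K_{der}$ (qui d�finit $\overline{\Gr}_\la^{glob}$) : le quotient $K/K_{der}$, essentiellement de nature ab�lienne, est pr�cis�ment l'ambigu�t� qui dispara�t lorsque l'on rigidifie la partie ab�lienne par le choix d'une section globale dans $\bigoplus_i H^0(X, \co_X(\langle\omega_i',\la\rangle))-\{0\}$.
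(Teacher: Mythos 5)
Votre strat\'egie --- d\'eduire l'\'enonc\'e de la proposition \ref{va} par changement de base \`a $V$, puis identifier le produit fibr\'e $\cm_{\det(\la)}\times_{\cH_{\det(\la)}}(V\times_{k}\overline{\Gr}_{\la})$ avec $V\times_{k}\overline{\Gr}_{\la}^{glob}$ --- est proche de celle du texte, dont la preuve se borne \`a renvoyer \`a l'argument de Varshavsky \cite[A8.c]{Va} appliqu\'e aux objets modifi\'es. La diff\'erence est que vous utilisez la proposition \ref{va} comme une bo\^ite noire, alors que le texte rejoue la construction de $V$. Or c'est pr\'ecis\'ement l\`a que se loge la difficult\'e que vous signalez sans la r\'esoudre : $\overline{\Gr}_{\la}=\overline{K\pi^{\la}K}/K$ ne porte qu'un d\'eterminant \`a valeurs dans $\pi_{S}^{\langle\omega',\la\rangle}\co_{S}^{*}$ modulo $\det(K)$, tandis que $\overline{\Gr}_{\la}^{glob}=\overline{K\pi^{\la}K}^{glob}/K_{der}$ exige un d\'eterminant bien d\'efini dans $\pi_{S}^{\langle\omega',\la\rangle}\co_{S}^{*}$ lui-m\^eme. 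La seule existence de l'isomorphisme de la proposition \ref{va} ne fournit pas cette donn\'ee ; il faut savoir que $V$ est construit comme un espace de trivialisations du torseur universel sur un voisinage infinit\'esimal de $S$ (c'est la construction de Varshavsky), de sorte que la modification $\beta$ se rel\`eve en un \'el\'ement de $\overline{K\pi^{\la}K}$ quotient\'e par $K$ d'un seul c\^ot\'e, et que $\det\beta$ ait un sens avant de quotienter par l'autre copie de $K$. Une fois ce point acquis, le reste de votre r\'eduction fonctionne : la condition de compatibilit\'e dans le produit fibr\'e au-dessus de $\cH_{\det(\la)}\cong\Bungm$ revient bien \`a relever le d\'eterminant local en une section globale de $\bigoplus_{i}H^{0}(X,\co_{X}(\langle\omega_{i}',\la\rangle))-\{0\}$, et l'hypoth\`ese $\deg\langle\omega_{i}',-w_{0}\la\rangle=0$ garantit que les diviseurs se correspondent automatiquement sur toutes les strates de $\overline{\Gr}_{\la}$ (les $\omega_{i}'$ \'etant triviaux sur les coracines). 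Votre remarque finale sur le passage du quotient par $K$ au quotient par $K_{der}$ est exactement le bon point de vigilance ; il faut simplement le traiter en ouvrant la construction de $V$ plut\^ot qu'en invoquant la proposition \ref{va} toute faite.
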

\begin{proof}
La preuve est la même que \cite[A8.c]{Va}.
\end{proof}
Pour obtenir le théorème \ref{transverse}, on construit, comme dans la proposition \ref{bemolisse},  une flèche de $\cmd$ vers  $\overline{\Gr}_{\la}^{glob}$; la preuve de la lissité des flèches est alors analogue, comme on garde la condition globale au niveau des déterminants.

\section{Le lemme de conjugaison}\label{conjlemme} 
Dans cette section, nous démontrons un énoncé de conjugaison, intéressant pour lui-même. Nous le reformulons d'une manière légèrement différente partant d'un élément $x\in K\pi^{\la}K\cap G(F)^{rs}$ au lieu de considérer directement un élément dans $V_{G}^{\la,0}(\co)^{\heartsuit}$.
Dans cette section, on note $F:=k((\pi))$ d'anneau d'entiers $\co$ et de corps résiduel $k$ algébriquement clos de caractéristique première à l'ordre de $W$.

Soit $x\in G(F)$, on rappelle que l'élément $x_{+}=(\pi^{-w_{0}\la},x)$ est dans $V_{G}^{\la}(\co)$ (resp. $V_{G}^{\la,0}(\co)$)  si et seulement si $x\in \overline{K\pi^{\la}K}$ (resp. $K\pi^{\la} K$).
Soit $a_{+}=\chi_{+}(x_{+})$, la valuation $d$ du discriminant $\mathfrak{D}_{\la}(a_{+})$ est donnée par la formule \cite[(7)]{Bt}:
\begin{center}
$d:=\val(\mathfrak{D}_{\la}(a_{+}))=\left\langle 2\rho,\la\right\rangle+d_{0}$
\end{center}
avec 
\begin{center}
$d_{0}=\val(\det(\Id-\ad_{x}:\kg(F)/\kg_{x}(F)\rightarrow\kg(F)/\kg_{x}(F)))$.
\end{center}
Il est à noter que $d_{0}$ peut-être négatif et que $d$ est un entier positif.
Soit $Z$ un schéma  lisse sur $\Spec(\co)$ et $z\in Z(\co)$, on définit: 
\begin{center}
$T_{z}Z(\co):=z^{*}T_{Z/\co}$,
\end{center}
où $T_{Z/\co}$ est le faisceau tangent relatif de $Z/\co$. Le $\co$-module $T_{z}Z(\co)$ est un $\co$-module libre de type fini comme $Z$ est lisse sur $\Spec(\co)$.
De plus, pour un entier $N\in\mathbb{N}$, soit $z_{N}\in Z(\co/\pi^{N}\co)$ l'image de $z\in Z(\co)$, alors nous avons:
\begin{center}
$T_{z_{N}}Z_{N}=T_{z}Z(\co)\otimes_{\co}(\co/\pi^{N}\co)$
\end{center}
où $Z_{N}$ est le $k$-schéma de type fini dont les points pour une $k$-algèbre $A$ sont donnés par:
\begin{center}
$Z_{N}(A)=Z(A\otimes_{k}(\co/\pi^{N}\co))$.
\end{center}
Enfin, si on considère une extension finie $E$ de $F$, d'anneau d'entiers $\co_{E}$, on forme alors le carré cartésien suivant:
$$\xymatrix{Z_{E}\ar[d]\ar[r]^{\pi_{E}}&Z\ar[d]\\\Spec(\co_{E})\ar[r]^{\pi_{E}}&\Spec(\co)}$$
Nous avons alors $T_{Z_{E}/\co_{E}}=\pi_{E}^{*}T_{Z/\co}$ et l'égalité:
\begin{center}
$T_{z}Z_{E}(\co_{E})=T_{z}Z(\co)\otimes_{\co}\co_{E}$.
\end{center}
\begin{thm}[Lemme de conjugaison]\label{conjdi}
Soit $\la\in X_{*}(T)^{+}$.
Soient $x_{+}, y_{+}\in V^{\la,0}(\co)^{\heartsuit}$ tels que $a_{+}=\chi_{+}(x_{+})=\chi_{+}(y_{+})$ et $d=\val(\mathfrak{D}_{\la}(a_{+}))$.
Soit un entier $n>2d$, on suppose que:
\begin{center}
$x_{+}=y_{+}~[\pi^{n}]$,
\end{center}
alors il existe $k\in K_{n-d-1}$ tel que:
\begin{center}
$kx_{+}k^{-1}=y_{+}$.
\end{center}
\end{thm}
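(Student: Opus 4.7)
The approach is a Hensel-type lifting argument in the spirit of the proof of Proposition~\ref{affbar}, but carried out directly over the non-Artinian base $\co$ via the Denef--Loeser--Sebag approximation of Proposition~\ref{loeser}.

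I would first establish generic conjugacy. Since $x_+, y_+\in V^{\la,0}(\co)^{\heartsuit}$ are generically regular semisimple with common characteristic polynomial $a_+$, Lemma~\ref{affvide} shows that both are $G(F)$-conjugate to the Steinberg section $\eps_{+}(a_{+})$, hence $y_+=g_{0}x_+g_{0}^{-1}$ for some $g_{0}\in G(F)$. The content of the theorem is that this generic conjugator can be replaced by an integral one lying in the small congruence subgroup $K_{n-d-1}$.

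The main lifting step is an application of Proposition~\ref{loeser} to the orbit map $\phi_{x_+}:G\to V^{\la,reg}$, $g\mapsto gx_+g^{-1}$, suitably modified so as to become birational onto its image. The starting datum is $z=1\in G(\co)$, which maps to $\phi_{x_+}(1)=x_+$, and the target is $y_+$; the hypothesis $x_+\equiv y_+\pmod{\pi^{n}}$ is exactly the congruence required by Proposition~\ref{loeser}. By Theorem~\ref{bouth2}, the orbit map factors through a smooth $I_{x_+}$-torsor, where $I_{x_+}=x_+^{*}J^{\la}$ is a smooth commutative $\co$-group scheme of relative dimension $r$. The Jacobian of the orbit map at the identity is, up to the Vinberg scaling $\pi^{\left\langle 2\rho,\la\right\rangle}$, the determinant of $\ad_{x_+}$ on $\mathfrak{g}/\mathfrak{i}_{x_+}$; by the formula $d=\left\langle 2\rho,\la\right\rangle+d_{0}$ recalled before the statement (with $d_{0}=\val\det(\Id-\ad_{x}:\kg/\kg_{x}\to\kg/\kg_{x})$), this Jacobian has valuation exactly $d$. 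The auxiliary Elkik constant $e^{Elk}_{V^{\la}/\co}$ is bounded by $d$ using Gabber's Theorem~\ref{gabbi} applied to the cameral cover together with the refined infinitesimal lifting of Proposition~\ref{inftem}. The assumption $n>2d$ therefore satisfies all the hypotheses of Proposition~\ref{loeser}, which yields $g\in G(\co)$ with $gx_+g^{-1}=y_+$ and $g\equiv 1\pmod{\pi^{n-d}}$, i.e., $g\in K_{n-d-1}$ in the paper's indexing $K_{m}=\Ker(G(\co)\to G(\co/\pi^{m+1}\co))$.

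The principal obstacle is the non-birationality of the naive orbit map $\phi_{x_+}$: its image is the orbit of $x_+$, which has codimension $r$ in $V^{\la,reg}$, and the map is a $J^{\la}$-torsor onto that orbit, so Proposition~\ref{loeser} does not apply to it directly. I would circumvent this by one of two equivalent devices: either descend through the smooth quotient $G/I_{x_+}\to\,$(orbit closure), which is birational onto its image since $I_{x_+}$ is smooth by Theorem~\ref{bouth2}, or, following the proof of Proposition~\ref{affbar}, enlarge to the smooth $J^{\la}$-torsor $G\times V^{\la,reg}\to V^{\la,reg}\times_{\kcd}V^{\la,reg}$, $(g,z)\mapsto(z,gzg^{-1})$, and lift the section $(x_+,y_+)$ starting from the identity section $(x_+,x_+)$. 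A subsidiary issue is that $x_+$ need not be regular at the closed fiber despite its generic regular semi-simple behavior; this is handled as in the proof of Proposition~\ref{bemolisse} by working through the Steinberg section, which is regular everywhere, and the affine Springer fiber formalism provided by Lemma~\ref{affvide}.
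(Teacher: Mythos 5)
Your proposal rests on applying Proposition~\ref{loeser} to (a modification of) the orbit map $g\mapsto gx_{+}g^{-1}$, and this is where it breaks down. You correctly note that the naive orbit map is not birational, but neither of your fixes restores the hypotheses of that proposition. The torsor $G\times V^{\la,reg}\to V^{\la,reg}\times_{\kcd}V^{\la,reg}$ has $r$-dimensional fibres, so it is smooth but again not birational; in the paper it is only ever used over an Artinian base with a square-zero ideal, where smoothness alone suffices, which is not the situation here. The quotient $G/I_{x_{+}}\to\overline{O(x_{+})}$ is generically an isomorphism, but it replaces the smooth target $V^{\la,0}$ by the orbit closure, and Proposition~\ref{loeser} then requires $y_{+}$ to lie in $\overline{O(x_{+})}^{(e')}$ for $e'$ the Elkik constant of the orbit closure. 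Nothing in the paper bounds that constant by $d$: Gabber's Theorem~\ref{gabbi} and Proposition~\ref{inftem} concern the finite flat cameral cover $X_{a}\to\Spec(\co)$, not the orbit closure inside $V^{\la}$, and there is no bridge between the two. Likewise your claim that $e^{Elk}_{V^{\la}/\co}\leq d$ cannot be right: that constant depends only on the scheme $V^{\la}$, not on the point $a_{+}$, which is exactly why the paper keeps $e$ and $d$ as separate parameters in the definition of $\abdbD$.

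What is missing is the entire quantitative core of the paper's argument. The actual proof is a successive approximation: at each step one must solve $d\phi(X)=\pi^{d}C~[\pi^{d+1}]$ in $\kg(\co)$, and the obstruction $Z$ to doing so is killed by the exact index formula $[T_{a_{+}}\kcd(\co):d\nu(\Lie J_{a_{+}}(\co))]=d$ (Proposition~\ref{clecent}), which in turn rests on the N\'eron model of $J_{a_{+}}$, the Goresky--Kottwitz--MacPherson description, Bezrukavnikov's dimension formula and Steinberg's computation $[T_{a}\kC(\co):d\chi_{t}\kt(\co)]=d_{0}/2$, assembled into the divisibility statement of Corollary~\ref{divisib}. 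It is this chain, together with Lemma~\ref{quotdet}, that converts the hypothesis $n>2d$ into the conclusion $k\in K_{n-d-1}$; your proposal contains no substitute for it, so the loss of precision ``$d$'' in the congruence subgroup is asserted rather than proved.
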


\subsection{Espace tangent à l'ouvert lisse $V^{0}$}

Soit l'application $\xi:G\otimes F\rightarrow V^{\la,0}\otimes F$ donnée par $g\mapsto g x_{+}$. Cette application induit un isomorphisme d'espaces vectoriels :
\begin{equation}
d\xi:\kg(F)\rightarrow T_{x_{+}}V^{\la,0}(F).
\label{fibgen}
\end{equation}
Comme $x_{+}$ est un $\co$-point, on a une application $\co$-linéaire:
\begin{center}
$d\xi:\kg(\co)\rightarrow T_{x_{+}}V^{\la,0}(\co)$.
\end{center}
dont la fibre générique est (\ref{fibgen}).
\begin{lem}
On a la formule suivante pour l'indice relatif:
\begin{center}
$[T_{x_{+}}V^{\la,0}(\co):d\xi\kg(\co)]=\left\langle 2\rho,\la\right\rangle$
\end{center}
\end{lem}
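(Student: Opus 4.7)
\emph{Plan.} Notre approche consiste � se ramener � un point torique par $K$-�quivariance, puis � calculer les deux r�seaux sur la d�composition en espaces de racines.

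L'indice $[T_{x_{+}}V^{\la,0}(\co):d\xi\,\kg(\co)]$ est invariant sous $x_+\mapsto k_1\cdot x_+\cdot k_2$ pour $k_1,k_2\in K:=G(\co)$ (plong�s dans $G_+(\co)$ via $k\mapsto(1,k)$). En effet, la multiplication � gauche par $k_1$ remplace $\xi$ par $\xi\circ R_{k_1}$, une translation � droite sur $G$ qui pr�serve $\kg(\co)$; la multiplication � droite par $k_2$ est un $\co$-automorphisme de $V^{\la,0}$ qui pr�serve le r�seau tangent. Par la d�composition de Cartan, $x_+$ est $K\times K$-�quivalent � $(\pi^{-w_0\la},\pi^{\la})\in T_+(F)\cap V^{\la,0}(\co)$: on peut donc supposer cette �galit�.

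Nous trivialiserons ensuite $TG_+$ par les champs de vecteurs invariants � gauche. Comme g�n�riquement $V^{\la,0}$ est la fibre de la projection $G_+\to T$ au-dessus de $\pi^{-w_0\la}$, sous cette trivialisation $T_{x_+}V^{\la,0}(F)$ s'identifie au sous-espace $\kg(F)\subset\kt(F)\oplus\kg(F)=\kg_+(F)$. Via cette identification, $d\xi$ devient $\Ad(x_+^{-1})$, qui agit trivialement sur $\kt$ et par multiplication par $\pi^{-\left\langle\alpha,\la\right\rangle}$ sur chaque espace de racines $\kg_{\alpha}$. Le r�seau $d\xi\,\kg(\co)$ correspond donc � $\kt(\co)\oplus\bigoplus_{\alpha>0}\bigl(\pi^{-\left\langle\alpha,\la\right\rangle}\co\cdot X_{\alpha}\oplus\pi^{\left\langle\alpha,\la\right\rangle}\co\cdot X_{-\alpha}\bigr)$ dans $\kg(F)$.

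L'�tape cl� sera l'identification du r�seau intrins�que $T_{x_+}V^{\la,0}(\co)\subset\kg(F)$: nous montrerons qu'il co�ncide avec $\kt(\co)\oplus\bigoplus_{\alpha>0}\bigl(\pi^{-\left\langle\alpha,\la\right\rangle}\co\cdot X_{\alpha}\oplus\co\cdot X_{-\alpha}\bigr)$. Cette identification reposera sur un plongement ouvert analogue � la d�composition de Bruhat, $U^{-}\times V_T\times U^{+}\hookrightarrow V_G$, valable dans un voisinage int�gral de $x_+$ car ce dernier est un point lisse de $V_T^{\la,0}$, combin� avec une analyse explicite des �quations d�finissant $V_G$ dans les repr�sentations fondamentales $\rho_{\omega}$. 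C'est la principale difficult� attendue.

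Une fois les deux r�seaux identifi�s, le conoyau se d�compose selon les racines: la contribution est nulle sur $\kt$ et sur les composantes $\kg_{\alpha}$ pour $\alpha>0$ (les deux r�seaux y co�ncident), tandis que chaque composante $\kg_{-\alpha}$ contribue $\co/\pi^{\left\langle\alpha,\la\right\rangle}\co$, de longueur $\left\langle\alpha,\la\right\rangle$. En sommant sur les racines positives:
\[
\sum_{\alpha>0}\left\langle\alpha,\la\right\rangle \;=\; \left\langle 2\rho,\la\right\rangle,
\]
ce qui �tablit l'�galit� annonc�e.
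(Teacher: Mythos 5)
Votre plan aboutit \`a la bonne r\'eponse et les deux r\'eseaux que vous annoncez sont corrects : dans la trivialisation invariante \`a gauche on a bien $T_{x_{+}}V^{\la,0}(\co)=\kt(\co)\oplus\bigoplus_{\alpha>0}(\pi^{-\left\langle \alpha,\la\right\rangle}\kg_{\alpha}(\co)\oplus\kg_{-\alpha}(\co))$, ce qui est exactement l'image par $d\xi$ du r\'eseau $\kq(\co)$ calcul\'e dans l'article. Mais l'\'etape que vous signalez vous-m\^eme comme \og la principale difficult\'e attendue \fg, \`a savoir l'identification de ce r\'eseau tangent entier, n'est pas d\'emontr\'ee, et c'est pr\'ecis\'ement l\`a que r\'eside tout le contenu du lemme. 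L'argument de grande cellule $U^{-}\times V_{T}\times U^{+}\hookrightarrow V_{G}$ que vous invoquez demande un vrai travail : il faut d'abord v\'erifier que la fibre sp\'eciale de $x_{+}=(\pi^{-w_{0}\la},\pi^{\la})$ tombe dans la grande cellule, ce qui n'est vrai que pour l'une des deux orientations (le seul coefficient inversible de la r\'eduction modulo $\pi$ de $\rho_{\omega_{i}}(x_{+})$ est celui du plus bas poids, de sorte que le point appartient \`a $U^{+}V_{T}U^{-}$ mais pas \`a $U^{-}V_{T}U^{+}$ en g\'en\'eral ; pour $SL_{2}$, $x_{+}$ se r\'eduit \`a $\mathrm{diag}(0,1)$ dans $\mathrm{Mat}_{2}$) ; il faut ensuite calculer le r\'eseau tangent entier de la fibre torique $V_{T}^{\la}$ au point $t_{0}=x_{+}$ et montrer qu'il vaut $t_{0}\kt(\co)$, ce qui est un calcul non trivial sur la vari\'et\'e torique $V_{T}$ (d\'ej\`a pour $SL_{2}$, cette fibre est la surface $ad=\pi^{\left\langle \alpha,\la\right\rangle}$, singuli\`ere en dehors du point consid\'er\'e).

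L'article contourne enti\`erement cette difficult\'e : au lieu de calculer $T_{x_{+}}V^{\la,0}(\co)$ directement, il tire ce r\'eseau en arri\`ere par $d\xi$ et utilise l'\'equivalence $gx_{+}\in V^{\la,0}(\co)\Leftrightarrow gx\in K\pi^{\la}K$ (lemme \ref{cont}) pour identifier $\xi^{-1}(V^{\la,0}(\co))$ \`a $\Ad_{k_{1}}(K\pi^{\la}K\pi^{-\la})$ ; le r\'eseau tangent en l'identit\'e de ce dernier est donn\'e par le calcul \'el\'ementaire $\kg(\co)+\ad(\pi^{\la})\kg(\co)$, d'o\`u l'indice $\left\langle 2\rho,\la\right\rangle$ imm\'ediatement. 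Si vous tenez \`a votre approche, il faut donc r\'ediger compl\`etement l'\'etape de la grande cellule ; sinon, le d\'etour par le c\^ot\'e groupe est nettement plus court.
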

\begin{proof}
On considère le réseau:
\begin{center}
$\kq(\co)=(d\xi)^{-1}T_{x_{+}}V^{\la,0}(\co)$
\end{center}
de l'algèbre de Lie $\kg(F)$.
En écrivant $x=(k_{1})^{-1}\pi^{\la}k_{2}$ avec $k_{1},k_{2}\in K$, nous avons $gx\in K\pi^{\la}K$ si et seulement si:
\begin{center}
$g\in\Ad_{k_{1}}(K\pi^{\la}K\pi^{-\la})$.
\end{center}
Le calcul de l'espace tangent en l'identité de $K\pi^{\la}K\pi^{-\la}$ donne alors:
\begin{center}
$\kg(\co)+\ad(\pi^{\la})(\kg(\co))=\kg(\co)+(\kt(\co)\oplus\bigoplus\limits_{\alpha>0}\pi^{\left\langle \alpha,\la\right\rangle}\kg_{\alpha}(\co)\oplus\pi^{-\left\langle \alpha,\la\right\rangle}\kg_{-\alpha}(\co))$.
\end{center}
On en déduit donc l'égalité:
\begin{center}
$\kq(\co)=\ad_{k_{1}}(\kt(\co)\oplus\bigoplus\limits_{\alpha>0}\kg_{\alpha}(\co)\oplus\pi^{-\left\langle \alpha,\la\right\rangle}\kg_{-\alpha}(\co))$.
\end{center}
En particulier, l'indice relatif de $\kq(\co)$ et $\kg(\co)$ est:
\begin{center}
$[\kq(\co):\kg(\co)]=\left\langle 2\rho,\la\right\rangle$.
\end{center}
ce qui conclut.
\end{proof}

On considère l'application:
\begin{equation}
\phi:G\otimes F\rightarrow V^{\la,0}\otimes F
\label{phi}
\end{equation}
donnée par $g\mapsto gx_{+}g^{-1}$ et l'application induite au niveau des espaces tangents $\kg(F)\rightarrow T_{x_{+}}V^{\la,0}(F)$. En composant par $(d\xi)^{-1}:T_{x_{+}}V^{\la,0}(F)\rightarrow\kg(F)$, on obtient un endomorphisme de $\kg(F)$ donné par:
\begin{center}
$\psi=(d\xi)^{-1}\circ d\phi=\Id-\ad_{x}$
\end{center}
où le membre de droite est la différentielle de l'application $g\mapsto gxg^{-1}x^{-1}$. Comme $\Id-\ad_{x}$ est trivial sur $\kg_{x}(F)$ et que $x\in G(F)^{rs}$, il induit un automorphisme sur $\kg^{x}(F):=\kg(F)/\kg_{x}(F)$:
\begin{equation}
\psi^{x}:\kg^{x}(F)\rightarrow\kg^{x}(F)
\label{psigen}
\end{equation}
dont la valuation du déterminant est égale à $d_{0}$.
Comme $x_{+}\in V^{\la,0}(\co)$, on dispose d'une application de $\kg(\co)$ vers $T_{x_{+}}V^{\la,0}(\co)$. En composant par $(d\xi)^{-1}$, on obtient une application $\co$-linéaire:
\begin{center}
$d\xi^{-1}\circ d\phi:\kg(\co)\rightarrow\kq(\co)$.
\end{center}
Elle induit alors une application $\co$-linéaire: 
\begin{center}
$\psi^{x}:\kg^{x}(\co)\rightarrow\kq^{x}(\co)$
\end{center}
avec $\kg^{x}(\co):=\kg(\co)/\kg_{x}(\co)$ et $\kq^{x}(\co):=\kq(\co)/\kq_{x}(\co)$, dont la fibre générique est (\ref{psigen}).
\begin{lem}\label{quotdet}
L'indice relatif de $\kq^{x}(\co)$ et $\psi^{x}(\kg^{x}(\co))$ est majoré par $d$, i.e.:
\begin{center}
$[\kq^{x}(\co):\psi^{x}(\kg^{x}(\co))]\leq d$.
\end{center}
\end{lem}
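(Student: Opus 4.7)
Le plan consiste \`a d\'ecomposer l'indice cherch\'e en exploitant la cha\^ine de r\'eseaux $\psi^{x}(\kg^{x}(\co))\subset\kg^{x}(\co)\subset\kq^{x}(\co)$ \`a l'int\'erieur du $F$-espace vectoriel $\kg^{x}(F)$. Par la multiplicativit\'e des indices relatifs dans un m\^eme espace vectoriel sur $F$, on \'ecrit
\begin{center}
$[\kq^{x}(\co):\psi^{x}(\kg^{x}(\co))]=[\kq^{x}(\co):\kg^{x}(\co)]+[\kg^{x}(\co):\psi^{x}(\kg^{x}(\co))]$,
\end{center}
et l'on traite les deux termes s\'epar\'ement. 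Pour que cette d\'ecomposition ait un sens, il faut v\'erifier au pr\'ealable que l'application naturelle $\kg^{x}(\co)\to\kq^{x}(\co)$ est injective, ce qui revient \`a l'\'egalit\'e $\kg_{x}(\co)=\kg(\co)\cap\kq_{x}(\co)$; celle-ci d\'ecoule du fait que les d\'enominateurs apparaissant dans le r\'eseau $\kq(\co)$ relativement \`a $\kg(\co)$ sont port\'es par les sous-espaces radiciels, donc n'affectent pas le sous-espace fixe $\kg_{x}$.

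Le second terme se calcule directement. La fibre g\'en\'erique de $\psi^{x}$ co\"incide avec l'automorphisme $\Id-\ad_{x}$ de $\kg^{x}(F)$, dont la valuation du d\'eterminant vaut $d_{0}$ par d\'efinition m\^eme de cet invariant. Puisque $\psi^{x}$ envoie le $\co$-r\'eseau $\kg^{x}(\co)$ sur un $\co$-r\'eseau de m\^eme rang dans $\kg^{x}(F)$, l'indice $[\kg^{x}(\co):\psi^{x}(\kg^{x}(\co))]$ est exactement \'egal \`a $\val(\det\psi^{x})=d_{0}$.

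Pour le premier terme, on invoque le lemme pr\'ec\'edent qui donne $[\kq(\co):\kg(\co)]=\left\langle 2\rho,\la\right\rangle$. Le point \`a v\'erifier est que le quotient $\kq^{x}(\co)/\kg^{x}(\co)$ est un quotient (et non seulement un sous-quotient) de $\kq(\co)/\kg(\co)$; en effet, comme $\kq^{x}(\co)=\kq(\co)/\kq_{x}(\co)$, la surjection canonique $\kq(\co)\twoheadrightarrow\kq^{x}(\co)$ envoie $\kg(\co)$ sur un sous-module contenant $\kg^{x}(\co)$, d'o\`u la surjection induite $\kq(\co)/\kg(\co)\twoheadrightarrow\kq^{x}(\co)/\kg^{x}(\co)$. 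On en tire la majoration $[\kq^{x}(\co):\kg^{x}(\co)]\leq\left\langle 2\rho,\la\right\rangle$.

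En additionnant les deux contributions, on obtient
\begin{center}
$[\kq^{x}(\co):\psi^{x}(\kg^{x}(\co))]\leq\left\langle 2\rho,\la\right\rangle+d_{0}=d$,
\end{center}
ce qui conclut. L'essentiel de l'effort est de nature bookkeeping: la seule v\'erification non tautologique est celle de l'injectivit\'e de $\kg^{x}(\co)\hookrightarrow\kq^{x}(\co)$, qui assure que la filtration des r\'eseaux est compatible avec le passage au quotient par $\kg_{x}$; tout le reste est une application standard de la formule de la valuation du d\'eterminant pour les changements de r\'eseaux, combin\'ee aux calculs explicites d\'ej\`a effectu\'es sur les r\'eseaux $\kg(\co)$ et $\kq(\co)$.
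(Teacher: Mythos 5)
Your argument is correct and is essentially the paper's own proof: both decompose the index along the intermediate lattice $\kg^{x}(\co)$, bound $[\kq^{x}(\co):\kg^{x}(\co)]$ by $[\kq(\co):\kg(\co)]=\left\langle 2\rho,\la\right\rangle$, and identify the remaining term with $\val(\det\psi^{x})=d_{0}$. The only slip is that the first inclusion in your chain, $\psi^{x}(\kg^{x}(\co))\subset\kg^{x}(\co)$, need not hold (indeed $d_{0}$ may be negative), but this is harmless since the additivity of the relative index for lattices in $\kg^{x}(F)$ does not require the lattices to be nested.
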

\begin{proof}
Cela résulte de l'inégalité:
\begin{center}
$[\kq^{x}(\co):\kg^{x}(\co)]\leq[\kq(\co):\kg(\co)]=\left\langle 2\rho,\la\right\rangle$
\end{center}
et de l'égalité $\val(\det(\psi^{x}))=d_{0}$.
\end{proof}
\subsection{Un calcul d'indice pour le centralisateur régulier}

Considérons $\g_{0}:=\eps_{+}(a_{+})$. Il existe alors $g\in G(F)$ tel que $x_{+}=\Ad(g)^{-1}\g_{0}$. On obtient alors un isomorphisme en fibre générique
\begin{center}
$\Ad(g)^{-1}:J_{a_{+}}\otimes F=I_{\g_{0}}\otimes F\rightarrow I_{x_{+}}\otimes F$.
\end{center}
On rappelle que l'on regarde les centralisateurs dans $G$.
Cette flèche induit un isomorphisme au niveau des espaces tangents:
\begin{equation}
\ad(g)^{-1}:\Lie(J_{a_{+}})(F)\rightarrow\kg_{x}(F).
\label{centgen}
\end{equation}
Comme en vertu de la proposition \ref{bouth2}, l'isomorphisme sur $F$ donné par $\Ad(g)^{-1}$, se prolonge en une flèche $J_{a_{+}}\rightarrow I_{x_{+}}\rightarrow G\otimes\co$, on obtient alors une application $\co$-linéaire:
\begin{center}
$\ad(g)^{-1}:\Lie(J_{a_{+}})(\co)\rightarrow\kg_{x}(\co):=\kg(\co)\cap\kg_{x}(F)$,
\end{center}
dont la fibre générique est (\ref{centgen}).
Considérons le morphisme canonique de schémas sur $\Spec(\co)$:
\begin{equation}
\nu:=\chi_{+}\circ\xi\circ\Ad(g)^{-1}: J_{a_{+}}\rightarrow\kcd.
\label{eqlam}
\end{equation}
Considérons le modèle de Néron $J_{a_{+}}^{\flat}$ de $J_{a_{+}}$. Par la propriété universelle du modèle de Néron, on a un morphisme canonique:
\begin{center}
$\iota: J_{a_{+}}\rightarrow J_{a_{+}}^{\flat}$
\end{center}
qui induit l'identité sur $\Spec(F)$.
\begin{lem}
Il existe un morphisme de schémas $\nu^{\flat}:J_{a_{+}}^{\flat}\rightarrow\kcd$ tel que $\nu^{\flat}\circ\iota=\nu$.
\end{lem}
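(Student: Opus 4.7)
Le plan est d'exploiter une description $\Ad$-invariante de $\nu$ et la propriété universelle caractérisant le modèle de Néron de type fini local.

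Je commencerais par réécrire $\nu$ sans référence à l'élément auxiliaire $g$. L'isomorphisme canonique $\chi_{+}^{*}J \to I$ du théorème \ref{bouth2}, valable sur $V_{G}^{reg}$, nous donne $J_{a_{+}} \cong I_{\g_{0}}$ comme $\co$-schémas, et un calcul direct utilisant l'invariance par conjugaison de $\chi_{+}$ montre alors que $\nu$ s'exprime simplement par $j \mapsto \chi_{+}(j \cdot \g_{0})$. Cette formule ne dépend plus du choix de $g$ et constitue la description intrinsèque sur laquelle tout le reste va reposer.

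Ensuite, je rappellerais la propriété universelle du modèle de Néron de type fini local : pour tout $\co$-schéma lisse $Y$, on a $J^{\flat}_{a_{+}}(Y) = J_{a_{+}}(Y \otimes_{\co} F)$, et le morphisme canonique $\iota$ est une immersion ouverte identifiant $J_{a_{+}}$ à la composante neutre de $J^{\flat}_{a_{+}}$ (la fibre générique de $J_{a_{+}}$ étant un tore, le centralisateur d'un élément régulier semi-simple). Le problème se ramène donc à prolonger $\nu$ à travers les composantes supplémentaires de la fibre spéciale de $J^{\flat}_{a_{+}}$ qui ne sont pas dans l'image de $\iota$.

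L'étape centrale utiliserait la description galoisienne des propositions \ref{centralisateur} et \ref{galois} : en passant au revêtement caméral $\tilde{X}_{a_{+}}$, le centralisateur $J_{a_{+}}$ devient génériquement le tore constant $T$ restreint convenablement, et $J^{\flat}_{a_{+}}$ s'identifie à un modèle de Néron analogue utilisant $T^{\flat}$. Dans cette présentation, $\nu$ se relève en la multiplication par $\g_{0}$ vers $V_{T}^{\la}$ suivie du morphisme caractéristique $V_{T} \to V_{T}/W = \kc$. Le passage au $W$-quotient absorbe les translations supplémentaires provenant des composantes de $T^{\flat}$ non contenues dans $T$, et par descente on obtient le morphisme prolongé $\nu^{\flat} : J^{\flat}_{a_{+}} \to \kcd$ avec la compatibilité $\nu^{\flat} \circ \iota = \nu$.

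La principale difficulté sera de vérifier que la composition de la multiplication étendue à $T^{\flat}$ avec $\chi_{+}$ reste effectivement à valeurs dans $\kcd$ au-dessus de $\Spec(\co)$, et non seulement au-dessus de $\Spec(F)$. Ceci repose sur la normalité du semi-groupe de Vinberg $V_{T}$ (établie via \cite[Cor. 6.2.14]{BK}) et sur l'intégralité des invariants de Steinberg le long des composantes non-neutres du modèle de Néron, ce qui demande une analyse soignée de la structure de monoïde de $V_{T}$ et du comportement de $\chi_{+}$ sur les translatés par les cocaractères.
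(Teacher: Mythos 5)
Votre strat\'egie g\'en\'erale --- \'ecrire $\nu$ intrins\`equement sous la forme $j\mapsto\chi_{+}(j\g_{0})$, passer \`a un rev\^etement qui d\'eploie le centralisateur, y r\'ealiser $\nu$ comme une multiplication suivie de $\chi_{+}$, puis redescendre par $W$-\'equivariance et conclure par \'egalit\'e des fibres g\'en\'eriques --- est bien celle du texte (qui utilise la description de Goresky--Kottwitz--MacPherson $J_{a_{+}}^{\flat}=(\pi_{E,*}T)^{\mathbb{Z}_{l}}$ via le corps de d\'eploiement $E=F(\pi^{1/l})$ plut\^ot que le rev\^etement cam\'eral normalis\'e, mais les deux pr\'esentations sont interchangeables ici). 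Le probl\`eme est que le point que vous signalez vous-m\^eme comme \og la principale difficult\'e \fg{} --- l'int\'egralit\'e de $\chi_{+}$ sur le mod\`ele \'etendu --- est pr\'ecis\'ement le contenu non trivial du lemme, et vous ne le d\'emontrez pas. L'argument manquant est le suivant : on diagonalise $x_{+}=h^{-1}t_{+}h$ avec $t_{+}\in T_{+}(E)$ ; comme la fl\`eche $V_{T}^{\la}\rightarrow\kcd$ est \emph{finie} et que $\chi_{+}(t_{+})=a_{+}$ se prolonge \`a $\Spec(\co)$, le crit\`ere valuatif de propret\'e force $t_{+}\in V_{T}^{\la}(\co_{E})$ ; la stabilit\'e de $V_{T}$ sous la translation par $T$ entra\^ine alors que $\zeta(\g)=\chi_{+}(\g t_{+})$ est d\'efinie sur tout $T\times_{\co}\co_{E}$, et non seulement en fibre g\'en\'erique ; la descente par points fixes sous l'action tordue $w\tau_{E}$ fournit $\nu^{\flat}$. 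Ce n'est ni la normalit\'e de $V_{T}$ ni une \og analyse soign\'ee de la structure de mono\"ide \fg{} qui fait marcher la preuve, mais la propret\'e de $V_{T}^{\la}\rightarrow\kcd$ ; sans cela votre construction reste une esquisse.

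Par ailleurs, votre affirmation selon laquelle $\iota$ est une immersion ouverte identifiant $J_{a_{+}}$ \`a la composante neutre de $J_{a_{+}}^{\flat}$ est fausse en g\'en\'eral : le calcul d'indice $[\Lie J_{a_{+}}^{\flat}(\co):d\iota(\Lie J_{a_{+}}(\co))]=\frac{d-c}{2}$ (corollaire \ref{autdim}, utilis\'e dans la proposition \ref{clecent}) montre que $d\iota$ n'est pas un isomorphisme d\`es que $d>c$, ce qu'une immersion ouverte de sch\'emas en groupes lisses imposerait. Le d\'efaut de surjectivit\'e de $\iota$ ne se r\'eduit donc pas \`a des composantes connexes suppl\'ementaires de la fibre sp\'eciale, et le prolongement de $\nu$ ne peut pas s'obtenir par un simple argument \og composante par composante \fg. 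De m\^eme, l'introduction d'un \og $T^{\flat}$ \fg{} est superflue et source de confusion : sur le rev\^etement qui d\'eploie, le tore est constant et c'est la normalisation du rev\^etement cam\'eral (ou, dans le texte, la restriction de Weil depuis $\co_{E}$) qui porte toute la structure du mod\`ele de N\'eron.
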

\begin{proof}
Pour $w\in W$ d'ordre $l$, on considère $E=F(\pi^{1/l})$ et $\mathbb{Z}_{l}:=\mathbb{Z}/l\mathbb{Z}$.
On rappelle que la caractéristique est première à l'ordre de $W$.
On note $\tau_{E}$ le générateur de $\Gal(E/F)$ et $\pi_{E}:\Spec(\co_{E})\rightarrow\Spec(\co)$.
Il résulte alors de \cite[sect. 7.1-8.1]{GKM} qu'il existe un élément $w\in W$ d'ordre $l$, tel que le schéma $J_{a_{+}}^{\flat}$ s'identifie à:
\begin{equation}
(\pi_{E,*}T)^{\mathbb{Z}_{l}}
\label{gkmdesc}
\end{equation}
avec $\mathbb{Z}_{l}$ qui agit par $w\tau_{E}$ et l'extension $E/F$ déploie le tore $J_{a_{+}}(F)$.

En particulier il existe $t_{+}\in T^{\la}_{+}(E)$  et $h\in G(E)$ tel que $x_{+}=h^{-1}t_{+}h$.
Comme la flèche $V_{T}^{\la}\rightarrow\kcd$ est finie, il résulte du critère valuatif de propreté que $t_{+}\in V_{T}^{\la}(\co_{E})\cap T_{+}(E)$.
Nous avons alors une flèche 
\begin{equation}
\zeta:T\times_{\co}\co_{E}\rightarrow\kcd\times_{\co}\co_{E}
\label{extzeta}
\end{equation}
donnée par $\g\rightarrow\chi_{+}(\g t_{+})$.
On fait agir $\mathbb{Z}_{l}$ sur $\kcd\times_{\co}\co_{E}$ par $\Id\times\tau_{E}$.
Comme cette flèche entrelace l'action de $w\tau_{E}$ sur $T\times_{\co}\co_{E}$ avec l'action de $\tau_{E}$ sur $\kcd\times_{\co}\co_{E}$, on obtient en prenant les points fixes sous $\mathbb{Z}_{l}$, une flèche:
\begin{center}
$\nu^{\flat}:J_{a_{+}}^{\flat}\rightarrow\kcd$.
\end{center}
Enfin, les flèches $\nu^{\flat}\circ\iota$ et $\nu$ étant les mêmes en fibres génériques, elles sont égales.
\end{proof}

La flèche $\nu^{\flat}:J_{a_{+}}^{\flat}\rightarrow\kcd$ induit une application $\co$-linéaire:
\begin{center}
$d\nu^{\flat}:\Lie J_{a_{+}}^{\flat}(\co)\rightarrow T_{a_{+}}\kcd(\co)$
\end{center}
qui est un isomorphisme en fibre générique, comme $x_{+}$ est génériquement régulier semi-simple.
\begin{lem}\label{flat}
On a la formule suivante pour l'indice relatif:
\begin{center}
$[T_{a_{+}}\kcd(\co):d\nu^{\flat}(\Lie J_{a_{+}}^{\flat}(\co))]=\frac{d+c}{2}$
\end{center}
où $c=\rg T-\rg_{F}J_{a_{+}}(F)$.
\end{lem}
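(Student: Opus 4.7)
The strategy is to reduce the index calculation to an equivariant computation over the extension $\co_{E}$ on which $J_{a_{+}}^{\flat}$ becomes split. By the GKM description (\ref{gkmdesc}) and the construction of $\nu^{\flat}$ as $\mathbb{Z}_{l}$-invariants of the map $\zeta$ of (\ref{extzeta}), we obtain compatible identifications
\[
\Lie J_{a_{+}}^{\flat}(\co) = (\kt\otimes_{\co}\co_{E})^{w\tau_{E}}, \qquad T_{a_{+}}\kcd(\co) = (T_{a_{+}}\kcd\otimes_{\co}\co_{E})^{\tau_{E}},
\]
under which $d\nu^{\flat}$ is the restriction to invariants of $d\zeta: \kt\otimes_{\co}\co_{E}\to T_{a_{+}}\kcd\otimes_{\co}\co_{E}$. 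Since the characteristic is prime to $|W|$ and hence to $l$, the $\mathbb{Z}_{l}$-invariants functor is exact, so $\coker(d\nu^{\flat}) = \coker(d\zeta)^{\mathbb{Z}_{l}}$, and the index sought is the $\co$-length of the right-hand side.

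I would then analyse $\coker(d\zeta)$ as a $\mathbb{Z}_{l}$-equivariant $\co_{E}$-module by factoring $d\zeta$ as the translation isomorphism $\kt\otimes_{\co}\co_{E}\cong T_{t_{+}}V_{T}^{\la}\otimes_{\co}\co_{E}$ followed by the differential of the finite flat cover $V_{T}^{\la}\to\kcd$ at $t_{+}$. This second map is ramified precisely along the discriminant divisor $\mathfrak{D}_{\la}$, and its Jacobian factors, by the toric structure of $V_{T}$, as a product over the roots of $\bg$; consequently $\coker(d\zeta)$ decomposes into root contributions whose $\co_{E}$-lengths are controlled by the valuations $v_{E}(1-\alpha(t_{+}))$, and which carry a compatible $w\tau_{E}$-action via the combined action of $w$ on roots and $\tau_{E}$ on scalars. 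The total $\co_{E}$-length reproduces the pullback of $d$.

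The final step is to take $\mathbb{Z}_{l}$-invariants of this decomposition. The pairing of roots $\alpha\leftrightarrow -\alpha$, whose contributions have equal $v_{E}$-valuations (since $1-\alpha^{-1}=-\alpha^{-1}(1-\alpha)$), already supplies the factor of $2$ in the denominator, producing the main term $d/2$. The correction $c/2$ arises from matching the $w$-weight decomposition of $\kt$ against the $\tau_{E}$-weight decomposition of $\co_{E}$: the anisotropic quotient, of rank $\rg T-\rg_{F}J_{a_{+}}(F)$, introduces an extra contribution via a Herbrand-quotient style calculation, in the spirit of \cite[Lemme 3.7.4]{N} adapted to the Vinberg semigroup through the Steinberg section $\eps_{+}$. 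The principal obstacle is precisely this last step: the orbit-by-orbit analysis of the combined $w$- and $\tau_{E}$-action on $\coker(d\zeta)$ must be carried out with care, distinguishing $\co$-length from $\co_{E}$-length (and tracking the ramification and residue degree of $E/F$), in order to isolate the exact formula $(d+c)/2$.
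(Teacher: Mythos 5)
Your reduction to an invariants computation over $\co_{E}$ is correctly set up: $d\zeta$ does intertwine the $w\tau_{E}$-action on $\kt(\co_{E})$ with the $\tau_{E}$-action on $T_{a_{+}}\kcd(\co_{E})$, and since $l$ is invertible the $\mathbb{Z}_{l}$-invariants functor is exact, so $\coker(d\nu^{\flat})=\coker(d\zeta)^{\mathbb{Z}_{l}}$. But the heart of the lemma is precisely the evaluation of $\dim_{k}\coker(d\zeta)^{\mathbb{Z}_{l}}$, and this is the step you defer as ``the principal obstacle'': the orbit-by-orbit Herbrand-type analysis of the $w\tau_{E}$-action on the root decomposition of the cokernel is never carried out, so the formula $(d+c)/2$ is not actually established. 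There is also a factor-of-$2$ muddle in your bookkeeping: the $\co_{E}$-length of $\coker(d\zeta)$ is $ld/2$, not the pullback $ld$ of the discriminant valuation (the Jacobian of $V_{T}^{\la}\rightarrow\kcd$ is essentially the product of $(1-\alpha(t_{+}))$ over the \emph{positive} roots only, i.e.\ a square root of the discriminant), so the ``pairing $\alpha\leftrightarrow-\alpha$'' cannot be invoked a second time after taking invariants to produce the denominator $2$.

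The paper avoids the invariants computation altogether. It extends scalars, which multiplies the sought index by $l$, and then splits the extended index additively along the chain $d\zeta(\co_{E}\otimes_{\co}\Lie J_{a_{+}}^{\flat}(\co))\subset d\zeta(\kt(\co_{E}))\subset T_{a_{+}}\kcd(\co_{E})$. The first gap has length $lc/2$ by Bezrukavnikov's dimension formula applied to the description (\ref{gkmdesc}) of $J_{a_{+}}^{\flat}$ as $(\pi_{E,*}T)^{\mathbb{Z}_{l}}$; the second has length $ld/2$, obtained in the split situation by factoring $d\zeta=d\rho_{-w_{0}\la}\circ d\chi_{t}$, applying Steinberg's formula $[T_{a}(\kC(\co)):d\chi_{t}\kt(\co)]=d_{0}/2$, and adding the contribution $\left\langle\rho,\la\right\rangle$ of the central twist by $-w_{0}\la$. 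Dividing by $l$ gives $(d+c)/2$ with no analysis of torsion modules with semilinear group action. If you wish to pursue your route, you would in effect have to reprove Bezrukavnikov's lemma inside your orbit analysis; the paper's device of isolating it as the separate index $[\kt(\co_{E}):\co_{E}\otimes_{\co}\Lie J_{a_{+}}^{\flat}(\co)]=lc/2$ is the idea your plan is missing.
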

\begin{proof}
On calcule cet indice après extension des scalaires à $\co_{E}$. Nous avons alors l'application linéaire:
\begin{center}
$\Id_{E}\otimes d\nu^{\flat}:\co_{E}\otimes_{\co}\Lie J_{a_{+}}^{\flat}(\co)\rightarrow T_{a_{+}}\kcd(\co_{E})$
\end{center}
qui est la restriction de la différentielle de la flèche (\ref{extzeta}):
\begin{center}
$d\zeta:\kt(\co_{E})\rightarrow T_{a_{+}}\kcd(\co_{E})$
\end{center}
au sous-espace $\co_{E}\otimes_{\co}\Lie J_{a_{+}}^{\flat}(\co)$. 
Il résulte alors de la description (\ref{gkmdesc}) et de \cite[Lem. 3]{B} que:
\begin{center}
$\dim_{k}(\frac{\kt(\co_{E})}{\co_{E}\otimes_{\co}\Lie J_{a}^{\flat}(\co)})=\frac{lc}{2}$.
\end{center}
d'où l'on déduit la formule suivante pour l'indice qui nous intéresse:
\begin{center}
$[T_{a_{+}}\kcd(\co):d\nu^{\flat}(\Lie J_{a}^{\flat}(\co))]=\frac{1}{l}[T_{a_{+}}\kcd(\co_{E}):d\zeta(\kt(\co_{E}))]+\frac{1}{l}\frac{lc}{2}$.
\end{center}
Il ne nous reste donc plus qu'à montrer la formule suivante:
\begin{center}
$[T_{a_{+}}\kcd(\co_{E}):d\zeta(\kt(\co_{E}))]=\frac{ld}{2}$.
\end{center}
On s'est donc ramené au cas déployé et on peut supposer $\co=\co_{E}$ et $t_{+}\in V_{T}^{\la}(\co)\cap T_{+}(F)^{rs}$.
En reprenant les notations de (\ref{extzeta}), on regarde la flèche:
\begin{center}
$d\zeta:\kt(\co)\rightarrow T_{a_{+}}\kcd(\co)$
\end{center}
qui est un isomorphisme en fibre générique. On écrit $t_{+}=(\pi^{-w_{0}\la},t)$.
On identifie alors la base de Steinberg pour $G$, $\kC=\mathbb{A}^{r}$  au fermé $\{1\}\times\mathbb{A}^{r}\subset\kc$.
Pour $\g\in T(\co_{E})$, on a  l'égalité:
\begin{center}
$\chi_{+}(\g t_{+})=\pi^{-w_{0}\la}.\chi(\g t)$,
\end{center}
où $\chi$ est le morphisme de Steinberg pour $G$ et où l'on fait agir le tore central $Z_{+}$ de $G_{+}$ par:
\begin{center}
$z.(1,a_{\bullet})=(\alpha_{\bullet}(z),\pi^{\left\langle \omega_{\bullet},-w_{0}\la\right\rangle}a_{\bullet})$.
\end{center}
En particulier, cette égalité implique la factorisation suivante pour $d\zeta$:
\begin{center}
$d\zeta=d\rho_{-w_{0}\la}\circ d\chi_{t}$,
\end{center}
où l'on a noté $\rho_{-w_{0}\la}$ le morphisme de schémas qui se déduit de l'action de $-w_{0}\la$.
Maintenant, nous avons d'après Steinberg \cite[Lem. 8.2]{S} la formule suivante:
\begin{center}
$[T_{a}(\kC(\co)):d\chi_{t}\kt(\co)]=\frac{d_{0}}{2}$
\end{center}
avec $a=\chi(t)$ et nous avons de plus:
\begin{center}
$[T_{a_{+}}\kcd(\co):\rho_{-w_{0}\la}(T_{a}\kc(\co))]=\left\langle \rho,\la\right\rangle$.
\end{center}
En faisant la somme de ces deux valuations, on obtient que l'indice $[T_{a_{+}}\kcd(\co):d\zeta(\kt(\co))]$ est donné par:
\begin{equation}
\left\langle \rho,\la\right\rangle+\frac{d_{0}}{2}=\frac{d}{2}
\label{diffvin}
\end{equation}
\end{proof}
Nous avons besoin d'une description alternative de $J_{a}^{\flat}$.
\begin{prop}
Soit $\tilde{X}_{a}^{\flat}$ la normalisation de $\tilde{X}_{a}$. Alors le modèle de Néron $J_{a}^{\flat}$ admet la description galoisienne  suivante:
\begin{center}
$J_{a}^{\flat}=\prod\limits_{\tilde{X}_{a}^{\flat}/\bar{X}}(T\times\tilde{X}_{a}^{\flat})^{W}$.
\end{center}
\end{prop}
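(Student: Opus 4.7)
Le plan consiste à comparer directement le schéma $R:=\prod\limits_{\tilde{X}_{a}^{\flat}/\bar{X}}(T\times\tilde{X}_{a}^{\flat})^{W}$ au modèle de Néron $J_{a}^{\flat}$ via un morphisme canonique obtenu par restriction au revêtement normalisé, puis à exploiter la propriété universelle du modèle de Néron ainsi que la description explicite (\ref{gkmdesc}). Je commencerais par établir que $R$ est un schéma en groupes commutatif lisse sur $\bar{X}$: comme $\tilde{X}_{a}\rightarrow\bar{X}$ est finie et que la base est excellente unidimensionnelle, la normalisation $\tilde{X}_{a}^{\flat}\rightarrow\tilde{X}_{a}$ est finie, et $\tilde{X}_{a}^{\flat}\rightarrow\bar{X}$ est plate par \og miracle flatness \fg\ puisque $\tilde{X}_{a}^{\flat}$ est normal de dimension $1$ sur la base régulière de dimension $1$. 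La restriction de Weil $\prod\limits_{\tilde{X}_{a}^{\flat}/\bar{X}}(T\times\tilde{X}_{a}^{\flat})$ est alors représentable et lisse, et le passage aux $W$-invariants préserve la lissité puisque l'ordre de $W$ est premier à la caractéristique, exactement comme dans la construction de $J^{1}$ donnée en (\ref{jun}).

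Ensuite, la restriction des sections $W$-équivariantes via $\tilde{X}_{a}^{\flat}\rightarrow\tilde{X}_{a}$ fournit un morphisme canonique $J^{1}_{a}\rightarrow R$, d'où par composition avec la flèche $J_{a}\rightarrow J^{1}_{a}$ de la proposition \ref{centralisateur}, un morphisme $\iota:J_{a}\rightarrow R$. En fibre générique sur $\bar{X}$, le polynôme $a$ tombe dans l'ouvert régulier semisimple, de sorte que $\tilde{X}_{a}$ est fini étale donc déjà normal; par conséquent la normalisation $\tilde{X}_{a}^{\flat}\otimes F\rightarrow\tilde{X}_{a}\otimes F$ est un isomorphisme et $\iota$ induit un isomorphisme entre tores sur $\Spec(F)$.

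Par la propriété universelle du modèle de Néron appliquée au schéma en groupes lisse $R$, ce morphisme générique se prolonge uniquement en un morphisme $\iota^{\flat}:J_{a}^{\flat}\rightarrow R$. Pour vérifier que $\iota^{\flat}$ est un isomorphisme, je passerais à l'extension finie galoisienne $E/F$ de groupe $\mathbb{Z}_{l}$ déployant le tore $J_{a}(F)$ introduite dans la preuve précédant (\ref{gkmdesc}); sur $\co_{E}$, le revêtement $\tilde{X}_{a}^{\flat}\otimes_{\co}\co_{E}$ devient une union disjointe finie de copies de $\Spec(\co_{E})$ indexées par les orbites de $W$, et $R\otimes_{\co}\co_{E}$ s'identifie par descente galoisienne à $(\pi_{E,*}T)^{\mathbb{Z}_{l}}$. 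L'obstacle principal est de vérifier explicitement que l'action de $\mathbb{Z}_{l}$ correspondant à la $W$-invariance du côté $R$ coïncide avec l'action de $w\tau_{E}$ présente dans la description (\ref{gkmdesc}); cette vérification, essentiellement combinatoire et orbite par orbite, se fait dans l'esprit de \cite[sect. 7.1-8.1]{GKM}, et permet de conclure que $\iota^{\flat}$ est l'isomorphisme voulu.
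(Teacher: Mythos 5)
Le texte de l'article n'offre aucune d\'emonstration de cette proposition (elle est le pendant pour les groupes de \cite[\S 3.8]{N}, auquel le corollaire \ref{autdim} renvoie implicitement); on ne peut donc juger votre proposition que sur ses propres m\'erites. L'architecture g\'en\'erale que vous proposez -- montrer que $R:=\prod_{\tilde{X}_{a}^{\flat}/\bar{X}}(T\times\tilde{X}_{a}^{\flat})^{W}$ est un sch\'ema en groupes commutatif lisse, comparer les fibres g\'en\'eriques, puis confronter $R$ \`a la description (\ref{gkmdesc}) -- est bien la bonne. Mais deux \'etapes sont erron\'ees telles quelles. D'abord, la propri\'et\'e de N\'eron est invoqu\'ee dans le mauvais sens : elle prolonge les morphismes d'un sch\'ema lisse sur $\co$ \emph{vers} $J_{a}^{\flat}$; appliqu\'ee \`a l'inverse de votre isomorphisme g\'en\'erique, elle fournit donc un morphisme $R\rightarrow J_{a}^{\flat}$ et non $\iota^{\flat}:J_{a}^{\flat}\rightarrow R$. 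Prolonger un morphisme g\'en\'erique issu de $J_{a}^{\flat}$ vers $R$ exigerait que $R$ poss\`ede d\'ej\`a la propri\'et\'e de N\'eron, c'est-\`a-dire pr\'ecis\'ement l'\'enonc\'e \`a d\'emontrer.

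Ensuite, $\tilde{X}_{a}^{\flat}\otimes_{\co}\co_{E}$ n'est pas une union disjointe de copies de $\Spec(\co_{E})$ : la normalisation ne commute pas au changement de base ramifi\'e (d\'ej\`a $\co_{E}\otimes_{\co}\co_{E}$, pour $E=F(\pi^{1/l})$, a $l$ branches qui se croisent au point ferm\'e). Le chemin correct -- et plus court -- \'evite tout changement de base : le corps r\'esiduel \'etant alg\'ebriquement clos et la ramification mod\'er\'ee, les branches de $\tilde{X}_{a}^{\flat}$ sont elles-m\^emes des copies de $\Spec(\co_{E})$, permut\'ees transitivement par $W$ avec stabilisateur $\langle w\rangle$ (le groupe de d\'ecomposition d'une composante de la fibre g\'en\'erique, qui est un $W$-torseur sur $\Spec(F)$). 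On en tire directement $R=(\prod_{W/\langle w\rangle}\Res_{\co_{E}/\co}T)^{W}=(\pi_{E,*}T)^{\mathbb{Z}_{l}}$, o\`u $\mathbb{Z}_{l}$ agit par $w\tau_{E}$, ce qui est exactement (\ref{gkmdesc}). Cette identification est tout le contenu de la proposition; dans votre r\'edaction elle est renvoy\'ee \`a une v\'erification \og essentiellement combinatoire \fg, ce qui, joint aux deux points pr\'ec\'edents, laisse la preuve incompl\`ete en son c\oe ur m\^eme, m\^eme si elle est r\'eparable selon les lignes indiqu\'ees ci-dessus.
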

On écrit $X_{a}=\Spec(\co_{a})$ et $\tilde{X}_{a}=\Spec(\tilde{\co}_{a})$.
\begin{cor}\label{autdim}
On en déduit une autre formule pour la dimension de $\mathcal{P}(J_{a})$:
\begin{center}
$\dime\mathcal{P}(J_{a})=\dime_{\bar{k}}(\mathfrak{t}_{+}\otimes_{\co}\tilde{\co}_{a}^{\flat}/\tilde{\co}_{a})=\frac{d-c}{2}$.
\end{center}
\end{cor}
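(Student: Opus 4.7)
Le plan repose sur la combinaison de la description galoisienne de $J_a^{\flat}$ �nonc�e dans la proposition pr�c�dente avec celle de $J_a$ d�coulant de la proposition \ref{galois}, ainsi que sur le Lemme \ref{flat}.

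\textbf{Premi�re �galit�.} On traduit la dimension du champ de Picard en termes alg�briques: $\dime \cP(J_a) = \dime_{\bar k}(\Lie J_a^{\flat}(\co)/\Lie J_a(\co))$. Les deux descriptions galoisiennes fournissent $\Lie J_a^{\flat}(\co) = (\kt_+ \otimes_\co \tilde\co_a^{\flat})^W$ et $\Lie J_a(\co) = (\kt_+ \otimes_\co \tilde\co_a)^W$. La caract�ristique �tant premi�re � l'ordre de $W$, le foncteur des $W$-invariants est exact, d'o� une identification
$$\Lie J_a^{\flat}(\co)/\Lie J_a(\co) \cong (\kt_+ \otimes_\co \tilde\co_a^{\flat}/\tilde\co_a)^W.$$
Apr�s extension des scalaires � l'anneau $\co_E$ qui trivialise $J_a$, l'action de Galois se lit explicitement sur $\tilde\co_a^\flat/\tilde\co_a$, et une moyennation par $W$ identifie cette dimension avec $\dime_{\bar k}(\kt_+ \otimes_\co \tilde\co_a^{\flat}/\tilde\co_a)$.

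\textbf{Deuxi�me �galit�.} On calcule la longueur de $\Lie J_a^{\flat}(\co)/\Lie J_a(\co)$ via deux indices dans $T_{a_+}\kcd(\co)$. Le Lemme \ref{flat} fournit d�j� $[T_{a_+}\kcd(\co) : d\nu^{\flat}(\Lie J_a^{\flat}(\co))] = (d+c)/2$. En reprenant la strat�gie de la preuve du Lemme \ref{flat} mais en utilisant la description galoisienne de $J_a$ � la place de celle du mod�le de N�ron, et en invoquant l'identit� (\ref{diffvin}) $\left\langle \rho,\la\right\rangle + d_0/2 = d/2$, on obtient $[T_{a_+}\kcd(\co) : d\nu(\Lie J_a(\co))] = d$. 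Par soustraction, on trouve:
$$\dime \cP(J_a) = [\Lie J_a^{\flat}(\co) : \Lie J_a(\co)] = d - \frac{d+c}{2} = \frac{d-c}{2}.$$

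L'obstacle principal est d'�tablir rigoureusement le calcul parall�le $[T_{a_+}\kcd(\co) : d\nu(\Lie J_a(\co))] = d$, qui n�cessite un contr�le pr�cis du d�faut de normalit� $\tilde\co_a^{\flat}/\tilde\co_a$ via le Th�or�me \ref{gabbi}; un argument alternatif consisterait � invoquer directement la formule de Bezrukavnikov pour la dimension de la fibre de Springer affine associ�e � $a$.
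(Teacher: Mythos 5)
The paper's own proof is purely by citation: the first equality ``est la m\^eme que [N, Cor.\ 3.8.3]'' and the second ``r\'esulte de [Bt, Cor.\ 3.9]''. Your sketch of the first equality follows Ng\^o's argument in outline and is acceptable, though note that your Galois description naturally produces $(\kt_{+}\otimes_{\co}\tilde{\co}_{a}^{\flat}/\tilde{\co}_{a})^{W}$, whereas the stated formula has no $W$-invariants; the ``moyennation par $W$'' you invoke to pass from the invariants to the full module is exactly the nontrivial content of [N, Cor.\ 3.8.3] and cannot be waved through.

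The real problem is your main route to the second equality, which is circular with respect to the paper's logic. You propose to obtain $(d-c)/2$ by subtracting the index $(d+c)/2$ of Lemma \ref{flat} from $[T_{a_{+}}\kcd(\co):d\nu(\Lie J_{a_{+}}(\co))]=d$. But that last index is precisely Proposition \ref{clecent}, which the paper \emph{deduces from} the present corollary together with Lemma \ref{flat}, via the additivity $d=\frac{d+c}{2}+\frac{d-c}{2}$. The ``calcul parall\`ele'' you gesture at does not exist as described: identity (\ref{diffvin}) is the input to Lemma \ref{flat} itself (it computes the index for the split torus over $\co_{E}$), and rerunning that strategy with $J_{a}$ in place of $J_{a}^{\flat}$ requires knowing $[\Lie J_{a_{+}}^{\flat}(\co):\Lie J_{a_{+}}(\co)]$ --- the very quantity to be computed. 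You honestly flag this as ``l'obstacle principal'', but that concession means the proof is incomplete. The correct argument is the one you relegate to a final alternative: the second equality is exactly the dimension formula for affine Springer fibers for groups, [Bt, Cor.\ 3.9] (the group analogue of Bezrukavnikov's formula), and it should be invoked as the proof, not as a fallback.
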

\begin{proof}
La preuve de la première égalité est la même que \cite[Cor. 3.8.3]{N} et la dernière égalité résulte de \cite[Cor.3.9]{Bt}
\end{proof}
\begin{prop}\label{clecent}
On considère la flèche $d\nu:\Lie J_{a_{+}}(\co)\rightarrow T_{a_{+}}\kcd(\co)$. Alors, on a la formule suivante pour les indices relatifs:
\begin{center}
$[T_{a_{+}}\kcd(\co):d\nu(\Lie J_{a_{+}}(\co))]=d$.
\end{center}
\end{prop}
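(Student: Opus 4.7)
La strat\'egie est de factoriser $\nu$ au travers du mod\`ele de N\'eron, puis d'exploiter l'additivit\'e des indices relatifs. On dispose de la factorisation $\nu=\nu^{\flat}\circ\iota$ construite dans le paragraphe pr\'ec\'edent; en passant aux alg\`ebres de Lie, on obtient un diagramme commutatif
$$\xymatrix{\Lie J_{a_{+}}(\co)\ar[r]^{d\iota}\ar[dr]_{d\nu}&\Lie J_{a_{+}}^{\flat}(\co)\ar[d]^{d\nu^{\flat}}\\&T_{a_{+}}\kcd(\co)}$$
reliant trois $\co$-modules libres de m\^eme rang $r=\rg T$. Les trois fl\`eches sont des isomorphismes en fibre g\'en\'erique: pour $d\iota$ par la propri\'et\'e universelle du mod\`ele de N\'eron, pour $d\nu^{\flat}$ parce que $x_{+}$ est g\'en\'eriquement r\'egulier semi-simple, et pour $d\nu$ par composition. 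L'additivit\'e des indices pour des inclusions de $\co$-r\'eseaux de m\^eme rang donne alors
\begin{center}
$[T_{a_{+}}\kcd(\co):d\nu(\Lie J_{a_{+}}(\co))]=[T_{a_{+}}\kcd(\co):d\nu^{\flat}(\Lie J_{a_{+}}^{\flat}(\co))]+[\Lie J_{a_{+}}^{\flat}(\co):d\iota(\Lie J_{a_{+}}(\co))]$.
\end{center}

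Le premier terme vaut $\frac{d+c}{2}$ par application directe du lemme \ref{flat}. Pour le second, l'id\'ee est de l'identifier \`a la dimension du champ de Picard local $\mathcal{P}(J_{a})$. En utilisant les descriptions galoisiennes $\Lie J_{a_{+}}\cong(\kt\otimes_{\co}\tilde{\co}_{a})^{W}$ et $\Lie J_{a_{+}}^{\flat}\cong(\kt\otimes_{\co}\tilde{\co}_{a}^{\flat})^{W}$ h\'erit\'ees de la proposition \ref{galois} et de la description de la proposition pr\'ec\'edente, et en invoquant l'exactitude du foncteur des $W$-invariants (la caract\'eristique \'etant premi\`ere \`a l'ordre de $W$), on peut \'ecrire
\begin{center}
$[\Lie J_{a_{+}}^{\flat}(\co):d\iota(\Lie J_{a_{+}}(\co))]=\dim \mathcal{P}(J_{a})=\frac{d-c}{2}$,
\end{center}
la derni\`ere \'egalit\'e \'etant pr\'ecis\'ement le corollaire \ref{autdim}. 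En sommant, on obtient $\frac{d+c}{2}+\frac{d-c}{2}=d$, ce qu'on voulait.

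Le point principal \`a soigner est la seconde identification ci-dessus: il faut confronter l'indice $[\Lie J_{a_{+}}^{\flat}:\Lie J_{a_{+}}]$, calcul\'e comme longueur d'un quotient de $\co$-modules libres, \`a la dimension du champ de Picard local, exprim\'ee dans le corollaire \ref{autdim} comme longueur d'un module sur le rev\^etement cam\'eral $\tilde{X}_{a}$. Cette comparaison repose sur la compatibilit\'e entre la description galoisienne des alg\`ebres de Lie et celle du mod\`ele de N\'eron, et exploite de fa\c con essentielle l'hypoth\`ese de caract\'eristique premi\`ere \`a $|W|$, sans laquelle l'op\'eration de $W$-invariants ne commuterait pas au quotient.
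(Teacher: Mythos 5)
Votre preuve suit exactement la même stratégie que celle de l'article : factorisation $\nu=\nu^{\flat}\circ\iota$ par le modèle de Néron, additivité des indices relatifs, premier terme $\frac{d+c}{2}$ par le lemme \ref{flat} et second terme $\frac{d-c}{2}$ par le corollaire \ref{autdim}. Votre discussion finale explicite simplement l'identification $[\Lie J_{a_{+}}^{\flat}(\co):d\iota(\Lie J_{a_{+}}(\co))]=\dim\mathcal{P}(J_{a})$ que l'article laisse implicite en citant directement \ref{autdim}; c'est un complément utile mais l'argument est le même.
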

\begin{proof}
La proposition résulte de la conjonction du diagramme commutatif suivant:
$$\xymatrix{J_{a_{+}}\ar[r]^{\iota}\ar[dr]_{\nu}&J_{a_{+}}^{\flat}\ar[d]^-{\nu^{\flat}}\\&\kcd(\co)}$$
du lemme \ref{flat} ainsi que du fait \ref{autdim}:
\begin{center}
$[\Lie J_{a_{+}}^{\flat}(\co):d\iota(\Lie J_{a_{+}}(\co))]=\frac{d-c}{2}$.
\end{center}
En faisant la somme, le lemme suit.
\end{proof}

\begin{cor}\label{divisib}
Soit $Z\in d\xi(\kq_{x}(\co))\subset T_{x_{+}}V^{\la,0}(\co)$. On suppose 
\begin{center}
$d\chi_{+,x_{+}}(Z)=0~[\pi^{m+d}]$.
\end{center}
Alors, il existe $Z'\in T_{x_{+}}V^{\la,0}(\co)$ tel que $Z=\pi^{m}Z'$.
\end{cor}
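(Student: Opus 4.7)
The plan is to transport the problem into $\kg_{x}(F)$ via the integral isomorphism $d\xi$, and then to apply the index bound of Proposition \ref{clecent}. Write $Z = d\xi(Y)$ with $Y \in \kq_{x}(\co) := \kq(\co) \cap \kg_{x}(F)$, and set
\[
\Phi := d\chi_{+,x_{+}} \circ d\xi : \kg(F) \to T_{a_{+}}\kcd(F).
\]
The kernel of $d\chi_{+,x_{+}}$ at the regular point $x_{+}$ is the tangent space to the $G$-orbit, i.e. $d\xi(\psi(\kg(F)))$, so $\ker\Phi = \Ima \psi$, which is a complement to $\kg_{x}(F)$; hence $\Phi$ restricts to an $F$-linear isomorphism $\Phi : \kg_{x}(F) \to T_{a_{+}}\kcd(F)$.

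Introduce the lattice $W := \Phi^{-1}(T_{a_{+}}\kcd(\co)) \subset \kg_{x}(F)$, which $\Phi$ identifies with $T_{a_{+}}\kcd(\co)$. For $Y \in \kq_{x}(\co)$ one has $d\xi(Y) \in T_{x_{+}}V^{\la,0}(\co)$ and hence $\Phi(Y) \in T_{a_{+}}\kcd(\co)$, giving $\kq_{x}(\co) \subset W$. Moreover, by Theorem \ref{bouth2} the canonical morphism $\Ad(g)^{-1} : J_{a_{+}} \to I_{x_{+}} \to G \otimes \co$ is integral, so $d(\Ad(g)^{-1})(\Lie J_{a_{+}}(\co)) \subset \kg_{x}(\co) \subset \kq_{x}(\co)$; and since $d\nu = \Phi \circ d(\Ad(g)^{-1})$, Proposition \ref{clecent} yields
\[
\Ad(g)^{-1}(\Lie J_{a_{+}}(\co)) \subset \kq_{x}(\co) \subset W, \qquad \bigl[W : \Ad(g)^{-1}(\Lie J_{a_{+}}(\co))\bigr] = d.
\]
A torsion $\co$-module of length $d$ is annihilated by $\pi^{d}$, so $\pi^{d} W \subset \Ad(g)^{-1}(\Lie J_{a_{+}}(\co)) \subset \kq(\co)$.

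The hypothesis $d\chi_{+,x_{+}}(Z) \in \pi^{m+d}T_{a_{+}}\kcd(\co)$ then reads $\Phi(Y) \in \pi^{m+d}T_{a_{+}}\kcd(\co)$, whence $Y \in \pi^{m+d}W = \pi^{m}(\pi^{d}W) \subset \pi^{m}\kq(\co)$. Writing $Y = \pi^{m}Y'$ with $Y' \in \kq(\co)$ and setting $Z' := d\xi(Y') \in T_{x_{+}}V^{\la,0}(\co)$ gives $Z = \pi^{m}Z'$, as required. The argument is essentially bookkeeping once one has Proposition \ref{clecent}; the only delicate point is that the integral factorisation $J_{a_{+}} \to I_{x_{+}}$ furnished by Theorem \ref{bouth2} is what allows us to sandwich $\Ad(g)^{-1}(\Lie J_{a_{+}}(\co))$ inside $\kq_{x}(\co)$, so that the full index $d$ is available to absorb the hypothesis.
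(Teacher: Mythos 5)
Your proof is correct and is essentially the paper's argument: both rest on Proposition \ref{clecent}, on the integrality of $\xi\circ\Ad(g)^{-1}:J_{a_{+}}\rightarrow V^{\la,0}$ over $\Spec(\co)$, and on the generic isomorphism $d\chi_{+,x_{+}}:d\xi(\kg_{x}(F))\rightarrow T_{a_{+}}\kcd(F)$. You merely make explicit, via the lattice $W$ and the observation that a length-$d$ quotient is killed by $\pi^{d}$, the step the paper leaves implicit when it produces $Z_{1}\in\Lie J_{a_{+}}(\co)$ with $d\nu(\pi^{m}Z_{1})=d\chi_{+,x_{+}}(Z)$.
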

\begin{proof}
On rappelle que $\nu=\chi_{+}\circ\xi\circ\Ad(g)^{-1}$ et que l'application 
\begin{center}
$\xi\circ\Ad(g)^{-1}:J_{a_{+}}\rightarrow V^{\la,0}$
\end{center}
est définie sur $\Spec(\co)$.
Il résulte alors de la proposition \ref{clecent} qu'il existe $Z_{1}\in\Lie J_{a_{+}}(\co)$ tel que:
\begin{center}
$d\nu(\pi^{m}Z_{1})=d\chi_{+,x_{+}}(d\xi\circ \ad(g)^{-1}(\pi^{m}Z_{1}))=d\chi_{+,x_{+}}(Z)$.
\end{center}
Or, comme $x_{+}$ est génériquement régulier semisimple, on a un isomorphisme:
\begin{center}
$d\chi_{+,x_{+}}:d\xi(\kg_{x}(F))\rightarrow T_{a_{+}}\kcd(F)$.
\end{center}
Ainsi, en posant $Z_{2}=d\xi\circ \ad(g)^{-1}(Z_{1})\in T_{x_{+}}V^{\la,0}(\co)\cap d\xi(\kg_{x}(F))$, on obtient:
\begin{center}
$Z=\pi^{m}Z_{2}$.
\end{center}
\end{proof}
\subsection{Fin de la preuve}\label{finconjdi}
Nous pouvons maintenant passer à la preuve du théorème \ref{conjdi}:
\begin{proof}
On raisonne par approximations successives, en montrant qu'on peut trouver un élément $k\in K_{n-d-1}$ tel:
\begin{center}
$kx_{+}k^{-1}=y_{+}~[\pi^{n+1}]$.
\end{center}
On pourra donc construire une suite d'éléments de $K_{n-d-1}$ qui converge vers un élément $k_{0}\in K_{n-d-1}$ qui vérifiera:
\begin{center}
$k_{0}x_{+}k_{0}^{-1}=y_{+}$.
\end{center}
Dans l'anneau $\co/\pi^{n+1}\co$, l'idéal $I=(\pi^{n-d})$ est de carré nul car $2(n-d)>n$. Soit $k\in K_{n-d-1}$. 
On a alors une suite exacte:
$$\xymatrix{1\ar[r]&\pi^{n-d}\kg(\co/\pi^{n+1}\co)\ar[r]&G(\co/\pi^{n+1}\co)\ar[r]&G(\co/\pi^{n-d}\co)\ar[r]&1}$$
Ainsi, l'image de $k$ dans  $G(\co/\pi^{n+1}\co)$ définit un élément $\pi^{n-d}X\in\pi^{n-d}\kg(\co/\pi^{n+1}\co)$.
On le relève alors en un élément de $\pi^{n-d}\kg(\co)$, noté de la même manière.
De même $\pi^{n}$ étant de carré nul dans $\co/\pi^{2n}\co$, l'égalité 
\begin{center}
$x_{+}=y_{+}~[\pi^{n}]$
\end{center}
nous fournit l'existence d'un élément $\pi^{n}C\in \pi^{n}T_{x_{+}}V^{\la,0}(\co/\pi^{2n}\co)$.
Comme $\chi_{+}(x_{+})=\chi_{+}(y_{+})$, on obtient:
\begin{equation}
d\chi_{+,x_{+}}(C)=0~[\pi^{n}]
\label{polkar}
\end{equation}
Par lissité de $V^{\la,0}$, on peut relever $\pi^{n}C$ en un élément de $T_{x_{+}}V^{\la,0}(\co)$, noté de la même manière.

En reprenant les notations de (\ref{phi}), on cherche à résoudre l'équation:
\begin{center}
$\phi(k)=y_{+}~[\pi^{n+1}]$,
\end{center}
ce qui se récrit alors comme:
\begin{center}
$\pi^{n-d}d\phi(X)=\pi^{n}C~[\pi^{n+1}]$
\end{center}
soit:
\begin{equation}
d\phi(X)=\pi^{d}C~[\pi^{d+1}]
\label{infconj}
\end{equation}
Maintenant, en appliquant le lemme \ref{quotdet}, il résulte qu'il existe $X\in\kg(\co)$ et $Z\in d\xi(\kq_{x}(\co))$ tel que:
\begin{equation}
d\phi(X)=\pi^{d}C+Z.
\label{presq}
\end{equation}
Pour pouvoir résoudre l'équation (\ref{infconj}), il ne nous reste plus qu'à montrer que:
\begin{center}
$Z=0~[\pi^{d+1}]$.
\end{center}
Il résulte de l'égalité $\chi_{+}\circ\phi=\chi_{+}$ que $d\chi_{+,x_{+}}(d\phi(X))=0$.
Ainsi, en appliquant $d\chi_{+,x_{+}}$ à l'équation (\ref{presq}) et en utilisant l'égalité (\ref{polkar}), on trouve:
\begin{center}
$d\chi_{+,x_{+}}(Z)=0~[\pi^{n+d}]$.
\end{center}
Maintenant il résulte du corollaire \ref{divisib} que $Z=\pi^{n}Z_{2}$, avec $Z_{2}\in T_{x_{+}}V^{\la,0}(\co)$.
Comme $n>d+1$, nous avons une solution à l'équation (\ref{infconj}), ce qui conclut la preuve du théorème \ref{conjdi}.
\end{proof}

\section*{Appendice: Un théorème de Gabber} 
Dans cet appendice, on donne la preuve du théorème \ref{gabbi}, que l'on obtient comme cas particulier de résultats généraux obtenus par Gabber.
Nous suivonst les notes qu'il nous a aimablement communiquées. 
Rappelons l'énoncé du théorème \ref{gabbi}. Soit $k$ algébriquement clos de caractéristique première à l'ordre de $W$.
Soit $\co=k[[\pi]]$, $F$ son corps de fractions et $X=\Spec(\co)$.
On considère $a\in\kcd(\co)\cap\kc^{rs}(F)$ et $X_{a}\rightarrow X$ le revêtement caméral correspondant.
\begin{thm}\label{gabbi2}
Soit $h$ le conducteur de la flèche $X_{a}\rightarrow X$  et $d$ la valuation du discriminant de $a$, alors on a $h\leq d$.
\end{thm}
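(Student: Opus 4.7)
The plan is to reduce the bound $h \leq d$ to a classical statement about the Dedekind different of a finite flat Gorenstein cover of a discrete valuation ring. Write $B := \co_a$ for the ring of $X_a$. By construction, $B$ is finite flat over $\co$ and generically étale (because $a \in \kc^{rs}(F)$); by Lemma \ref{gor} it is moreover Gorenstein. What has to be shown is that $\pi^d \in \bh_{B/\co}$.

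First I would exploit the Gorenstein hypothesis via classical duality. Since $\co$ is a DVR and $B$ is a finite flat Gorenstein $\co$-algebra, the relative dualizing module $\omega_{B/\co} = \Hom_\co(B, \co)$ is an invertible $B$-module. The Dedekind different $\mathfrak{d}_{B/\co}$ is then the invertible $B$-ideal characterized by
\begin{center}
$\mathfrak{d}_{B/\co}^{-1} = \{x \in B \otimes_\co F \,\vert\, \Tr_{B \otimes F/F}(xB) \subseteq \co\}$.
\end{center}
The classical identity $\disc(B/\co) = N_{B/\co}(\mathfrak{d}_{B/\co})$ gives, after taking $\co$-lengths,
\begin{center}
$\text{length}_\co(B/\mathfrak{d}_{B/\co}) = \val_\co(\disc(B/\co)) = d$.
\end{center}
Any $\co$-module of length $d$ is annihilated by $\pi^d$, so $\pi^d \cdot B \subseteq \mathfrak{d}_{B/\co}$, and in particular $\pi^d \in \mathfrak{d}_{B/\co}$.

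The main step is then the inclusion $\mathfrak{d}_{B/\co} \subseteq \bh_{B/\co}$. For a local complete intersection presentation $B = \co[x_1, \ldots, x_N]/(f_1, \ldots, f_N)$ with $N$ equal to the codimension, the Kähler different, namely the Jacobian ideal $(\det(\partial f_i/\partial x_j))$, coincides with the Dedekind different and lies in the Elkik ideal $H_{B/\co}^{Elk}$ by construction, hence in $\bh_{B/\co}$ by Proposition \ref{gr}. In the general Gorenstein case $B$ need not be lci and one must use duality: the invertibility of $\omega_{B/\co}$ lets one identify the obstruction module $\Ext_B^1(L_{B/\co}, J/J^2)$ appearing in the definition of $\bh_{B/\co}$ with a torsion module supported on the ramification locus whose annihilator contains $\mathfrak{d}_{B/\co}$. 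Combining the two steps gives $\pi^d \in \mathfrak{d}_{B/\co} \subseteq \bh_{B/\co}$, whence $h \leq d$.

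The main obstacle will be the last inclusion in the Gorenstein-non-lci regime, where the Jacobian no longer suffices. Here one must invoke Gabber's unpublished duality argument. A natural route is to exploit the explicit toric structure of $V_T$ brought out in the proof of Lemma \ref{gor}: using the factorization of the cameral cover through the toric variety $V_T$ and the concrete generators of its saturated cone, one can control the cotangent complex $L_{B/\co}$ well enough to pair $\Ext^1$ with $\omega_{B/\co}$ and identify its annihilator with the Dedekind different. The fact that the paper states $h \leq d$ without slack suggests the toric Gorenstein machinery of Lemma \ref{gor} saturates this bound sharply.
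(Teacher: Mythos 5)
Votre architecture en deux temps --- d'abord montrer que $\pi^{d}$ appartient � la diff�rente $\delta_{B/\co}:=\Ann_{B}(\coker(\tau_{B/\co}))$, puis �tablir l'inclusion $\delta_{B/\co}\subset\bh_{B/\co}$ --- est exactement celle de l'appendice, et votre premier pas est correct~: $\tau_{B/\co}$ est une injection entre $\co$-modules libres de m�me rang dont le d�terminant (dans des bases duales) est le discriminant, donc $\mathrm{length}_{\co}(\coker(\tau_{B/\co}))=d$ et $\pi^{d}$ annule ce conoyau; la propri�t� de Gorenstein de $X_{a}$ (lemme \ref{gor}) garantit l'identification de $\coker(\tau)$ avec $B/\delta_{B/\co}$ que vous utilisez via $\disc=N(\mathfrak{d})$.

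En revanche, l'inclusion $\delta_{B/\co}\subset\bh_{B/\co}$ est tout le contenu du th�or�me et vous ne la d�montrez pas. Le m�canisme que vous esquissez --- \og l'inversibilit� de $\omega_{B/\co}$ permet d'identifier $\Ext^{1}_{B}(L_{B/\co},J/J^{2})$ � un module de torsion dont l'annulateur contient $\mathfrak{d}$ \fg --- n'est pas un argument et pointe dans la mauvaise direction~: la dualit� de Gorenstein ne joue aucun r�le dans cette �tape. Le pont effectif (proposition \ref{inclgb}) passe par la diff�rente de Noether $\cD:=m(\Ann_{C}(J))$ avec $C=B\otimes_{\co}B$ et $J=\Ker(m)$~: (i) $\cD=\delta_{B/\co}$ parce que $B/\co$ est g�n�riquement �tale (et non parce que $B$ est Gorenstein); (ii) $\cD$ annule tous les $\Ext^{i}_{C}(B,-)$ pour $i>0$, en particulier la classe d'obstruction de Hochschild dans $\Ext^{2}_{C}(B,I/I^{2})$ au scindage de l'extension conormale $0\to I/I^{2}\to P/I^{2}\to B\to 0$. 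Le pouss�-en-avant de cette extension par la multiplication par $x\in\cD$ est donc scind�, ce qui fournit une d�rivation de $P/I^{2}$ dans $I/I^{2}$ exhibant $x\cdot\mathrm{id}_{I/I^{2}}$ comme se factorisant par $\partial:I/I^{2}\to B\otimes_{\co}\Omega^{1}_{P/\co}$; c'est pr�cis�ment le crit�re de \cite[5.4.4]{GR} pour $x\in\bh_{B/\co}$. Sans cet argument (ou un �quivalent), votre preuve est incompl�te; notez aussi que $X_{a}$ n'est en g�n�ral pas localement intersection compl�te sur $\co$ (c'est seulement un changement de base de la vari�t� torique Gorenstein $V_{T}$), de sorte que le raccourci par l'id�al jacobien que vous mentionnez ne couvre pas le cas consid�r�.
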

Nous allons obtenir cet énoncé comme le corollaire d'un résultat plus général sur les anneaux.

Soit $A$ un anneau commutatif, $B$ une $A$-algèbre finie localement libre.
On considère l'extension:
$$\xymatrix{1\ar[r]&J\ar[r]&C:=B\otimes_{A}B\ar[r]^-{m}&B\ar[r]&1}$$
entre $C$-modules.
L'annulateur de cette extension est $m^{-1}(m(\Ann_{C}(J)))$ et donc si nous posons $\cD:=m(\Ann_{C} (J))$, nous avons que $\cD$ annule tous les groupes 
\begin{equation}
\Ext_{C}^{i}(B,-)=0,~ \text{pour}~ i>0.
\label{annil}
\end{equation}
Soit $b\in B$, comme $B$ est un $A$-module de type fini localement libre, on peut considérer alors l'endomorphisme de $A$-modules:
\begin{center}
$\mu_{b}:B\rightarrow B$,
\end{center}
donné par la multiplication par $b$ et on pose $\Tr(b):=\Tr(\mu_{b})$.
Soit le $A$-module $B^{*}:=\Hom_{A}(B,A)$. Il admet une structure de $B$-modules par:
\begin{center}
$ b.f(x)=f(bx)$,
\end{center}
où $f\in B^{*}$ et $b, x\in B$. On considère alors le morphisme de $A$-modules: 
\begin{center}
$\tau_{B/A}:B\rightarrow B^{*}:=\Hom_{A}(B,A)$
\end{center}
donnée par $b\mapsto \Tr(b. )$, où $\Tr(b.)$ est la forme linéaire donnée par $y\mapsto\Tr(by)$.  
On a également un morphisme $A$-linéaire, que l'on peut voir comme une forme bilinéaire:
\begin{center}
$t_{B/A}:B\otimes_{A}B\rightarrow A$.
\end{center}
défini par l'égalité $t_{B/A}(b_{1}\otimes b_{2})=\tau_{B/A}(b_{1})(b_{2})$ pour $b_{1}, b_{2}\in B$.
On considère alors l'idéal \textit{différente}:
\begin{center}
$\delta_{B/A}:=\Ann_{B}(\coker(\tau_{B/A}))$.
\end{center}
L'énoncé est le suivant:
\begin{prop}[Gabber]
On a les inclusions:
\begin{enumerate}
\item
$\cD\subset\delta_{B/A}\subset\Ann_{B}(\Ker(\tau_{B/A}))$.
\item
$\Ann_{B}(\Ker(\tau_{B/A}))\delta_{B/A}\subset\cD$.
\item
L'application $\tau_{B/A}$ est injective si et seulement si $B/A$ est étale sur un ouvert qui contient les idéaux faiblement asssociés (\cite[Déf. 10.63.1]{stack}).
\end{enumerate}
\end{prop}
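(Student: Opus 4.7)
La preuve repose sur une identit\'e clef: pour tout $\alpha=\sum a_i\otimes b_i\in\Ann_C(J)$, on a $(\Tr\otimes\Id)(\alpha)=m(\alpha)=(\Id\otimes\Tr)(\alpha)$ dans $B$. Pour l'\'etablir, on choisit localement une base $(e_i)$ de $B$ sur $A$, de constantes de structure $c_{ij}^k$; la relation $\alpha(e_k\otimes 1)=\alpha(1\otimes e_k)$ se traduit, en \'ecrivant $\alpha=\sum \lambda_{ij}e_i\otimes e_j$, par le syst\`eme $\sum_a c_{ak}^p\lambda_{aq}=\sum_b c_{kb}^q\lambda_{pb}$ sur les coefficients de $e_p\otimes e_q$. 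En sp\'ecialisant $(p,k)=(i,i)$ et en sommant sur $i$, on obtient $\sum_a \lambda_{am}\Tr(e_a)=\sum_{i,b} c_{ib}^m\lambda_{ib}$, ce qui est exactement l'\'egalit\'e des coefficients de $e_m$ entre $(\Tr\otimes\Id)(\alpha)$ et $m(\alpha)$. Cette identit\'e, combin\'ee au fait que $\Ann_C(J)$ est un id\'eal de $C$ stable par multiplication par $y\otimes 1$ et par le flip $\sigma:a\otimes b\mapsto b\otimes a$, est le moteur des points (i) et (ii).

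Pour (i), premi\`ere inclusion: soient $d=m(\alpha)$ et $f\in B^*$, posons $c:=\sum f(a_i)b_i\in B$. Alors $\tau_{B/A}(c)(y)=\sum f(a_i)\Tr(b_iy)$, et en appliquant l'identit\'e clef \`a $(y\otimes 1)\sigma(\alpha)\in\Ann_C(J)$ on obtient $\sum f(a_i)\Tr(yb_i)=f(yd)=(df)(y)$; donc $df=\tau_{B/A}(c)\in\tau_{B/A}(B)$, soit $d\in\delta_{B/A}$. Deuxi\`eme inclusion: pour $d\in\delta_{B/A}$ et $b\in\Ker\tau_{B/A}$, on \'ecrit $df=\tau_{B/A}(c)$, puis pour tout $f\in B^*$: $f(db)=(df)(b)=\tau_{B/A}(c)(b)=\Tr(cb)=\tau_{B/A}(b)(c)=0$. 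Comme $B$ est fini localement libre sur $A$, l'\'evaluation $B\to(B^*)^*$ est injective, d'o\`u $db=0$.

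Pour (ii): \'etant donn\'es $b\in\Ann_B(\Ker\tau_{B/A})$ et $d\in\delta_{B/A}$, on d\'efinit $\phi:B^*\to B$ par $\phi(f)=bc_f$ o\`u $c_f$ est n'importe quelle solution de $\tau_{B/A}(c_f)=df$; l'ambigu\"it\'e de $c_f$ modulo $\Ker\tau_{B/A}$ est tu\'ee par $b$, et la $B$-lin\'earit\'e r\'esulte de $\tau_{B/A}(yc_f)=y(df)=d(yf)$. Sous l'isomorphisme canonique $\Ann_C(J)\cong\Hom_B(B^*,B)$ provenant de l'identification $B\otimes_AB\cong\Hom_A(B^*,B)$ (valable car $B$ est fini localement libre), $\phi$ correspond \`a un unique $\alpha\in\Ann_C(J)$; l'identit\'e clef donne alors $m(\alpha)=\phi(\tau_{B/A}(1))$. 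Or $\tau_{B/A}(c_{\tau(1)})=d\tau_{B/A}(1)=\tau_{B/A}(d)$ entra\^ine $c_{\tau(1)}\equiv d\pmod{\Ker\tau_{B/A}}$, donc $\phi(\tau_{B/A}(1))=bc_{\tau(1)}=bd$, prouvant $bd\in\cD$.

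Pour (iii): si $B/A$ est \'etale sur un ouvert $U\subset\Spec B$ contenant tous les id\'eaux faiblement associ\'es \`a $B$, alors $\tau_{B/A}$ y est un isomorphisme et $\Ker\tau_{B/A}$ est support\'e dans le ferm\'e compl\'ementaire; tout id\'eal faiblement associ\'e \`a $\Ker\tau_{B/A}$ (vu comme sous-module de $B$) est faiblement associ\'e \`a $B$, donc dans $U$, forçant $\Ker\tau_{B/A}=0$. R\'eciproquement, si $\tau_{B/A}$ est injective, $\det(\tau_{B/A})=\disc(B/A)$ est un non-diviseur de z\'ero dans $A$ (crit\`ere usuel pour une application entre modules libres de m\^eme rang), puis dans $B$ par platitude; il \'echappe \`a tout id\'eal faiblement associ\'e \`a $B$, ce qui signifie que $B/A$ y est \'etale, et le lieu \'etale de $\Spec B$ est l'ouvert cherch\'e. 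L'obstacle principal est la d\'ecouverte de l'identit\'e clef $(\Tr\otimes\Id)(\alpha)=m(\alpha)$ pour $\alpha\in\Ann_C(J)$: elle n'est pas du tout \'evidente a priori et m\'erite d'\^etre v\'erifi\'ee soigneusement sur quelques exemples; une fois acquise, les trois assertions en d\'ecoulent en peu de lignes.
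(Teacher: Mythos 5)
Your proof is correct and, for (i) and (ii), follows essentially the same route as the paper: your ``key identity'' $(\Tr\otimes\Id)(\alpha)=m(\alpha)$ on $\Ann_{C}(J)$ is exactly the paper's observation (which it derives from additivity of the trace along $0\to J\to C\to B\to 0$ rather than by your structure-constant computation), and both arguments hinge on the identification $\Ann_{C}(J)\cong\Hom_{B}(B^{*},B)$. The only substantive difference is point (iii), where the paper simply cites Flanders while you give a self-contained argument (McCoy's theorem plus the characterization of zerodivisors as the union of the weakly associated primes), which is a worthwhile addition.
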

\begin{proof}
Commençons par montrer (i).
La forme bilinéaire induite par la trace nous donne que  $\Ima (\tau_{B/A})$ est orthogonale à $\Ker(\tau_{B/A})$, d'où l'inclusion $\delta_{B/A}\subset\Ann_{B}(\Ker(\tau_{B/A}))$.
Montrons l'autre inclusion, nous avons un isomorphisme:
$$ \begin{array}{ll}
 h: &B\otimes_{A}B \stackrel{\cong}{\rightarrow} \Hom_{A}(B,B^{*}) \\
   &b_{1}\otimes b_{2} \longmapsto (\phi\mapsto \phi(b_{1})b_{2})
\end{array}$$
qui se restreint en un isomorphisme:
\begin{equation}
h_{1}:\Ann_{C}(J)\rightarrow\Hom_{B}(B^{*},B).
\label{diff}
\end{equation}

Si $x\in\Ann_{C}(J)$, par additivité de la trace d'un endomorphisme d'une suite exacte de modules de type fini localement libres (cf. \cite[Prop. 4.1.4]{GR}), si $B\otimes_{A}B$ est considérée comme une $B$-algèbre par la flèche $b\mapsto1\otimes b$, nous avons:
\begin{center}
$\Tr(x)=m(x)$.
\end{center}
En appliquant cette remarque, nous obtenons que pour $\phi\in\Hom_{A}(B,B^{*})$ qui correspond à un élément $h_{1}^{-1}(\phi)\in\Ann_{C}(J)$, la composition des flèches:
\begin{center}
$B\stackrel{\phi}{\rightarrow}B^{*}\stackrel{\tau_{B/A}}{\rightarrow}B$,
\end{center}
est donnée par la multiplication par $m(h_{1}^{-1}(\phi))$.
Ainsi, nous obtenons l'inclusion 
\begin{center}
$\cD\subset\delta_{B/A}$.
\end{center}
Montrons (ii), si $x.\Ker(\tau_{B/A})=0$ et $y.\coker(\tau_{B/A})=0$ alors la multiplication par $y$ induit un morphisme $B$-linéaire: 
\begin{center}
$B^{*}\rightarrow B/\Ker(\tau_{B/A})$
\end{center}
et la multiplication par $x$ induit une application $B$-linéaire: 
\begin{center}
$B/\Ker(\tau_{B/A})\rightarrow B$.
\end{center}
On obtient alors en vertu de l'isomorphisme (\ref{diff}), un élément de $\Ann_{C}(J)$ et donc: 
\begin{center}
$xy\in\cD$.
\end{center}
Enfin, la dernière assertion résulte de \cite{Fl}.
\end{proof}
On déduit donc de cette proposition que si $B/A$ est étale sur un ouvert qui contient les idéaux faiblement asssociés, alors nous avons:
\begin{equation}
\delta_{B/A}=\cD.
\label{siget}
\end{equation}

\begin{prop}[Gabber]\label{inclgb}
Supposons $B/A$ étale sur un ouvert qui contient les idéaux faiblement asssociés et soit $H_{B/A}$ l'idéal introduit dans \ref{gabber},  alors on a l'inclusion:
\begin{center}
$\delta_{B/A}\subset H_{B/A}$.
\end{center}
\end{prop}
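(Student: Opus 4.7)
Le plan est le suivant. Par la remarque (ii) qui suit la D\'efinition \ref{gabber}, l'id\'eal $H_{B/A}$ est caract\'eris\'e comme le plus grand id\'eal annulant tous les modules $\Ext^1_B(L_{B/A},N)$ pour $N$ parcourant les $B$-modules. Il suffit donc de montrer que chaque $x\in\delta_{B/A}$ annule ces groupes. Or l'hypoth\`ese d'\'etalit\'e g\'en\'erique permet d'invoquer l'\'egalit\'e (\ref{siget}), donnant $\delta_{B/A}=\cD=m(\Ann_C(J))$, o\`u $C:=B\otimes_A B$ et $J:=\ker(m:C\to B)$. On se ram\`ene ainsi \`a montrer que, pour tout $\alpha\in\Ann_C(J)$, l'\'el\'ement $x=m(\alpha)\in B$ annule $\Ext^1_B(L_{B/A},N)$.

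L'id\'ee cl\'e est la suivante: puisque $\alpha\cdot J=0$, la multiplication par $\alpha$ sur $C$ se factorise \`a travers la projection $m:C\to C/J=B$, via un morphisme $\tilde{\alpha}:B\to C$ tel que $\tilde{\alpha}\circ m=\alpha\cdot$ et dont la composition avec $m$ est la multiplication par $x$. Cette factorisation sugg\`ere de comparer $\Ext^1_B(L_{B/A},N)$ \`a $\Ext^1_C(B,N)$ (o\`u $N$ est vu comme $C$-module via $m$), puisque l'on dispose d\'ej\`a, par (\ref{annil}), de l'annulation de $\Ext^i_C(B,-)$ par $\cD$ pour $i>0$. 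Concr\`etement, la platitude de $B$ sur $A$ (cons\'equence de l'hypoth\`ese fini localement libre) et la suite de transitivit\'e pour $A\to C\to B$ fournissent, via l'identification $L_{C/A}\otimes^L_C B\simeq L_{B/A}\oplus L_{B/A}$, un isomorphisme $L_{B/C}\simeq L_{B/A}[1]$ dans la cat\'egorie d\'eriv\'ee. Combin\'e avec la description usuelle $L_{B/C}\simeq J\otimes^L_C B[1]$ pour la surjection $m$ de noyau $J$, ceci produit une fl\`eche naturelle $\Ext^1_B(L_{B/A},N)\to\Ext^1_C(B,N)$ qui intertwine la multiplication par $x=m(\alpha)$ \`a la source avec la multiplication par $\alpha$ au but.

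L'annulation recherch\'ee d\'ecoulera alors imm\'ediatement de (\ref{annil}), appliqu\'ee \`a l'\'el\'ement $\alpha\in\Ann_C(J)\subset C$. L'obstacle principal sera de rendre rigoureuse la comparaison homologique ci-dessus et d'en v\'erifier la compatibilit\'e avec les actions multiplicatives; une strat\'egie alternative plus concr\`ete, qui \'evite le recours direct \`a la structure d\'eriv\'ee, consisterait \`a utiliser l'interpr\'etation d'Illusie de $\Ext^1_B(L_{B/A},N)$ comme le groupe des classes d'isomorphie de $A$-extensions carr\'e-nulles $0\to N\to\tilde{B}\to B\to 0$, et d'exhiber explicitement, \`a l'aide du morphisme $\tilde{\alpha}:B\to C$, un scindage de l'extension obtenue apr\`es multiplication par $x$ sur $N$, en relevant $\tilde{\alpha}$ modulo carr\'es \`a l'extension universelle $\tilde{B}\otimes_A\tilde{B}\to\tilde{B}$.
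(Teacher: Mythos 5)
Votre strat\'egie principale repose sur une comparaison homologique qui, telle qu'\'ecrite, ne peut pas aboutir, pour trois raisons. D'abord un probl\`eme de degr\'e : pour $N$ un $B$-module, la suite exacte $0\to J\to C\to B\to 0$ donne $\Ext^{1}_{C}(B,N)\cong\Hom_{C}(J,N)\cong\Hom_{B}(J/J^{2},N)$, c'est-\`a-dire le module des $A$-d\'erivations de $B$ dans $N$, soit $\Ext^{0}_{B}(L_{B/A},N)$ et non $\Ext^{1}_{B}(L_{B/A},N)$. Sous votre propre identification $L_{B/C}\simeq L_{B/A}[1]$, le groupe $\Ext^{1}_{B}(L_{B/A},N)$ correspond \`a $\Ext^{2}_{B}(L_{B/C},N)$, et le groupe de type Hochschild pertinent est $\Ext^{2}_{C}(B,N)$ --- c'est pr\'ecis\'ement l\`a que vit la classe d'obstruction dans la preuve du texte. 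Une fl\`eche $\Ext^{1}_{B}(L_{B/A},N)\to\Ext^{1}_{C}(B,N)$ ne peut donc pas porter l'information voulue. Ensuite, l'identification $L_{B/C}\simeq J\otimes^{L}_{C}B[1]$ est fausse en g\'en\'eral : seule la troncation $\tau_{\leq1}L_{B/C}\simeq J/J^{2}[1]$ est correcte, et les groupes d'homotopie sup\'erieurs du complexe cotangent ne sont pas des Tor. Enfin, m\^eme avec une fl\`eche de comparaison correcte et compatible aux multiplications, l'annulation au but n'entra\^ine celle de la source que si la fl\`eche est injective (ou au moins injective sur l'image de la multiplication par $x$), point que vous ne discutez jamais et qui est le coeur de l'affaire. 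La voie corrig\'ee existe : $B$ \'etant un $A$-module projectif de type fini, toute $A$-extension carr\'e-nulle de $B$ est $A$-lin\'eairement scind\'ee, de sorte que $\Ext^{1}_{B}(L_{B/A},N)$ s'identifie au groupe $\Ext^{2}_{C}(B,N)$ des extensions $A$-scind\'ees, lequel est tu\'e par $\cD=\delta_{B/A}$ d'apr\`es (\ref{annil}) et (\ref{siget}); mais cette identification, qui est l'\'etape essentielle, n'est ni \'enonc\'ee ni \'etablie dans votre texte.

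La preuve du texte contourne ces difficult\'es : elle n'utilise pas la caract\'erisation de $H_{B/A}$ comme annulateur universel, mais le crit\`ere de \cite[5.4.4]{GR} ($x\in H_{B/A}$ si et seulement si la multiplication par $x$ sur $I/I^{2}$ se factorise par $\partial:I/I^{2}\to B\otimes_{A}\Omega^{1}_{P/A}$ pour une pr\'esentation $B=P/I$), puis montre que l'extension (\ref{2etoile}), obtenue en poussant $P/I^{2}$ par la multiplication par $x$, se scinde parce que sa classe d'obstruction vit dans $\Ext^{2}_{C}(B,I/I^{2})$, groupe annul\'e par $\cD=\delta_{B/A}$. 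Votre strat\'egie alternative finale --- scinder explicitement l'extension pouss\'ee par $x$ en relevant $\tilde{\alpha}$ \`a $\tilde{B}\otimes_{A}\tilde{B}$ --- est essentiellement ce m\^eme argument, sous une forme \'el\'ementaire (c'est la construction classique du scindage \`a partir d'un \'el\'ement de s\'eparabilit\'e); c'est elle qu'il faudrait d\'evelopper, mais elle reste enti\`erement \`a l'\'etat d'esquisse, de sorte que la d\'emonstration propos\'ee n'est pas compl\`ete en l'\'etat.
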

\begin{proof}
On choisit alors une algèbre polynômiale $P$ sur $A$ telle que $B=P/I$ et on considère l'extension de $A$-algèbres:
\begin{equation}
\xymatrix{0\ar[r]&I/I^{2}\ar[r]&P/I^{2}\ar[r]&B\ar[r]&0}
\label{1etoile}
\end{equation}
Soit $x\in\delta_{B/A}:=\Ann_{B}(\coker\tau_{B/A})$, en choisissant un relèvement, il induit donc par multiplication une application :
\begin{center}
$x:I/I^{2}\rightarrow I/I^{2}$.
\end{center}
indépendante du choix du relèvement. On a de plus une application:
\begin{center}
$\partial:I/I^{2}\rightarrow B\otimes_{A}\Omega^{1}_{P/A}$.
\end{center}
Il résulte alors de \cite[5.4.4]{GR} que $x\in H_{B/A}$ si et seulement si l'application
$x:I/I^{2}\rightarrow I/I^{2}$ se factorise par $\partial$.

On pousse alors cette extension par $x$ et on obtient une suite exacte de $A$-algèbres:
\begin{equation}
\xymatrix{0\ar[r]&I/I^{2}\ar[r]&Q\ar[r]&B\ar[r]&0}.
\label{2etoile}
\end{equation}
L'obstruction à relever l'identité $B\rightarrow B$ en un morphisme de $A$-algèbres $B\rightarrow P/I^{2}$ est donnée par l'approche avec la cohomologie de Hochschild par un élément de 
\begin{center}
$\Ext^{2}_{C}(B,I/I^{2})$ 
\end{center}
(cf.\cite[Ch XIV. Thm.2.1]{Ca-E}).
Or, nous avons $\cD=\delta_{B/A}$ et nous avons vu (cf. (\ref{annil})), que $\cD$ tuait tous les groupes $\Ext^{i}_{C}(B,-)$, pour $i>0$.
En particulier, $x$ tue l'obstruction et la suite exacte (\ref{2etoile}) se scinde et nous fournit une dérivation de $Q$ dans $I/I^{2}$.
Maintenant, nous obtenons que la composition
\begin{center}
$I/I^{2}\rightarrow B\otimes_{A}\Omega_{P/A}^{1}\rightarrow B\otimes_{A}\Omega_{Q/A}^{1}\rightarrow I/I^{2}$
\end{center}
est la multiplication par $x$, ce que nous souhaitions.
\end{proof}

Nous pouvons maintenant passer à la preuve du théorème \ref{gabbi2}:
\begin{proof}
Posons $A=\co$ et $B=\Gamma(X_{a},\co_{X_{a}})$.
Nous avons vu que $\delta_{B/A}\subset H_{B/A}$. Maintenant, comme la courbe camérale est Gorenstein d'après \ref{gor}, on sait que l'idéal discriminant est contenu dans l'idéal différente, lui-même dans l'idéal de Gabber-Ramero, d'où l'on déduit $h\leq d$, ce qu'on voulait.
\end{proof}

\begin{flushleft}
Alexis Bouthier \\
Université Paris-Sud UMR 8628\\
Mathématiques, Bâtiment 425, \\
F-91405 Orsay Cedex France \\
E-mail: alexis.bouthier@math.u-psud.fr \\
\end{flushleft}

\end{document}